\documentclass[10pt,letterpaper,reqno]{amsart}
\usepackage{geometry}
\usepackage{graphicx} 
\usepackage{tikz-cd}
\usepackage{amssymb,amsmath,amsthm}
\usepackage{euscript}
\usepackage{hyperref}
\usepackage{aliascnt}
\newtheorem{theorem}{Theorem}[section]

\theoremstyle{proposition}
\newtheorem{proposition}[theorem]{Proposition}
\theoremstyle{corollary}
\newtheorem{corollary}[theorem]{Corollary}

\theoremstyle{definition}
\newtheorem{definition}[theorem]{Definition}
\newtheorem{example}[theorem]{Example}

\theoremstyle{remark}
\newtheorem{remark}[theorem]{Remark}
\theoremstyle{question}
\newtheorem{question}[theorem]{Question}
\numberwithin{equation}{section}

\newcommand{\bfP}{\mathbf{P}^1}
\newcommand{\bfA}{\mathbb{A}_{\mathbb{C}}^1}
\newcommand{\an}{\mathrm{an}}
\newcommand{\Map}{\underline{\mathrm{Map}}}
\newcommand{\Perf}{\underline{\mathrm{Perf}}}
\newcommand{\RPerf}{\mathbb{R}\underline{\mathrm{Perf}}}
\newcommand{\RCoh}{\mathbb{R}\underline{\mathrm{Coh}}}
\newcommand{\Sect}{\mathbb{R}\underline{\mathrm{Sec}}}
\newcommand{\ASect}{\mathbb{R}\underline{\mathrm{AnSec}}}
\newcommand{\AnMap}{\underline{\mathrm{AnMap}}}
\newcommand{\AnPerf}{\underline{\mathrm{AnPerf}}}
\newcommand{\RAnPerf}{\mathbb{R}\underline{\mathrm{AnPerf}}}
\newcommand{\RAnCoh}{\mathbb{R}\underline{\mathrm{AnCoh}}}
\newcommand{\Dol}{\mathrm{Dol}}
\newcommand{\DR}{\mathrm{DR}}
\newcommand{\Hod}{\mathrm{Hod}}
\newcommand{\Del}{\mathrm{Del}}
\newcommand{\B}{\mathrm{B}}
\newcommand{\AG}{[\mathbb{A}^1/\mathbb{G}_m]}

\newcommand{\Q}{\mathsf{QCoh}}
\newcommand{\Per}{\mathsf{Perf}}

\numberwithin{equation}{section}

\title[Lagrangian structures, NAH-theory and Twistor Geometry]{Lagrangian correspondences of nonabelian Hodge type and shifted twistor structures}
\subjclass[2021]{(Primary) 14A30, 14D23, 18N60, 53C26, (Secondary) 14D20, 81T13}

\keywords{Kapustin--Witten theory, nonabelian Hodge theory, derived algebraic geometry, perfect complexes, shifted symplectic structures, Lagrangian correspondences, hyperk\"ahler reduction, derived moduli stacks, $\lambda$-connections, twistor families.}

\author{Jacob Kryczka}
\address{School of Mathematics, Harbin Institute of Technology (HIT), Harbin 150001, China \&
Beijing Institute of Mathematical Sciences and Applications (BIMSA), Huairou District, Beijing 101408, China.
}
\email{jkryczka@hit.edu.cn}

\author{Yuuji Tanaka}
\address{Beijing Institute of Mathematical Sciences and Applications (BIMSA), No. 544, Hefangkou
Village, Huaibei Town, Huairou District, Beijing 101408, China
}
\email{ytanaka@bimsa.cn}

\author{Shing--Tung Yau}
\address{Yau Mathematical Sciences Center, Tsinghua University, Haidian District, Beijing,
China\\
}
\email{styau@tsinghua.edu.cn}

\date{29 June 2026}

\begin{document}

\maketitle

\begin{abstract} 
Classical nonabelian Hodge theory identifies Dolbeault and de Rham moduli spaces by providing a real-analytic isomorphism. In this paper, motivated by the Kapustin--Witten theory, we study 
this correspondence in the more general framework of perfect complexes on proper
varieties, paying special attention to the surface case.  We establish a Lagrangian correspondence which relates the shifted symplectic geometries by Pantev--To\"en--Vaqui\'e--Vezzosi (PTVV) between the derived stacks of flat and Higgs perfect complexes.

Furthermore, we investigate the existence of the derived twistor structure of hyperkähler type on the moduli stack of perfect complexes endowed with $\lambda$-connections by Deligne--Hitchin--Simpson.  
We establish a version of the AKSZ/PTVV transgression, Lagrangian intersection, and (hyperk\"ahler) symplectic reduction theorems in this context.  
Moreover, we prove that the derived Riemann--Hilbert correspondence of Porta and Holstein--Porta, which states an equivalence of derived analytic stacks of perfect complexes on $X_{\mathrm{Betti}}$ and $X_{\mathrm{DR}}$,  is compatible with the natural shifted--symplectic structures.

We then study the relation between the shifted twistor structure and the shifted symplectic forms on the fibers, and prove that the analytic Deligne--Hitchin--Simpson moduli stack on a smooth projective variety $X$ has a canonical $2(1-\dim X)$ shifted twistor structure over $\mathbb{P}^1_{\mathbb{C}}$, a result which has been anticipated for some time. 
 In particular, the moduli stack of solutions to the Kapustin--Witten equations modulo gauge equivalence on a smooth proper complex algebraic surface exibits a $(-2)$-shifted (pre-)twistor structure as a family over
$\mathbb{P}^1_{\mathbb{C}}$. 
\end{abstract}
\tableofcontents

\newpage 

\section{Introduction}
In this paper, we initiate the study of hyperk\"ahler structures in derived algebraic and analytic geometry.
\subsection{Overview}
\label{OverviewIntro}
Our main results can be summarized as follows: 
\begin{enumerate}
\item[\hypertarget{(1.)}{1.}] 
We define \textit{derived analytic (pre)-twistor spaces} and study their formal properties. Roughly speaking, they are locally geometric relative derived $\mathbb{C}$-analytic stacks endowed with the action of the Galois group $\mathrm{Gal}(\mathbb{C}/\mathbb{R})$ which encodes a real structure, together with a relative shifted symplectic structure invariant for the action. This requires a detailed study of shifted symplectic structures on derived analytic stacks, modelled as in \cite{HolsteinPorta2025} (see references therein).  
Using this formalism, we answer questions of Katzarkov--Pandit--Spaide \cite[Problems~6.5,6.6]{KPS2021} on their interaction with fundamental constructions in shifted symplectic geometry \cite{PTVV13}:
\begin{itemize}
    \item We prove three new existence results for shifted twistor families which are of use in constructing new examples of such hyperk\"ahler manifolds. We prove twistor structures are compatible with: the derived Alexandrov--Kontsevich--Schwarz--Zaboronsky (AKSZ) construction (Thm.~\ref{MainTheorem2}), forming derived (hyper-)Lagrangian intersections (Thm.~\ref{MainTheorem3}) and derived symplectic reduction\footnote{Informally, suppressing the adjective “derived”, the latter may be summarized as Hyperk\"ahler reduction$:=$ Symplectic reduction of a twistor family of hyperk\"ahler type.} along connected complex analytic reductive groups $G$ (Thm.~\ref{MainTheorem4}).   

    \item  We study these structures in relation to various moduli problems coming from nonabelian Hodge theory and its relation with Kapustin--Witten gauge theory explored by the second author with Rayan and Liu \cite{LRT2022}.
\end{itemize}
   \item[\hypertarget{(2.)}{2.}] 
    We apply the shifted twistor formalism to study the Hodge stack of perfect complexes with $\lambda$-connections on a smooth projective complex variety $X$ with parameter $\lambda \in\mathbb{A}^1$, introduced by Simpson \cite{Sim09}. 
   Recall the derived stack of perfect complexes is the mapping stack
\begin{equation}
    \label{eqn: RPerfIntro}
\RPerf(X):=\underline{\mathrm{Map}}_{}(X,\underline{\mathrm{Perf}}),
\end{equation}
where $\underline{\mathrm{Perf}}$ is the (locally geometric\footnote{A derived stack is locally geometric if it the union of open substacks
that are geometric.}) classifying stack of perfect complexes \cite{toenvaquie2007} and $\underline{\mathrm{Map}}$ is taken in the category of derived \emph{algebraic} stacks. More generally, one may consider the relative derived mapping stack 
$$\RPerf(X/S):=\underline{\mathrm{Map}}_{/S}(X,\underline{\mathrm{Perf}}\times S),$$ of perfect complexes on the fibers of a smooth morphism $\pi:X\to S$ of projective varieties. 

In this paper, we are mainly concerned with the analytic version $\AnPerf$ of $\Perf.$ Its functor of points sends a derived analytic space $W$, in the sense of \cite{LurieDAGV,PortaYuDerivedGAGA2019}, to the stable $\infty$-category $\mathsf{Perf}(W)$ of perfect complexes on $W.$ It is a locally geometric derived analytic stack \cite[Thm.~1.2]{HolsteinPorta2025}, related to \eqref{eqn: RPerfIntro} via the functor of analytification $(-)^{\an}$, whose definition and properties are given in \cite[\S.~3]{HolsteinPorta2025}. 

\begin{remark}[Notations]
Given a (higher) moduli stack $\mathcal{M}$ e.g. of coherent sheaves, we write $M$ for its underlying coarse scheme, and $\mathbb{R}\underline{\mathcal{M}}$ for a derived enhancement \cite{STV}, and $t_0(\mathbb{R}\underline{\mathcal{M}})$ for its classical truncation. If it exists, we write $\mathbb{R}\underline{\mathrm{An}\mathcal{M}}$ for the derived analytic analog. Sometimes it arises via analytification $\mathbb{R}\underline{\mathcal{M}}^{\an}$ of $\mathbb{R}\underline{\mathcal{M}}.$
In this paper, we mainly work with perfect complexes $\Perf_{\mathbb{C}}$ over $\mathbb{C}$. By \cite[Thm.~ 1.2 (1)]{HolsteinPorta2025} we have
$\Perf^{\an}_{\mathbb{C}}\simeq \AnPerf_{\mathbb{C}}.$
We will sometimes use these notations interchangeably.
\end{remark}
Let $X$ be a smooth $\mathbb{C}$-analytic
space. Following ideas originating with Simpson \cite{Simpson1994ModuliI,Simpson1994ModuliII}, elaborated in detail and in greater generality by Porta--Sala \cite{PortaSala}, various aspects of the geometry of $X$ (e.g. topological or holomorphic) can be encoded by so-called \emph{Simpson shape operations} e.g. Betti $X_{\mathrm{B}}$, Dolbeault $X_{\mathrm{Dol}}$ and de Rham $X_{\mathrm{DR}}$ shapes.

The \emph{Hodge shape} $X_{\mathrm{Hod}},$ which lives over $\Theta:=[\mathbb{A}^1/\mathbb{G}_m]$ where $\mathbb{G}_m$ acts on $\mathbb{A}^1$ with weight $1$, is defined as the deformation to the normal bundle construction along the canonical map $X\to X_{\DR}$ (see e.g. \cite[§5.~1.3]{GR17b}). It interpolates between $X_{\mathrm{Dol}}$ and $X_{\mathrm{DR}}$ and geometrically, it encodes the Hodge filtration in de Rham cohomology.
According to \cite{Sim09}, the locally geometric derived stack of $\lambda$-complexes is the family
\begin{equation}
    \label{RPerfHodIntro}
\RPerf_{\mathrm{Hod}}(X):=\Map_{/\Theta}(X_{\Hod},\Perf\times\Theta)\to\Theta.\end{equation}
By mapping out of Simpson's shapes as in \eqref{eqn: RPerfIntro}, one obtains the derived moduli stack of
flat perfect complexes 
\begin{equation}
    \label{RPerfDRIntro}
\RPerf_{\DR}(X):=\RPerf(X_{\mathrm{DR}}),
\end{equation}
and perfect complexes with Higgs fields
\begin{equation}
    \label{RPerfDolIntro}
\RPerf_{\Dol}(X):=\RPerf(X_{\Dol}).
\end{equation}
The latter possess interesting open substacks e.g. of $\mathcal{O}_X$-perfect complexes of Higgs sheaves whose cohomology sheaves are locally free, semistable and with vanishing Chern classes, of importance in nonabelian Hodge theory \cite{SiNonabelian,SimpsonHodgeFiltrationNonabelian}. 

Both derived moduli stacks \eqref{RPerfDRIntro} and \eqref{RPerfDolIntro} possess shifted symplectic structures \cite{PTVV13}, and in this paper we study shifted-symplectic aspects of the nonabelian Hodge correspondence in the analytic category via shifted twistor geometry;
    \begin{itemize}
    \item We prove via pullback along $\mathbb{A}^1\to \Theta$, that the analytic Hodge stack \eqref{RPerfHodIntro} is part of a relative shifted Lagrangian correspondence between the \emph{analytifications} \cite{HolsteinPorta2025} of the derived stacks \eqref{RPerfDRIntro} and \eqref{RPerfDolIntro}(Thm.~\ref{MainTheorem}).
\end{itemize}

\item[\hypertarget{(3.)}{3.}] The final main result of this paper is Thm.~\ref{MainTheorem5}. Using derived analytic techniques, it proves the existence of a locally geometric relative derived analytic stack, realizing the original method sketched by
 Simpson \cite{Sim09} to construct Hitchin's twistor space of perfect complexes. The original formulation predated the crucial ingredient of the derived Riemann--Hilbert correspondence \cite{porta2017derived}). Even before the crucial developments of derived analytic geometry, it was expected that a derived version of Hitchin's space has a derived twistor structure \cite{KPS2021}. Our proof presents a first step towards a global derived NAH correspondence (see Rem.~\ref{DNCRemark} for further comments).

\begin{itemize}
    \item We first prove the derived Riemann--Hilbert (RH) correspondence \cite{porta2017derived} with values in a suitable geometric derived stack (made precise in \cite[Cor.~7.6]{HolsteinPorta2025}), which is formulated as an equivalence of derived analytic mapping stacks (see \eqref{DerRHIntro} below), is compatible with the natural shifted symplectic structures (Thm.~\ref{AnalyticNAHIntro}). This result has important implications for shifted-symplectic aspects of nonabelian Hodge theory for perfect complexes,  

    \item We construct the derived analytic Deligne--Hitchin--Simpson stack of perfect complexes on a $\mathbb{C}$-analytic variety $X$ over $\bfP$, and prove via the twistor-formalism it has a shifted pretwistor structure (Thm.~\ref{MainTheorem5}).
\end{itemize}
      \end{enumerate}  
Our construction extends one given in \cite{FrancoHanson2024}, and we are mainly interested in its relative shifted-symplectic geometry over $\mathbb{P}^1.$
Finally, let us mention a conjectural derived \emph{Twistor Geometric Langlands Conjecture} (GLC) (see, \cite[eqn.~1.5]{Hanson2025}), interpolating Dolbeault \cite{DonagiPantev2012} and Betti Geometric Langlands \cite{BZN2018}.
Such a twistor GLC incorporates the Hodge filtrations of the de Rham
theory, similarly to what is done in the setting of classical limit categories in Dolbeault GLC \cite{PT2025}, but restricted to a rigid hyperk\"ahler class
of objects with physical significance in four-dimensional super Yang--Mills e.g. stacky BAA-branes and BBB-branes \cite{Gaiotto2018} (compatible with Hodge deformations). 
\begin{remark}[Related work]
    While finalizing this draft, the interesting preprint \cite{DDX2026}  sharing a similar title appeared. Their Lagrangian correspondence is non-derived, and over compact connected Riemann surfaces. Interestingly, the authors announce 
    their expectations that the Dolbeault GLC
is generically realized by the Fourier transforms induced by their Lagrangian correspondences for
Hitchin moduli spaces \cite[Conj.~1.5]{DDX2026}, and the de Rham GLC is generically realized by the corresponding quantized
Fourier transforms \cite[Conj.~1.6]{DDX2026}. A comparison will appear elsewhere.
\end{remark}

\subsection{Background}
Every hyperk\"ahler manifold is holomorphic symplectic; conversely, by the Calabi--Yau theorem \cite{Yau1978} any compact holomorphic symplectic manifold admits a hyperk\"ahler metric in each K\"ahler class. This deep correspondence connects differential-geometric extremal problems to algebro-geometric and categorical stability properties \cite{Dona83, Do, Dona87, UY}. It plays a central role in Mirror Symmetry \cite{Kontsevich1994HMS, StromingerYauZaslow1996} and for Bridgeland stability \cite{Br},\cite{BLMNPS2021}.

Compared with holomorphic symplectic manifolds, examples of hyperk\"ahler manifolds are generally harder to come by; examples are Hilbert--Douady schemes $S^{[n]}$ of $n$-points on a $K3$ surface $S$ and generalized Kummer varieties associated to abelian surfaces \cite{Beauville1983}, the Kirwan blowup of moduli of semistable torsion-free sheaves of rank two $\mathcal{M}_{(2,0,4)}(S)$ with $c_1=0,c_2=4$ on a $K3$ surface $S$ \cite{OGrady} (or abelian surface \cite{OGrady2003}).

Therefore it is important to find methods to discover new examples or deduce when a smooth projective variety is hyperk\"ahler or not. As an example of the latter, if $X$ and $Y$ are derived equivalent smooth
projective varieties then $X$ is hyperk\"ahler if and only if $Y$ is hyperk\"ahler \cite{HuybrechtsNieperWisskirchen2011}.

While holomorphic symplectic geometry has seen substantial advancements via the theory of shifted symplectic and Poisson structures on derived Artin stacks \cite{PTVV13}, later extended to the relative \cite{CalaqueHaugsengScheimbauer2025} and non-geometric settings \cite{calaque2024shifted}, there is at present no satisfactory derived analogue of a hyperk\"ahler structure, despite its central role in moduli theory.  

In the present paper, we take up the fundamental question:

\begin{question}
\label{Question1}
What is the appropriate notion of a hyperk\"ahler structure on a derived stack?
\end{question}

Motivated in part by the relationship between Kapustin--Witten gauge theory \cite{KapustinWitten2007} and nonabelian Hodge theory explored in \cite{LRT2022}, we propose an approach of both an algebraic and analytic nature to construct hyperk\"ahler structures on derived stacks in terms of their twistor-theoretic description, summarized in \S~\ref{HKViaTwistorIntro}. Employing well-known results in derived symplectic geometry, such as the AKSZ/PTVV transgression and derived Lagrangian intersection theorems \cite{PTVV13,CPTVV17}, we can construct new hyperk\"ahler structures with relative ease compared to classical approaches. 


We now briefly recall the classical nonabelian Hodge (NAH) and Riemann--Hilbert (RH) correspondences, together with the hyperk\"ahler geometry of the associated Deligne--Hitchin--Simpson twistor family; the primary objective of this paper is to study these structures from the derived analytic viewpoint.

\subsection{Recollections}
Let $G$ be a reductive algebraic group and let $\Sigma$ be a compact Riemann surface. The moduli space $\mathcal{M}_{\mathrm{Dol}}^{\mathrm{ss}}(\Sigma,G)$ of semistable $(G-)$Higgs bundles with vanishing rational Chern classes carries a natural hyperk\"ahler metric defined by the Hitchin equations \cite{Fujiki1991}, whose symplectic geometry is governed by the Hitchin fibration \cite{Hitchin1987}. Its regular locus is an algebraic completely integrable system admitting a semiflat metric \cite{Freed1999}. However, classically, the hyperk\"ahler metrics are generally defined only over smooth loci, with the full moduli space (highly) singular and non-compact,  particularly in higher dimensions. Extending the metric near singularities e.g. in the large Higgs field limit, leads to substantial analytic difficulties \cite{GMN2013}, such as the failure of metric perturbation methods to achieve transversality.

Let $X$ be a smooth projective variety of dimension $d.$ The nonabelian Hodge correspondence \cite{SiNonabelian, SimpsonHodgeFiltrationNonabelian} offers a uniform study relating the Betti $\mathcal{M}_{\mathrm{B}}(X),$ de Rham $\mathcal{M}_{\mathrm{DR}}(X)$ and Dolbeault stacks $\mathcal{M}_{\mathrm{Dol}}^{\mathrm{ss}}(X)$. The Riemann--Hilbert (RH) correspondence further yields a complex analytic isomorphism
\begin{equation}
    \label{RHIntro}
\mathrm{RH}:\mathcal{M}_{\mathrm{B}}(X)\xrightarrow{\simeq}\mathcal{M}_{\mathrm{DR}}(X),
\end{equation}
while the existence of pluriharmonic metrics yields a real analytic isomorphism of good moduli spaces $M_{\mathrm{DR}}(X)$ and $M_{\mathrm{Dol}}(X)$ \cite{alper2013good}. Although nonalgebraic, \eqref{RHIntro} is compatible with the Atiyah--Bott \cite{Jeffrey1997} and Goldman symplectic form on character varieties \cite{Goldman1984}.

These moduli spaces arise as fibers of a single \emph{twistor} family \cite[Thm.~4.1]{SimpsonHodgeFiltrationNonabelian}; a complex analytic $\mathbb{P}_{\mathbb{C}}^1$-family of K\"ahler manifolds, in fact a Poisson variety with holomorphic symplectic fibers,
\begin{equation}
    \label{DeligneSimpsonIntro}
\eta:\mathcal{M}_{\mathrm{Del}}^{\mathrm{ss}}(X,G)=\mathrm{Tw}\big(\mathcal{M}_{\mathrm{Dol}}^{\mathrm{ss}}(X,G)\big)\rightarrow\mathbb{P}_{\mathbb{C}}^1
\end{equation}
interpolating between the $\mathcal{M}_{\mathrm{Dol}}(X,G)$ (fiber over $\lambda=0$) and $\mathcal{M}_{\mathrm{DR}}(X)$ (fiber over $\lambda=1$). For $\lambda \neq 0,$ one has Deligne's moduli space parametrizing semistable $\lambda$-connections. For a review see \cite{Garcia2015introduction}, or \cite{Huang2020} for details of this classical correspondence.
Informally, it may be summarized by the diffeomorphism
\begin{equation}
    \label{NAH}
    \mathrm{NAH}:\mathcal{M}^{\mathrm{ss}}_{\mathrm{Dol}}(X,n)\to \mathcal{M}^{\mathrm{}}_{\mathrm{DR}}(X,n),\hspace{5mm}
    (E,\overline{\partial}_E,\theta)\mapsto (E,\overline{\partial}_E+\theta^{\dagger_h}+\partial_{E,h}+\theta),
\end{equation}
where $h$ is the unique pluri-harmonic metric of $(E,\overline{\partial}_E,\theta).$

For compact K\"ahler surfaces this provides a correspondence between the moduli spaces of solutions to the equations in Kapustin--Witten gauge theory \cite{KapustinWitten2007}, at $t = 0$ and $t \in \mathbb{R} \setminus \{0\}$, as described in \cite{LRT2022}. We recall this in \S~\ref{sec: KWReview}.

The moduli spaces related by \eqref{NAH}, thus also the Kapustin--Witten moduli space $\mathcal{M}_{\text{KW}}$ (with parameter $t$), are fibers of the hyperk\"ahler geometry fibered over the $\lambda$-line in the twistor space \eqref{DeligneSimpsonIntro}. 
As pointed out in \cite{LRT2022}, these moduli spaces would be, in suitable form, diffeomorphic to moduli spaces of $\lambda$-connections, providing an explicit gauge-theoretic realization of the nonabelian Hodge correspondence by the identification 
$\lambda\longleftrightarrow t$.

From the viewpoint of derived geometry, the moduli stack of perfect complexes of local systems on compact oriented four‑manifolds carries a natural $(-2)$-shifted symplectic structure. This is viewed as corresponding to the $t\neq 0$ regime of Kapustin--Witten gauge theory and suggests the full moduli space of solutions should admit a uniform shifted symplectic derived enhancement. Indeed, when the underlying four‑manifold is a complex projective surface, such a structure is known to exist, and the orientation problem for this moduli space at $t=0$ has been resolved \cite[Thm.~4.9]{JoyceTanakaUpmeier2020}, with the same conclusion extending to $t\neq 0$.
\subsubsection{This paper}

In this paper, we propose replacing classical analytic methods used e.g. in proving existence of harmonic metrics, with cohomological tools using ideas of derived algebraic \cite{TV2} and analytic geometry \cite{Lurie2011}. In this way, we recast nonlinear elliptic PDE problems in formal and categorical terms. This turns out to be fruitful and necessary for a number of other PDE-related problems (see e.g. \cite{Pardon23},\cite{Ste24},\cite{KSY23},\cite{KSY2},\cite{PoTe24},\cite{HPT2024}).

In fact, such a perspective is already implicit in observations of Deligne, Hitchin \cite{HKLR87} and Simpson. Namely, the twistor construction encodes the nonabelian Hodge correspondence in a purely geometric manner, bypassing much of the analytic input\footnote{Note that the homeomorphism $M_{\mathrm{DR}}(X,G)\simeq M_{\mathrm{Dol}}(X,G)$ comes from the real‑analytic harmonic metric construction, and the corresponding twistor space trivialization $\mathrm{Tw}(M^{\mathrm{smth}}(X,G))\simeq M^{\mathrm{smth}}(X,G)\times\mathbf{P}^1$, which is a $C^{\infty}$-isomorphism over smooth points $M_{\mathrm{Dol}}^{\mathrm{smth}}$, also depends on harmonic metrics \cite{SimpsonHodgeFiltrationNonabelian}.}. 

As discussed in \S~\ref{OverviewIntro} above, our approach is a formal one based on Simpson’s shape operations (see \cite{PortaSala}), which have favorable functorial properties extending to various $\infty$-toposes of derived stacks \cite{Lur09}. Thanks to the series of works due to Holstein, Porta, Sala, Yu, and others e.g. \cite{PortaYuHigherGaga2016,PortaYuDerivedGAGA2019,PortaYu2020,HolsteinPorta2025}, many properties of these operations are well understood, and their behaviour in relation with analytification is known for large classes of geometric objects useful for applications e.g. locally geometric derived algebraic stacks. This simplifies our situation greatly, since by working with $\Perf$, many properties can be transported between algebraic and analytic settings, subject to some other natural properties. This abstract approach permits the extension of hyperk\"ahler metrics beyond smooth loci to the full singular moduli stack, when treated as an appropriately derived object in the language of stable $\infty$-categories \cite{Lur17}.

\begin{remark}
    Even if we work over underived i.e. classical schemes, language of higher category theory is unavoidable. Indeed, if $X$ is a smooth proper variety over $\mathbb{C}$, the Betti shape is naturally a $\mathsf{Spc}$-valued sheaf on the \'etale-site of $X,$ since it encodes the full topological homotopy-type of $X$.  
\end{remark}
Throughout this paper, when $X$ is a classical scheme, we always assume it is proper over $\mathbb{C}.$ A choice of fundamental class $[X]$ in algebraic de Rham cohomology (resp. Dolbeault cohomology) then determines a $2(1-d)$-shifted symplectic structure on $\RPerf_{\DR}(X)$ (resp.  $\RPerf_{\Dol}(X)$) (see \cite[Cor.~2.13]{PTVV13}). These symplectic structures are the derived extensions of the Atiyah--Bott and Goldman symplectic forms.

\begin{remark}[Non-proper case]
In recent work \cite{PantevToen2021,PantevToen2022} (see also \cite{Tom2026}), the derived stack of flat complexes on a general smooth algebraic variety was studied. In this case, the derived stacks are not representable, although formally representable at field‑valued point. Their tangent complexes are infinite-dimensional which poses challenges for non‑degeneracy conditions \cite[Sect.~1.1]{KSY2}. This setting necessitates a more involved ind-pro (Tate) framework \cite{Hennion2017}. Throughout this paper we mainly work in the proper setting.
\end{remark}

We now introduce the main twistor-theoretic structures studied in this paper, giving further motivations from nonabelian Hodge and gauge theory. We state the corresponding results in Subsect.~\ref{ssec: MainResultsPartAIntro} and Subsect.~\ref{StatementofResults}, respectively.

\subsection{Hyperk\"ahler structures via shifted twistor spaces} 
\label{HKViaTwistorIntro}
The starting point is the classical fact that a hyperk\"ahler structure can be encoded in purely holomorphic data via Penrose’s twistor
geometry \cite[Thm.~3.3]{HKLR87}. In the classical picture, a hyperk\"ahler manifold $M$ gives rise to a holomorphic fibration
$ \pi : Z \to \mathbb{P}^1$, 
whose fibers parametrize the different complex structures on $M$. Moreover, $Z$ carries a holomorphic symplectic structure varying quadratically along $\mathbb{P}^1$, and a real structure covering the antipodal map. 

Our goal is to extend this paradigm to derived geometry. Instead of working with smooth manifolds and classical moduli spaces (such as the Deligne--Hitchin space), we consider derived stacks and encode hyperk\"ahler structures entirely in terms of their \emph{twistor geometry}, following and extending ideas\footnote{This notion is due to \cite{KPS2021}, but predated the necessary mathematical preliminaries on relative shifted symplectic derived algebraic geometry \cite{CalaqueHaugsengScheimbauer2025, calaque2024shifted} and derived analytic geometry \cite{PortaYu2020, HolsteinPorta2025}.} of Katzarkov--Pandit--Spaide \cite{KPS2021}. 

In this approach, one replaces metric data by holomorphic and homotopical structures, as we now explain in further detail. 

\subsubsection{Shifted pretwistor structures}
Let $Z$ be a derived analytic stack equipped with a morphism 
$\eta : Z \to \mathbb{P}^1$.
For foundations on derived analytic geometry over a complete non-archimedean field $K$ with a non-trivial valuation we refer to \cite{Lurie2011,PortaYu2020,PortaYu2018,PortaYuHigherGaga2016,PortaYuDerivedGAGA2019}. For most of our applications we will take $K=\mathbb{C}.$
\begin{remark}[Notation]
\label{GalActionRemark}
 Let $\mathbb{A}_{\mathbb{C}}^n$ be the $n$-dimensional affine space and $\mathbf{A}_{\mathbb{C}}^n:=(\mathbb{A}_{\mathbb{C}}^n)^{\mathrm{an}},$ its analytification. Similarly, $\bfP:=(\mathbb{P}_{\mathbb{C}}^1)^{\mathrm{an}}$ is the complex manifold with sheaf of holomorphic functions $\mathcal{O}_{\mathbf{P}^1(\mathbb{C})}$, whose underlying topological space has a $\mathrm{Gal}(\mathbb{C}/\mathbb{R})$-action, for which the non-trivial element $\sigma$ acts via complex conjugation.
\end{remark}
We think of $Z$ as a derived analogue of a twistor space whose fibers at $\lambda\in\bfP$, in the setting of NAH theory, play the role of moduli spaces of $\lambda$-connections. 
The key additional structure is a {\it shifted symplectic form}  $\omega$,  
which should be viewed as a derived version of the classical holomorphic symplectic form varying with weight $2$ along $\mathbb{P}^1$.
More precisely, fix $n\in\mathbb{Z}$ and assume that $\eta$ is equipped with a relative $n$-shifted closed $2$-form \cite{CalaqueHaugsengScheimbauer2025} twisted by the rank one locally free sheaf $\EuScript{O}_{\bfP}(2)$ on $\bfP$ \cite{calaque2024shifted}. In other words, it defines a degree $n$ section of the weighted (relative) negative cyclic complex i.e. is a shifted (derived) global section
 \begin{equation}
     \label{RelFormIntro}
\omega \in \Gamma\big(Z,\wedge^2\mathbb{L}_{Z/\mathbf{P}^1}^{\an}\otimes \eta^*\EuScript{O}_{\mathbf{P}^1}(2)\big)[n],
 \end{equation}
 where $\mathbb{L}_{Z/\mathbf{P}^1}^{\an}$ is the analytic relative cotangent complex. Recall in the derived setting, a closed $2$-form is a $2$-form equipped with additional closure \emph{data}.
Moreover, the form \eqref{RelFormIntro} satisfies a non-degeneracy condition given originally by \cite[Def.~1.18 (1)]{PTVV13}, and in the generalized form we require by \cite[Thm.~2.7, Def.~2.11]{calaque2024shifted}. It requires that $\omega$ induces a quasi-isomorphism
\begin{equation}
    \label{TwistedNDIntro}
\Theta_{\omega}:(\mathbb{L}_{Z/\bfP}^{\an})^{\vee}\xrightarrow{\sim}\mathbb{L}_{Z/\bfP}^{\an}[n]\otimes \eta^*\EuScript{O}_{\bfP}(2),
\end{equation}
where $\mathbb{T}_{Z/\bfP}^{\an}:=(\mathbb{L}_{Z/\bfP}^{\an})^{\vee}$ is the (derived) dual,  
which ensures that each fiber carries a shifted symplectic structure.

\begin{remark}
Following \cite{Simpson1997MixedTwistor}, we require this structure to be compatible with a real involution covering the antipodal map\footnote{This defines a real structure on $\mathbb{P}^1$, although its set of real points is empty.} on $\mathbb{P}^1$ given by the antiholomorphic involution $\sigma_{\mathbb{P}^1}:\mathbb{P}^1\to \mathbb{P}^1,[x:y]\mapsto [\overline{y}:-\overline{x}],x,y\in\mathbb{P}^1$. Equivalently, $\sigma_{\mathbb{P}^1}(z)=-1/\bar{z},$ for $z:=x/y\in\mathbb{P}^1$.
Via the canonical $\mathbb{G}_m$-action on $\mathbb{P}_{\mathbb{C}}^1$ given by translation we then impose the requirement that the underlying $n$-shifted $2$-form \eqref{RelFormIntro} is compatible with a given \emph{real structure on $Z$} (over $\mathbb{C})$ covering  $\sigma_{\mathbb{P}^1}.$
\end{remark}
In the classical theory, the twistor space carries an anti-holomorphic involution covering the antipodal map on $\mathbb{P}^1$. In the derived setting, we encode this using a homotopy coherent action of the group $C_2 = \mathrm{Gal}(\mathbb{C}/\mathbb{R})$.
More concretely, this consists of an automorphism $\sigma_Z : Z \to Z$ together with higher coherence data expressing that $\sigma_Z^2$ is homotopic to the identity. This replaces strict real structures by derived ones, where compatibility holds up to higher homotopies.

\begin{remark}
\label{ConformalRemark}
    We are interested in encoding real structures via $\mathrm{Gal}(\mathbb{C}/\mathbb{R}),$ but mention also the larger group $\mathsf{H}$ of conformal automorphisms of $\mathbf{P}^1$ preserving $\{0,\infty\},$ which act trivially when orientation is preserved and non-trivially when changed. The connected component of $\mathsf{H}$ is the group of holomorphic automorphisms fixing $0,\infty$ i.e. $\mathbb{G}_m$. The group $\mathsf{H}$ may be expressed as the group of automorphisms generated by $\mathbb{G}_m$ and by $\sigma_{\mathbf{P}^1}$ i.e.
$\mathbb{G}_m\cdot \sigma_{\mathbf{P}^1}.$ 
\end{remark}
We formulate this in Subsect.~\ref{sssec: DerivedGalois}--\ref{C2WaldCons}, by characterizing 
real structures on an object in derived geometry over $\mathbb{C}$ via a homotopy coherent action of the group $C_2$.

In this formulation, the main technical result (Proposition \ref{GaloisLemma}) has the following intuitive and explicit description: 
Let $C_2$ be presented by $\{\sigma,e: \sigma^2=e\},$ with classifying space $\mathsf{B}C_2$ given by the simplicial set with unique $0$-simplex $*$, nondegenerate $1$-simplex $\sigma$ and nondegenerate $2$-simplex witnessing the homotopy $\sigma \circ \sigma \simeq \mathrm{id}_*$ (together with all degeneracies).

Consider the $\infty$-category $\mathsf{Funct}(\mathsf{B}C_2,\mathsf{dSt}_{\mathbb{C}}^{\mathrm{laft}}),$ whose objects are $\mathsf{B}C_2$-shaped diagrams of derived stacks locally of finite presentation over $\mathbb{C},$
\begin{equation}
    \label{F}
    \mathcal{T}:\mathsf{B}C_2\to \mathsf{dSt}_{\mathbb{C}}^{\mathrm{laft}}.
\end{equation}

Unwinding \eqref{F}, it consists of: a derived stack locally of finite type $Z := \mathcal{T}(*)$, a $1$-morphism $\sigma_Z := \mathcal{T}(\sigma) : Z \to Z$, a homotopy ($2$-morphism) $\beta : \sigma_Z \circ \sigma_Z \Rightarrow \mathrm{id}_Z$, the image of the $2$-simplex, together with higher coherences.

Giving a diagram \eqref{F} is therefore equivalent to prescribing a pair $(Z,\sigma)$ where $\sigma$ is an automorphism whose square is homotopic to the identity.


The key notion introduced in this paper can be paraphrased
as follows; see Definitions \ref{DerivedTwistor} and \ref{DerivedTwistor2} for the precise formulation.
\begin{definition}
An \emph{$n$-shifted pre-twistor family} for $n\in\mathbb{Z}$ is a triple
$(\eta:Z\to \bfP,\omega,\sigma_Z)$ where the section $\omega\in\Gamma\big(Z,\wedge^2\mathbb{L}_{Z/\bfP}^{\an}\otimes\eta^*\EuScript{O}_{\mathbb{P}^1}(2)\big)$ has cohomological degree $n$, satisfies the non-degeneracy condition \eqref{TwistedNDIntro} and is compatible with the real involution $\sigma_Z$ on $Z$ covering the antipodal map on $\mathbf{P}^1.$  
\end{definition}
Via the $\mathbb{G}_m$-action on $\bfP$ we prove an important Proposition \ref{WeightVersion} characterizing pretwistor data in terms of an additional $\mathbb{Z}$-grading on differential forms \cite{GinRoz}, called the \emph{weight} grading. In this picture, the symplectic form \eqref{RelFormIntro} has has weight $2$, namely,  $t\cdot \omega = t^2 \omega$. 

The definition of a \emph{twistor family of hyperk\"ahler type} is obtained by imposing a geometric condition on the analytic stack of sections. It is modeled on the classical twistor space reconstruction theorem \cite[Thm.~3.3]{HKLR87} by noticing the real sections of $\eta$ are computed via the homotopy fixed point stack (for the $C_2$-action), in the setting of derived analytic geometry \cite{PortaYu2020, HolsteinPorta2025}.
\subsubsection{Shifted twistor structures}
Let $\mathsf{dAnSt}_{\mathbb{C}}$ denote the $\infty$-category of derived analytic stacks over $\mathbb{C}$ in the sense of \cite{PortaYuHigherGaga2016,PortaYuDerivedGAGA2019}, recalled in Subsect.~\ref{AnalyticConventions}. The analytification functor
\begin{equation}
\label{AnIntro}
(-)^{\mathrm{an}} : \mathsf{dSt}_{\mathbb{C}}^{\mathrm{laft}} \longrightarrow \mathsf{dAnSt}_{\mathbb{C}}
\end{equation}
preserves $C_2$-actions, commuting with colimits and extends\footnote{In more precise, yet abstract terms that we aim to avoid wherever possible, \eqref{AnIntro} provides a morphism of \emph{(pre) geometries} \cite[Thm.~2.1.1]{LurieDAGV}.} to other classes of derived stacks \cite{HolsteinPorta2025}. Thus, we may extend $\mathsf{B}C_2$-diagrams \eqref{F} relative to $\mathbb{P}_{\mathbb{C}}^1$ by analytification \eqref{AnIntro} in the \'etale topology
\begin{equation}
    \label{RelAnFunct}
(-)^{\an}:\mathsf{dSt}_{/[\mathbb{P}_{\mathbb{C}}^1/C_2]}\to \mathsf{dAnSt}_{/[\mathbf{P}^1/C_2]}.\end{equation}

Let $\eta:Z\to \bfP$ be such a derived analytic stack given by $\mathcal{T}^{\mathrm{an}}.$ Its stack of analytic sections is defined by the relative analytic mapping stack whose functor of points on $T \in \mathsf{dAn}_{\mathbb{C}}$ is given by
\begin{equation}
\label{AnSectIntro}
\ASect(Z/\bfP)(T)
:= \AnMap_{/\bfP}(T\times \bfP,Z),
\end{equation}
 where mapping spaces are taken in $\mathsf{dAnSt}_{\mathbb{C}/\mathbb{P}_{\mathbb{C}}^1}$, computed by (analytic) Weil restriction \cite{Lurie2018}. See also \cite[Def.~1.3.3]{KMMP}.

There is an induced automorphism of \eqref{AnSectIntro}, explained in Proposition  \ref{ActAnMaps}, and we study its homotopy fixed points stack. This stack is constructed in \S.\ref{sssec: FixedPoints}, and results in an analytic stack 
\begin{equation}
\label{AnSectFPIntro}
\ASect^{\sigma}(Z/\bfP):=\ASect(Z/\bfP)^{hC_2},
\end{equation}
parameterizing real-analytic maps i.e. $T$-families of $C_2$-invariant holomorphic sections
$
\phi_T : T \times \mathbb{P}_{\mathbb{C}}^1 \to Z.$
Importantly, compatibility with the real structure is not a strict condition on the family of holomorphic sections $\phi_T$, but rather part of derived structure encoded by additional homotopical data. 

With standard universal (relative) GAGA assumptions \cite[Def.~5.2 (2)]{HolsteinPorta2025}, \cite{PortaYuDerivedGAGA2019}, the twisted AKSZ construction \cite[Thm.~2.35]{calaque2024shifted} endows \eqref{AnSectIntro} with a twisted shifted symplectic structure, which descends to \eqref{AnSectFPIntro}. 
\begin{remark} Proposition \ref{WeightVersion} states this form has the same $\mathbb{G}_m$-weight. See also \cite[Thm.~3.3]{GinRoz}.
\end{remark}
Let $(Z\to \bfP,\omega,\sigma)$ be an $n$-shifted pre-twistor family. Following \cite{KPS2021}, we say that it is an \emph{$n$-shifted twistor family of hyperk\"aler type} if there exists a connected component
\begin{equation}
    \label{TwistorCondition1Intro}
\mathcal{X} \subset \ASect^{\sigma}(Z/\bfP)
\end{equation}
such that the universal evaluation morphism
$
\mathrm{ev}^{\sigma}:\ASect^{\sigma}(Z/\bfP)\times \bfP \longrightarrow Z,$
restricts to a $C^{\infty}$-equivalence
\begin{equation}
\label{TwistorCondition2Intro}
\Phi:\mathcal{X}\times \bfP \xrightarrow{\sim} Z.
\end{equation}
Some remarks should be made, since the nonabelian Hodge correspondence can not be applied for stacks, thus twistor lines and $C^{\infty}$-trivializations \eqref{TwistorCondition2Intro} do not exist\footnote{Indeed, proving such a trivialization would then give, in the case of $Z=\Perf(X)$ or $\Map(X,\mathsf{B}G)$ for a reductive group $G$, a global derived nonabelian Hodge correspondence, which is for now out of reach.}.
In this work, we impose a milder version of the above twistor condition given in \cite{KPS2021} by classical truncation and descending to a good moduli space (Def. \ref{DerivedTwistor2}). This is an initial ansatz to be shaped by future insights in derived and categorical (hyper)K\"ahler geometry.

\begin{remark}
As a first approximation, to be refined later by analyzing the deformation theory of (preferred) sections (see Rem.~ \ref{DNCRemark}), we impose that the classical truncation $t_{0}(Z)$ contains an open substack
    $U \subseteq t_{0}(Z)$ that admits a good moduli space to the twistor space $\mathrm{Tw}(M)$ of a hyperk\"ahler manifold $M$ (called the \emph{underlying classical hk-manifold}).
 This definition is sufficient for our intended applications, where $Z$ will be $\RAnPerf_{\Del}(X)$ and $U$ is the open substack
of Gieseker semistable objects (cf. Sect.~\ref{JHmaps} and Theorem \ref{MainTheoremBody} (iii)). The twistor conditions state that 
the classical truncation  of the derived Deligne stack \cite{alper2013good, FrancoHanson2024},
once restricted to the semistable locus  \eqref{DeligneSimpsonIntro}, is equivalent
(as a real-analytic stack over $\bfP$) to the product
$\mathrm{Tw}(M_{\Dol}(X))\times\bfP$ via the horizontal
twistor lines by NAH theory \eqref{NAH}.

\end{remark}
In other words, the twistor condition we ask for is interpreted as a trivialization of the underlying good moduli space $
\Phi^{\mathrm{cl}} :
\mathcal{X} \times \mathbb{P}^1
\xrightarrow{\sim}
t_0(Z)^{\mathrm{ss}}.$ 
Together, \eqref{TwistorCondition1Intro} and \eqref{TwistorCondition2Intro} abstractify the existence of twistor lines; here they are the above family of real analytic sections extending the classical fact one may reconstruct
a hyperk\"ahler manifold from its twistor space via a distinguished family of real sections \cite[Thm.~3.3]{HKLR87}. In this way, the derived twistor families we study in this paper are to be interpreted as derived enhancements of classical twistor spaces. 
\begin{remark}
    In this work, we mainly consider pre-twistor structures. We will give a more systematic treatment elsewhere of the twistor conditions and the existence of derived enhancements of horizontal twistor lines in connection with hyperholomorphic complexes \cite[Sect.~4.4]{Hanson2025}.
\end{remark}
From the viewpoint of nonabelian Hodge theory, the fibers of $Z \to \bfP$ play the role of moduli spaces of $\lambda$-connections, interpolating between Higgs bundles and flat connections. The shifted symplectic structure encodes the universal holomorphic symplectic form, while the real structure captures the reality conditions appearing in gauge theory. A key difference from the classical setting is that we now consider the derived moduli space of sections itself as a geometric object. This framework promotes the classical twistor description of hyperk\"ahler geometry to a derived and functorial setting, suitable for applications to gauge theory and the Kapustin--Witten equations.

We now state our main results which lead to new constructions of (derived) hyper-k\"ahler structures.

\subsection{Main results on hyperk\"ahler structures on derived stacks}
\label{ssec: MainResultsPartAIntro}
A key advantage of
formulating derived hyperk\"ahler geometry via the above notion of shifted twistor space is that it relies on the derived algebraic and analytic geometry of derived mapping stacks of sections, where known existence results for shifted symplectic structures are available \cite{PTVV13,CPTVV17,CalaqueHaugsengScheimbauer2025,Calaque15,calaque2024shifted}.

The main contents on the hyperk\"ahler structures on derived stacks appear in \S.\ref{sec:HKinDAG}. We now state the main theorems.
 \subsubsection{Existence results on twistor structures}
We first treat the twistor analogue of the well-known model-independent AKSZ/PTVV transgression theorem for construction of symplectic structures on derived mapping stacks \cite[Thm.~2.5]{PTVV13}. See \cite{CalaqueHaugsengScheimbauer2025}, for the relative setting and \cite{calaque2024shifted} for non-geometric derived stacks.

In the analytic context, there are two cases which occur in practice: \hypertarget{(1)}{(1)}
when the mapping stack has target $Z$ which is already a derived analytic stack for which the analytic mapping stack is locally geometric, and \hypertarget{(2)}{(2)} when $Z$ is an algebraic derived stack locally of finite presentation over $\mathbb{C}$ which moreover satisfies \emph{Tannakian} conditions of \cite[Def.~6.1]{HolsteinPorta2025}. Roughly, a derived algebraic stack $Z$ is Tannakian if for every $X\in\mathsf{dSt}_{\mathbb{C}},$
there is a fully-faithful functor
$$\mathrm{Maps}_{\mathsf{dSt}_{\mathbb{C}}}(X,Z)\to \mathsf{Funct}_{L,\mathbb{C}}^{\otimes}\big(\mathsf{Perf}(Z),\mathsf{Perf}(X)\big),$$
whose essential image consists of exact $\mathbb{C}$-linear symmetric monoidal functors commuting with colimits (i.e. cocontinuous $\mathbb{C}$-linear functors) which preserve both connective\footnote{Recall \emph{connectivity} refers to objects which are cohomologically non-positively graded.}
and flat objects.

Tannakian duality in the algebraic setting for geometric stacks is studied in \cite{Lurie2004,LurieDAG8, Lurie2018}. Its requirement here ensures certain analytic mapping stacks may be obtained via analytification of an algebraic mapping stack. 

Our first main result on pre-twistor transgression, states that in both scenarios \hyperlink{(1)}{(1)} and \hyperlink{(2)}{(2)}, the mapping stacks inherit a canonical shifted pretwistor structure.

\begin{theorem}[Theorem \ref{MainTheorem2Body}]
\label{MainTheorem2}
Fix $n\in\mathbb{Z}$ and an integer $d\geq 0.$ Let $X$ be a proper geometric derived algebraic stack with a relative $d$-orientation $[X],$ compatible with the $C_2$-action. 
\begin{enumerate}
   \item[(i)] Let $\eta_Z:Z\to \mathbb{P}^1$ be an $n$-shifted derived algebraic twistor family, with $Z$ geometric and satisfying $\mathsf{QCoh}(Z)\simeq\mathsf{Ind}\big(\mathsf{Perf}(Z)\big)$. Assume it is Tannakian and that $\Map_{/\mathbb{P}_{\mathbb{C}}^1}(X\times\mathbb{P}_{\mathbb{C}}^1,Z)$ is geometric. Then, $\AnMap_{/\bfP}(X^{\an}\times \bfP,Z^{\an})\to \bfP$ is canonically $(n-d)$-shifted pretwistor.

 \item[(ii)]
Let $\eta_Z: Z \to \bfP$ be an $n$-shifted derived analytic pretwistor family. Assume for the analytic mapping stack 
$\AnMap_{/\bfP}(X^{\an}\times \bfP,Z)$ is locally geometric. 
Then, it is canonically an $(n-d)$-shifted pretwistor family.
\end{enumerate}
\end{theorem}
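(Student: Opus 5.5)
The plan is to reduce both parts to the twisted relative AKSZ/PTVV transgression theorem of \cite[Thm.~2.35]{calaque2024shifted}, applied over the base $\bfP$ with twisting line bundle $\EuScript{O}_{\bfP}(2)$. After that, the $C_2$-equivariance is checked separately. Write $\mathcal{M}:=\AnMap_{/\bfP}(X^{\an}\times\bfP,Z)$ in case (ii), with $Z$ replaced by $Z^{\an}$ in case (i). The pretwistor structure on $\mathcal{M}$ has three ingredients. First, a relative closed $2$-form of degree $n-d$ twisted by $\eta_{\mathcal{M}}^*\EuScript{O}_{\bfP}(2)$. Second, its nondegeneracy in the form \eqref{TwistedNDIntro}. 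Third, a real structure $\sigma_{\mathcal{M}}$ covering $\sigma_{\bfP}$ that preserves the form.

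\textbf{Case (i), via algebraization.} The Tannakian hypothesis on $Z$, together with $\mathsf{QCoh}(Z)\simeq\mathsf{Ind}(\mathsf{Perf}(Z))$ and geometricity of the algebraic mapping stack, lets me invoke the comparison of \cite[Cor.~7.6, Def.~5.2]{HolsteinPorta2025}. This gives an equivalence
\[
\Map_{/\mathbb{P}^1_{\mathbb{C}}}(X\times\mathbb{P}^1_{\mathbb{C}},Z)^{\an}\simeq\AnMap_{/\bfP}(X^{\an}\times\bfP,Z^{\an}).
\]
On the algebraic side, the relative AKSZ construction of \cite{CalaqueHaugsengScheimbauer2025,calaque2024shifted} produces the twisted form. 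One pulls $\omega_Z$ back along the evaluation map $\mathrm{ev}\colon X\times\mathcal{M}\to Z$ over $\mathbb{P}^1$, then integrates along the proper $d$-oriented fibers using $[X]$. Since $\mathrm{ev}^*\eta_Z^*\EuScript{O}(2)=\eta_{\mathcal{M}}^*\EuScript{O}(2)$ is pulled back from the base, the projection formula lets the twist pass through $\int_{[X]}$. Nondegeneracy is the usual fiberwise Serre/Grothendieck duality argument for $\mathbb{T}_{\mathcal{M}/\mathbb{P}^1}=\pi_*\mathrm{ev}^*\mathbb{T}_{Z/\mathbb{P}^1}$. I would then analytify. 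Analytification is compatible with cotangent complexes and relative de Rham/negative cyclic complexes by \cite[\S 3]{HolsteinPorta2025} and relative GAGA, so the form and the nondegeneracy quasi-isomorphism transfer to $\mathcal{M}$.

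\textbf{Case (ii), directly analytic.} Since $\mathcal{M}$ is locally geometric, its analytic cotangent complex exists. The relative analytic cotangent complex is $\mathbb{L}^{\an}_{\mathcal{M}/\bfP}\simeq(\pi_*\mathrm{ev}^*\mathbb{T}^{\an}_{Z/\bfP})^\vee$, using properness of $X^{\an}$ and analytic Grothendieck duality. I would then repeat the AKSZ steps in $\mathsf{dAnSt}_{/\bfP}$. Pull back the twisted closed form, contract with the analytic fundamental class $[X^{\an}]$ induced from $[X]$, and check nondegeneracy fiberwise as before.

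\textbf{Real structure and the main difficulty.} The $C_2$-action on $\mathcal{M}$ comes from Proposition \ref{ActAnMaps}: combine the actions on $X$, $\bfP$ and $Z$ by conjugation, $f\mapsto\sigma_Z\circ f\circ(\sigma_X\times\sigma_{\bfP})^{-1}$. This is functorial in $\mathsf{B}C_2$-diagrams, via \eqref{RelAnFunct} in case (i). Invariance of the transgressed form then follows because $\mathrm{ev}$ is $C_2$-equivariant, $\omega_Z$ is compatible, and $[X]$ is $C_2$-compatible by hypothesis. Hence $\int_{[X]}\mathrm{ev}^*\omega_Z$ is a $C_2$-homotopy-fixed point. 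By Proposition \ref{WeightVersion}, it suffices to check that the $\mathbb{G}_m$-weight $2$ is preserved, which holds since integration has weight $0$.

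I expect the main obstacle to be this equivariance step. Invariance has to hold as coherent homotopy data, not just as an identity of forms. This requires the AKSZ construction to be functorial as a functor of $\infty$-categories $\mathsf{Funct}(\mathsf{B}C_2,-)$. I would try to prove it by building transgression as a natural transformation on diagram categories, so that it is automatically $C_2$-equivariant. A secondary issue is that $\sigma$ is antiholomorphic, so it intertwines $\EuScript{O}(2)$ with its conjugate; this is handled through the identification $\sigma_{\bfP}^*\overline{\EuScript{O}(2)}\simeq\EuScript{O}(2)$ already built into the hypothesis on $Z$.
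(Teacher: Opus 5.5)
Your proposal is correct and follows essentially the same route as the paper. The paper also obtains the form by relative AKSZ transgression of $\omega_Z$ along $\mathrm{ev}$, proves nondegeneracy by pushing forward $\mathrm{ev}^*\Theta_{\omega_Z}$ and dualizing with $[X]$ (via Proposition \ref{AnMapCotComplex}), passes to the analytic side through the Holstein--Porta comparison map and Proposition \ref{AnDeRhamAlgs}, and gets the real structure by transgressing an invariant form $\omega_Z^{\mathrm{inv}}$ using $C_2$-equivariance of the evaluation square. The remaining differences are minor: the paper lets $C_2$ act by $\mathrm{id}_X\times\sigma_M$ and relies on $\mathbb{C}$-linearity of $\int_{[X]}$ instead of your $\sigma_X$ on the source, and your appeal to Proposition \ref{WeightVersion} is unnecessary because the twisted transgression already carries the $\EuScript{O}(2)$-twist through the projection formula.
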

The assumptions in Theorem \ref{MainTheorem2} are satisfied for the objects of interest in NAH-theory i.e. $Z=\Perf$ or the classifying stack $\mathsf{B}G$ of a connected reductive group $G$ over $\mathbb{C}$, and $X$ of the form $Y_{\sharp}$ for any of Simpson's shapes $\sharp\in\{\DR,\Dol,\B\}$ with $Y$ a smooth and proper scheme over $\mathbb{C}$. Of course, the local geometricity of the mapping stacks is always satisfied if $X$ is a proper underived scheme over $\mathbb{C}.$ 
\begin{remark}
 The assumptions on the source $X$ can be significantly weakened by requiring $X$ to satisfy the universal GAGA property \cite[Def.~5.1]{HolsteinPorta2025}. For example, if $X$ is a proper and geometric derived stack over $\mathbb{C}$, then $X_{\Dol}$ satisfies universal GAGA \cite[Prop.~5.32]{HolsteinPorta2025}.
 \end{remark}
The algebraic transgression for Tannakian derived algebraic stacks actually implies analytic transgression i.e. Theorem \ref{MainTheorem2} $(i)\Rightarrow(ii)$, due to the existence of a functorial comparison morphism, which we recall in \S.\ref{sssec: Analytic mapping stacks}, (see e.g. \eqref{eqn: Map to AnMap}), given by
$$\Map_{/\mathbb{P}_{\mathbb{C}}^1}(X\times\mathbb{P}_{\mathbb{C}}^1,Z)^{\an}\rightarrow\AnMap_{/\bfP}(X^{\an}\times \bfP,Z^{\an}).$$
By the assumptions of the theorem, it is an equivalence of derived analytic stacks by \cite{HolsteinPorta2025}.

The notion of \emph{hyper-Lagrangian} arises naturally in hyperk\"ahler geometry \cite{leung2007hyper}, which is a condition on \emph{complex} Lagrangian sub-manifolds, as opposed to real ones. Studying the notion of a Lagrangian structure \cite{Calaque15} on a morphism $f:L\to Z$ of derived analytic stacks relative to $\bfP$, we prove
a Proposition \ref{LagSectProp} using Theorem.~\ref{MainTheorem2} which extends to the shifted twistor setting an important result in derived symplectic geometry due to Calaque \cite[Thm.~2.10]{Calaque15}.

Since homotopy fiber products of derived analytic spaces exist in $\mathsf{dAnSt}_{\mathbb{C}}$ and the analytic cotangent complex has the expected base-change properties \cite[Thm.~1.5]{PortaYu2020} 
(recalled in Proposition \ref{AnCotangentProperties} below), we prove the following derived Lagrangian intersection theorem for twistor families.
\begin{theorem}[Theorem \ref{MainTheorem3Body}]
\label{MainTheorem3}
Let $Z \to \bfP$ be an $n$-shifted twistor family of hyperk\"ahler type. Let $f_1:L_1\to Z,f_2:L_2\to Z$ be morphism of derived analytic stacks relative to $\bfP$, equipped with relative Lagrangian structures. Consider the homotopy pullback square in $\mathsf{dAnSt}_{\bfP},$
\[
\begin{tikzcd}
    L\arrow[d]\arrow[r] & L_2\arrow[d,"f_2"]
    \\
    L_1\arrow[r,"f_1"] & Z,
\end{tikzcd}
\]

Then $\eta_{L}:L\to \bfP$ has the structure of an $(n-1)$-shifted twistor structure of hyperk\"ahler type.
\end{theorem}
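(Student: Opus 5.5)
The plan is to follow Calaque's/PTVV's derived Lagrangian intersection argument in the relative, twisted, analytic setting, and then verify the real structure and the (truncated) twistor condition separately.

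\textbf{Step 1: the shifted form on $L$.} Write $\mathcal{L}:=\eta^*\EuScript{O}_{\bfP}(2)$. The Lagrangian structures are isotropy homotopies $h_i: f_i^*\omega\simeq 0$ of relative closed forms twisted by $\mathcal{L}$ on $L_i$. Pulling both back to $L$ gives two null-homotopies of the same form $g^*\omega$, where $g:L\to Z$ is the composite. Their difference $h_1-h_2$ is a self-homotopy of $0$, that is, a closed $\mathcal{L}$-twisted relative $2$-form $\omega_L$ of degree $n-1$ on $L/\bfP$. This is purely formal in the $\infty$-category of twisted weighted negative cyclic complexes (as in \cite{calaque2024shifted}). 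It only uses functoriality of the analytic cotangent complex along the pullback square, for which I invoke Proposition~\ref{AnCotangentProperties} and \cite[Thm.~1.5]{PortaYu2020}. By Proposition~\ref{WeightVersion}, $\omega_L$ again has weight $2$.

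\textbf{Step 2: non-degeneracy.} Base change gives a fiber sequence
\[
\mathbb{T}^{\an}_{L/\bfP}\to p_1^*\mathbb{T}^{\an}_{L_1/\bfP}\oplus p_2^*\mathbb{T}^{\an}_{L_2/\bfP}\to g^*\mathbb{T}^{\an}_{Z/\bfP}.
\]
The Lagrangian conditions identify $\mathbb{T}_{L_i/\bfP}\simeq \mathbb{L}_{L_i/Z}[n-1]\otimes\mathcal{L}$, compatibly with $\Theta_\omega$. Comparing this sequence with the dual sequence for $\mathbb{L}^{\an}_{L/\bfP}[n-1]\otimes\mathcal{L}$, and applying the five lemma, shows that $\Theta_{\omega_L}$ is a quasi-isomorphism. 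Since everything is $\mathcal{O}_{\bfP}$-linear and the twist is a line bundle, the check can be done fiberwise over $\lambda\in\bfP$, where it reduces to Calaque's theorem \cite[Thm.~2.10]{Calaque15}.

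\textbf{Step 3: real structure.} Hyper-Lagrangian should mean that the $f_i$ are $C_2$-equivariant, i.e.\ morphisms in $\mathsf{Funct}(\mathsf{B}C_2,\mathsf{dAnSt}_{/[\bfP/C_2]})$, with $C_2$-compatible isotropy data. Homotopy limits in the functor category are computed pointwise, so $L$ inherits $\sigma_L$ covering $\sigma_{\mathbb{P}^1}$. The construction of $\omega_L$ in Step 1 is natural, so compatibility with $\sigma_L$ follows from compatibility of $\omega$ and the $h_i$; Proposition~\ref{GaloisLemma} packages this. Together with Steps 1 and 2, this makes $L$ an $(n-1)$-shifted pretwistor family.

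\textbf{Step 4: hyperk\"ahler type (the main obstacle).} Under Definition~\ref{DerivedTwistor2} the twistor condition is imposed on the classical truncation over a good-moduli open substack. I would try the following:
\begin{itemize}
\item[(a)] Use that $t_0$ commutes with fiber products up to truncation and that good moduli spaces are compatible with closed/fibered substacks, so $t_0(L)^{\mathrm{ss}}$ has good moduli space $\mathrm{Tw}(M_1)\times_{\mathrm{Tw}(M)}\mathrm{Tw}(M_2)$.
\item[(b)] Use that real sections of a fiber product are fiber products of real sections, since Weil restriction and $(-)^{hC_2}$ commute with limits. Then $\ASect^\sigma(L/\bfP)\simeq \ASect^\sigma(L_1)\times_{\ASect^\sigma(Z)}\ASect^\sigma(L_2)$.
\item[(c)] Take $\mathcal{X}_L$ to be the component corresponding to $\mathcal{X}_1\times_{\mathcal{X}}\mathcal{X}_2$. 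The trivialization $\Phi$ then restricts because the $L_i$ are hyper-Lagrangian, i.e.\ preserved by the twistor lines, so their twistor lines are twistor lines of $Z$.
\end{itemize}
The delicate point is that a $C^\infty$ fiber product of trivializations is a trivialization only under a transversality or cleanness hypothesis on $M_1\cap M_2$ inside $M$. I expect to need to build this into the hyper-Lagrangian hypothesis, or to state the conclusion at the level of the truncated good moduli spaces, where hyperk\"ahler intersections of complex-Lagrangian, hyperk\"ahler submanifolds are again hyperk\"ahler.
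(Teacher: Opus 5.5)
Your Steps 1--3 match the paper's argument. The paper proves non-degeneracy by the same five-lemma comparison of the fibre sequences of $\mathbb{T}^{\an}_{(-)/\bfP}$ along the pullback square, and it lets $L$ inherit the $C_2$-action. Your Step 1, which writes $\omega_L$ as the loop $h_1-h_2$, is more explicit than the paper, which never constructs the form. As you note, the inherited $C_2$-action really requires the \emph{real} Lagrangian structures of Definition~\ref{defn:HyperLagStructure}. These steps show that $L\to\bfP$ is an $(n-1)$-shifted pretwistor family. The gap is in Step 4, and it is more serious than the transversality issue you flag.

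\textbf{Missing hk-type data for the $L_i$.} Definition~\ref{DerivedTwistor2} requires hk-type data for $L$ itself, but the hypotheses supply it only for $Z$. A relative Lagrangian structure constrains closed forms and cotangent complexes. It says nothing about whether $t_0(f_i)^{-1}(U_Z)$ has a good moduli space of the form $\mathrm{Tw}(M_i)$, whether the real sections in $\mathcal{X}_Z$ lift to $L_i$, or whether $\Phi_Z$ restricts.

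\textbf{Your (a) and (c) rest on unavailable or contradictory assumptions.} Your (a) presupposes $M_1,M_2$. Your (c) uses ``preserved by twistor lines'', which is an extra hypothesis not in the statement. In the basic embedded $0$-shifted case this hypothesis is even contradictory. If $N\times\mathbb{P}^1\subset\mathrm{Tw}(M)$ is relative Lagrangian, then $TN$ is stable under every $I_\lambda$. Hence $\omega_J(v,Jv)=\pm|v|^2\neq0$ for $0\neq v\in TN$, so $N$ is not isotropic for the $I$-holomorphic form $\omega_J+i\omega_K$ unless it is discrete. In particular, the ``complex-Lagrangian, hyperk\"ahler submanifolds'' of your fallback are points.

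\textbf{Good moduli spaces and fibre products.} Even granting hk-type data on the $L_i$, (a) fails as a general principle. Good moduli spaces are stable under base change along maps to the moduli space, not under fibre products taken over the stack. For example, $\mathrm{pt}\times_{\mathsf{B}G}\mathrm{pt}\simeq G$ has good moduli space $G$, not $\mathrm{pt}\times_{\mathrm{pt}}\mathrm{pt}$. Your transversality problem for $\Phi_L$ then remains on top of this.

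The paper's proof takes the same route and does not supply the missing idea either. It simply asserts the maps $p_{L_i}:U_{L_i}\to\mathrm{Tw}(M_{L_i})$, the $C^\infty$-equivalences $\Phi_{L_i}$, and the good-moduli gluing via \cite[Prop.~7.9]{alper2013good}. A correct statement needs one of two fixes. Either add hk-type compatibility of the $L_i$, together with control of the fibre product of moduli spaces, as hypotheses. Or weaken the conclusion to ``pretwistor''.
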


Equipped with Theorem \ref{MainTheorem3}, we closely follow the ideas of Safronov expressing derived symplectic reduction as a derived Lagrangian intersection \cite{Safronov2016}. See also \cite[Thm.~2.1]{AnelCalaque2022}.

We prove a version of derived symplectic reduction as a derived Lagrangian intersection for twistor families of hyperk\"ahler-type denoted by $Z_{\mathrm{HK}-\mathrm{red}}$\footnote{If $Z$ is a classical hyper\"kahler manifold with complex structures $I,J,K$ and $\mu:Z\to \mathfrak{g}$ is a moment map, we note that it is sometimes customary in the literature to denote the hyperk\"ahler reduction by $\mu^{-1}(0)/\!/\!/\mathfrak{g},$ to indicate the three symplectic structures defined by $I,J,K.$}.

\begin{theorem}[Theorem \ref{MainTheorem4Body}]
  \label{MainTheorem4}
Let $Z \to \mathbb{P}^1$ be an $n$-shifted derived (pre-)twistor family, with $G$ a connected complex reductive group with Lie algebra $\mathfrak{g}$. Let $\mu: Z \to \mathfrak{g}^*[n]_{\mathbb{P}^1}$ be a relative $\EuScript{O}(2)$-twisted $n$-shifted moment map. Then the derived stack $Z_{\mathrm{HK-red}} \to \mathbb{P}^1$ defined by the cartesian square 
\[
\begin{tikzcd}[column sep=large,row sep=large] Z_{\mathrm{HK-red}} \arrow[r] \arrow[d] & \mathsf{B}G\times\mathbb{P}^1 \arrow[d,"0"] \\ {[\![Z/G]\!]}\arrow[r,"{[\mu]}"'] & {[\![\mathfrak{g}^*[n]/G]\!]}\times\mathbb{P}^1, \end{tikzcd}
\]
in $\mathsf{dSt}_{[\mathbb{P}^1/\mathrm{Gal}(\mathbb{C}/\mathbb{R})]}$ is canonically endowed with the structure of an $n$-shifted (pre-)twistor family, compatibly with the induced functor \eqref{RelAnFunct} of analytification.
\end{theorem}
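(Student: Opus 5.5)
The plan is to follow Safronov's argument \cite{Safronov2016}, which expresses symplectic reduction as a Lagrangian intersection, carried out relative to $\mathbb{P}^1$ with the $\EuScript{O}(2)$-twist. Once the relevant relative Lagrangians are in place, the statement reduces to the twistor Lagrangian intersection result (Theorem \ref{MainTheorem3} in its pre-twistor form).

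\textbf{Step 1: the target.} First I equip $[\![\mathfrak{g}^*[n]/G]\!]\times\mathbb{P}^1\to\mathbb{P}^1$ with a relative $(n+1)$-shifted, $\EuScript{O}(2)$-twisted symplectic form. Since $G$ is reductive, I fix a nondegenerate invariant pairing on $\mathfrak{g}$ and give it $\mathbb{G}_m$-weight $2$, as in Proposition \ref{WeightVersion}. Concretely, $[\![\mathfrak{g}^*[n]/G]\!]\simeq T^*[n+1]\mathsf{B}G$, which carries its canonical Liouville form; I tensor this with the tautological section of $\EuScript{O}(2)$. The real structure $\sigma$ acts by the antipodal map on $\mathbb{P}^1$, and on $\mathfrak{g}^*$ by the Cartan involution of a compact real form composed with $-1$. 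This choice is made so that the twisted form is $\sigma$-invariant.

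\textbf{Step 2: two relative Lagrangians.} The zero section $0:\mathsf{B}G\times\mathbb{P}^1\to [\![\mathfrak{g}^*[n]/G]\!]\times\mathbb{P}^1$ is relative Lagrangian; this is the standard zero-section Lagrangian of a shifted cotangent stack, base-changed along $\mathbb{P}^1$. The second Lagrangian is $[\mu]:[\![Z/G]\!]\to[\![\mathfrak{g}^*[n]/G]\!]\times\mathbb{P}^1$. Its Lagrangian structure comes from unpacking the definition of a twisted $n$-shifted moment map: the equivariant extension of $\omega$ gives the isotropy homotopy. Nondegeneracy then follows from the fiber sequence
\[
\mathfrak{g}\otimes\mathcal{O}\to \mathbb{T}_{Z/\mathbb{P}^1}\to \mathbb{T}_{[\![Z/G]\!]/\mathbb{P}^1},
\]
compared with the dual sequence $\mathbb{L}$ shifted and twisted by $\eta^*\EuScript{O}(2)$. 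The comparison uses the infinitesimal moment map equation $d\mu=\iota_{a}\omega$, where $a$ denotes the infinitesimal action. Both structures are $C_2$-equivariant, because $\mu$ is assumed compatible with $\sigma_Z$ and the $G$-action commutes with the real structure.

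\textbf{Step 3: intersect, then analytify.} By the relative Lagrangian intersection theorem (Calaque, in the twisted relative form of \cite{calaque2024shifted}), the fiber product $Z_{\mathrm{HK-red}}$ inherits an $(n+1)-1=n$-shifted $\EuScript{O}(2)$-twisted relative symplectic form. The $C_2$-action passes to the homotopy pullback by functoriality in $\mathsf{dSt}_{[\mathbb{P}^1/\mathrm{Gal}(\mathbb{C}/\mathbb{R})]}$, so the pre-twistor structure is obtained. For the analytic compatibility, I use that $(-)^{\an}$ in \eqref{RelAnFunct} preserves finite limits and $C_2$-actions, and that the analytic cotangent complex satisfies base change (Proposition \ref{AnCotangentProperties}). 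Hence analytifying the square gives exactly the analytic Lagrangian intersection of Theorem \ref{MainTheorem3}, with the same form. In the twistor case of hyperkähler type, the same theorem also transports the condition on sections.

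\textbf{Main obstacle.} I expect the hardest step to be Step 2: producing the Lagrangian structure on $[\mu]$ with the $\EuScript{O}(2)$-twist coherently and $C_2$-equivariantly, rather than only pointwise. My first approach would be to work $\mathbb{G}_m$-equivariantly in weight-graded negative cyclic complexes, where the twist is just weight $2$. I would then descend the real structure using homotopy fixed points, as in Proposition \ref{GaloisLemma}.
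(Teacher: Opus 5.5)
Your proposal follows the paper's argument. $Z_{\mathrm{HK-red}}$ is obtained as the relative Lagrangian intersection of $[\mu]$ with the zero section $\mathsf{B}G\times\mathbb{P}^1$ inside the $(n+1)$-shifted $\EuScript{O}(2)$-twisted family $[\![\mathfrak{g}^*[n]/G]\!]$ over $\mathbb{P}^1$. The real structure on $[\![Z/G]\!]$ is induced from the real-compatible $G$-action (Proposition~\ref{RealOnQuotients}), analytification is controlled by base change of the analytic cotangent complex and the de Rham comparison (Propositions~\ref{AnCotangentProperties} and~\ref{AnDeRhamAlgs}), and the conclusion is read off from Theorem~\ref{MainTheorem3Body}.

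The one real difference is that in the paper the $\EuScript{O}(2)$-twisted relative Lagrangian structure on $[\mu]$ is hypothesis (i) of Theorem~\ref{MainTheorem4Body}, i.e.\ part of what a twisted moment map means there. So the step you flag as the main obstacle is assumed in the paper rather than derived from $d\mu=\iota_a\omega$.
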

In the second part of the paper, we apply the shifted twistor space formalism to study the analytic moduli stacks of flat and Higgs perfect complexes. The main results are stated in \S.\ref{StatementofResults}, but first we provide some further background.

\subsection{Motivation from Gauge theory}
In prior work \cite{LRT2022}, the second author studied nonlinear PDE systems arising from the Kapustin–Witten theory \cite{KapustinWitten2007}, obtained via a topological twist of $\mathcal{N}=4$ super Yang–Mills theory on a compact oriented four-manifold. A key observation is that the Kapustin–Witten equations naturally fit into the framework of nonabelian Hodge theory \cite{SiNonabelian,SiHiggs,SimpsonHodgeFiltrationNonabelian}. This ultimately relies on Simpson’s extension of the Uhlenbeck–Yau continuity method \cite{UY}, showing that a Higgs bundle admits a Hermitian–Yang–Mills metric if and only if it is polystable \cite{Simpson1988}.
 
Over a curve $C$, this relationship interacts with the
hyperk\"ahler structure $\mathcal{M}_{\mathrm{Dol}}^{\mathrm{ss}}(C)$. The $S$-duality transformation selects distinguished branes that are either symplectic or complex with regards to each complex structure in the moduli
space that are transformed under mirror symmetry, which itself is a form of global
Langlands duality, discussed in detail by Donagi--Pantev \cite{DonagiPantev2012}.

In higher dimensions these moduli spaces become highly singular, making them natural objects for shifted symplectic derived geometry \cite{PTVV13,CPTVV17,CalaqueHaugsengScheimbauer2025}; their shifted analogues exist, recalled in Subsection.~\ref{ShiftedHiggs}. A significant goal and formadible open problem is to understand the derived enhancements of the moduli spaces underlying the Corlette--Donaldson--Hitchin--Uhlenbeck--Yau--Simpson correspondence \cite{Corlette88,Corlette92} and the natural structures they possess in gauge theory and complex geometry. 

Note that there is another topological twist by Vafa and Witten \cite{VafaWitten1994}. For this twist, one expects that the generating series of invariants defined through the moduli spaces of solutions to the gauge-theoretic equations arising from the twist may exhibit modular properties reflecting $S$-duality, which $\mathcal{N}=4$ super Yang--Mills theory in four dimensions would have. For projective surfaces, Thomas and the second author of this paper \cite{TaTh1, TaTh2} gave a mathematical definition of these invariants using moduli spaces of semistable Higgs bundles, via a Donaldson--Uhlenbeck--Yau type theorem for this system \cite{AlGa, Tana2}. Vafa and Witten \cite{VafaWitten1994} also discussed that the blowup formula in this twist should possess modular properties, which ought to be explained in terms of conformal field theory. On the mathematical side, Kuhn, Leigh, and the second author of this paper \cite{KuLeTa} obtained exactly the same modular forms predicted by Vafa and Witten for the instanton part of the Vafa–Witten invariants, by using more general blowup formulas for enumerative invariants on projective surfaces by Kuhn and the second author \cite{KuTa}. 
We do not pursue this direction here; instead, our focus is on the comparison and relationship between the moduli spaces arising from the Kapustin--Witten theory and their Derived enhancements.

\subsection{Gauge-theoretic moduli spaces in the Kapustin--Witten theory}
\label{ssec: Gaugetheoretic KW-theory}
Derived algebraic geometry offers a powerful alternative approach that circumvents certain analytic difficulties in gauge theory, such as the failure of standard metric perturbation methods to obtain the transversality of the moduli spaces in analytic settings. On one hand, the moduli stack of perfect complexes of local systems on a compact oriented topological four-manifold carries a canonical $(-2)$-shifted symplectic structure \cite{PTVV13}, which corresponds to the $t \neq 0$ side of Kapustin–-Witten theory. Therefore, it is natural to expect that the moduli spaces of solutions to the Kapustin--Witten equations for $t \neq 0$ would carry a $(-2)$-shifted symplectic structure, as they do when the underlying four-manifold is a complex projective surface. On the other hand, the orientation problem for the moduli space of solutions to the Kapustin--Witten equations at $t = 0$ was solved in \cite[Thm.~4.9]{JoyceTanakaUpmeier2020}, and the same conclusion extends to the $t \neq 0$ case.

Derived algebraic geometry has been used to describe moduli spaces arising from topological twists of $\mathcal{N}=4$ super Yang–Mills theory. Elliott–Yoo \cite{ElliotYoo2018} describe the geometric quantization of the derived stack assigned to $C\times\mathbb{C}$ for each twist, where $C$ is a compact Riemann surface. In their sequel \cite{ElliottYoo2025}, they further explain the role of these moduli spaces in the (quantum) categorical geometric Langlands programme (see Subsection \ref{ssec: Relation to other works}).

Two natural problems arise: (a) understand the deformation spaces of solutions to the Kapustin--Witten equations for both $t = 0$ and $t \neq 0$ on a closed four-manifold in an appropriate sense and to compare these moduli spaces to obtain a certain form of correspondence predicted by nonabelian Hodge theory, (b) \emph{define}, study and ultimately clarify the hyperk\"ahler geometry arising in this context.

Using derived (analytic) geometry, we identify a Lagrangian correspondence, which we say is of \textit{nonabelian Hodge-type}. Our approach is compatible with previous descriptions via topological twists, but we focus on developing the basic features of hyperk\"ahler structures in derived geometry, which are of independent interest (see \cite[Rem.~2.5(ii)]{GinRoz}).

This leads to our second family of results in this paper, which prove the existence of a shifted twistor family structure on the Deligne--Simpson moduli space for perfect complexes, generalizing the classical twistor family structure \eqref{DeligneSimpsonIntro}. 
Via the nonabelian Hodge correspondence, our results may be fruitfully interpreted as a deformation of the classical hyperk\"ahler structure in derived directions, controlling the obstruction theory of the moduli space of perfect complexes with flat $\lambda$-connections.

\subsection{Main results on Lagrangian correspondences of nonabelian Hodge-type}
\label{StatementofResults}
Let $X$ be a smooth algebraic variety over $\mathbb{C}$ of dimension $d$, with Hodge stack $X_{\Hod},$ over $\Theta=[\mathbb{A}^1/\mathbb{G}_m].$ Its fiber over $[1]$ is $X_{\mathrm{DR}}$, and its special fiber over $[0]$ defines a \emph{formal moduli problem} under $X\times \Theta$, in the sense of Gaitsgory--Rozenblyum \cite{GR17b}. Roughly, a formal moduli problem under a derived stack locally of finite-type is equivalent to a Lie algebroid object in ind-coherent sheaves. See \cite[\S.2.2.3]{ElliotYoo2018} for a nice explanation in terms of algebraic gauge theories.

The special fiber is {\it Dolbeault stack} $X_{\Dol}$ over $\mathsf{B}\mathbb{G}_m,$ with natural $\mathbb{G}_m$-action, so $X_{\Hod}$ is the Dolbeault degeneration of $X_{\DR}$ to the total space of the $1$-shifted tangent bundle $\mathsf{T}[1]X.$

Functoriality of mapping stacks \eqref{eqn: RPerfIntro} induce morphisms to the generic fiber, given by the stack of flat perfect complexes \eqref{RPerfDRIntro}, and special fiber, given by Higgs perfect complexes \eqref{RPerfDolIntro}.
Both fibers are $2(1-d)$-shifted symplectic derived stacks, and we construct a series of Lagrangian correspondences for these symplectic structures, that we refer to as being of \emph{nonabelian Hodge-type.} 

\begin{remark}[Notation]
In this paper, we adopt the convention of writing a correspondence between two objects $X,Y$ implemented by a third object $Z$ as $Z:X\dashrightarrow Y,$ as opposed to the usual 'hat' notation,
\[
\begin{tikzcd}[row sep=small, column sep=small]
    & \arrow[dl,"\ell"] Z \arrow[dr,"r"] & \\
    X & & Y.
\end{tikzcd}
\]
When the maps need specification, we use the notation $Z\overset{\ell\times r}{\rightarrow}X\times Y$.
\end{remark}
Let $F$ be a derived Artin stack locally of finite presentation over $\mathbb{C}.$ Let $\mathsf{Filt}(F),\mathsf{Grad}(F)$ be the stacks of filtered and graded objects \cite{Hal14,Moul2021}. There is a known correspondence 
\begin{equation}
\label{AttractorCorrIntro} 
\mathsf{Filt}(F): \mathsf{Grad}(F)\dashrightarrow F,
    \end{equation}
    via maps $\mathsf{Filt}(F)\xrightarrow{(\mathsf{Gr}\times\mathsf{Un})}\mathsf{Grad}(F)\times F,$ taking the `associated graded' and 'underlying object'. 
If $F$ is further endowed with an $n$-shifted symplectic structure $\omega_F$, the correspondence \eqref{AttractorCorrIntro} is an $n$-shifted Lagrangian correspondence 
\cite[Cor.~5.19]{KinjoParkSafranov2024}).
Specifically, the map $\mathsf{Filt}(F)\to \mathsf{Grad}(F)\times \overline{F}$ has a Lagrangian structure, where $\overline{F}$ denotes the stack $F$ with symplectic form $\omega_{\overline{F}}:=-\omega_F$ (e.g., \cite{Calaque15}).

In Subsect.~\ref{ssec: Analytification}, we prove an analogue of  \eqref{AttractorCorrIntro} in the $\mathbb{C}$-analytic setting \cite{Lurie2011,HolsteinPorta2025}, when $F$ is the analytification of the $2(1-d)$-shifted derived stack of flat perfect complexes. This makes use of nice formal properties of analytification; for any shape operation $(-)_{\sharp},\sharp\in\{\mathrm{DR},\mathrm{Dol},\mathrm{B}\},$ there is a natural transformation 
\begin{equation}
\label{NatTransform}
(-)^{\mathrm{an}}\circ (-)_{\sharp}\rightarrow (-)_{\sharp}\circ (-)^{\mathrm{an}}.
\end{equation}
When evaluated on a smooth proper complex scheme $X$, \eqref{NatTransform} is an equivalence \cite[Lem.~5.24, Lem.~5.31]{HolsteinPorta2025}. The notation 
$X_{\mathrm{DR}}^{\mathrm{an}}$ and $X_{\mathrm{Betti}}^{\mathrm{an}}$ is therefore unambiguous.

\begin{proposition}[Proposition \ref{MainTheoremBody}]
Let $X$ be a smooth proper algebraic variety over $\mathbb{C}$. Let $X^{\an}$ be its analytification. Then 
$$\mathsf{AnFilt}(\RAnPerf(X_{\DR}^{\an})\big): \mathsf{AnGrad}\big(\RAnPerf(X_{\DR}^{\an})\big)\dashrightarrow \RPerf_{\DR}(X)^{\an},$$
is a $2(1-d)$-shifted Lagrangian correspondence of derived analytic stacks with $\mathbf{G}_m$-actions, where $\RAnPerf(X_{\DR}^{\an})$ has the trivial action. Thus, $\mathsf{AnFilt}(\RAnPerf_{\DR}(X^{\an})\big)$ is naturally endowed with $(1-2d)$-shifted Poisson structure.
\end{proposition}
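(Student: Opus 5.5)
The plan is to reduce the analytic statement to the known algebraic one, \eqref{AttractorCorrIntro} with \cite[Cor.~5.19]{KinjoParkSafranov2024}, by analytification. We then check that the relevant stacks and the symplectic and Lagrangian data are carried along.

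Set $F:=\RPerf_{\DR}(X)$. This is a locally geometric derived Artin stack, locally of finite presentation over $\mathbb{C}$. Its $2(1-d)$-shifted symplectic form $\omega_F$ comes from AKSZ/PTVV transgression along the de Rham fundamental class $[X]$ \cite[Cor.~2.13]{PTVV13}. Algebraically, $\mathsf{Filt}(F)=\Map(\Theta,F)$ and $\mathsf{Grad}(F)=\Map(\mathsf{B}\mathbb{G}_m,F)$. By \cite[Cor.~5.19]{KinjoParkSafranov2024}, the map $\mathsf{Gr}\times\mathsf{Un}:\mathsf{Filt}(F)\to\mathsf{Grad}(F)\times\overline{F}$ carries a Lagrangian structure, and it is $\mathbb{G}_m$-equivariant for the scaling action on $\Theta$.

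\textbf{Step 1: identify the analytic stacks with analytifications.} Using the equivalence \eqref{NatTransform} for smooth proper $X$ \cite[Lem.~5.24]{HolsteinPorta2025}, we get $F^{\an}\simeq\RAnPerf(X^{\an}_{\DR})$. Next we need
\[
\mathsf{Filt}(F)^{\an}\simeq\mathsf{AnFilt}(F^{\an})
\quad\text{and}\quad
\mathsf{Grad}(F)^{\an}\simeq\mathsf{AnGrad}(F^{\an}).
\]
Since $\Perf$ is Tannakian, I would write
\[
\mathsf{Filt}(F)=\Map(X_{\DR}\times\Theta,\Perf)
\quad\text{and}\quad
\mathsf{Grad}(F)=\Map(X_{\DR}\times\mathsf{B}\mathbb{G}_m,\Perf).
\]
I would then apply the comparison morphism \eqref{eqn: Map to AnMap}, which is an equivalence whenever the source satisfies universal GAGA \cite[Def.~5.1]{HolsteinPorta2025}. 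Both $X_{\DR}\times\Theta$ and $X_{\DR}\times\mathsf{B}\mathbb{G}_m$ should qualify, since universal GAGA is stable under products with $\Theta$ and $\mathsf{B}\mathbb{G}_m$: their categories of perfect complexes are filtered, respectively graded, objects in $\Perf(X_{\DR})$. I expect this step to be the main obstacle. Universal GAGA for the quotient stacks $\Theta$ and $\mathsf{B}\mathbb{G}_m$ requires care because they are not proper. I would handle it by descent along $\mathbb{A}^1\to\Theta$ and $\mathrm{pt}\to\mathsf{B}\mathbb{G}_m$, checking the comparison on each $\mathbb{G}_m$-weight piece. Mapping stacks out of $\Theta$ are the $\mathbb{G}_m$-fixed loci of $\Map(\mathbb{A}^1,-)$.

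\textbf{Step 2: transport the symplectic and Lagrangian structures.} Analytification sends closed forms to closed forms via $\mathbb{L}^{\an}_{Y^{\an}}\simeq(\mathbb{L}_Y)^{\an}$ (Proposition \ref{AnCotangentProperties}). It is exact and symmetric monoidal on perfect complexes, so it preserves non-degeneracy and isotropic structures. It also preserves the non-degeneracy of the induced map $\mathbb{T}_{L}\to\mathbb{L}_{L/F}[n-1]$. Hence $\mathsf{AnFilt}\to\mathsf{AnGrad}\times\overline{F^{\an}}$ is Lagrangian for $(\omega_F)^{\an}$. It remains to see that this is the analytically transgressed form on $\RAnPerf(X_{\DR}^{\an})$. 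That follows from compatibility of analytification with the fundamental class and evaluation map, which is case (i)$\Rightarrow$(ii) of Theorem \ref{MainTheorem2} with trivial $\mathbb{P}^1$-dependence. The $\mathbf{G}_m$-equivariance passes through $(-)^{\an}$ because analytification commutes with colimits and so preserves group actions. The action on $\RAnPerf(X^{\an}_{\DR})$ is trivial, since it only rescales the source $\Theta$.

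\textbf{Step 3: the Poisson structure.} A Lagrangian $L\to Y$ into an $n$-shifted symplectic stack carries an $(n-1)$-shifted Poisson structure; this is the Lagrangian–coisotropic correspondence \cite{CPTVV17}. With $n=2(1-d)$ this gives $n-1=1-2d$. In the analytic setting, I would avoid redoing that theory. Instead I would construct the Poisson structure algebraically on $\mathsf{Filt}(F)$ and analytify it, using the identification of Step 1.
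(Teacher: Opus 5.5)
Your proposal is correct at the level of the paper's own argument and follows it essentially step for step. The paper also takes the algebraic Lagrangian structure on $\mathsf{Filt}(\RPerf_{\DR}(X))\to\mathsf{Grad}(\RPerf_{\DR}(X))\times\overline{\RPerf_{\DR}(X)}$ as given and transports $\Theta_h$ and the closed forms through analytification via Proposition~\ref{AnDeRhamAlgs}, which is the comparison $\mathbb{L}^{\an}_{Y^{\an}}\simeq(\mathbb{L}_Y)^{\an}$ you need in Step~2 (not Proposition~\ref{AnCotangentProperties}). It identifies $\mathsf{Filt}(-)^{\an}$ and $\mathsf{Grad}(-)^{\an}$ with $\mathsf{AnFilt}$ and $\mathsf{AnGrad}$ simply by citing \cite[Prop.~5.2, Thm.~5.5, Lem.~5.24, Lem.~5.31]{HolsteinPorta2025}, so your descent and weight-decomposition argument along $\mathbb{A}^1\to\Theta$ correctly flags a subtlety that the paper does not address. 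Finally, it obtains the $(1-2d)$-shifted Poisson structure on the algebraic side from Proposition~\ref{LagThicken}, exactly as in your Step~3.
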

Since $X_{\Hod}$ is a deformation to the normal cone, we obtain the following NAH-type Lagrangian correspondence, where $\mathcal{O}_{\Theta}(p)$ is the trivial line bundle on $\mathbb{A}^1$ for the weight $p$ representation of $\mathbb{G}_m$ with $\mathcal{O}_{\Theta}(1)$ the universal bundle on $\Theta$ \cite[Prop.~5.1]{calaque2024shifted}.
\begin{theorem}[Theorem \ref{MainTheoremBodyBody}]
\label{MainTheorem}
Let $X$ be a smooth proper variety of dimension $d$ over $\mathbb{C}.$ 
Then $X_{\Hod}\to \Theta$ has a natural $\mathcal{O}_{\Theta}(1)$-twisted $2d$-orientation such that by pullback along $\mathbb{A}^1\to \Theta,$ the morphisms of evaluation at special and generic fiber $(\mathrm{ev}_{\lambda =0},\mathrm{ev}_{\lambda =1})$ yield a relative shifted Lagrangian correspondence
$$\big(\Map_{/\Theta}(X_{\Hod},\Perf\times\Theta)\times\mathbb{A}_{\mathbb{C}}^1\big)^{\an}: \RAnPerf(X_{\Dol}^{\an})\dashrightarrow\RAnPerf(X_{\DR}^{\an}),$$
of derived analytic stacks over $\mathbf{A}^1=(\bfA)^{\an}.$
\end{theorem}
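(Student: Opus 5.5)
The proof breaks into three stages: an algebraic twisted orientation on $X_{\Hod}\to\Theta$, relative AKSZ transgression plus boundary restriction to produce the Lagrangian correspondence, and analytification to pass to derived analytic stacks over $\mathbf{A}^1$.

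\emph{Stage 1: the twisted orientation.} We start from the de Rham fundamental class $[X]\in H^{2d}_{\DR}(X)$ and use the Hodge filtration. The relative de Rham cohomology of $X_{\Hod}\to\Theta$ is the Rees module of $(\mathbb{R}\Gamma_{\DR}(X),F^\bullet)$, so $\mathcal{O}(X_{\Hod})$ as a quasi-coherent sheaf on $\Theta$ is the filtered complex $\mathrm{Rees}(F^\bullet H_{\DR})$. Since $[X]$ lies in $F^dH^{2d}$, a weight-shift produces a class
\[
[X_{\Hod}]:\mathcal{O}(X_{\Hod})\to\mathcal{O}_\Theta(1)^{\otimes ?}[-2d]
\]
with the twist normalized, as in \cite[Prop.~5.1]{calaque2024shifted}, so that it is $\mathcal{O}_\Theta(1)$-twisted. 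On the generic fibre $\lambda=1$ this recovers the de Rham orientation of $X_{\DR}$. On the special fibre it recovers the Dolbeault orientation, namely the trace $H^d(X,\Omega^d)\to\mathbb{C}$ on $\mathsf{T}[1]X$.

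Non-degeneracy is checked fibrewise. By base change along points of $\Theta$ this reduces to Poincar\'e–Serre duality on $X_{\DR}$ and on $X_{\Dol}$. Both cases are covered by \cite[Cor.~2.13]{PTVV13}, applied to the corresponding $\lambda$-connection Lie algebroid.

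\emph{Stage 2: transgression and boundary restriction.} We apply the relative AKSZ theorem for twisted orientations \cite[Thm.~2.35]{calaque2024shifted} with target $\Perf\times\Theta$, whose $2$-shifted form has weight $0$. This makes $\RPerf_{\Hod}(X)\to\Theta$ carry a relative $\mathcal{O}_\Theta(1)$-twisted $2(1-d)$-shifted symplectic structure. Pulling back along $\mathbb{A}^1\to\Theta$ trivializes the twist.

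To obtain a \emph{Lagrangian correspondence} between the fibres over $0$ and $1$, we view $\mathbb{A}^1$ as carrying the relative boundary structure of the path from $0$ to $1$. Concretely, the evaluation $(\mathrm{ev}_0,\mathrm{ev}_1)$ realizes the total space over $\mathbb{A}^1$ as a Lagrangian in $\RPerf_{\Dol}\times\overline{\RPerf_{\DR}}$, with the isotropic homotopy supplied by the Stokes-type nullhomotopy of $[X_{\Hod}]$ restricted to $\{0\}\sqcup\{1\}$. Following the Calaque-style argument \cite[Thm.~2.10]{Calaque15}, the nondegeneracy of this Lagrangian structure reduces to the fibre sequence
\[
\mathcal{O}(X_{\Hod}\times_\Theta\mathbb{A}^1)\to \mathcal{O}(X_{\Dol})\oplus\mathcal{O}(X_{\DR})
\]
being compatible with the relative orientation, i.e.\ to a relative orientation of the pair $(X_{\Hod}|_{\mathbb{A}^1},\,X_{\Dol}\sqcup X_{\DR})$ over $\mathbb{A}^1$.

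\emph{Stage 3: analytification.} We pass to analytic stacks using \eqref{NatTransform}, which is an equivalence for $X$ smooth and proper. The mapping stacks are Tannakian targets into $\Perf$, so Theorem~\ref{MainTheorem2}(i) applies and carries the relative forms and isotropic data to the analytic side. The analytic cotangent complex matches the analytification of the algebraic one \cite{PortaYu2020}, so non-degeneracy is preserved.

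The main obstacle is Stage~2, making the relative Lagrangian structure precise. The fibre over $\lambda\neq0$ is not literally $X_{\DR}$ but its $\lambda$-rescaling, and the relevant "boundary" is not a genuine boundary of $\mathbb{A}^1$. I would first try to work over $\mathbb{A}^1$ relatively, proving a Lagrangian structure on the evaluation map over $\{0\}\sqcup\{1\}$ via base change of relative AKSZ along $\{0,1\}\hookrightarrow\mathbb{A}^1$. The required homotopy should come from trivializing the $\mathbb{G}_m$-equivariant orientation across the two points.
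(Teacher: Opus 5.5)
The gap is in the second half of your Stage~2, which you already flag, and the mechanism you propose there cannot work.

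On the relative stack $Z:=\RPerf_{\Hod}(X)\times_\Theta\mathbb{A}^1\to\mathbb{A}^1$ there are no maps $\mathrm{ev}_0:Z\to\RPerf(X_{\Dol})$ or $\mathrm{ev}_1:Z\to\RPerf(X_{\DR})$. A $T$-point over a non-constant $\lambda\in\mathcal{O}(T)$ is a complex on $X_{\Hod}\times_\Theta T$, and it has no restriction to $\lambda=0$ or $\lambda=1$; the fibres include \emph{into} $Z$, not the other way. Such restrictions exist only on the absolute stack $\Map(X_{\Hod}\times_\Theta\mathbb{A}^1,\Perf)$ of families over the whole line. There the cobordism picture fails twice.

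\emph{Non-degeneracy fails.} $X_{\Hod}\times_\Theta\mathbb{A}^1$ is not $\mathcal{O}$-compact over $\mathbb{C}$: the $H^0$ of its functions already contains $\mathbb{C}[\lambda]$. So the tangent complexes $\Gamma(X_{\Hod}\times_\Theta\mathbb{A}^1,\mathcal{E}nd(E))[1]$ are not perfect, and no Lagrangian non-degeneracy can hold.

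\emph{The ``Stokes-type nullhomotopy'' does not exist.} An orientation of the pair $(X_{\Hod}\times_\Theta\mathbb{A}^1,\,X_{\Dol}\sqcup X_{\DR})$ requires the composite $\Gamma(X_{\Hod}\times_\Theta\mathbb{A}^1,\mathcal{O})\to\Gamma(X_{\Dol},\mathcal{O})\oplus\Gamma(X_{\DR},\mathcal{O})\xrightarrow{[X_{\Dol}]-[X_{\DR}]}\mathbb{C}[-2d]$ to be nullhomotopic. Yet it is nonzero on $H^{2d}$: if $u$ generates the rank-one Rees module in degree $2d$, then $\lambda u$ vanishes at $\lambda=0$ and restricts to a generator at $\lambda=1$. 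Already $f\mapsto f(0)-f(1)$ on $\mathbb{C}[\lambda]$ is not nullhomotopic. The interval-with-boundary intuition is topological and has no counterpart in coherent cohomology of $\mathbb{A}^1$, so the boundary version of AKSZ does not apply.

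The paper avoids all of this by never leaving the relative setting over $\mathbb{A}^1$. It pulls back the twisted relative AKSZ structure on $\RPerf_{\Hod}(X)\to\Theta$ from \cite[Prop.~5.10, Cor.~5.13]{calaque2024shifted} to a relative $2(1-d)$-shifted symplectic structure on $Z\to\mathbb{A}^1$; this matches the first half of your Stage~2. The correspondence is then the relative diagonal $Z\to Z\times_{\mathbb{A}^1}\overline{Z}$, which is Lagrangian for any relatively symplectic stack (the general observation stated just before the theorem). Its fibres over $\lambda=0$ and $\lambda=1$ recover $\RPerf(X_{\Dol})$ and $\RPerf(X_{\DR})$ with their diagonal Lagrangians. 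This fibrewise sense is the only one in which the paper's $(\mathrm{ev}_{\lambda=0},\mathrm{ev}_{\lambda=1})$ relates the two stacks, and no boundary structure enters. So you should replace the second half of Stage~2 by this observation.

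Two smaller corrections:
\begin{itemize}
\item For Stage~3, the relevant inputs are the comparison equivalence \eqref{eqn: Map to AnMap} of \cite{HolsteinPorta2025} and Proposition~\ref{AnDeRhamAlgs}. Theorem~\ref{MainTheorem2}(i) is about twistor families over $\mathbb{P}^1$ and does not apply.
\item The exponent you leave as ``$?$'' in Stage~1 is forced, not a normalization choice. The class $[X]\in F^dH^{2d}_{\DR}(X)$, i.e.\ the Dolbeault top class in $H^d(X,\Omega^d_X)$, has $\mathbb{G}_m$-weight $d$, so the equivariant target is $\mathcal{O}_\Theta(\pm d)$ (sign depending on conventions). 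This is harmless here only because every such twist trivializes after pulling back along $\mathbb{A}^1\to\Theta$.
\end{itemize}
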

It is important to emphasize
Theorem \ref{MainTheorem} is a statement about shifted symplectic structures and their compatibilities, not about homeomorphism or diffeomorphism of coarse moduli spaces. In particular, it does not assert the existence of an analytic isomorphism, nor does 
it impose stability conditions cf. \eqref{NAH}, although we briefly discuss polynomial stability conditions in Subsect.~\ref{ssec: Stability considerations}. We prove a Corollary \ref{SSCohCor} stating a restricted Lagrangian correspondence for semi-stable coherent complexes with fixed numerical behaviour.

\begin{remark}
We mention that there is also a $(1-2d)$-shifted \emph{coistoropic} correspondence of derived algebraic stacks, for a related derived moduli functor of complexes up to chain isomorphism $\mathbb{R}\underline{\mathrm{Cplx}}(X)$ introduced by Hua--Polishchuk \cite{HuaPolishchuk2018}. Working with the latter $\Perf$, allows additional flexibility for endowing complexes with additional structures term-wise e.g. hermitian forms.
\end{remark}

\subsubsection{Derived Riemann--Hilbert correspondence}
Recall that given a smooth complex variety $X$ the algebraic de Rham theorem identifies the cohomology of the constant sheaf (derived endomorphisms of the constant local system),
$H^*(X,\underline{\mathbb{C}}_X)\simeq H^*\mathbb{R}\mathrm{End}_{\mathsf{Shv}(X)}(\underline{\mathbb{C}}_X),$
with de Rham cohomology
$H^*(X,\mathrm{DR}_X^{\bullet}),$ or equivalently, derived endomorphisms $H^*\mathbb{R}\mathrm{End}_{\mathcal{D}_X}(\mathcal{O}_X,d_{DR}),$ of its structure sheaf with natural $\mathcal{D}$-module structure (flat connection). 

Natural transformations from $X_{\B}$ to the analytic stack in categories\footnote{See \cite{porta2017derived} for what this means.} $\underline{\mathrm{AnVect}}_n$ of rank $n$ holomorphic vector bundles of corresponds to representations of the fundamental groupoid
$\pi(X_{\mathrm{top}})$ of $X$ in $\mathbb{C}^n,$ where $X_{\mathrm{top}}$ is the underlying topological space of $X.$  Similarly, natural transformations
from $X_{\mathrm{DR}}$ to $\underline{\mathrm{AnVect}}_n$ give a category of holomorphic bundles on $X$ with flat connection.

The Riemann--Hilbert correspondence \cite[Thm.~2.23]{Deligne1970} (see also \cite{Kashiwara1984}), may be formulated as an equivalence
\begin{equation}
    \label{RHVectIntro}\mathrm{Map}_{\mathrm{AnSt}^{\mathrm{Cat}}}(X_{\mathrm{Betti}},\underline{\mathrm{AnVect}}_n)\simeq \mathrm{Map}_{\mathrm{AnSt}^{\mathrm{Cat}}}(X_{\mathrm{DR}},\underline{\mathrm{AnVect}}_n)\end{equation}
where $\mathrm{AnSt}_{\mathbb{C}}^{\mathrm{Cat}}$ is the $\infty$-category of sheaves on the category of Stein spaces $\mathrm{Stein}_{\mathbb{C}}$ with Grothendieck topology given by open immersions, valued in $\infty$-categories (see \cite[eqn. (1.1)]{porta2017derived}).

There is a \emph{Riemann--Hilbert transformation} 
$$\eta_{\mathrm{RH}}:X_{\mathrm{DR}}^{\mathrm{an}}\to X_{\mathrm{Betti}}^{\mathrm{an}},$$ and this leads to the Porta's \emph{derived RH theorem} \cite[Thm.~6.11]{porta2017derived}, generalizing the classical one \eqref{RHIntro}, and promoting the equivalence \eqref{RHVectIntro} from vector bundles to all perfect complexes. 
More precisely, the derived RH-correspondence is an equivalence of derived analytic stacks \cite[Cor.~7.5]{HolsteinPorta2025}, 
\begin{equation}
\label{DerRHIntro}\eta_{\mathrm{RH}}^*:\RAnPerf(X_{\mathrm{Betti}}^{\mathrm{an}})\simeq \RAnPerf(X_{\mathrm{DR}}^{\mathrm{an}}).
\end{equation}
A general derived RH-correspondence with coefficients in any derived algebraic stack $Z$ of finite presentation over $\mathbb{C}$ satisfying the
Tannakian property is given in \cite[Cor.~7.6]{HolsteinPorta2025}.

Our contribution provides a shifted symplectic enhancement of the derived RH theorem.
First, we observe the AKSZ construction is compatible with analytification and with relative GAGA properties for $X_{\DR},X_{\B}$. This fact allows us to prove the relative orientations (in the sense of \cite[Sect.~2]{PTVV13}) on $X_{\B}^{\an}$ and on $X_{\DR}^{\an}$ are preserved under $\eta_{\mathrm{RH}}^*.$

In other words, \eqref{DerRHIntro} is compatible with the natural shifted symplectic structures on each side and we obtain the following result.
\begin{theorem}[Theorem.~\ref{AnalyticNAH} (i)]
\label{AnalyticNAHIntro}
Let $X$ be a smooth proper complex variety of $\dim_{\mathbb{C}}X=d.$ Then, the derived RH--correspondence \eqref{DerRHIntro} is an equivalence of $(2-2d)$-shifted symplectic derived analytic stacks.
\end{theorem}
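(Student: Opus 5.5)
Both shifted symplectic structures come from AKSZ/PTVV transgression: the $2$-shifted symplectic form $\omega_{\Perf}$ (the trace pairing) on $\Perf$ is integrated against an orientation of the source. On the de Rham side the orientation $[X_{\DR}]$ is the fundamental class in $H^{2d}_{\DR}(X)$. On the Betti side it is the topological fundamental class $[X_{\mathrm{top}}]\in H_{2d}(X,\mathbb{C})$, i.e. a $\mathbb{C}$-linear map $\Gamma(X_{\B},\mathcal{O})\simeq C^*(X_{\mathrm{top}},\mathbb{C})\to\mathbb{C}[-2d]$. My plan is to produce both analytic forms by an \emph{analytic} AKSZ construction (Theorem \ref{MainTheorem2} (ii), used with trivial twisting and no $C_2$-action, or its untwisted analogue). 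I then reduce the comparison to two facts: that $\eta_{\mathrm{RH}}$ intertwines the two orientations, and that AKSZ is natural in the source.

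The first step is to check that the analytic transgressed forms agree with the analytifications of the algebraic PTVV forms. I use the comparison map $\Map(X_\sharp,\Perf)^{\an}\to\AnMap(X_\sharp^{\an},\AnPerf)$ for $\sharp\in\{\DR,\B\}$, which is an equivalence by \cite{HolsteinPorta2025} together with the equivalence \eqref{NatTransform} for smooth proper $X$. I also use the compatibility of the analytic cotangent complex with analytification (Proposition \ref{AnCotangentProperties}). Since $\mathrm{ev}^*\omega_{\Perf}$ and the integration map $\int_{[X_\sharp]}$ commute with $(-)^{\an}$, the form $\int_{[X_\sharp^{\an}]}\mathrm{ev}^*\omega_{\AnPerf}$ is the analytification of the PTVV form. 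It is therefore nondegenerate of degree $2-2d$.

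Naturality of transgression is formal. Given $f:S\to S'$ and an orientation $[S]$, the pullback of the form on $\AnMap(S',Z)$ along $f^*$ is the form transgressed using the pushforward orientation $f_*[S]$. This holds because $\mathrm{ev}_{S}=\mathrm{ev}_{S'}\circ(f\times f^*)$, and integration against $[S]$ after pulling back along $f$ is integration against $f_*[S]$. Applying this with $f=\eta_{\mathrm{RH}}:X^{\an}_{\DR}\to X^{\an}_{\B}$, it remains to show $\eta_{\mathrm{RH},*}[X_{\DR}^{\an}]=[X_{\B}^{\an}]$. Up to a universal nonzero normalisation such as $(2\pi i)^d$, this can be absorbed into the choice of orientation.

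The main obstacle is this orientation comparison. Pullback along $\eta_{\mathrm{RH}}$ on functions induces $C^*(X_{\mathrm{top}},\mathbb{C})\to\Gamma(X^{\an}_{\DR},\mathcal{O})\simeq\Omega^\bullet_{X^{\an}}$. This map is the holomorphic Poincaré lemma quasi-isomorphism $\underline{\mathbb{C}}\to\Omega^\bullet_{X^{\an}}$, and through GAGA it is the comparison with algebraic de Rham cohomology. I would identify it with the map of Grothendieck's comparison theorem in top degree. Then de Rham's theorem shows that integration of top forms corresponds to pairing with $[X_{\mathrm{top}}]$, which gives the equality of classes in $H^{2d}$. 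The extra difficulty is that an orientation is a chain-level map, not just a class. However, the space of such maps up to homotopy is detected by $H^{2d}$, so agreement in cohomology suffices. The remaining points are to check nondegeneracy of the Betti-side pairing (Poincaré duality for $X_{\mathrm{top}}$) and to confirm that the analytic AKSZ hypotheses (local geometricity, universal GAGA) hold for $X^{\an}_{\B}$ and $X^{\an}_{\DR}$ by \cite[Lem.~5.24, Lem.~5.31]{HolsteinPorta2025}. With these in place, $\eta_{\mathrm{RH}}^*$ pulls back one symplectic form to the other, and since it is already an equivalence of stacks, this proves the theorem.
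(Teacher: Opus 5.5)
Your proposal is correct and follows essentially the paper's route. The paper likewise reduces the statement to $\eta_{\mathrm{RH}}$ matching the Betti and de Rham orientations (the de Rham theorem, phrased there as the unit $p_*\to q_+\eta_{\mathrm{RH}}^*$ being an equivalence), identifies the pulled-back form by naturality of AKSZ transgression via a Beck--Chevalley/projection-formula argument, and takes the underlying equivalence of stacks from Porta's derived Riemann--Hilbert theorem; you are in fact more careful on two points the paper passes over, namely that agreement of orientations only needs checking on $H^{2d}$, and the $(2\pi i)^d$ discrepancy between the algebraic trace and topological integration, which must be fixed by the choice of normalisation.
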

We actually prove a more general statement allowing for Tannakian coefficients other than $Z=\Perf$, which should have future applications (Theorem.~\ref{AnalyticNAH} (ii)).

In Section \ref{sec: Derived NAH and KW}, we apply 
the theory of derived twistor families to the Deligne--Hitchin--Simpson moduli space interpreted as the moduli of solutions to the Kapustin--Witten equations in the surface case \cite{LRT2022}.
\subsubsection{Deligne--Simpson perfect complexes}
\label{sec:intro_hkr}
We now come to the main result of this paper, which generalizes the Deligne–Simpson moduli space \eqref{DeligneSimpsonIntro} of $\lambda$-connections to the derived analytic moduli stack of  perfect complexes on a smooth projective variety $X$ of dimension $d.$ For a related construction for $G$-bundles see \cite{FrancoHanson2024}.

In the surface case $d=2,$ we relate it to the Kapustin--Witten moduli space which, in light of Theorems \ref{MainTheorem2}, \ref{MainTheorem3}, and \ref{MainTheorem4}, is obtained as a derived hyperk\"ahler reduction. 

Let $X^{\mathrm{conj}}$ be the complex (Galois) conjugate variety of $X$, as in \S.~\ref{PreTwistorSection}, defined by taking complex conjugates of the coefficients of equations defining $X.$
 There is a real-analytic homeomorphism,
 $\gamma:X_{\B}\xrightarrow{\simeq}X^{\mathrm{conj}}_{\B},$
 as $X_{\B}$ is the topological space underlying the complex analytic manifold $X^{\an}.$
 Via Theorem \ref{AnalyticNAHIntro}, 
 we construct a closed immersion of derived analytic stacks to the analytification $\RAnPerf_{\Hod}(X)$ and to $\RAnPerf_{\Hod}(X^{\mathrm{conj}}).$ 
 We compute the relative homotopy-pushout in $\mathsf{dAnSt}_{/\bfP}$by gluing the along the shifted symplectic equivalence \ref{DerRHIntro}, and prove that it naturally inherits a relative shifted symplectic form.

Altogether, by the results on shifted twistor formalism, shifted symplectic aspects of the derived nonabelian Hodge Lagrangian correspondence and the shifted symplectic RH-correspondence, we prove the following main result completing the original construction by Simpson in his seminal paper \cite{Sim09}, and answering \cite[Questions 6.7 (1), (2)]{KPS2021}.

Recall $(\mathbb{A}^1\backslash \{0\})^{\mathrm{an}}\simeq\mathbb{C}^*\simeq \mathbf{G}_m,$ and our conventions regarding $\mathbf{P}^1$ are that it is always endowed with its $\mathrm{Gal}(\mathbb{C}/\mathbb{R})$-action (cf. Rem.~\ref{GalActionRemark}).

\begin{theorem}[Theorem \ref{DHSTwistor1}, Proposition \ref{DHSTwistor2}]
    \label{MainTheorem5}
    Let $X$ be a smooth $\mathbb{C}$-analytic space of dimension $d$. Then there exists closed-immersion of derived analytic stacks $\EuScript{J},\EuScript{J}_{\mathrm{conj}}$ and a homotopy-pushout diagram in derived analytic stacks
    \begin{equation}
        \label{DeligneHoPushout}
    \begin{tikzcd}
       \Map(X_{\B},\Perf)^{\an}\times \mathbf{G}_m\arrow[d,"\EuScript{J}"] \arrow[r,"\EuScript{J}_{\mathrm{conj}}"] & 
       \RAnPerf_{\Hod}(X^{\mathrm{conj}})\arrow[d,"p"]
\\
\RAnPerf_{\Hod}(X)\arrow[r,"q"] & \RAnPerf_{\Del}(X)
    \end{tikzcd}
\end{equation}
which endows the relative analytic Deligne--Hitchin--Simpson derived stack of perfect complexes,
   $$\eta_{\Del}:\RAnPerf_{\Del}(X)\to\bfP,$$
   with the structure of a $2(1-d)$-shifted pretwistor family of hyperk\"ahler type.
 
\end{theorem}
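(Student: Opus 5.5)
The plan is to build $\RAnPerf_{\Del}(X)$ by Simpson's gluing and then transport the relative symplectic data across the gluing. First, I will construct $\EuScript{J}$. By Theorem~\ref{MainTheorem}, $\RAnPerf_{\Hod}(X)$ over $\mathbf{A}^1$ restricts over $\mathbf{G}_m\subset\mathbf{A}^1$ to $\RAnPerf(X_{\DR}^{\an})\times\mathbf{G}_m$. This uses the $\mathbb{G}_m$-equivariance of $X_{\Hod}\to\Theta$: its generic fiber over $\mathbb{A}^1\setminus\{0\}$ is $X_{\DR}\times\mathbb{G}_m$. Composing with the inverse of the derived RH equivalence \eqref{DerRHIntro} identifies this open piece with $\Map(X_{\B},\Perf)^{\an}\times\mathbf{G}_m$; the analytification comparison is an equivalence by \cite{HolsteinPorta2025}. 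This defines $\EuScript{J}$.

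For $\EuScript{J}_{\mathrm{conj}}$ I will do the same for $X^{\mathrm{conj}}$, but precompose with the homeomorphism $\gamma:X_{\B}\simeq X^{\mathrm{conj}}_{\B}$ and with $\lambda\mapsto\lambda^{-1}$ on $\mathbf{G}_m$. Thus the pushout \eqref{DeligneHoPushout} lies over the standard two-chart gluing $\mathbf{A}^1\cup_{\mathbf{G}_m}\mathbf{A}^1\simeq\bfP$, and $\eta_{\Del}$ is induced by the universal property in $\mathsf{dAnSt}_{/\bfP}$. Both maps are open immersions onto the loci $\lambda\neq 0$. I read the statement's ``closed immersion'' as the base change of $\mathbf{G}_m\hookrightarrow\mathbf{A}^1$, but only the fact that the maps are monomorphisms matters, so that pushouts along them are computed chartwise and are again locally geometric. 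Local geometricity then follows by descent along the open cover.

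Second, I will construct the twisted relative form. On each chart, Theorem~\ref{MainTheorem} (via the $\mathcal{O}_\Theta(1)$-twisted $2d$-orientation and the twisted AKSZ construction) gives a relative $2(1-d)$-shifted symplectic form over $\mathbf{A}^1$ with values in $\eta^*\mathcal{O}(2)$, of weight $2$ by Proposition~\ref{WeightVersion}. On the overlap, Theorem~\ref{AnalyticNAHIntro} says that $\eta_{\mathrm{RH}}^*$ matches the two forms. The transition $\lambda\mapsto\lambda^{-1}$ multiplies a weight-$2$ section by $\lambda^{-2}$, which is exactly the cocycle of $\mathcal{O}_{\bfP}(2)$. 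Since closed $2$-forms satisfy descent (they form a sheaf for the analytic/étale topology), the two chart forms glue to a global section in $\Gamma(\RAnPerf_{\Del}(X),\wedge^2\mathbb{L}^{\an}_{/\bfP}\otimes\eta^*\mathcal{O}(2))[2-2d]$. Nondegeneracy \eqref{TwistedNDIntro} is local on $\bfP$, so it holds because it holds on each chart.

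Third, I will build the real structure. Define $\sigma$ on the pushout by sending the $X$-chart at $\lambda$ to the $X^{\mathrm{conj}}$-chart at $-\bar\lambda^{-1}$ through complex conjugation of the Hodge stacks, $\RAnPerf_{\Hod}(X)\to\overline{\RAnPerf_{\Hod}(X^{\mathrm{conj}})}$, combined with $\gamma$ and duality (or $\lambda\mapsto-\lambda$) as in Simpson. I then check that $\sigma^2\simeq\mathrm{id}$ coherently, which gives a $\mathsf{B}C_2$-diagram in the sense of Proposition~\ref{GaloisLemma} lying over $\sigma_{\mathbf{P}^1}$. The form is compatible with $\sigma$ because conjugation sends the fundamental class $[X]$ to $[X^{\mathrm{conj}}]$ and the RH equivalence is Galois-equivariant on Betti data. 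Finally, the hyperkähler-type condition (Definition~\ref{DerivedTwistor2}) reduces to the classical statement. The classical truncation restricted to Gieseker semistable objects has good moduli space Simpson's $M_{\Del}$ (via the NAH identification \eqref{NAH}), and this is $\mathrm{Tw}(M_{\Dol})$ with twistor lines given by harmonic metrics.

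The main obstacle is the coherent gluing of the forms on the overlap. Theorem~\ref{AnalyticNAHIntro} gives a match on the fiber over $\lambda=1$, but I need it uniformly over $\mathbf{G}_m$ and with the correct $\lambda$-scaling. I would address this by deducing the family statement from the $\lambda=1$ case using $\mathbb{G}_m$-equivariance: the scaling action identifies fibers, and the orientation has weight $1$ on each side. A further point to verify is that the orientation classes agree in negative cyclic homology, not just in cohomology, so that the closure data glue as well.
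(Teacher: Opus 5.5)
Your architecture is a genuinely different route from the paper's, and in several respects a cleaner one.

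\emph{Gluing.} You treat $\EuScript{J},\EuScript{J}_{\mathrm{conj}}$ as open immersions and obtain both local geometricity and $\eta_{\Del}$ by descent along the two-chart cover $\mathbf{A}^1\cup_{\mathbf{G}_m}\mathbf{A}^1\simeq\bfP$. The paper instead treats them as closed immersions. It forms the pushout of the underlying $\infty$-topoi, writes the structure sheaf as a fibre product with a Postnikov/square-zero analysis, and proves local geometricity separately via tor-amplitude strata and relative GAGA (Proposition~\ref{DHSTwistor2}). Your reading is the correct one. The maps factor through the base change of $\mathbf{G}_m\hookrightarrow\mathbf{A}^1$, which is open and not closed: the section $\lambda\mapsto(\mathcal{O}_X,\lambda\,d)$ pulls the locus $\lambda\neq0$ back to $\mathbf{G}_m\subset\mathbf{A}^1$. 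So the closed-immersion machinery does not apply, and your descent argument is the one that actually works.

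\emph{Real structure.} Your involution exchanges the two charts via $X\leftrightarrow X^{\mathrm{conj}}$, as in Simpson. The paper glues chartwise automorphisms covering involutions of $\mathbf{A}^1$, which cannot cover the antipode because the antipode swaps $0$ and $\infty$.

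\emph{Forms.} You glue the chart forms over the overlap using Theorem~\ref{AnalyticNAH}. The paper only applies Theorem~\ref{MainTheoremBodyBody} on each chart and never examines the overlap.

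The step that fails is the twist, which you rightly single out as the crux.

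First, Proposition~\ref{WeightVersion} does not apply here. It describes associated bundles $Z'\times_{\mathsf{B}\mathbb{G}_m}\mathbb{P}^1$, all of whose fibres are isomorphic, and the Deligne stack is not of that form.

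What fixes the cocycle is the weight of each chart form, and that weight is $d$. Under $\tau$ the fibre complex $(\Omega^\bullet_X,\lambda\,d)$ is identified with $(\Omega^\bullet_X,d)$ by rescaling $\Omega^p_X$ by $\lambda^{-p}$. The orientation that extends over $\lambda=0$ is the trace on $H^{2d}_{\DR}(X)=\mathrm{gr}^d_F H^{2d}_{\DR}(X)=H^d(X,\Omega^d_X)$. So on the overlap the chart-$0$ form is $\lambda^{d}\omega_{\DR}$ and the chart-$\infty$ form is $\lambda^{-d}\omega_{\DR}$. This holds after $\eta_{\mathrm{RH}}^*$ and $\gamma$, up to nonzero constants such as the sign $(-1)^d$ coming from $\gamma$.

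The transition is therefore $\lambda^{-2d}$, the cocycle of $\mathcal{O}_{\bfP}(2d)$, not of $\mathcal{O}_{\bfP}(2)$. Your ``weight $1$ on each side'' (equivalently, the $\mathcal{O}_\Theta(1)$-twist you import from Theorem~\ref{MainTheorem}) holds only for $d=1$. Your ``weight $2$'' citation is inconsistent with it and would give $\lambda^{-4}$.

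For curves your argument closes. For $d\geq 2$, in particular for surfaces, the natural PTVV forms glue to an $\mathcal{O}(2d)$-twisted relative $2(1-d)$-shifted form. Getting an $\mathcal{O}(2)$-twist would require a weight-one orientation, which the fundamental class is not. The paper's proof does not carry out this check, so it offers no way around the issue.
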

The push-out \eqref{DeligneHoPushout} must be computed in derived analytic stacks, since it is not the analytification of a derived algebraic stack, simply because the equivalence \eqref{DerRHIntro} only holds in the analytic setting (c.f, Remark \ref{AnalyticRequirementRemark}).

Applying Theorem \ref{MainTheorem5} to the $2$-shifted symplectic stack $\mathsf{B}G$, in place of $\Perf$, we have the following natural corollary.

\begin{corollary}
\label{KWCor}
The derived stack $\AnMap_{\Del}(X,G)$ is a derived hyperkahler reduction. Moreover, the restriction of its classical truncation to the locus of Gieseker semistable sheaves is a good moduli space for the classical Deligne--Hitchin--Simpson twistor family $\mathcal{M}_{\mathrm{Del}}^{\mathrm{ss}}(X)$ in \eqref{DeligneSimpsonIntro}.
\end{corollary}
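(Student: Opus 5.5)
The plan is to run the proof of Theorem \ref{MainTheorem5} with $\mathsf{B}G$ in place of $\Perf$, and then treat the two claims separately. For any connected reductive $G$ over $\mathbb{C}$, $\mathsf{B}G$ carries the $2$-shifted symplectic form coming from a nondegenerate invariant pairing on $\mathfrak{g}$. It is geometric, of finite presentation, and Tannakian in the sense of \cite[Def.~6.1]{HolsteinPorta2025}, as stated after Theorem \ref{MainTheorem2}.

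First I build the $\mathsf{B}G$ analogue of the pushout \eqref{DeligneHoPushout}. The inputs are:
\begin{enumerate}
\item[(a)] the relative Lagrangian correspondence of Theorem \ref{MainTheorem}, with $\mathsf{B}G$ as target; its proof uses only the $\mathcal{O}_\Theta(1)$-twisted $2d$-orientation on $X_{\Hod}\to\Theta$ and AKSZ;
\item[(b)] the symplectic Riemann--Hilbert equivalence of Theorem \ref{AnalyticNAH}(ii), applied with Tannakian coefficients $Z=\mathsf{B}G$.
\end{enumerate}
Gluing $\AnMap_{\Hod}(X,G)$ and $\AnMap_{\Hod}(X^{\mathrm{conj}},G)$ along $\Map(X_{\B},\mathsf{B}G)^{\an}\times\mathbf{G}_m$ then produces $\AnMap_{\Del}(X,G)\to\bfP$. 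Theorem \ref{MainTheorem2}(i) and (ii) give the relative $\mathcal{O}(2)$-twisted $2(1-d)+2-2 = 2-2d$ shifted form on each chart. These forms agree on the overlap by Theorem \ref{AnalyticNAH}, so they descend to the pushout. The $C_2$-action comes from exchanging $X$ with $X^{\mathrm{conj}}$ and using $\gamma$, which covers $\sigma_{\mathbb{P}^1}$.

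To see that the result is a derived hyperk\"ahler reduction in the sense of Theorem \ref{MainTheorem4}, I present it as a Lagrangian intersection. Locally on $X$, fix the underlying $C^\infty$ bundle, which gives an infinite-dimensional affine family of $\lambda$-connections. Its twisted moment map for the gauge group then expresses $\AnMap_{\Del}(X,G)$ as the fibre product
\[
[\![\,\mu^{-1}(0)/\mathcal{G}\,]\!] \;=\; [\![Z/\mathcal{G}]\!]\times_{[\![\mathfrak{g}^*[n]/\mathcal{G}]\!]}\mathsf{B}\mathcal{G}.
\]
A cleaner route avoids infinite-dimensional gauge groups. Write $\mathsf{B}G=[\mathrm{pt}/G]$ and use the identity $\Map(Y,\mathsf{B}G)=\Map(Y,\mathrm{pt})\times_{\Map(Y,\mathfrak{g}^*[2]/G)}\Map(Y,\mathsf{B}G)$, following \cite{Safronov2016}. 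Transgressing this cartesian square along $Y=X_{\Hod}$ relative to $\Theta$, and gluing with the conjugate chart, exhibits $\AnMap_{\Del}(X,G)$ as the cartesian square of Theorem \ref{MainTheorem4}. The moment map is $\mathcal{O}(2)$-twisted because the twisted orientation has weight one over each affine chart of $\bfP$. Theorem \ref{MainTheorem3} then supplies the shifted pretwistor structure on this intersection, compatibly with analytification.

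For the second claim I pass to classical truncations. Take $t_0$ of the pushout and restrict to the open substack of Gieseker semistable objects on each chart, i.e.\ semistable $\lambda$-connections. The semistable $\lambda$-connections and semistable Higgs bundles are the stacks constructed by Simpson, and by \cite{alper2013good} these admit good moduli spaces $M_{\Hod}^{\mathrm{ss}}(X)$ and $M_{\Hod}^{\mathrm{ss}}(X^{\mathrm{conj}})$. Good moduli spaces are compatible with open gluing, because their formation is local on the base and commutes with flat base change. The classical Riemann--Hilbert map \eqref{RHIntro} is an analytic isomorphism of the $\lambda\neq0$ loci, so the glued good moduli space is Simpson's $M_{\Del}^{\mathrm{ss}}(X)\to\mathbb{P}^1$ of \cite{SimpsonHodgeFiltrationNonabelian}.

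The main obstacle I expect is the first point: identifying the glued analytic stack with the transgressed cartesian diagram. This requires that the homotopy pushout in $\mathsf{dAnSt}_{/\bfP}$ commute with the fibre products used in the reduction. I would check this chartwise over the two affine opens of $\bfP$. Pushouts along open immersions are universal in the $\infty$-topos, so base change commutes with them, and the check reduces to the algebraic statement on each chart together with Theorem \ref{AnalyticNAH}(ii) on the overlap.
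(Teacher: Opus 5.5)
Your construction of the glued stack follows the paper: you run the $\mathsf{B}G$ version of Theorem \ref{MainTheorem5} via Theorem \ref{AnalyticNAH}(ii). Your good-moduli argument also follows it: you take Alper's good moduli spaces on the two Hodge charts and glue them over $\lambda\neq 0$. The paper gets both from the Tannakian variant of Theorem \ref{DHSTwistor1}, in particular part (iii).

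The gap is in the first assertion, that $\AnMap_{\Del}(X,G)$ is a derived hyperk\"ahler reduction. Your ``cleaner route'' rests on the identity $\Map(Y,\mathsf{B}G)\simeq\Map(Y,\mathrm{pt})\times_{\Map(Y,[\mathfrak{g}^*[2]/G])}\Map(Y,\mathsf{B}G)$, and this identity is false. Since $\Map(Y,-)$ preserves limits, it reduces to $\mathsf{B}G\simeq\mathrm{pt}\times_{[\mathfrak{g}^*[2]/G]}\mathsf{B}G$. Pulling the zero section back along the torsor $\mathfrak{g}^*[2]\to[\mathfrak{g}^*[2]/G]$ gives $0\colon\mathrm{pt}\to\mathfrak{g}^*[2]$, so the right-hand side is $\mathrm{pt}\times_{\mathfrak{g}^*[2]}\mathrm{pt}\simeq\mathfrak{g}^*[1]$. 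If you meant the reduction of a point as in \cite{Safronov2016}, namely $[\mathrm{pt}/G]\times_{[\mathfrak{g}^*[2]/G]}\mathsf{B}G$, you get $[\mathfrak{g}^*[1]/G]\simeq\mathsf{T}^*[2]\mathsf{B}G$, which again is not $\mathsf{B}G$.

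A correct cartesian square of targets would not rescue the approach either. Transgressing it along $X_{\Hod}\to\Theta$ gives a Lagrangian intersection inside the mapping stack $\Map_{/\Theta}(X_{\Hod},\mathsf{T}^*[3]\mathsf{B}G\times\Theta)$. That is not the square of Theorem \ref{MainTheorem4}. The theorem requires:
\begin{itemize}
\item a pretwistor family $Z$ with a fibrewise action of a fixed connected reductive group;
\item an $\mathcal{O}(2)$-twisted moment map $Z\to\mathfrak{g}^*[n]_{\mathbb{P}^1}$;
\item a verification of its hypotheses (i) and (ii).
\end{itemize}
Your gauge-group route does not supply these either. The group $\mathcal{G}$ is infinite-dimensional, and nothing in the paper's holomorphic framework covers that case.

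The paper argues differently. On the chart $\mathbb{A}^1_0$ (the other chart by conjugation), it takes an $\mathcal{O}(2)$-twisted $2(1-d)$-shifted relative moment map $\mu|_{\mathbb{A}^1_0}$ on $\RPerf_{\Hod}(X)|_{\mathbb{A}^1_0}$ as given. It then forms $[\![\RPerf_{\Hod}(X)|_{\mathbb{A}^1_0}/G]\!]\times^h_{\mathfrak{g}^*\otimes\mathcal{O}(2)|_{\mathbb{A}^1_0}/G}(\mathsf{B}G\times\mathbb{A}^1_0)$ and invokes Theorem \ref{MainTheorem4}.

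To close the gap you have to do this explicitly. Name the group and its fibrewise action, and produce the twisted moment map on both charts, compatibly over $\lambda\neq 0$. Then verify (i) and (ii). Your one-line reason that the twisting is $\mathcal{O}(2)$ (``the orientation has weight one'') also needs an actual argument.
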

Via pullback along $\mathbb{A}^1\to \Theta,$ by identifying $\mathbb{P}^1$ with two copies $\mathbb{A}_0^1,\mathbb{A}_{\infty}^1$ of $\mathbb{A}^1$, given a $2(1-d)$-shifted $\EuScript{O}(2)$-twisted relative moment map $\mu$ restricted to one copy:
$$\mu|_{\mathbb{A}_0}:\RPerf_{\mathrm{Hod}}(X)|_{\mathbb{A}_0^1}\to \mathfrak{g}^*\otimes \mathcal{O}_{\mathbb{P}^1}(2)|_{\mathbb{A}_0^1},$$
the reduction as a relative derived Lagrangian intersection given by
\begin{equation}
\label{RKW(X)1}
[\![\RPerf_{\mathrm{Hod}}(X)|_{\mathbb{A}_0^1}/G]\!]\times_{\mathfrak{g}^*\otimes\EuScript{O}_{\mathbb{P}^1}(2)|_{\mathbb{A}_0^1}/G}^h(\mathsf{B}G\times\mathbb{A}_{0}^1).
\end{equation}
in the surface case, yields a shifted-symplectic fiber of a relative twisted $(-2)$-shifted derived (pre)twistor family of hyperk\"ahler type by Theorem \ref{MainTheorem4}. As emphasized in Subsect.~\ref{ssec: Gaugetheoretic KW-theory}, motivated by results of \cite{LRT2022} it is interpreted as an analog of the Kapustin--Witten moduli space of perfect complexes; for $\mathsf{B}G$, the result is compatible with the conclusions of \cite{ElliotYoo2018}, however our construction differs substantially in nature and interpretation.
\begin{remark}
  \label{DNCRemark}
    In \cite{NABMHS2000}, it was conjectured the local deformation theory along preferred sections of a derived twistor stack should be governed
by some kind of mixed Hodge complex. More specifically,
the prefered sections of Hitchin-Deligne’s twistor space for perfect complexes in Theorem \ref{MainTheorem5} correspond to $\mathcal{O}_X$-perfect mixed Hodge complexes over $X$, in the sense of T. Mochizuki \cite{Mochizuki2015}. See also, works of Sabbah \cite{SabbahTwistor} and Saito \cite{Saito1988,Saito1990}. 
This perspective is summarized by stating that the formal neighborhood of a prefered section should be a nonabelian mixed Hodge
structure \cite{NABMHS2000}. 
Recall the conformal automorphism group $\mathsf{H}$ of Remark \ref{ConformalRemark}. An $\mathsf{H}$-equivariant mixed twistor structure i.e. a mixed twistor structures which is $\mathbb{G}_m$-equivariant
and equipped with antipodal involution, is 
equivalent to a $\mathbb{R}$-mixed Hodge structure \cite[Prop.~2.1]{Simpson1997MixedTwistor}. Moreover, Hodge structures are $\mathbb{C}^*$-equivariant twistor structures. With this picture in mind, one may carry out the derived deformation to the normal cone construction in the $\mathsf{H}$-equivariant topos. We will revisit this computation later.
\end{remark}

\subsubsection{Relation to other works}
\label{ssec: Relation to other works}
There is a profound connection between Kapustin--Witten theory and (categorical) Geometric Langlands. Motivated from physics perspective, specifically via topological field theory, this was studied mathematically within the context of derived algebraic geometry by C. Elliot and P. Yoo. \cite[Thm 1.3,1.4]{ElliotYoo2018}.
The authors compute the derived moduli space of the equation of motion of holomorphic, $B$ and $A$-twists of $\mathcal{N}=4$ super Yang--Mills theory as an assignment of derived stacks. Their starting point is the twistor space point of view, and fits into a general theory of computing topological twists of $\mathcal{N}=4$ supersymmetric gauge theories with gauge group $G$. 
Note that their parameter $\Psi$ is defined as

$$ \Psi := \frac{\tau + \bar{\tau}}{2}  + \frac{\tau - \bar{\tau}}{2} \left( \frac{t - t^{-1}}{t + t^{-1}} \right), $$ 
where $\tau = \frac{\theta}{2 \pi} + \frac{4 \pi i}{e^2}$, as in \cite[Sect.~3.5]{KapustinWitten2007}. Thus,  their B-model at $\Psi =\tau$ corresponds to $t=0$ in the twisting parameter of supersymmetry, or $\lambda =0$ in the Deligne--Simpson  moduli space, and the A-model at $\Psi = \infty$ corresponds to $t=1$ in the supersymmetry parameter, or $\lambda =1$ in the Deligne--Simpson  moduli space but the structure group or the {\it gauge group} replaced by the Langlands dual ${}^{L}G$.  

The moduli spaces of solutions to the equations of motion in the $A$ and $B$-twists of $\mathcal{N}=4$ super Yang--Mills on a smooth proper complex algebraic surface $S$ are equivalent to
$\mathbf{Higgs}_G(S)_{\mathrm{DR}},$ and $\mathsf{T}^*[-1]\mathbf{Flat}_G(S)_{\mathbf{Flat}_G(S)}^{\wedge},$ respectively as $(-1)$-shifted symplectic stacks.

Note the $B$-module twist 
is a deformation of $\mathbf{Higgs}(S)$; it sees the algebraic structure of
$\mathbf{Flat}_G(S)$ as opposed to the stack of locally constant
sheaves. The $A$-twist gives rise to a de Rham stack, explaining the appearance of
$D$-modules \cite{GR14}, and avoiding Fukaya-type categories in the formulation of the categorical geometric
Langlands correspondence.

Summarizing the moduli spaces involved in \cite{ElliottYoo2025}, where the authors consider complex algebraic surfaces of the form $\mathbb{C}\times C$, where $C$ is a closed
algebraic curve, whose higher dimensional generalization (which is possible via results of \cite{calaque2024shifted}) as discussed in  Subsect.~\ref{ShiftedHiggs}, are given by the diagram:
\begin{equation}
\label{TwistedBunGDiagramIntro}
\begin{tikzcd}
& \arrow[dl,"\lambda \to 1"] T_{\EuScript{L}_{\det}^{\lambda}}^*\mathrm{Bun}_G(C)\arrow[dr,"\lambda\to 0"]\arrow[d,"\simeq"]
\\
T_{\EuScript{L}_{\det}}^*\mathrm{Bun}_G(C) \arrow[d,"\simeq"] & \arrow[dl,"\lambda \to 1"] Flat_G^{\lambda}(C)\arrow[dr,"\lambda \to 0"] & \arrow[d,"\simeq"] T^*\mathrm{Bun}_G(C)
\\
Flat_G(C) & & Higgs_G(C).
\end{tikzcd}
\end{equation}

\subsection{Notations and definitions}
Here and throughout, we work over a field $K$ of characteristic zero, or more general ring over $K$.
We follow closely \cite{Lur09,Lur17}, and by a \emph{category}, we mean a $K$-linear stable presentable $(\infty,1)$-category, denoted by $\mathsf{C}.$ We let $\mathrm{Maps}_{\mathsf{C}}$ denote the mapping space. The category of spaces is denoted by $\mathsf{Spc}.$
We assume familiarity with derived algebraic geometry \cite{LurieDAGV}. See \cite{T2014} for an overview.

\begin{itemize}
\item We let $\mathrm{dg}_K$ denote the category of cochain complexes of
$K$-vector spaces considered with its projective model structure and standard tensor product, so it is a symmetric monoidal model category. 
Given a symmetric monoidal model category $M$ satisfying some assumptions \cite[Subsect~1.1]{CPTVV17}, the associated 
symmetric monoidal $\infty$-category, obtained via homotopy coherent
nerve of the Dwyer--Kan localization $L(M)$ along weak equivalences is denoted by $\mathsf{M}$. For example, $\mathsf{dg}_K=L(\mathrm{dg}_K).$
\item Let $\mathsf{cdga}_K^{\leq 0}$ denote connective commutative dg-algebras in $\mathsf{dg}_K.$ Let $\mathsf{PSt}_K$ denote the $\infty$-category of $K$-prestacks i.e. accessible $\infty$-functors $\mathsf{cdga}_K^{\leq 0}\to \mathsf{Spc}.$ We denote by $\mathsf{dSt}_{K}$ the $\infty$-category of derived stacks over $K$ for the standard \'etale topology \cite{TV2}. For $n\geq -1,$ we use the notion of $n$-geometric derived stack and $n$-representable morphism of derived stacks given in \cite[Def.~1.3.3.1]{TV2}.
\item Working over $K=\mathbb{C}$, denote the $\infty$-category of connective commutative differential graded algebras $A$ of finite presentation over $\mathbb{C}$ as $\mathsf{cdga}_{\mathbb{C}}^{\leq 0}$ and by $\mathsf{dAff}_{\mathbb{C}}$ the opposite category of derived affine $\mathbb{C}$-schemes. Let $\mathsf{Mod}(A)$ denote the $\infty$-category of connective $A$-modules.

\item Given a prestack $S$, denote by $\mathsf{dSt}_{S/K}$ the category of $S$-prestacks. Given a derived prestack $S$ and derived $S$-prestacks $E,F$, the relative mapping prestack is $\Map_{/S}(E,F)\in\mathsf{PStk}_S.$

\item 
Let $\underline{\mathsf{Q}}\to \mathsf{dSt}^{op}$ the cocartesian fibration for the functor $\underline{\mathsf{QCoh}}^*(-)$, where the $*$ refers to the fact we consider this assignment with respect to $*$-pull-back functors. Equivalently, it is the cartesian fibration for the corresponding right-adjoint pushforward functors, which for a morphism of prestacks $f:B\to S$ is denoted $f_*(-):=\Gamma_{S}(B,-).$
Then, given a derived stack $Z$, we have
$$\mathsf{QCoh}(Z)\simeq \underset{\mathsf{Spec}(A)\to Z}{\lim} \mathsf{Mod}(A).$$
We write $\mathsf{Mod}^{\mathrm{gr}}(A)$ the $\mathbb{Z}$-graded $A$-modules, and $\mathsf{Mod}^{\mathrm{gr},\epsilon}(A)$ the graded mixed $A$-modules. Similarly, $\mathsf{QCoh}^{\mathrm{gr}}(Z),\mathsf{QCoh}^{\mathrm{gr},\epsilon}(Z).$
\item 
Let $\underline{\mathcal{O}}:\mathsf{dSt}^{op}\to \underline{\mathsf{Q}},$ be the cocartesian section sending a derived stack $S$ to its structure sheaf $\mathcal{O}_S.$ 

\item Let $\mathsf{Perf}(A)\subseteq \mathsf{Mod}(A)$ the stable idempotent complete $\infty$-category spanned by perfect $A$-modules. An $A$-module $M$ is \emph{almost perfect} if $\pi_i(M)=0,i\ll0$ and for every $n\in\mathbb{Z}$, the truncation $\tau^{\leq n}M$ is compact as an object of $\mathsf{Mod}(A)^{\leq n}.$ Denote this category by $\mathsf{APerf}(A).$
$$\mathsf{Perf}(Z)\simeq \underset{\mathsf{Spec}(A)\to Z}{\lim}\mathsf{Perf}(A),\hspace{2mm}\mathsf{APerf}(Z)\simeq\underset{\mathsf{Spec}(A)\to Z}{\lim}\mathsf{APerf}(A).$$

\item Let $\mathsf{Coh}(Z)$ be the full subcategory of $\mathsf{Mod}(\mathcal{O}_Z)$ consisting of $\mathcal{F}$ for which there exists an atlas $\{f_i:U_i\to Z\}_{i\in I}$ such that for each $i\in I,n\in\mathbb{Z}$, the $\mathcal{O}_{U_i}$-modules $\pi_n(f_i^*\mathcal{F})$ are coherent sheaves.

Let $f:B\to S$ be a morphism of derived stacks, with $E\in\mathsf{QCoh}(B)$. We say that it has \emph{tor-amplitude $[a,b]$ relative to $S$} (resp. $\leq n$ relative to $S$) if for every $F\in \mathsf{QCoh}(S)^{\heartsuit},$ 
$\pi_i(E\otimes f^*F)=0,$ for $i\notin[a,b]$ (resp. $i\notin [0,n]$). We write the corresponding full subcategories of $\mathsf{QCoh}(B)$ spanned by them as
$\mathsf{QCoh}_S^{\leq n}(B),\mathsf{APerf}_S^{\leq n }(B).$ 
Set
$$\mathsf{Coh}_S(B):=\mathsf{APerf}_{S}^{\leq 0}(B).$$

\item In this work, we also consider the derived stacks
$\RCoh(Z),\mathbb{R}\underline{\mathrm{APerf}}(Z),\RPerf(Z):\mathsf{dAff}^{op}\to \mathsf{Spc},$
where $\mathbb{R}\underline{\mathrm{APerf}}(Z\times S)$ is gives $\mathsf{APerf}(Z\times S)^{\simeq},$ and 
$\RCoh(Z)$ sends a derived affine scheme $S$ to the full sub $\infty$-groupoid,
$$\mathsf{Coh}_S(Z\times S)^{\simeq}\hookrightarrow \mathsf{APerf}(Z\times S)^{\simeq}.$$
They are locally geometric and finitely presented derived stacks by \cite[Prop.~3.13, Cor.~3.29]{toenvaquie2007} i.e. a union of open substacks (of fixed tor-amplitudes) that are geometric. 

If $Z$ is a proper, underived complex scheme $X$, the derived stack $\RCoh(X)$ admits a global cotangent complex. Moreover, $^{\mathrm{cl}}\RCoh(X)$ is the usual stack $M_{coh}(X)$ of coherent sheaves on $X.$ Set $\mathbb{R}\underline{\mathrm{Vect}}(Z):=\coprod_{n\geq 0}\Map(Z,\mathsf{\mathsf{B}GL}_n)$ to be the open derived substack of $\RCoh(Z)$, to be called the \emph{derived stack of vector bundles on $Z$.}
The derived analytic stack $\RAnCoh$ of coherent sheaves is constructed in \cite[Sec.~7.1]{PortaSala2023}.
\end{itemize}

\subsection{Acknowledgments}
The authors thank S. Heller, M. Porta and P. Pandit for helpful discussions. 
In the early stages of this project, J.K. was supported by the Simons Foundation (grant SFI‑MPS‑T‑Institutes‑00007697) and the Ministry of Education and Science of the Republic of Bulgaria (grant DO1‑239/10.12.2024). 
Y.T. was supported by JSPS KAKENHI Grant Numbers JP21H00973, JP21K03246, Startup Grant at BIMSA, and the Beijing NSF Beijing International Scientist Project IS25031. 
Part of this work was conducted while J.K. and Y.T. were visiting Kyoto University and was completed while J.K. held a visiting position at BIMSA; they thank these institutions for their hospitality.

\section{Kapustin--Witten and nonabelian Hodge theory}
\label{sec: KWReview}
In this section, we provide background, recalling the main results of \cite{LRT2022}.

\subsection{Kapustin--Witten equations}
\label{ssec: KW}
Let $M$ be a closed, oriented, smooth, Riemannian 4-manifold with Riemannian metric $g$, and let $P \to M$ be a principal $G$-bundle over $M$ for a compact Lie group $G$. 
Following \cite{KapustinWitten2007}, via a topological twist
of $\mathcal{N}=4$ super Yang–-Mills theory in four dimensions, one has the following family of equations parameterized by $t \in \mathbb{P}^1$ for pairs $(A,\mathfrak{a})$  where $A$ is a connection on $P$ with its curvature $F_A$, and $\mathfrak{a} \in \Omega^1(M, \operatorname{ad} P):=\Gamma^{\infty}(M, \operatorname{ad} P  \otimes T^*M)$: 
\begin{align}
    &\big(F_A-[\mathfrak{a}\wedge\mathfrak{a}]+t d_A \mathfrak{a} \big)^+=0,
   \label{eqn:KW1}
    \\
    &\big(F_A-[\mathfrak{a}\wedge\mathfrak{a}]-t^{-1}d_A\mathfrak{a} \big)^-=0,
    \label{eqn:KW2}
    \\
    &d_A^{*} \mathfrak{a}=0,
    \label{eqn:KW3}
\end{align}
where $\pm$ in the first two conditions are the projections to the self-dual and anti-selfdual parts of $\mathrm{Ad}(P)$-valued two-forms, respectively, and $d_A^*$ is the formal adjoint of the covariant derivative $d_A.$ 
A pair $(A, \mathfrak{a})$ is said to be a solution to the $(G-)$ Kapustin--Witten equations on $M$ if it satisfies the above equations.

For the parameter $t =0$, the Kapustin--Witten equations \eqref{eqn:KW1}, \eqref{eqn:KW2}, \eqref{eqn:KW3} for the pair $(A,\mathfrak{a})$ become: 
\begin{align}
   &F_A^{+}-[\mathfrak{a}\wedge\mathfrak{a}]^+=0,
    \label{eqn:KWt01}
    \\    
   &d_A^{*} \mathfrak{a} =0,\hspace{2mm} \big(d_A\mathfrak{a} \big)^-=0.
    \label{eqn:KWt02}
\end{align}
Note that after gauge fixing, the equation $d_A^{*} \mathfrak{a} =0$ renders this system elliptic.

When the underlying 4-manifold is a K\"ahler surface, which we denote by $S$ from here, the equations \eqref{eqn:KWt01} and \eqref{eqn:KWt02} reduce to the Simposon equations (see \cite{Naka}, \cite{Tana}). Namely, the moduli space of solutions to the equations \eqref{eqn:KWt01} and  \eqref{eqn:KWt02} on a compact K\"ahler surface can be identified with the moduli space of stable Higgs bundles, via Corrette--Donaldson--Hitchin--Simpson--Uhlenbeck--Yau correspondence. 

On the other hand, for the parameter $t \in \mathbb{R} \setminus \{ 0 \}$ the Kapustin--Witten equations \eqref{eqn:KW1}, \eqref{eqn:KW2} read
\begin{gather}
F_{A} - [\mathfrak{a} \wedge \mathfrak{a} ] =0, \, 
d_{A} \mathfrak{a} = d_{A}^{*} \mathfrak{a} =0 .  
\label{eqn:KWt1}
\end{gather}
When the underlying four-manifold is a K\"ahler surface $S$, the system of the equations \eqref{eqn:KWt1} is equivalent to those defining a pluriharmonic metric on a Higgs bundle over $S$ in nonabelian Hodge theory \cite{LRT2022}.
Namely, the moduli space of solutions to the equation \eqref{eqn:KWt1} on a compact K\"aler surface $S$ coincide with the moduli space of semisimple local system on $S$. 

By classical NAH,  one has an interpretation of the moduli spaces of solutions to the Kapustin--Witten equations at $t=0$ and that at $t \in \mathbb{R} \setminus \{ 0 \}$, which are diffeomorphic if the underlying smooth four-manifold is a compact K\"ahler surface. This raises the question of how the two moduli spaces defined by equations \eqref{eqn:KWt01}--\eqref{eqn:KWt02} and \eqref{eqn:KWt1}
are related when the underlying manifold is more generally a compact, oriented, smooth four-manifold.

In fact, \cite[Thm.~5.1, Thm.~5.2, Cor.~5.3]{LRT2022} shows that the two moduli spaces have the same virtual dimension;
the expected dimension is the dimension of the moduli space when it is smooth
and unobstructed, computed e.g. via the index of the Atiyah--Hitchin--Singer
deformation complex. 

In this paper, we pursue a different direction by relaxing the assumption on the structures of underlying manifold. We focus on the case where the underlying manifold is in fact a algebraic surface and investigate the geometry of the moduli spaces at the level of their derived enhancements. More specifically, each of these moduli spaces carries a $(-2)-$shifted symplectic structure, and our goal is to clarify their relationship with gauge-theoretic structures within the framework of derived algebraic and analytic geometry.

\begin{remark}
    The parameter $t$ indexes the family \eqref{eqn:KW1}, \eqref{eqn:KW2}, \eqref{eqn:KW3},  
    but note that for 
    $t\in \bfP \setminus (\mathbb{R}\cup \{\infty\}),$ 
    this system is overdetermined, while for $t=\infty$ are orientation-reversed equations for $t=0.$ Foundations of derived geometry of overdetermined systems of non-linear PDEs appear in \cite{KSY23,KSY2} and \cite{Kry2026}. A more detailed treatment is unfortunately outside the scope of the current article, and by keeping $t$ real, we may follow instead the approaches of \cite{BJ}. 
\end{remark}

In the next section, we recall the basic facts concerning derived symplectic geometry which will be used to develop the notion of a hyperk\"ahler structure in the context of derived algebraic and analytic geometry.

\section{Shifted symplectic geometry}
\label{sec: Shifted symplectic geometry}
We recall basic notions from shifted symplectic derived algebraic geometry, taking the opportunity to further fix some notations. We closely follow \cite{PTVV13,CPTVV17} and \cite[App.~B]{CalaqueHaugsengScheimbauer2025} in the relative setting.

First, in \S\ref{ssec: RelSSS} we recall shifted symplectic in the relative setting. In \S\ref{AnalyticConventions} we recall aspects of derived analytic geometry and in \S\ref{ssec: Lags2} we recall the definition of relative shifted Lagrangian correspondences, formulated in the analytic setting relative to $\bfP$ which are suitable for our needs.

Given a derived scheme $S$ with an action of an algebraic group $G$, denote by $S/G$ the simplicial derived scheme, obtained via the nerve of the action groupoid. Denote by $[S/G]$ the associated derived stack. The basic example is the classifying stack $\mathsf{B}G\simeq [\mathrm{pt}/G].$ 

\subsection{Relative shifted symplectic structures}
\label{ssec: RelSSS}
Let $f:B\to S$ be a morphism of derived prestacks with relative cotangent complex $\mathbb{L}_{B/S}$. Consider its relative
de Rham algebra $\mathrm{DR}(B/S)$, a graded mixed commutative dg-algebra in $\mathsf{QCoh}(S),$ whose graded pieces give the relative $p$-forms and whose mixed structure is provided by the de Rham differential $d_{\mathrm{DR}}$ \cite{CalaqueHaugsengScheimbauer2025,calaque2024shifted}.

More explicitly, there is a canonical morphism 
$$\Gamma_S(B,\mathrm{Sym}(\mathbb{L}_{B/S}[-1]))\to \mathrm{DR}(B/S)^{\epsilon},$$
with $p$-th graded piece $\Gamma_S(B,\mathrm{Sym}^p(\mathbb{L}_{B/S}[-1])).$

For $n\in\mathbb{Z}$ and $p\geq 0$ a non-negative integer let $\EuScript{A}^{p}_S(B,n)$ and $\mathcal{A}_{S}^{p,cl}(B,n)$ be the spaces of $n$-shifted relative $p$-forms and closed $p$-forms respectively.
In the absolute case, when $B$ is derived Artin stack over $K$, by \cite[Prop.~1.14]{PTVV13}, there is an equivalence
\begin{equation}
\label{MapsShiftedForms}
\mathrm{Maps}_{\mathsf{QCoh}(B)}(\mathcal{O}_B,\wedge^q_{\mathcal{O}_B}\mathbb{L}_{B/K}[n])\simeq \mathcal{A}_K^q(B,n).
\end{equation}

As a representable functor on $\mathsf{dSt}_{S}$, we denote the representing object $A_S^p(n)$. We follow \cite{CalaqueHaugsengScheimbauer2025} and denote by 
$\mathrm{ClDF}^{2,n}\to \mathsf{dSt}^{\Delta^1},$
the right fibration corresponding to the functor 
$$\underline{\mathcal{A}}_{(-)}^{2,cl}(-,n):\mathsf{dSt}^{\Delta^1,op}\to \mathsf{Spc},\hspace{2mm}(X\to S)\mapsto
\mathcal{A}_S^{2,cl}(X,n).$$ Thus,
$\mathrm{ClDF}^{2,n}\to \mathsf{dSt}^{\Delta^1,op}
\xrightarrow{ev}\mathsf{dSt}$ is a cartesian fibration.
\begin{remark}
With these conventions, whenever we have (shifted) closed $(p=2)$-forms, we call them \emph{pre-symplectic forms}.
\end{remark}
In the relative setting, an $n$-presymplectic derived Artin stack $Z$ over $S$ is equivalently a morphism $Z\to A_S^{2,cl}(n)$ in $\mathsf{dSt}_S$. 
The $\infty$-category of $n$-shifted presymplectic stacks over $S$ (resp. $n$-shifted presymplectic stacks) is
\begin{equation}
    \label{nPreSymp}
\mathsf{PreSympSt}_{S,n}:=\mathsf{dSt}_S^{\mathrm{Art}}\times_{\mathsf{dSt}_S}\mathsf{dSt}_{S/A_S^{2,cl}(n)},
\end{equation}
(resp. $\mathsf{PreSympSt}_{n}:=\mathsf{dSt}^{\Delta^1,\mathrm{Art}}\times_{\mathsf{dSt}^{\Delta^1}}\mathrm{ClDF}^{2,n}$).
\begin{remark}
For each $p,n\in\mathbb{Z}$ there is an equivalence $\mathrm{ClDF}_{S}^{p,n}\simeq \mathsf{dSt}_{S/A_S^{p,\mathrm{cl}}(n)}.$ 
    \end{remark} 
Fix $\EuScript{L}\in \mathrm{Pic}^{gr}(S),$ where $\mathrm{Pic}^{gr}$ is the classifying stack of graded line bundles.
The space of $\EuScript{L}$-twisted presymplectic structures on $Z\to S$ is given by 
\begin{equation}
    \label{eqn:Ltwistedpresymp}
\EuScript{A}_{S}^{2,\mathrm{cl}}(Z;\EuScript{L}):=\mathrm{Maps}_{\mathsf{QCoh}(S)^{\mathrm{gr},\epsilon}}(\mathcal{O}_S(p)[-p],\mathrm{DR}_S(Z)\otimes\EuScript{L}).
\end{equation}
Note that $\EuScript{L}=\mathcal{O}_S[n]$-twisted closed relative $p$-forms are just $n$-shifted closed relative $p$-forms.

An element of \eqref{eqn:Ltwistedpresymp} is equivalent to a morphism $\omega_Z:Z\to A_S^{2,cl}(\EuScript{L}),$ in $\mathsf{PSt}_S.$ If $f:Y\to Z$ is a morphism of $S$-prestacks and $Z$ has an $\EuScript{L}$-twsited presymplectic structure $\omega_Z$, we put $f^*\omega:=\omega_Z
\circ f:Y\to A_S^{2,cl}(\EuScript{L}),$ for induced presymplectic form on $Y.$

\subsubsection{Lagrangian structures}
\label{Lags1}
Let $f:B\to S$ be a morphism of derived Artin stacks, with $S$ equipped with an $n$-shifted symplectic form $\omega.$

An isotropic structure on $f:B\to S$ is a nullhomotopy $h:0\sim f^*\omega$ in $\EuScript{A}^{2,\mathrm{cl}}(B,n)$ i.e. an element $h\in \EuScript{A}^{2,cl}(B,n-1)$ such that $(\delta+d_{\mathrm{DR}})h=f^*(\omega).$ Explicitly, it is given by a collection of forms $\{h_i\}$ such that $dh_0=f^*(\omega_0),$ and $\delta h_{i+1}+d_{\mathrm{DR}}h_{i}=f^*\omega_{i+1}.$ The underlying form $h_0$ defines a morphism $\mathbb{T}_{\B}\to \mathbb{L}_{\B}[n-1].$

From the fiber sequence $\mathbb{L}_{B/S}[n-1]\to f^*\mathbb{L}_{S}[n]\to \mathbb{L}_{\B}[n],$ there is a canonical morphism 
\begin{equation}
    \label{eqn: Thetah}
\Theta_h:\mathbb{T}_{\B}\to \mathbb{L}_{B/S}[n-1].
\end{equation}
A \emph{Lagrangian structure on $f$} is an isotropic structure which is non-degenerate in the sense that \eqref{eqn: Thetah} is a quasi-isomorphism. 

We will need twisted versions, stated for not necessarily geometric derived stacks. For a general treatment of coisotropic and Lagrangian correspondences, we refer to \cite{HMS2022},\cite{Haugseng2018}. Isotropic structures are special cases of isotropic correspondences.
\begin{definition}
    Let $f:L\to Z$ be a morphism of derived $S$-prestacks. An $\EuScript{L}$-twisted isotropic structure on $f$ is a commuting square
    \[
    \begin{tikzcd}
    L\arrow[d]\arrow[r,"f"]& Z\arrow[d,"\omega_Z"]
    \\
    S\arrow[r] & A_S^{2,cl}(\EuScript{L}).
    \end{tikzcd}
    \]
An $\EuScript{L}$-twisted isotropic correspondence on $Z\to S$ to $Z'\to S$ is a commutative diagram of $S$-prestacks,
\[
\begin{tikzcd}
L\arrow[d]\arrow[r] & Z'\arrow[d,"\omega_{Z'}"]
\\
Z\arrow[r,"\omega_Z"]& A_S^{2,cl}(\EuScript{L}).
\end{tikzcd}
\]
\end{definition}
We postpone the definition of twisted Lagrangian correspondence to \S\ref{ssec: Lags2}, since we want to formulate it in the analytic setting. 
Thus, we now briefly recal conventions regarding derived analytic geometry, referring to \cite[Sect.~2]{HolsteinPorta2025} for comprehensive overview.
\subsection{Conventions from analytic geometry}
\label{AnalyticConventions}
We let $\EuScript{T}_{\mathrm{an}}$ denote the (ordinary) category whose objects are Stein open subsets of $\mathbb{C}^n$ and whose morphisms are holomorphic functions. We endow $\EuScript{T}_{\mathrm{an}}$ with the analytic topology, denoted $\tau_{\mathrm{an}}$. 
The definition of derived analytic stacks begins with a more abstract notion of pre-geometry $\EuScript{T}$ and of a $\EuScript{T}$-structure on an $\infty$-topos $\EuScript{X}$ \cite[3.1.1, 3.1.4]{LurieDAGV}. We do not go into details, referring instead to \cite[Defs 2.1--2.4]{PortaYu2020} 
for a comprehensive treatment of the relevant pre-geometries for derived $\mathbb{C}$-analytic geometry.
\begin{definition}\label{AnalyticTopos}
Let $\EuScript{X}$ be an $\infty$-topos. A \emph{$\EuScript{T}_{\mathrm{an}}$-structure} is a functor 
$$
\mathcal{O} : \EuScript{T}_{\mathrm{an}} \to \EuScript{X},$$
such that:
\begin{enumerate}
\item $\mathcal{O}$ commutes with products and pullbacks along open immersions in $\EuScript{T}_{\mathrm{an}}$;
\item $\mathcal{O}$ takes $\tau_{\mathrm{an}}$-covers to morphisms which are $\pi_0$-epimorphic in $\EuScript{X}$.
\end{enumerate}
The $\infty$-category of $\EuScript{T}_{\mathrm{an}}$-structures in $\EuScript{X}$ is denoted $\mathsf{Str}_{\EuScript{T}_{\mathrm{an}}}(\EuScript{X})$. A morphism $f : \mathcal{O} \to \mathcal{O}'$ of $\EuScript{T}_{\mathrm{an}}$-structures is \emph{local} if for every open immersion $\phi : U \to V$ in $\EuScript{T}_{\mathrm{an}}$, the square
\[
\begin{tikzcd}
\mathcal{O}(U) \arrow[d]\arrow[r] & \mathcal{O}(V) \arrow[d] \\
\mathcal{O}'(U) \arrow[r] &  \mathcal{O}'(V),
\end{tikzcd}
\]
is a pullback square. The $\infty$-category of $\EuScript{T}_{\mathrm{an}}$-structures and local morphisms between them is denoted $\mathsf{AnRing}(\EuScript{X})$, and we refer to it as the $\infty$-category of analytic rings in $\EuScript{X}$.
\end{definition}

\begin{example}
\label{AnExample}
\normalfont
Let $X$ be a $\mathbb{C}$-analytic space and let $X_{\mathrm{top}}$ denote the underlying topological space of $X$ and consider the $\infty$-topos $\mathsf{Shv}(X_{\mathrm{top}})$ of sheaves on $X_{\mathrm{top}}$. We can define an analytic ring $\mathcal{O}$ on $X$ as the functor
$$\mathcal{O}:\EuScript{T}_{\mathrm{an}}\to \mathsf{Shv}(X_{\mathrm{top}}),$$
sending $U \in \EuScript{T}_{\mathrm{an}}$ to the sheaf $\mathcal{O}(U)$ on $X_{\mathrm{top}}$ defined by
$$
\mathsf{Opens}(X_{\mathrm{top}}) \ni V \mapsto \mathcal{O}(U)(V) := \mathrm{Hom}_{\mathsf{An}}(V,U) \in \mathsf{Set},$$
where $\mathsf{An}_{\mathbb{C}}$ denotes the category of $\mathbb{C}$-analytic spaces. Here $\mathsf{Open}(X_{\mathrm{top}})$ is the lattice of open subsets of $X_{\mathrm{top}}.$ 
In particular, $\mathcal{O}(\mathbf{A}_{\mathbb{C}}^1)$
corresponds with the sheaf of
holomorphic functions on $X.$
\end{example}
Therefore, the $\infty$-topos for derived analytic geometry is $\mathsf{Str}_{\EuScript{T}_{\mathrm{an}}}\big(\mathsf{Shv}(X_{\mathrm{top}})\simeq \mathsf{AnRing}\big(\mathsf{Shv}(X_{\mathrm{top}})\big).$
\begin{definition}
\label{AnalyticSpace}
A \emph{derived $\mathbb{C}$-analytic space} is a pair \((\EuScript{X}, \mathcal{O}_X)\), where \(\EuScript{X}\) is an \(\infty\)-topos and \(\mathcal{O}_X\) is an analytic ring in \(\EuScript{X}\), such that:
\begin{enumerate}
\item locally on $\EuScript{X}$, \((\EuScript{X}, \pi_0\mathcal{O}_X)\) is equivalent to a structured topos arising from Example \ref{AnExample}, where locally means there exists a collection of objects \(\{U_i\}\) of \(\EuScript{X}\) such that \(U_i \to \mathbf{1}_\EuScript{X}\) is an effective epimorphism and such that the structured topos \((\EuScript{X}/U_i, \mathcal{O}_X|_{U_i})\) satisfies the given condition;
\item For every $i\in\mathbb{Z}$, the sheaves \(\pi_i(\mathcal{O}^{\mathrm{alg}}_X)\) are coherent as sheaves of \(\pi_0(\mathcal{O}^{\mathrm{alg}}_X)\)-modules.
\end{enumerate}
\end{definition}
Finally, we need a definition of closed immersion \cite{Lurie2011}.
\begin{definition}
\label{ClImmersions}
A morphism $f : \EuScript{X} = (\EuScript{X},\mathcal{O}_X) \to \EuScript{Y} = (\EuScript{Y},\mathcal{O}_Y)$ of derived $\mathbb{C}$-analytic spaces is a \emph{closed immersion} if:
\begin{enumerate}
\item the underlying morphism of $\infty$-topoi $f^{-1}: \EuScript{Y} \rightleftarrows \EuScript{X} : f_*$ is a closed immersion of $\infty$-topoi;\footnote{More details can be found in \cite[Subsect.7.3.2]{Lur09}.}
\item the induced morphism $f^{-1}\mathcal{O}_Y \to \mathcal{O}_X$ is an effective epimorphism ($\pi_0$-surjection) in the sense that for any $U \in \EuScript{T}_{\mathrm{an}}$ the morphism $f^{-1}\mathcal{O}_Y(U) \to \mathcal{O}_X(U)$ is an effective epimorphism (surjection on $\pi_0$) in $\EuScript{X}$.
\end{enumerate}
\end{definition}
The following can be found in \cite[Section.11]{Lurie2011}.
\begin{proposition}
\label{PullbackClosedImmersionLemma}
Let
\[
\begin{tikzcd}
X' \ar[r,"g"] \ar[d,"i"] & Y' \ar[d,"j"] \\
X \ar[r,"f"] & Y
\end{tikzcd}
\]
be a pullback diagram in $\mathsf{dAn}_{\mathbb{C}}$, where $j$ is a closed immersion. Then:
\begin{enumerate}
\item the underlying algebra of the structure sheaf of $X'$ can be computed as the tensor product
\[
\mathcal{O}_{Y'} \otimes_{f^{-1}\mathcal{O}_Y} f^{-1}j_*\mathcal{O}_X.
\]
\item For any $F \in \mathcal{O}_X\mathsf{-Mod}$, the natural transformation $
f_*j_*(F) \to i_*g_*(F)$
is an equivalence.
\end{enumerate}
\end{proposition}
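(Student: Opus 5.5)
The plan is to reduce both assertions to the case of closed immersions of derived affine (Stein) analytic spaces, where the structure sheaves are computed through their underlying $\mathcal{O}^{\mathrm{alg}}$-algebras. The first observation is that $\mathsf{dAn}_{\mathbb{C}}$ admits fiber products along closed immersions. The underlying $\infty$-topos of $X'$ is the fiber product of topoi $\EuScript{X}\times_{\EuScript{Y}}\EuScript{Y}'$. Because $j$ is a closed immersion of topoi (Definition \ref{ClImmersions}(1)), pulling back along $f$ again gives a closed immersion of topoi. Concretely, it is the closed subtopos of $\EuScript{X}$ complementary to the pullback of the open complement of $\EuScript{Y}'$, and this base-change stability is in \cite[Subsect.~7.3.2]{Lur09}. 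So topologically $i$ is a closed immersion and $g$ is the induced map.

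For (1), I would use the unramifiedness of the pregeometry $\EuScript{T}_{\mathrm{an}}$ from \cite{Lurie2011}. For unramified pregeometries, the functor $\mathcal{O}\mapsto\mathcal{O}^{\mathrm{alg}}$ commutes with pushouts along effective epimorphisms, that is, along morphisms inducing surjections on $\pi_0$. The structure sheaf of the pullback is the pushout $i^{-1}f^{-1}\mathcal{O}_Y\to i^{-1}\mathcal{O}_X$, $i^{-1}f^{-1}\mathcal{O}_Y\to g^{-1}\mathcal{O}_{Y'}$ in $\mathsf{AnRing}(\EuScript{X}')$. The second leg is an effective epimorphism by Definition \ref{ClImmersions}(2). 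Using the unramified property, the underlying algebra of this pushout is the derived tensor product in commutative algebras. After transporting along the closed immersions, which is harmless because pushforward along a closed immersion of topoi is fully faithful, this is the expression $\mathcal{O}_{Y'}\otimes_{f^{-1}\mathcal{O}_Y}f^{-1}j_*\mathcal{O}_X$ from the statement. I would then check that this $\EuScript{T}_{\mathrm{an}}$-structured topos satisfies Definition \ref{AnalyticSpace}, by showing the $\pi_i$ are coherent. Locally this follows from the Tor spectral sequence, using that $\pi_0$ is a quotient of a coherent analytic algebra.

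For (2), I would exploit that pushforward along a closed immersion of topoi is exact and fully faithful, with the Beck–Chevalley map tested on stalks. For a point $x$ of $\EuScript{X}'$, the stalks of $f_*j_*F$ and $i_*g_*F$ agree by the proper-base-change property of closed immersions of topoi \cite[7.3.2]{Lur09}. Module structures are then compatible because of (1). As a result, the natural transformation is an equivalence on underlying sheaves of spectra, and hence an equivalence of modules.

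I expect the main obstacle to be step (1), namely justifying that the underlying algebra of the analytic pushout is the algebraic tensor product. This is exactly the unramifiedness and closed-immersion pushout statement of \cite[§11]{Lurie2011}, and I would cite it rather than reprove it.
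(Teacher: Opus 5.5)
Your outline matches the paper, whose proof consists only of the citation of Lurie's DAG IX, \S 11. That section gives the unramifiedness of $\EuScript{T}_{\mathrm{an}}$, so that $(-)^{\mathrm{alg}}$ computes pushouts along effective epimorphisms, and the stability of closed immersions under base change, with underlying topos the topos fibre product; your topos, coherence and stalk checks just unpack that reference.

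One caution: the printed formulas are misprints, since $j_*\mathcal{O}_X$ and $f_*j_*F$ do not typecheck, so you should not claim to recover them literally. Your pushout on $X'$ actually yields $i_*\mathcal{O}^{\mathrm{alg}}_{X'}\simeq \mathcal{O}_X\otimes_{f^{-1}\mathcal{O}_Y}f^{-1}j_*\mathcal{O}_{Y'}$. Getting there uses the projection formula for $i_*$ and the topos base change $f^{-1}j_*\simeq i_*g^{-1}$, not merely full faithfulness of $i_*$. Likewise (2) is really the base change $f^*j_*F\simeq i_*g^*F$ for $F\in\mathcal{O}_{Y'}\mathsf{-Mod}$. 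Your stalk comparison must then also cover points of $X$ outside the closed part, where both sides vanish.
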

Analytic stacks can be defined to take values in $\infty$-categories. We denote by
\[
\mathsf{dAnSt}_{\mathbb{C}}^{\mathrm{Cat}} := \mathsf{Shv}_{\mathsf{Cat}_\infty}(\mathsf{dStn}_{\mathbb{C}}, \tau_{\mathrm{an}})
\]
the $\infty$-category of sheaves on $(\mathsf{dStn}_{\mathbb{C}}, \tau_{\mathrm{an}})$ with values in $\mathsf{Cat}_\infty$. Since $\mathsf{Spc}$ canonically embeds in $\mathsf{Cat}_{\infty},$ we have a canonical natural embedding 
\begin{equation}
    \label{CatStks}
J:\mathsf{dAnSt}_{\mathbb{C}}\hookrightarrow \mathsf{dAnSt}_{\mathbb{C}}^{\mathrm{Cat}}.\end{equation}
In other words, $\mathsf{dAnSt}_{\mathbb{C}}$ is the $\infty$-category of hypercomplete sheaves $\mathsf{dAnSt}_{\mathbb{C}}:=\mathsf{Shv}\big(\mathsf{dAfd}_{\mathbb{C}},\tau_{\text{\'et}}\big)^{\wedge},$ on the Grothendieck site $(\mathsf{dAfd}_{\mathbb{C}},\tau_{\text{\'et}})$ (see \cite[Subsect.~3.1]{PortaYuDerivedGAGA2019} and \cite[Subsect.~7.1]{PortaYu2018}).
\subsubsection{Analytic mapping stacks}
\label{sssec: Analytic mapping stacks}
All details are found in \cite{HolsteinPorta2025}, we recall only the basic definition. 
Let $Z,Y$ be objects of $\mathsf{dAnSt}_{\mathbb{C}}$. The derived analytic mapping stack is denoted by
$$\AnMap(Z,Y):\mathsf{dAn}_{\mathbb{C}}^{\mathrm{op}}\to \mathsf{Spc},\hspace{2mm}U\mapsto \mathrm{Maps}_{\mathsf{dAnSt}_{\mathbb{C}}}(U\times Z,Y).$$
Relative to $\bfP$, we write $\AnMap_{/\bfP}(Z,Y).$
\begin{remark}
    The inclusion \eqref{CatStks} is strong monoidal and commutes with internal hom spaces \cite{PortaYu2020}, so we will use the same notation for both.
\end{remark}
Let $Z$ be a derived algebraic stack and $X$ a proper geometric derived stack locally almost of finite presentation over $\mathbb{C}$. There is a natural morphism
\begin{equation}
\label{eqn: Map to AnMap}
C_{X,Z}:\Map(X,Z)^{\mathrm{an}}\to \AnMap(X^{\mathrm{an}},Z^{\mathrm{an}}),
\end{equation}
of derived analytic stacks.

With certain assumptions, found in the main theorem \cite[Thm.~6.14]{HolsteinPorta2025}, this is an equivalence. We will frequently use their theorem, since the derived algebraic stacks we study in this paper e.g. $Z=\Perf,\mathsf{B}G$ satisfy such assumptions, and so the comparison map \eqref{eqn: Map to AnMap} will be an equivalence for our applications.
\subsubsection{Analytic cotangent complexes}
Throughout, we always assume our derived analytic stacks $Z$ admit perfect cotangent complexes $\mathbb{L}_{Z}^{\mathrm{an}}.$ Properties and constructions are found in \cite[Thm. 1.5]{PortaYu2020}.

The analytic cotangent complex enjoys nice finiteness properties, differing from the algebraic one. For instance, for any morphism of derived analytic spaces $f:Z\to W$, its analytic cotangent complex
$\mathbb{L}_{Z/W}^{\mathrm{an}}:=\mathbb{L}_{\mathcal{O}_Z/f^{-1}\mathcal{O}_W}^{\mathrm{an}},$
is connective with coherent cohomology \cite[Cor.~5.40]{PortaYu2020}. For example, $\mathbb{L}_{\mathbf{A}^n}^{\mathrm{an}}$ is perfect of tor-amplitude $0$ (cf., \cite[Cor.~5.26]{PortaYu2020}).

We will require two standard properties of $\mathbb{L}_{(-)}^{\mathrm{an}}.$
\begin{proposition}{\cite[Thm. 1.5 (2),(3)]{PortaYu2020}}
\label{AnCotangentProperties}
Let $W\xrightarrow{f}V\xrightarrow{g}Z$ be composeable morphisms of derived analytic stacks. Then, there is a fiber sequence
$$f^*\mathbb{L}_{V/Z}^{\mathrm{an}}\to \mathbb{L}_{W/Z}^{\mathrm{an}}\to \mathbb{L}_{W/V}^{\mathrm{an}}.$$
Furthermore, for any pull-back square of derived analytic spaces,
\[
\begin{tikzcd}
    W'\arrow[d,"g"] \arrow[r] & V'\arrow[d,"f"]
    \\
    W\arrow[r]& V,
\end{tikzcd}
\]
there is a canonical equivalence $\mathbb{L}_{W'/V}^{\mathrm{an}}\simeq g^*\mathbb{L}_{W/V}^{\mathrm{an}}.$
\end{proposition}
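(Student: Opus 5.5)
The statement concerns the analytic cotangent complex: a transitivity fiber sequence for $W\xrightarrow{f}V\xrightarrow{g}Z$, and base change along a pullback square of derived analytic spaces. The plan is to reduce both claims to the corresponding statements for derived $\mathbb{C}$-analytic spaces, and then to the universal property of $\mathbb{L}^{\an}$. One could also cite \cite{PortaYu2020} directly; the plan below indicates the mechanism behind that result.

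\emph{Step 1: spaces.} For a morphism of derived analytic spaces $f:W\to V$, the relative analytic cotangent complex corepresents analytic derivations. That is,
\[
\mathrm{Maps}_{\mathcal{O}_W\text{-Mod}}\big(\mathbb{L}^{\an}_{W/V},M\big)\simeq \mathrm{Maps}_{\mathsf{AnRing}_{/\mathcal{O}_W}}\big(\mathcal{O}_W, \mathcal{O}_W\oplus M\big)_{f^{-1}\mathcal{O}_V},
\]
where $\mathcal{O}_W\oplus M$ is the analytic square-zero extension and the right-hand side means maps under $f^{-1}\mathcal{O}_V$.

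For the transitivity sequence, derivations relative to $Z$ whose restriction to $f^{-1}\mathcal{O}_V$ vanishes are exactly derivations relative to $V$. This gives a fiber sequence of mapping spaces, natural in $M$. Yoneda then yields the cofiber sequence $f^*\mathbb{L}^{\an}_{V/Z}\to\mathbb{L}^{\an}_{W/Z}\to\mathbb{L}^{\an}_{W/V}$.

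For base change, first handle the local model: pullbacks along closed immersions into Stein spaces, using Proposition \ref{PullbackClosedImmersionLemma}(1). There the structure sheaf of $W'$ is an underlying tensor product. Analytic derivations out of it are computed by the pushout of analytic rings, so the universal property gives $\mathbb{L}^{\an}_{W'/V'}\simeq g^*\mathbb{L}^{\an}_{W/V}$. The general case follows by factoring an arbitrary morphism locally as a closed immersion into $V\times\mathbf{A}^n$ followed by the smooth projection. For the projection, $\mathbb{L}^{\an}$ is the free module on $dz_1,\dots,dz_n$ by \cite[Cor.~5.26]{PortaYu2020}, which visibly commutes with base change. The two pieces are then glued via the transitivity sequence just proved.

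\emph{Step 2: stacks.} For stacks, $\mathbb{L}^{\an}$ is defined by descent: it is the limit over smooth atlases of the cotangent complexes of spaces, which is meaningful under the standing assumption that it exists and is perfect. Both statements are compatible with pullback to atlases. They are also compatible with limits of stable $\infty$-categories, since fiber sequences are preserved by limits. Hence both claims descend from Step 1.

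\emph{Main obstacle.} The hard step is showing that analytic square-zero extensions and the analytic pushout of $\EuScript{T}_{\an}$-structures are computed on underlying algebras. This is what makes the base-change identification go through, and it is where Lurie's unramified pregeometry results are needed. I would take this from \cite[Sect.~11]{Lurie2011} rather than reprove it.
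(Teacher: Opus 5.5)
Your Step~1 is a reasonable sketch of the mechanism behind \cite[Thm.~1.5]{PortaYu2020}, which the paper simply cites. You were also right to read the base-change formula as $\mathbb{L}^{\mathrm{an}}_{W'/V'}\simeq g^*\mathbb{L}^{\mathrm{an}}_{W/V}$: the printed $\mathbb{L}^{\mathrm{an}}_{W'/V}$ is a misprint, false already for $W=V$, where it would force $\mathbb{L}^{\mathrm{an}}_{V'/V}\simeq 0$.

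The genuine gap is Step~2. The analytic cotangent complex of a derived analytic stack is \emph{not} obtained by descent from the cotangent complexes of the spaces in a smooth atlas. For a smooth atlas $u:U\to W$ one has $u^*\mathbb{L}^{\mathrm{an}}_{W}\simeq\mathrm{fib}\big(\mathbb{L}^{\mathrm{an}}_{U}\to\mathbb{L}^{\mathrm{an}}_{U/W}\big)$. The complexes $\mathbb{L}^{\mathrm{an}}_{U_n}$ on the \v{C}ech nerve do not form a descent datum, because the face maps are smooth but not \'etale. Concretely, for the atlas $u:\mathrm{pt}\to\mathsf{B}G$ your recipe produces an object pulling back to $\mathbb{L}_{\mathrm{pt}}=0$, whereas $u^*\mathbb{L}_{\mathsf{B}G}\simeq\mathfrak{g}^*[-1]\neq 0$ (the formula used in the proof of Theorem~\ref{MainTheorem4Body}). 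Your description is valid only for \'etale atlases, while the stacks this proposition is applied to ($\mathsf{B}G$, quotient stacks, stacks of perfect complexes) need smooth ones. So ``both claims descend from Step~1'' does not go through. Patching it by pulling back to an atlas is circular, since expressing $u^*\mathbb{L}^{\mathrm{an}}_{W/Z}$ through spaces already uses transitivity for $U\to W\to Z$ with $W,Z$ stacks. Since the first half of the statement is precisely about stacks, this step has to be replaced.

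The replacement is to run your Yoneda argument on the functor of points, as the paper does in the proof of Proposition~\ref{AnMapCotComplex}. Characterize $\mathbb{L}^{\mathrm{an}}_{W/Z}$ by $\mathrm{Maps}(x^*\mathbb{L}^{\mathrm{an}}_{W/Z},M)\simeq\mathrm{Der}^{\mathrm{an}}_{W/Z}(x,M)$ for every derived Stein space $T$, every $x:T\to W$ and every connective coherent $M$ on $T$. Here $\mathrm{Der}^{\mathrm{an}}_{W/Z}(x,M)$ is the fiber over $(x,0)$ of $W(T[M])\to W(T)\times_{Z(T)}Z(T[M])$, where $0$ denotes the trivial extension and $T[M]$ is the analytic square-zero extension. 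Factoring this map through $W(T)\times_{V(T)}V(T[M])$ gives a fiber sequence $\mathrm{Der}^{\mathrm{an}}_{W/V}(x,M)\to\mathrm{Der}^{\mathrm{an}}_{W/Z}(x,M)\to\mathrm{Der}^{\mathrm{an}}_{V/Z}(f\circ x,M)$, natural in $x$ and $M$. Yoneda then gives the cofiber sequence after applying $x^*$, compatibly in $x$, hence in $\mathsf{Coh}^+(W)$; no atlas is involved.

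The same description gives base change immediately. Writing $g:W'\to W$ for the projection, $W'(T)\simeq W(T)\times_{V(T)}V'(T)$ yields $\mathrm{Der}^{\mathrm{an}}_{W'/V'}(x',M)\simeq\mathrm{Der}^{\mathrm{an}}_{W/V}(g\circ x',M)$. What you still need from \cite{PortaYu2020} is that, for derived analytic spaces, this pointwise object agrees with the structured-topos cotangent complex $\mathbb{L}^{\mathrm{an}}_{\mathcal{O}_W/f^{-1}\mathcal{O}_V}$ of your Step~1.

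Finally, a caution on your ``main obstacle'': analytic pushouts are in general not computed on underlying algebras (at the origin, $\mathbb{C}\{x\}\otimes_{\mathbb{C}}\mathbb{C}\{y\}\neq\mathbb{C}\{x,y\}$). The result behind Proposition~\ref{PullbackClosedImmersionLemma} covers pushouts along effective epimorphisms. That is why the tensor-product description is available only in your closed-immersion local model.
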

Recall the $+$-pushforward $
\eta_+$ along a proper flat map $\eta:Z\to \bfP$ \cite{PortaYu2020}. Via base-change, we have the following.
\begin{proposition}
\label{AnMapCotComplex}
    Let $X$ be a derived geometric stack locally of finite presentation such that there is a map $X^{\an}\to \bfP$ which is proper and flat. Let $Z\to\bfP$ be finitely presented with perfect relative analytic cotangent complex $\mathbb{L}_{Z/\bfP}^{\an}.$ Then the map $\AnMap_{/\bfP}(X^{\an},Z)\to\bfP$   admits a relative analytic cotangent complex, given by 
    $\pi_+\big(\mathrm{ev}^*\mathbb{L}_{Z/\bfP}^{\an})\in \mathsf{Perf}\big(\AnMap_{/\bfP}(X^{\an},Z)\big),$ where $\pi$ is the projection in the diagram \eqref{CartesianDiaghAnMap} below.
\end{proposition}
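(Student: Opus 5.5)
The plan is to reduce to the pointwise description of the cotangent complex of a mapping stack and then globalize using base change. Write $M:=\AnMap_{/\bfP}(X^{\an},Z)$. Form the cartesian diagram with $M\times_{\bfP}X^{\an}$ in the upper left, carrying the evaluation map $\mathrm{ev}:M\times_{\bfP}X^{\an}\to Z$ and the projection $\pi:M\times_{\bfP}X^{\an}\to M$. The map $\pi$ is the base change of the proper flat map $X^{\an}\to\bfP$ along $M\to\bfP$, so it is again proper and flat.

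First, by the base change statement in Proposition \ref{AnCotangentProperties}, its relative cotangent complex is the pullback of $\mathbb{L}^{\an}_{X^{\an}/\bfP}$. The derived analytic Grothendieck duality of \cite{PortaYu2020} then provides the plus-pushforward $\pi_+$, left adjoint to $\pi^*$ on almost perfect (respectively perfect) modules. Proper base change for $\pi_+$ holds along arbitrary maps $T\to M$.

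Second, I will verify the defining universal property of a relative cotangent complex. Take a derived Stein (or affinoid) $T$, a point $x:T\to M$, and a connective coherent module $F$ on $T$. The space of lifts of $x$ to the square-zero extension $T[F]$ over $\bfP$ is, by definition of the mapping stack, the space of extensions of the induced map
\[
\phi_x:T\times_{\bfP}X^{\an}\to Z
\]
to $T[F]\times_{\bfP}X^{\an}\simeq (T\times_{\bfP}X^{\an})[\pi_T^*F]$. Here I use that $\pi$ is flat, so that pulling back a square-zero extension along it is again one.

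By the universal property of $\mathbb{L}^{\an}_{Z/\bfP}$, this space of extensions is
\[
\mathrm{Maps}(\phi_x^*\mathbb{L}^{\an}_{Z/\bfP},\pi_T^*F).
\]
Adjunction identifies this with
\[
\mathrm{Maps}\big(\pi_{T,+}\phi_x^*\mathbb{L}^{\an}_{Z/\bfP},F\big).
\]
Base change then gives $\pi_{T,+}\phi_x^*\simeq x^*\pi_+\mathrm{ev}^*$. These identifications are natural in $(T,x,F)$, so $\pi_+\mathrm{ev}^*\mathbb{L}^{\an}_{Z/\bfP}$ corepresents the derivation functor. Compatibility with composition of points then gives that $M\to\bfP$ is infinitesimally cartesian and admits a global cotangent complex, in the sense of \cite[Thm.~1.5]{PortaYu2020}.

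Third, perfectness follows because $\mathrm{ev}^*\mathbb{L}^{\an}_{Z/\bfP}$ is perfect and $\pi$ is proper, flat and of finite presentation. Under these hypotheses $\pi_+$ preserves perfect complexes: $\pi_+\simeq\pi_*(-\otimes\omega_\pi)$ with $\omega_\pi$ invertible up to shift in the lci case, or more generally by the finiteness theorem for proper pushforward of perfect complexes along flat proper maps.

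The main obstacle is the existence of $\pi_+$ and its base change for the non-representable source $M\times_{\bfP}X^{\an}$. Here $X$ is only a geometric derived stack, and $M$ is only a stack. I would handle this by working locally on $T$, which is a derived analytic space. The map $T\times_{\bfP}X^{\an}\to T$ is then a proper flat geometric morphism, and \cite{PortaYu2020} applies via descent along a smooth atlas of $X$. I would then glue the resulting modules by the base change equivalences, exactly as in Proposition \ref{PullbackClosedImmersionLemma}(2)-style compatibilities.
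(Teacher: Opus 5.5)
Your proposal is correct and is essentially the paper's own argument. Fix an affinoid point $u\colon U\to\AnMap_{/\bfP}(X^{\an},Z)$, identify analytic derivations into $\mathcal{E}\in\mathsf{Coh}^{\geq 0}(U)$ with $\mathrm{Maps}(f_u^*\mathbb{L}^{\an}_{Z/\bfP},\pi_u^*\mathcal{E})$, pass by adjunction to $\mathrm{Maps}(\pi_{u+}f_u^*\mathbb{L}^{\an}_{Z/\bfP},\mathcal{E})$, and conclude with proper base change $\pi_{u+}f_u^*\simeq u^*\pi_+\mathrm{ev}^*$. The paper takes existence, base change and perfectness of $\pi_+$ for granted, so your extra remarks go beyond it. One caveat there: descent along a smooth atlas of $X$ would not by itself produce $\pi_+$ in the genuinely stacky case, since the atlas charts are not proper over $T$, so that case needs an additional $\mathcal{O}$-compactness (categorical properness) input.
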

\begin{proof}
    Let $U$ be derived affinoid with map $u:U\to \AnMap_{/\bfP}(X^{\an},Z)$ relative to $\bfP$ with $f_u:X^{\an}\times_{\bfP}U\to Z$ the morphism classified by it.
    Consider the cartesian diagran
    \begin{equation}
    \label{CartesianDiaghAnMap}
    \begin{tikzcd} X^{\an}\times_{\bfP} U \ar[r] \ar[d,"\pi_u"] & X\times_{\bfP}\AnMap_{/\bfP}(X^{\an},Z) \ar[d,"\pi"] \\ U \ar[r,"u"] & \AnMap_{/\bfP}(X^{\an},Z). \end{tikzcd}
    \end{equation}
We need to confirm the universal property of the analytic cotangent complex relative to $\bfP$. To this end, let $\mathcal{E}\in \mathsf{Coh}^{\geq 0}(U)$, and letting $\underline{\mathrm{Der}}^{\an}_{(-)/\bfP}(-,\mathcal{E})$ denote the space of analytic derivations, it suffices to note there are equivalences
$$\underline{\mathrm{Der}}^{\an}_{\AnMap_{\bfP}(X^{\an},Z)/\bfP}(U,\mathcal{E})\simeq \mathrm{Maps}_{\mathsf{Coh}^+(X^{\an}\times_{\bfP}U)}\big(f_u^*\mathbb{L}_{Z/\bfP}^{\an},\pi_u^*\mathcal{E}\big)\simeq \mathrm{Maps}_{\mathsf{Coh}^+(U)}\big(\pi_{u+}(f_u^+\mathbb{L}_{Z/\bfP}^{\an}),\mathcal{E}\big).$$
By proper base-change for $+$-pushforwards, since \eqref{CartesianDiaghAnMap} is cartesian,  
$\pi_{u+}(f_u^*\mathbb{L}_{Z/\bfP}^{\an})\simeq u^*\big(\pi_+\mathrm{ev}^*\mathbb{L}_{Z/\bfP}^{\an}\big)=u^*\mathcal{F},$ and an equivalence
$$\underline{\mathrm{Der}}^{\an}_{\AnMap_{\bfP}(X^{\an},Z)/\bfP}(U,\mathcal{E})\simeq \mathrm{Maps}_{\mathsf{Coh}^+(U)}\big(u^*\mathcal{F},\mathcal{E}\big).$$
\end{proof}
More generally, let $\underline{\mathrm{AnRes}}_{X^{\an}/\bfP}(Z)$ be the analytic Weil restriction of $Z\to X^{\an}$ along $X^{\an}\to\bfP,$ with universal evaluation diagram
\begin{equation}
    \label{UniverAnResEval}
\begin{tikzcd}
X^{\an}\times_{\bfP}\underline{\mathrm{AnRes}}_{X^{\an}/\bfP}(Z) \ar[r,"\mathrm{ev}"] \ar[d,"\pi"] & Y \ar[d] \\ \underline{\mathrm{AnRes}}_{X^{\an}/\bfP}(Z) \ar[r] & \bfP.\end{tikzcd}
\end{equation}
With the same hypothesis as in Proposition \ref{AnMapCotComplex}, an analogous argument the universal property for $f_u:X^{\an}\times_{\bfP}U\to Z,$ over $X^{\an}$ is given by
$$\underline{\mathrm{Der}}^{\an}_{\underline{\mathrm{AnRes}}_{X^{\an}/\bfP}(Z)/\bfP}\big(U,\mathcal{E}\big)\simeq \mathrm{Maps}_{\mathsf{Coh}^+(X^{\an}\times_{\bfP}U))}\big(f_U^*\mathbb{L}_{Z/X^{\an}}^{\an},\pi_u^*\mathcal{E}\big),$$
which shows that the induced map $q:\underline{\mathrm{AnRes}}_{X^{\an}/\bfP}(Z) \to \bfP$ admits a relative analytic cotangent complex which is perfect.

In order to define de Rham algebra and thus make sense of shifted symplectic structures on derived analytic stacks, the following result is central.
\begin{proposition}
\label{AnDeRhamAlgs}
Let $Z$ be a derived geometric stack locally of finite presentation over $\mathbb{C}$ with cotangent complex $\mathbb{L}_{Z/\mathbb{C}}.$
Let $Z^{\an}$ denote its analytification. 
Then, there exists a canonical equivalence of graded mixed commutative dg-algebras
$$\mathrm{DR}(Z/\mathbb{C})^{\an}\xrightarrow{\simeq} \mathrm{DR}^{\an}(Z^{\an}/\mathbb{C}),$$
where $\mathrm{DR}^{\an}(Z^{\an}/\mathbb{C})=\mathrm{Sym}(\mathbb{L}_{Z^{\an}/\mathbb{C}}^{\an}[-1]).$
\end{proposition}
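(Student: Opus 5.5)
The plan is to build the comparison map first, reduce to the cotangent complex, and then propagate the mixed structure. Start with $\mathcal{O}_Z\to\mathcal{O}_{Z^{\an}}$, regarded as a map of sheaves of algebras via the canonical morphism of structured topoi $Z^{\an}\to Z$. Analytification carries the universal derivation $d:\mathcal{O}_Z\to\mathbb{L}_{Z/\mathbb{C}}$ to a $\mathbb{C}$-linear derivation of $\mathcal{O}_{Z^{\an}}$ with values in $(\mathbb{L}_{Z/\mathbb{C}})^{\an}$. The universal property of the analytic cotangent complex \cite[Thm.~1.5]{PortaYu2020} then produces a canonical map $\mathbb{L}_{Z^{\an}/\mathbb{C}}^{\an}\to (\mathbb{L}_{Z/\mathbb{C}})^{\an}$. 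Here $(-)^{\an}$ on modules means pullback along $Z^{\an}\to Z$.

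The first key step is to show this map is an equivalence. By étale descent for both sides, and since analytification commutes with the colimits presenting a geometric stack, it is enough to treat derived affine $Z=\mathrm{Spec}(A)$ of finite presentation. We then present $A$ as a finite cell attachment starting from polynomial algebras $\mathbb{C}[x_1,\dots,x_n]$. For $\mathbb{A}^n$, both sides are free of rank $n$ on the $dx_i$; this uses the fact that $\mathbb{L}^{\an}_{\mathbf{A}^n}$ is perfect of tor-amplitude $0$ \cite[Cor.~5.26]{PortaYu2020}. For a cell attachment, which is a pushout of algebras, we use the base-change and transitivity sequences of Proposition \ref{AnCotangentProperties}, together with the fact that analytification preserves these pullbacks of derived affines. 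Both sides transform the same way, so a five-lemma argument on the cofiber sequences gives the equivalence.

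Next we pass to the graded algebras. The functor $(-)^{\an}$ on modules is symmetric monoidal, so it commutes with $\mathrm{Sym}$. This yields a canonical equivalence of graded commutative algebras
\[
\mathrm{Sym}(\mathbb{L}_{Z/\mathbb{C}}[-1])^{\an}\simeq \mathrm{Sym}(\mathbb{L}^{\an}_{Z^{\an}/\mathbb{C}}[-1]).
\]
It remains to match the mixed structures. Here we use the characterization of the de Rham algebra as the free graded mixed cdga on $\mathcal{O}$ generated in weight zero, equivalently the left adjoint to the weight-zero functor, following \cite{CPTVV17}. The analytic version $\mathrm{DR}^{\an}$ has the same universal property with respect to analytic derivations. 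The map $\mathcal{O}_{Z^{\an}}\to \mathrm{DR}(Z)^{\an}$ in weight zero, viewed as a map of graded mixed algebras, therefore induces $\mathrm{DR}^{\an}(Z^{\an})\to\mathrm{DR}(Z)^{\an}$. This comparison map is compatible with the mixed differentials by construction, and on underlying graded objects it is the equivalence above. Equivalently, its inverse is the map in the statement.

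The main obstacle is the mixed compatibility. Analytification is not a priori compatible with the de Rham differential, because $d$ is not $\mathcal{O}$-linear. My first attempt would be to control this via the analytic universal property. If that proves delicate, the fallback is a strictification: realize the comparison on Stein charts with explicit Koszul-type models, where $d_{\DR}$ is computed on generators, and then descend.
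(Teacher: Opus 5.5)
Your outline is the same as the paper's: compare cotangent complexes, pass to $\mathrm{Sym}$ via symmetric monoidality of pullback, then match mixed structures through the universal derivation. Your affine/cell-attachment induction is a reasonable expansion of the step the paper simply cites from Hanson. The gap is that your comparison maps point the wrong way.

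Pulling $d:\mathcal{O}_Z\to\mathbb{L}_{Z/\mathbb{C}}$ back along $h:Z^{\an}\to Z$ gives a derivation of $h^{-1}\mathcal{O}_Z$, not of $\mathcal{O}_{Z^{\an}}$. The universal property of $\mathbb{L}^{\an}_{Z^{\an}/\mathbb{C}}$ needs an \emph{analytic} derivation, i.e.\ a morphism of $\EuScript{T}_{\an}$-structures $\mathcal{O}_{Z^{\an}}\to\mathcal{O}_{Z^{\an}}\oplus(\mathbb{L}_{Z/\mathbb{C}})^{\an}$. Extending $d$ to holomorphic functions in this sense is not formal; it is essentially the comparison you are trying to prove. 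So the map $\mathbb{L}^{\an}_{Z^{\an}/\mathbb{C}}\to(\mathbb{L}_{Z/\mathbb{C}})^{\an}$ on which your five-lemma argument runs has not actually been constructed.

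The same circularity recurs in the mixed step. $\mathrm{DR}(Z/\mathbb{C})^{\an}$ carries no mixed structure until one knows how $\epsilon$ extends to $\mathcal{O}_{Z^{\an}}$. The ``analytic universal property'' of $\mathrm{DR}^{\an}$ that you invoke is not established.

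The fix is to reverse the arrows. The canonical map is $\phi:(\mathbb{L}_{Z/\mathbb{C}})^{\an}\to\mathbb{L}^{\an}_{Z^{\an}/\mathbb{C}}$, the direction the paper uses. It is adjoint to the ordinary derivation $h^{-1}\mathcal{O}_Z\to\mathcal{O}^{\mathrm{alg}}_{Z^{\an}}\to\mathbb{L}^{\an}_{Z^{\an}/\mathbb{C}}$ obtained by restricting the universal analytic derivation. This $\phi$ is natural, so your induction shows it is an equivalence: free on the $dx_i$ for $\mathbf{A}^n$, base change and transitivity for cell attachments, the five lemma, then descent.

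For the mixed structure, use only the \emph{algebraic} universal property of $\mathrm{DR}$ from \cite{CPTVV17}. The algebra map $h^{-1}\mathcal{O}_Z\to\mathcal{O}^{\mathrm{alg}}_{Z^{\an}}$ is a map into the weight-zero part of $\mathrm{DR}^{\an}(Z^{\an}/\mathbb{C})$. It therefore induces, chart by chart, a map of graded mixed algebras $h^{-1}\mathrm{DR}(Z/\mathbb{C})\to\mathrm{DR}^{\an}(Z^{\an}/\mathbb{C})$. Its weight-one component is adjoint to the restricted derivation above, so its $\mathcal{O}_{Z^{\an}}$-linear extension in weight $p$ is $\mathrm{Sym}^p(\phi[-1])$, an equivalence.

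That is the precise content of the statement. Compatibility with the mixed structures is built in, and you need neither an analytic universal property nor a Koszul strictification. The paper's proof glosses exactly this point too: it simply asserts that the analytified differential composed with $\phi$ is the universal analytic derivation.
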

\begin{proof}
Let $\mathrm{DR}(Z/\mathbb{C})=\mathrm{Sym}_{\mathcal{O}_Z}(\mathbb{L}_{Z/\mathbb{C}}[-1])$ be the algebraic de Rham algebra with universal differential $d_{Z/\mathbb{C}}:\mathcal{O}_Z\to\mathbb{L}_{Z/\mathbb{C}}.$ We must show that since analytification is a left-adjoint it commutes with colimits and yields an equivalence
\begin{equation}
    \label{DeRhamAn}
\mathrm{DR}(Z/\mathbb{C})^{\an}\simeq \big(\mathrm{Sym}_{\mathcal{O}_Z}(\mathbb{L}_{Z/\mathbb{C}}[-1])\big)^{\an}\simeq\mathrm{Sym}\big((\mathbb{L}_{Z/\mathbb{C}}[-1])^{\an}\big)\simeq \mathrm{Sym}_{\mathcal{O}_{Z^{\an}}}\big(\mathbb{L}_{Z^{\an}/\mathbb{C}}^{\an}[-1]\big).\end{equation}
By the universal property of analytification and 
$\mathbb{L}_{(-)/\mathbb{C}},$ both $\mathbb{L}_{Z^{\an}/\mathbb{C}}^{\an}$ and the analytification $(\mathbb{L}_{Z/\mathbb{C}})^{\an}$ satisfy descent and by reduction to the case of finitely presented derived affine schemes, there is a canonical equivalence $\phi:(\mathbb{L}_{Z/\mathbb{C}})^{\an}\xrightarrow{\simeq}\mathbb{L}_{Z^{\an}/\mathbb{C}}^{\an}$ \cite[Lem.~2.2]{Hanson2025}, which induces \eqref{DeRhamAn}. Indeed, by analytification of the universal algebraic differential, there is a map $d_{Z/\mathbb{C}}^{\an}:\mathcal{O}_{Z^{\an}}\to (\mathbb{L}_{Z/\mathbb{C}})^{\an}.$ Composing with the equivalence $\phi,$ gives an analytic derivation
$$\delta:\mathcal{O}_{Z^{\an}}\xrightarrow{d_{Z/\mathbb{C}}^{\an}}(\mathbb{L}_{Z/\mathbb{C}})^{\an}\xrightarrow{\phi}\mathbb{L}_{Z^{\an}/\mathbb{C}}^{\an},$$
which one may verify is the universal analytic derivation for $Z^{\an}$, such that
\[
\begin{tikzcd}
    \mathcal{O}_{Z^{\an}}\arrow[d,"\mathrm{id}"] \arrow[r,"d_{Z/\mathbb{C}}^{\an}"] & (\mathbb{L}_{Z/\mathbb{C}})^{\an}\arrow[d,"\phi"]
    \\
    \mathcal{O}_{Z^{\an}}\arrow[r,"d_{Z^{\an}/\mathbb{C}}"] & \mathbb{L}_{Z^{\an}/\mathbb{C}}^{\an},
\end{tikzcd}
\]
commutes. Thus, it is compatible with the graded-mixed structures and induces an equivalence of graded mixed algebras is given by $\mathrm{Sym}(\phi[-1]).$
\end{proof}
Using Proposition \ref{AnDeRhamAlgs}, we follow \eqref{MapsShiftedForms} and may define the space of $n$-shifted $q$-forms on the analytification $Z^{\mathrm{an}}$ is 
$\mathrm{AnMaps}(\mathcal{O}_{Z^{\mathrm{an}}},\wedge^q\mathbb{L}_{Z}^{\mathrm{an}}[n]\big).$ Denote it by $\EuScript{A}^q(Z^{\mathrm{an}},n).$
Analogous to \eqref{nPreSymp}, specializing over $\mathbf{A}^k=(\mathbb{A}_{\mathbb{C}}^k)^{\mathrm{an}},$ (for future applications, with $k=1$), denote the category of $n$-shifted relative presymplectic analytic stacks over $\mathbf{A}^1_{\mathbb{C}}$ by
\begin{equation}
    \label{nPreSympAnalytic}
\mathsf{PreSympAnSt}_{\mathbf{A}^1_{\mathbb{C}},n}:=\mathsf{dAnSt}_{\mathbf{A}^1_{\mathbb{C}}}^{\mathrm{lft}}\times_{\mathsf{dAnSt}_{\mathbf{A}^1_{\mathbb{C}}}}\mathsf{dAnSt}_{\mathbf{A}^1_{\mathbb{C}}/A_{\mathbf{A}^1_{\mathbb{C}}}^{2,\mathrm{cl}}(n)}.
\end{equation}
It is directly related with \eqref{nPreSymp}, via $(-)^{\mathrm{an}}$ since it preserves local geometricity of derived stacks.

\begin{remark}[Notation]
Later, with added non-degeneracy conditions, we study objects of 
\begin{equation}
    \label{DAnStk1}
\mathsf{dAnSt}_{\mathbb{P}_{\mathbb{C}}^1/A_{\mathbb{P}^1}^{2,cl}(n)^{\mathrm{nd}}},
\end{equation}
to be denoted $(\eta_Z:Z\to \bfP,\omega_Z).$
\end{remark}

\subsection{Analytic Lagrangian structures}
\label{ssec: Lags2}

We now state the definition of relative shifted Lagrangian correspondences we will use in this paper. It is stated for analytic derived stacks and we note that while such a definition has not appeared in the literature before, it is essentially the same as the algebraic setting, using instead analytic cotangent complexes.

With twistor-space geometry in mind, we state the definition for derived analytic stacks relative to $\bfP,$ via the $\infty$-topos \eqref{DAnStk1}, but it holds for more general bases.

\begin{definition}
\label{LagStructure}
 An \emph{$n$-shifted analytic relative Lagrangian correspondence over $\mathbf{P}^1$}, denoted 
 $L:Z_1\dashrightarrow  Z_2,$ is a morphism in 
 $$\mathrm{Corr}\big(\mathsf{dAnSt}_{\mathbf{P}^1/\EuScript{A}^{2,cl}[n]}\big),$$
 such that $L\to \mathbf{P}^1$ is locally geometric and locally of finite presentation, for which there is a cartesian square of perfect complexes,
\[
\begin{tikzcd}
\mathbb{T}_{L/\mathbf{P}^1}^{\mathrm{an}}\arrow[d]\arrow[r] & \pi_{2}^*\mathbb{T}_{Z_2/\mathbf{P}^1}^{\mathrm{an}}\simeq \mathbb{L}_{Z_2/\mathbf{P}^1}^{\mathrm{an}}[n]\arrow[d]
\\
\pi_{1}^*\mathbb{T}_{Z_1/\mathbf{P}^1}^{\mathrm{an}}\simeq \pi_1^*\mathbb{L}_{Z_1/\mathbf{P}^1}^{\mathrm{an}}[n] \arrow[r]& \mathbb{L}_{L/\mathbf{P}^1}^{\mathrm{an}}[n],
\end{tikzcd}
\]
where $\pi_1,\pi_2,$ are the projections.
\end{definition}

Fixing a perfect complex $\EuScript{F}\in \mathsf{Perf}(\bfP)$, there is an analogue of definition \ref{LagStructure}. The more appropriate notion for our purposes is an $\EuScript{F}$-twisted notion of relative Lagrangian correspondence (Def. \ref{defn:HyperLagStructure}).


\subsubsection{$\mathcal{O}$-compactness, $\mathcal{O}$-orientations} 
For notations and definitions of $\mathcal{O}$-compact derived stacks and
$\mathcal{O}$-orientations, we refer the reader to \cite[\S~2]{PTVV13}. We need a twisted version which uses the notion of \emph{universally cocontinuous} (resp, \emph{universally perfect}) morphism $\pi:B\to S$ of prestacks \cite[Def.~2.17]{calaque2024shifted}. This means quasi-coherent pushforward of the morphism induced from base-change along a derived affine preserves colimits (resp. perfect complexes).
\begin{definition}
Let $f:B\to S$ be universally cocontinuous, let $\EuScript{L}\in \mathrm{Pic}^{gr}(B)$ and let $d\in\mathbb{Z}$. A \emph{$d$-preorientation ($\EuScript{L}$-twisted) on $B$ over $S$} is a morphism $[B]_{\EuScript{L}}:\Gamma_S(B,\EuScript{L})\to \mathcal{O}_S[-d].$

\end{definition}
An $\EuScript{L}$-twisted $d$-preorientation is an element of $\mathrm{Maps}_{\mathsf{QCoh}(S)}(\Gamma_S(B,\EuScript{L})[d],\mathcal{O}_S).$ By the Yoneda embedding $\mathsf{dPSt}_S\to \mathsf{Funct}(\mathsf{dPSt}_S,\mathsf{Spc})^{op},$ we view an $S$-prestack as a functor $\mathsf{dPSt}_S\to \mathsf{Spc},$ where $B\mapsto\mathrm{Maps}_{\mathsf{dPSt}_S}(B,-).$ Set
$$\mathsf{PrOr}_{S}^{(-)}(d):\mathsf{dPSt}_{S\times \mathsf{Pic}^{gr}}\to \mathsf{Spc},\hspace{2mm}(B\to S,\EuScript{L})\to \mathrm{Maps}_{\mathsf{QCoh}(S)}(\Gamma_S(B,\EuScript{L}),\mathcal{O}_S[-d]).$$
Therefore, an $\EuScript{L}$-twisted $d$-preorientation on $B\to S$ is just a morphism of $S$-prestacks
$\mathsf{PrOr}_S^{(-)}(d)\to B.$ 

\subsubsection{Shifted Poisson structures from Lagrangians}
We now recall an important result attributed first to Costello--Rozenblyum (see also \cite[Lem.~2.3]{Spaide2016}) as stated in \cite[Thm.~4.22]{MelaniSafronovII2018}, which constructs shifted Poisson structures from shifted Lagrangian thickenings\footnote{Without going into details, an $n$-shifted Poisson structure on a morphism of derived prestacks $\mathfrak{X}\to \mathfrak{S}$ is equivalent to the existence of a Lagrangian structure on a morphism $f:\mathfrak{X}\to \mathfrak{Y}$, to an $(n+1)$-shifted symplectic formal derived stack $\mathfrak{Y}.$} \cite{Cal}. A more general version is given by \cite[Thm.~4.3]{Tom2025}, generalized further in \cite[Thm.~4.1]{Tom2026}.  
\begin{proposition}
\label{LagThicken}
Let $f:B\to S$ be a morphism of locally geometric derived stacks locally of finite presentation. Suppose that $S$ is equipped with an $n$-shifted symplectic form, for some $n\in\mathbb{Z}$ and that $f$ has a Lagrangian structure. Then $B$ is canonically equipped with an $(n-1)$-shifted Poisson structure.
\end{proposition}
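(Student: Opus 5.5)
The plan is to construct the $(n-1)$-shifted Poisson structure on $B$ in three stages. First I build a formal $n$-shifted symplectic target out of the completion of $S$ along $f$. Then I show that $B$ sits inside it as a Lagrangian, which is exactly Lagrangian data of the kind described in the footnote. Finally I appeal to the Costello--Rozenblyum / Melani--Safronov correspondence \cite[Thm.~4.22]{MelaniSafronovII2018}, which identifies such Lagrangian data with Poisson structures on $B$.

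\textbf{Step 1: the formal completion.} Let $\widehat{S}_B:=S^\wedge_B$ be the formal completion of $S$ along $f$, viewed as a formal moduli problem under $B$, so that $B\to\widehat{S}_B$ is a nil-isomorphism. Because $B$ and $S$ are locally geometric and locally of finite presentation, the cotangent complexes $\mathbb{L}_B$ and $f^*\mathbb{L}_S$ are perfect. Hence $\widehat{S}_B$ is a formal derived stack over $B$ whose relative tangent complex is $\mathbb{T}_{B/S}[1]$, an $L_\infty$-algebroid in the sense of \cite{GR17b}.

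\textbf{Step 2: restricting the form and the Lagrangian.} Pull back $\omega$ along $\widehat{S}_B\to S$. The formal completion map is formally étale, so the cotangent complex of $\widehat{S}_B$, restricted to $B$, is $f^*\mathbb{L}_S$. Therefore the pulled-back form stays non-degenerate, and $\widehat{S}_B$ is an $n$-shifted symplectic formal stack. Next, the isotropic nullhomotopy $h$ of $f^*\omega$ factors through $B\to\widehat{S}_B$. The comparison map $\Theta_h:\mathbb{T}_B\to\mathbb{L}_{B/S}[n-1]$ of \eqref{eqn: Thetah} is unchanged by the replacement, since $\mathbb{L}_{B/\widehat{S}_B}\simeq\mathbb{L}_{B/S}$. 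So $B\to\widehat{S}_B$ is again Lagrangian, and we are exactly in the situation of the footnote: a Lagrangian $B\to\mathfrak{Y}$ into an $n$-shifted symplectic formal stack $\mathfrak{Y}=\widehat{S}_B$.

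\textbf{Step 3: the Poisson structure, and the main obstacle.} Apply \cite[Thm.~4.22]{MelaniSafronovII2018}, which states that the space of $(n-1)$-shifted Poisson structures on $B$ is equivalent to the space of pairs consisting of a nil-isomorphism $B\to\mathfrak{Y}$ with $\mathfrak{Y}$ $n$-shifted symplectic and a Lagrangian structure on it. This produces the $(n-1)$-shifted Poisson structure. It is canonical because the completion and all choices above are functorial in $(f,h)$. The main obstacle is the hypothesis of that theorem. It is proved there for derived Artin stacks locally of finite presentation, whereas here $B$ and $S$ are only locally geometric. To handle this I would write $B$ and $S$ as unions of open geometric substacks, apply the theorem on each piece, and glue using the fact that Poisson structures form a sheaf for the Zariski/étale topology. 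Open immersions are étale, so both the formal completions and the Lagrangian data are compatible with restriction. Alternatively, one may invoke directly the more general \cite[Thm.~4.3]{Tom2025} or \cite[Thm.~4.1]{Tom2026}, which drop the geometricity requirement.
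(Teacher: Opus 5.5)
Your proposal is correct and takes essentially the same route as the paper. The paper gives no independent argument: it recalls the statement from \cite[Thm.~4.22]{MelaniSafronovII2018} through the Lagrangian-thickening mechanism described in its footnote (a Lagrangian $B\to\mathfrak{Y}$ into a formal $n$-shifted symplectic stack encodes an $(n-1)$-shifted Poisson structure on $B$), and defers to \cite{Tom2025,Tom2026} for more general settings. Your construction of the thickening $B\to\widehat{S}_B$ by formal completion, and your gluing over open geometric substacks, make explicit the two points the citation leaves implicit: how the thickening arises from $f$, and how to pass from derived Artin stacks to locally geometric ones.
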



\subsection{Analytification, shifted cotangents and completions} 
\label{ssec: ShapesCotCompl}

We use the general convention that given a derived stack $Z$ admitting a perfect cotangent complex $\mathbb{L}_{Z}$ with dual $\mathbb{T}_{Z}:=\mathbb{L}_{Z}^{\vee},$ for any $n\in\mathbb{Z}$, the $n$-shifted total cotangent derived stack is denoted by $\mathsf{T}^*[n]Z:=\mathsf{Spec}_{Z}\big(\mathrm{Sym}_{\mathcal{O}_{Z}}^{\bullet}(\mathbb{T}_{Z}[-n])\big)$. It is shifted symplectic \cite{Cal}. 

The de Rham stack $Z_{\mathrm{DR}}$ is defined by its functor of points for $A\in\mathsf{cdga}^{\leq 0}$ by $Z_{\mathrm{DR}}(A)\simeq Z(A^{\mathsf{red}}).$ There is a canonical map $q_{\mathrm{DR}}^Z:Z\to Z_{\mathrm{DR}},$ and for any morphism $f:Z\to Y$, let $f_{\mathrm{DR}}:Z_{\mathrm{DR}}\to Y_{\mathrm{DR}}$ be the induced map.

Following \cite[Defn.~2.1.3 (3)]{CPTVV17}, the \emph{formal completion} $Y_f^{\wedge}$ of $Y$ along $f$ (also to be denoted by $Y_Z^{\wedge}$) is the homotopy pull-back,
\begin{equation}
    \label{FormalCompletionSq}
\begin{tikzcd}
Y_f^{\wedge}\arrow[d]\arrow[r] & Z_{\mathrm{DR}}\arrow[d,"f_{\mathrm{DR}}"]
    \\
    Y\arrow[r,"q_{\mathrm{DR}}^Y"] & Y_{\mathrm{DR}}.
\end{tikzcd}
\end{equation}
If $Z,Y$ are geometric derived stacks lafp over $\mathbb{C}$, then analytification preserves formal completions. Indeed, there is a natural morphism 
$(Y_Z^{\wedge})^{\an}\to(Y^{\an}_{Z^{\an}})^{\wedge},$ which is an equivalence since analytification commutes with colimits.

An important example of a formal completion is taken along the zero-section of a shifted cotangent stack. Namely, let $Z$ be a geometric derived stack locally of finite presentation and the formal completion along the zero-section $Z\to \mathsf{T}^*[n]Z$ for some $n\in\mathbb{Z}$, denoted 
\begin{equation}
    \label{eqn: FormalCompletion}
\mathsf{T}_f^*[n]Z:=\big(\mathsf{T}^*[n]Z\big)_{Z}^{\wedge}:=Z_{\mathrm{DR}}\times_{(\mathsf{T}^*[n]Z)_{\mathrm{DR}}}\mathsf{T}^*[n]Z.
\end{equation}

\begin{definition}
The \emph{Dolbeault} stack $Z_{\mathrm{Dol}}$ of $Z$ is the relative classifying stack $Z_{\mathrm{Dol}}:=\mathsf{B}_Z\widehat{\mathsf{T}Z},$ along the formal completion of the tangent stack along the zero section $\widehat{\mathsf{T}Z}:=Z_{\mathrm{DR}}\times_{(\mathsf{T}Z)_{\mathrm{DR}}}\mathsf{T}Z.$  
The \emph{nilpotent Dolbeault stack} is $Z_{\mathrm{Dol}}^{\mathrm{nil}}\simeq \mathsf{B}_Z\mathsf{T}Z.$ 
\end{definition}
There are natural maps $\xi_Z:Z\to Z_{\mathrm{Dol}},\xi_Z^{\mathrm{nil}}:Z\to Z_{\mathrm{Dol}}^{\mathrm{nil}}$ and there is a canonical morphism
\begin{equation}
\label{NilpDolbShape}
\nu_Z:Z_{\mathrm{Dol}}\to Z_{\mathrm{Dol}}^{\mathrm{nil}},
\end{equation}
with $\xi_Z^{\mathrm{nil}}=\nu_Z\circ \xi_Z.$
\begin{example}
\label{UniversalDolbeault}
    Let $\mathcal{E}$ be a given sheaf on $X_{\mathrm{Dol}}.$ There is a map $\xi_X:X\to X_{\mathrm{Dol}}.$ The Higgs sheaf associated to $\mathcal{E}$ is $(E,\phi):=\xi_Z^*(\mathcal{E}).$ Thus $H^{\bullet}(X_{\mathrm{Dol}};\mathcal{E})\simeq H_{\mathrm{Dol}}^{\bullet}(E,\varphi).$ For the trivial Higgs sheaf $(\mathcal{O}_X,0)$, we have
$$\mathbb{R}\Gamma(\mathcal{O}_{X_{\mathrm{Dol}}})\simeq H^{\bullet}(X_{\mathrm{Dol}},\mathcal{O}_{X_{\mathrm{Dol}}})\simeq H^{\bullet}_{\mathrm{Dol}}(\mathcal{O}_X,0).$$

Sheaves on $X_{\mathrm{DR}}$ encode sheaf-valued de
Rham cohomology on $X$;
$$\mathbb{R}\Gamma(X_{\mathrm{DR}},\mathcal{O}_{X_{\mathrm{DR}}})\simeq H_{\mathrm{DR}}^{\bullet}(X)\simeq H_{\mathrm{DR}}^{\bullet}(\mathcal{O}_X,d_{DR}).$$
\end{example}

We conclude by collecting some supplementary results on the interaction between the formation of shifted cotangent bundles, formal completions and mapping stacks under analytification. They are likely well-known, and certainly follow from known properties (see e.g. \cite[Cor. 5.20, Prop. 5.23]{HolsteinPorta2025}), but we find it convenient to state them explicitly since they extend many algebraic results of \cite{ElliotYoo2018} to the analytic setting.

\begin{proposition}
\label{PropsAn}
    Let $Z$ be a geometric derived stack locally of finite presentation over $\mathbb{C}$. 
    \begin{itemize}

\item[1.] Assume $Z$ is endowed with an $n$-shifted symplectic structure. Then, there is a canonical equivalence
$\mathsf{T}_f[1]Z\simeq \mathsf{T}_f^*[n+1]Z,$ compatible with analytification.
\item[2.] Let $X$ be a reduced classical scheme of dimension $d.$ Suppose that $Z$ is Tannakian satisfying $\mathsf{QCoh}(Z)\simeq \mathsf{Ind}(\mathsf{Perf}(Z))$. Then there is a commutative diagram in derived analytic stacks over $\mathbb{C}$
\[
\begin{tikzcd}
\Map(X,Z_{\DR})^{\an}\arrow[d]\arrow[r] & \Map(X,Z)_{\DR}^{\an}\arrow[d]
\\
\AnMap(X^{\an},Z_{\DR}^{\an})\arrow[r] & \AnMap(X^{\an},Z^{\an})_{\DR},
\end{tikzcd}
\]
whose morphisms are all equivalences.
\item[3.] Let $Z$ be as in the above 2. Suppose that $X$ is a smooth proper classical scheme of dimension $d,$ with trivial canonical bundle $\mathcal{O}_X\simeq K_X.$ Then, there is an equivalence of $(n-d)$-shifted derived algebraic stacks 
\begin{equation}
    \label{FormCompMaps}
F_{X,Z}:\mathsf{T}_f^*[n-d]\Map(X,Z)\xrightarrow{\simeq}\Map\big(T[1]X,Z\big)_{\Map(X,Z)}^{\wedge},
\end{equation}
compatible with analytification.
    \end{itemize}
\end{proposition}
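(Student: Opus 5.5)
We prove the three items separately. The common strategy is to establish each equivalence algebraically first. We then transport it along $(-)^{\an}$, using three facts recalled above: analytification commutes with finite limits and colimits (hence with formal completions, as noted after \eqref{FormalCompletionSq}); the comparison $\phi:(\mathbb{L}_{Z})^{\an}\simeq\mathbb{L}^{\an}_{Z^{\an}}$ of Proposition \ref{AnDeRhamAlgs}; and the equivalence $C_{X,Z}$ of \eqref{eqn: Map to AnMap} under the Tannakian hypotheses of \cite[Thm.~6.14]{HolsteinPorta2025}.

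\emph{Item 1.} The $n$-shifted symplectic form gives a non-degenerate pairing $\Theta_\omega:\mathbb{T}_Z\xrightarrow{\sim}\mathbb{L}_Z[n]$. Applying $\mathrm{Sym}_{\mathcal{O}_Z}$ to the dual map, shifted appropriately, yields an equivalence of relative affine stacks
$$\mathsf{T}[1]Z=\mathsf{Spec}_Z\mathrm{Sym}(\mathbb{L}_Z[-1])\simeq\mathsf{Spec}_Z\mathrm{Sym}(\mathbb{T}_Z[-n-1])=\mathsf{T}^*[n+1]Z$$
over $Z$ that preserves the zero sections. The induced map $(\mathsf{T}[1]Z)_{\mathrm{DR}}\to(\mathsf{T}^*[n+1]Z)_{\mathrm{DR}}$ is then an equivalence under $Z_{\mathrm{DR}}$. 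Taking the pullbacks \eqref{eqn: FormalCompletion} therefore gives $\mathsf{T}_f[1]Z\simeq\mathsf{T}^*_f[n+1]Z$. For compatibility with analytification, we note that $\mathrm{Sym}$ and $\mathsf{Spec}_Z$ commute with $(-)^{\an}$ (relative analytic spectrum), that $\phi$ identifies $\Theta_\omega^{\an}$ with the analytic non-degeneracy map, and that formal completions are preserved.

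\emph{Item 2.} The top horizontal map is obtained by analytifying the canonical morphism $\Map(X,Z_{\DR})\to\Map(X,Z)_{\DR}$. On $A$-points this morphism is
$$Z_{\DR}(X\times\mathsf{Spec}A)=Z((X\times\mathsf{Spec}A)^{\mathrm{red}})\to Z(X\times\mathsf{Spec}A^{\mathrm{red}}).$$
It is an equivalence because $X$ is reduced over $\mathbb{C}$ of characteristic zero, so $(X\times\mathsf{Spec}A)^{\mathrm{red}}=X\times\mathsf{Spec}A^{\mathrm{red}}$. The right vertical map is $C_{X,Z}$ composed with the natural transformation $(-)^{\an}\circ(-)_{\DR}\to(-)_{\DR}\circ(-)^{\an}$ of \eqref{NatTransform}. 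The latter is an equivalence on lafp geometric stacks by \cite[Lem.~5.24]{HolsteinPorta2025}, and $C_{X,Z}$ is an equivalence by the Tannakian assumption and $\mathsf{QCoh}(Z)\simeq\mathsf{Ind}\,\mathsf{Perf}(Z)$. For the left vertical map, we first identify $Z_{\DR}^{\an}\simeq(Z^{\an})_{\DR}$, again by \eqref{NatTransform}, and then apply \cite[Cor.~5.20]{HolsteinPorta2025} to mapping stacks into the de Rham shape. The bottom map is the analytic version of the reduction argument, run on derived Stein test objects. The square commutes by naturality of $C_{X,-}$, and two-out-of-three then shows that every map in it is an equivalence.

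\emph{Item 3.} Here the hypotheses on $X$ give an orientation. Since $K_X\simeq\mathcal{O}_X$, Serre duality provides a $d$-orientation $[X]$, so the AKSZ construction \cite{PTVV13} makes $\Map(X,Z)$ an $(n-d)$-shifted symplectic stack. The tangent complex of $\Map(X,Z)$ at $f$ is $\Gamma(X,f^*\mathbb{T}_Z)$. The mapping stack $\Map(\mathsf{T}[1]X,Z)$ fibres over $\Map(X,Z)$ via the zero section. On the formal completion, its fibres are formal moduli problems with tangent complex $\Gamma(X,f^*\mathbb{T}_Z\otimes\mathbb{T}_X[-1])$ or its dual variant, and $K_X$-duality identifies this with $\Gamma(X,f^*\mathbb{L}_Z)[n-d]$ up to the fixed shift. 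Using the formal-moduli-problem/Lie algebroid equivalence of \cite{GR17b}, we then build $F_{X,Z}$ as the map of formal thickenings of $\Map(X,Z)$ induced by this identification of relative tangent Lie algebroids. We check that $F_{X,Z}$ is an equivalence on relative tangent complexes, hence an equivalence of formal completions, and that it respects the symplectic forms through the AKSZ transgression of $\omega$. This is the algebraic statement of \cite{ElliotYoo2018}. Compatibility with analytification then follows from Item 1, from $C_{X,Z}$ (which applies because $\mathsf{T}[1]X$ satisfies universal GAGA, cf.\ \cite[Prop.~5.32]{HolsteinPorta2025}), and from the preservation of formal completions under $(-)^{\an}$.

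\textbf{Main obstacle.} The hardest step is Item 3: making the identification of formal neighbourhoods coherent, beyond the level of tangent complexes, and verifying that the $(n-d)$-shifted forms match. The plan is to reduce this to the universal case. We would apply Item 1 to $Z$, and then transgress along $X$ using the fact that $\Map(X,-)$ commutes with formal completions along the zero section.
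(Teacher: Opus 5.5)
Your items 1 and 2 follow the paper: prove the algebraic statement, then transport it along $(-)^{\an}$ using Proposition~\ref{AnDeRhamAlgs}, the compatibility of formal completions with analytification, and the comparison map \eqref{eqn: Map to AnMap}. In item 2 your bookkeeping is slightly cleaner than the paper's. You prove the bottom arrow directly from $(X^{\an}\times S)_{\mathrm{red}}\simeq X^{\an}\times S_{\mathrm{red}}$ and get the left arrow by two-out-of-three, which avoids applying the Tannakian comparison theorem to the non-geometric target $Z_{\DR}$; the paper simply asserts both vertical arrows at once. Both versions tacitly need $X$ proper for $C_{X,Z}$ to exist.

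The gap is in item 3. There the paper's proof is only the citation of \cite[Thm.~4.20]{ElliotYoo2018} followed by analytification, and the argument you add beyond that citation does not work.
\begin{itemize}
\item Your proposed reduction (item 1 for $Z$, then apply $\Map(X,-)$) only yields $\Map(X,\mathsf{T}_f[1]Z)\simeq\Map(X,\mathsf{T}^*_f[n+1]Z)$. That is $\mathsf{T}_f[1]\Map(X,Z)\simeq\mathsf{T}^*_f[n+1-d]\Map(X,Z)$, i.e.\ item 1 for the $(n-d)$-shifted symplectic stack $\Map(X,Z)$. It is a statement about maps \emph{into} shifted (co)tangent stacks of $Z$, and it never produces maps \emph{out of} $\mathsf{T}[1]X$.
\item Your tangent-complex step is also wrong. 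At a point $f$ of $\Map(X,Z)$, viewed in $\Map(\mathsf{T}[1]X,Z)$ via the projection $\mathsf{T}[1]X\to X$, the tangent complex is $\Gamma\bigl(X,f^*\mathbb{T}_Z\otimes\bigoplus_{i=0}^{d}\Omega^i_X[-i]\bigr)$. The tangent complex of $\mathsf{T}^*[n-d]\Map(X,Z)$ at $f$ is $\Gamma(X,f^*\mathbb{T}_Z)\oplus\Gamma(X,f^*\mathbb{T}_Z)^\vee[n-d]\simeq\Gamma(X,f^*\mathbb{T}_Z)^{\oplus 2}$, by Serre duality with $K_X\simeq\mathcal{O}_X$ and $\Theta_\omega$. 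These do not match: for $d=1$ the second summands differ by a shift, and for $d\geq 2$ the first complex has extra pieces $\Gamma(X,f^*\mathbb{T}_Z\otimes\Omega^i_X)[-i]$. Your phrase ``up to the fixed shift'' hides a real mismatch.
\end{itemize}
This check is in fact fatal for \eqref{FormCompMaps} as written. $X=\mathrm{Spec}\,\mathbb{C}$ satisfies the hypotheses with $d=0$. The right-hand side is then $Z^\wedge_Z=Z$, while the left-hand side is $\mathsf{T}^*_f[n]Z$. For $Z=\mathsf{B}G$ with its $2$-shifted form, the tangent complexes are $\mathfrak{g}[1]$ versus $\mathfrak{g}[1]\oplus\mathfrak{g}^*[1]$.

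So item 3 cannot be reached from item 1. In your proposal, as in the paper, it rests entirely on the imported Elliott--Yoo theorem, and its exact shift, meaning of $\mathsf{T}[1]X$, and allowed range of $d$ must be checked against \eqref{FormCompMaps}. Once a correct algebraic equivalence is in hand, your analytification step (item 1, $C_{X,Z}$, universal GAGA for the source) agrees with the paper's and is fine.
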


\begin{proof}
1. The equivalence follows immediately by definition of shifted cotangent bundles, by using the symplectic form on $Z$. Compatibility with analytification follows directly from the definition \eqref{eqn: FormalCompletion}, by analytifying the equivalence of algebraic derived stacks. Indeed, analytification commutes with formal completions and de Rham spaces, so we have a commuting diagram of equivalences
\[
\begin{tikzcd}
    (\mathsf{T}_{f}[1]Z)^{\an}\arrow[r,"\sim"] \arrow[d,"\sim"] & \mathsf{T}_f^{\an}[n+1]Z\arrow[d,"\sim"]
    \\
\big(\mathsf{T}_{f}^{*,\an}[n+1]\big)Z^{\an}& \arrow[l,"\sim"] (\mathsf{T}_f^*[n+1]Z)^{\an},
\end{tikzcd}
\]
were $\mathsf{T}_{f}^{*,\an}$ denotes is the analytic version of \eqref{eqn: FormalCompletion} arising via Proposition \ref{AnDeRhamAlgs}. 

2. Since $X$ is reduced, the algebraic equivalence $\Map(X,Z_{\DR})\simeq \Map(X,Z)_{\DR}$ is clear by definition of $(-)_{\DR}.$ The vertical arrows, and hence the bottom horizontal arrow are equivalences by the assumptions on $Z$ allowing us to apply the main theorem of \cite{HolsteinPorta2025}, and due to the more general fact that for any derived geometric stack locally of finite presentation $Y$, thus in particular the reduced scheme $X$, the canonical morphism 
$(Y_{\mathrm{red}})^{\an}\to (Y^{\an})_{\mathrm{red}}$ is an equivalence of derived analytic stacks.

3. The existence of \eqref{FormCompMaps} is proven in \cite[Thm. 4.20 (2.)]{ElliotYoo2018}.
We must establish an analytic morphism,
$$F_{X^{\an},Z^{\an}}:\mathsf{T}_f^{*,\an}[n-d]\AnMap(X^{\an},Z^{\an})\to \AnMap(T^{\an}[1]X^{\an},Z^{\an})_{\AnMap(X^{\an},Z^{\an})}^{\wedge},$$
and prove it is an equivalence of derived analytic stacks with $(n-d)$-shifted symplectic structures. However, by part (1.) and (2.) of the proposition, using again the assumptions on $Z$, we have the comparison map \eqref{eqn: Map to AnMap}, is an equivalence. The existence of $F_{X^{\an},Z^{\an}}$ is determined by the diagram,
\[
\begin{tikzcd}
    \big(\mathsf{T}_f^*[n-d]\Map(X,Z)\big)^{\an}\arrow[r,"\sim"] \arrow[d,"(F_{X,Z})^{\an}"] & \mathsf{T}_f^{*,\an}[n-d]\AnMap(X^{\an},Z^{\an})\arrow[d,"F_{X^{\an},Z^{\an}}"]
    \\
\big(\Map(T[1]X,Z)_{\Map(X,Z)}^{\wedge}\big)^{\an}\arrow[r,"\sim"]& \AnMap(T^{\an}[1]X^{\an},Z^{\an})_{\AnMap(X^{\an},Z^{\an})}^{\wedge},
\end{tikzcd}
\]
Since $F_{X,Z}$ is an equivalence, and the horizontal arrows are induced by analytification and \eqref{eqn: Map to AnMap}, we have $F_{X^{\an},Z^{\an}}$ is an equivalence.
\end{proof}
\begin{proposition}
\label{DolDRIncreaseShift}
Let $Z$ be a geometric derived stack locally of finite presentation over $\mathbb{C}$ with an $n$-shifted symplectic structure. Then $Z_{\Dol}$ (and trivially $Z_{\DR}$) are $(n+1)$-shifted symplectic. 
Moreover, $(\mathsf{T}_f^*[-n]Z)_{\mathrm{DR}}$ is a formal moduli problem under $\mathsf{T}^*[-n]Z.$
\end{proposition}
\begin{proof}
Let $Z \in \mathsf{dSt}_{\mathbb{C}}^{\mathrm{lafp}}$ with an $n$-shifted symplectic structure. Since $Z_{\mathrm{Dol}}\simeq \mathsf{T}_f[1]Z,$ the first claim follows from Proposition \ref{PropsAn} (1). The second claim follows via the equivalence,
    $(\mathsf{T}_f^*[-n]Z)_{\mathrm{DR}}\simeq \mathsf{T}_f^*[1-n](Z_{\mathrm{DR}}),$
    and using the composition,
    $\mathsf{T}_f^*[-n]Z\to \big(\mathsf{T}_f^*[-n]Z\big)_{\mathrm{DR}}\to Z_{\mathrm{DR}}\simeq \mathsf{T}_f^*[1-n](Z_{\mathrm{DR}}).$
\end{proof}

\subsubsection{The Weil restriction} 
Let $f:B\to S$ be a morphism of prestacks and let $E$ be a $B$-prestack. The \emph{Weil restriction} of $E$ along $f:B\rightarrow S$ is the $S$-prestack $\underline{\mathrm{Res}}_{B/S}(E)$ characterized by the universal property:
\begin{equation}
    \label{eqn: WeilRes}
\mathrm{Map}_{\mathsf{PStk}_S}(T,\underline{\mathrm{Res}}_{B/S}(E))\simeq\mathrm{Map}_{\mathsf{PStk}_B}(T\times_S B,E).
\end{equation}
One example is given by relative mapping prestacks; let $E,B\in \mathsf{PStk}_S$, then $E\times B\in \mathsf{PStk}_B$ and $\underline{\mathrm{Res}}_{B/S}(E\times B)\simeq \Map_S(B,E).$ Another describes mapping stacks of flat sections.
Let $\pi:E\to X$ be a morphism of derived prestacks and $f:X\to S$ a morphism with $\EuScript{M}\in \mathrm{Pic}^{gr}(S)$ and equipped with a $\EuScript{L}$-orientation. Consider the twisted total cotangent bundles and natural projections
$p:\mathsf{T}_{\EuScript{L}\otimes f^*\EuScript{M}}^*(E/X)\to E,$ and $\widetilde{p}:\mathsf{T}_{\EuScript{M}}^*\big(\underline{\mathrm{Res}}_{X/S}(E)/S\big)\to \underline{\mathrm{Res}}_{X/S}(E).$
Consider tautological $1$-forms,
$$\lambda_E\in \Gamma_X\big(\mathsf{T}_{\EuScript{L}\otimes f^*\EuScript{M}}^*(E/X),\mathbb{L}_{\mathsf{T}_{\EuScript{L}\otimes f^*\EuScript{M}}^*(E/X)/X}\otimes \pi^*(\EuScript{L}\otimes f^*\EuScript{M})\big),$$
as well as 
$$\lambda_{\underline{\mathrm{Res}}_{X/S}(E)}\in \Gamma_S\big(\mathsf{T}_{\EuScript{M}}^*(\underline{\mathrm{Res}}_{X/S}(E)/S),\mathbb{L}_{\mathsf{T}_{\EuScript{M}}^*(\underline{\mathrm{Res}}_{X/S}(E)/S)/S}\otimes\widetilde{\pi}^*\EuScript{M}.\big)$$
In \cite[Prop.~3.9]{calaque2024shifted}, the following is proven.
\begin{proposition}
\label{CS24Prop3.9}
    Let $\pi:E\to X$ be a morphism of derived prestacks, $f:X\to S$ a morphism of prestacks, with $\EuScript{M}$ a graded line bundle on $S$, equipped with an $\EuScript{L}$-orientation. Then, 
there is an equivalence of derived prestacks,
$$\underline{\mathrm{Res}}_{X/S}\big(\mathsf{T}_{\EuScript{L}\otimes f^*\EuScript{M}}^*(E/X)\big)\simeq \mathsf{T}_{\EuScript{M}}^*\big(\underline{\mathrm{Res}}_{X/S}(E)/S\big).$$
\end{proposition}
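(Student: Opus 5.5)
The plan is to verify the universal property of the Weil restriction \eqref{eqn: WeilRes} on both sides. Concretely, we compare the functors of points of the two $S$-prestacks on an arbitrary test object $T\to S$, and then show that the resulting identification respects the tautological $1$-forms. Write $g:T\times_S X\to T$ for the base change of $f$.

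First we unwind the right-hand side. A $T$-point of $\mathsf{T}^*_{\EuScript{M}}\big(\underline{\mathrm{Res}}_{X/S}(E)/S\big)$ consists of two pieces of data:
\begin{itemize}
\item a $T$-point $u$ of $\underline{\mathrm{Res}}_{X/S}(E)$, i.e.\ by \eqref{eqn: WeilRes} a map $\phi_u:T\times_S X\to E$ over $X$;
\item a section of $u^*\mathbb{T}_{\underline{\mathrm{Res}}_{X/S}(E)/S}\otimes\EuScript{M}_T$, i.e.\ a map $u^*\mathbb{L}_{\underline{\mathrm{Res}}/S}\to\EuScript{M}_T$.
\end{itemize}
By the cotangent complex formula for Weil restrictions (the algebraic analogue of Proposition \ref{AnMapCotComplex} and of the computation after \eqref{UniverAnResEval}), we have
\[
u^*\mathbb{L}_{\underline{\mathrm{Res}}_{X/S}(E)/S}\simeq g_+\big(\phi_u^*\mathbb{L}_{E/X}\big),
\]
where $g_+$ is the left adjoint of $g^*$. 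This left adjoint exists precisely because the $\EuScript{L}$-orientation forces $f$ to be universally cocontinuous.

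The key step is then a twisted duality identity:
\[
\mathrm{Maps}\big(g_+\phi_u^*\mathbb{L}_{E/X},\EuScript{M}_T\big)\simeq\mathrm{Maps}\big(\phi_u^*\mathbb{L}_{E/X},\,g^*\EuScript{M}_T\otimes\EuScript{L}\big).
\]
We get it by composing two adjunctions. First, $g_+\dashv g^*$ converts the left side into a space of maps to $g^*$ of the target. Second, the orientation $[X]_{\EuScript{L}}:\Gamma_S(X,\EuScript{L})\to\mathcal{O}_S$ identifies $g_+$ with $g_*(-\otimes\EuScript{L})$ on dualizable objects, by the usual Serre-type duality argument (as in PTVV \S2 and \cite[\S3]{calaque2024shifted}). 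The right side is exactly a $T\times_SX$-point of $\mathsf{T}^*_{\EuScript{L}\otimes f^*\EuScript{M}}(E/X)$ lying over $\phi_u$. By the universal property \eqref{eqn: WeilRes}, this is a $T$-point of $\underline{\mathrm{Res}}_{X/S}\big(\mathsf{T}^*_{\EuScript{L}\otimes f^*\EuScript{M}}(E/X)\big)$.

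All identifications are natural in $T$, being base-change compatible because the orientation is stable under base change. Yoneda therefore gives the equivalence of prestacks.

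It remains to check compatibility with the tautological forms. Under the equivalence, $\lambda_{\underline{\mathrm{Res}}_{X/S}(E)}$ should be the transgression of $\lambda_E$: pull back $\lambda_E$ along the evaluation map, then integrate it against $[X]_{\EuScript{L}}$. Both forms are characterized by the identity map on the fibre data, so this follows from the same adjunction chain.

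I expect the main obstacle to be the duality step. It requires the orientation to be non-degenerate enough that $g_+\simeq g_*(-\otimes\EuScript{L})$ holds for the relevant complexes, uniformly and base-change compatibly. Since $\phi_u^*\mathbb{L}_{E/X}$ need not be perfect for general prestacks, I would either restrict to the dualizable case or use the universally-perfect hypothesis of \cite[Def.~2.17]{calaque2024shifted}. The rest is formal.
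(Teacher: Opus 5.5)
There is a genuine gap. Your overall strategy is the right one: compare fibres over $\underline{\mathrm{Res}}_{X/S}(E)$ using $u^*\mathbb{L}_{\underline{\mathrm{Res}}/S}\simeq g_+\phi_u^*\mathbb{L}_{E/X}$, and let the orientation trade $g_+$ for $g_*(-\otimes\EuScript{L})$. The problem is that you have described the fibres of both cotangent bundles as fibres of tangent bundles.

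With the paper's convention $\mathsf{T}^*[n]Z=\mathsf{Spec}_Z\mathrm{Sym}(\mathbb{T}_Z[-n])$, a point of $\mathsf{T}^*_{\EuScript{M}}(\underline{\mathrm{Res}}_{X/S}(E)/S)$ over $u$ is a map $u^*\mathbb{T}_{\underline{\mathrm{Res}}/S}\to\EuScript{M}_T$, i.e.\ a section of $u^*\mathbb{L}_{\underline{\mathrm{Res}}/S}\otimes\EuScript{M}_T$. A map $u^*\mathbb{L}_{\underline{\mathrm{Res}}/S}\to\EuScript{M}_T$, as you write it, is an $\EuScript{M}$-valued derivation. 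The same slip recurs when you read $\mathrm{Maps}(\phi_u^*\mathbb{L}_{E/X},g^*\EuScript{M}_T\otimes\EuScript{L})$ as a lift of $\phi_u$ to $\mathsf{T}^*_{\EuScript{L}\otimes f^*\EuScript{M}}(E/X)$.

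As a result, your displayed ``twisted duality identity'' is false. The adjunction $g_+\dashv g^*$ by itself turns its left side into $\mathrm{Maps}(\phi_u^*\mathbb{L}_{E/X},g^*\EuScript{M}_T)$, with no $\EuScript{L}$. Once $g_+$ has been eliminated, there is nothing left for the orientation to act on. The identity would then assert $\mathrm{Maps}(F,g^*\EuScript{M}_T)\simeq\mathrm{Maps}(F,g^*\EuScript{M}_T\otimes\EuScript{L})$. This fails in general, e.g.\ for an ordinary $d$-orientation $\EuScript{L}=\mathcal{O}_X[d]$ with $d\neq 0$. What your chain actually proves is the orientation-free fact that Weil restriction commutes with twisted tangent bundles.

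The repair is to dualize once; this is exactly where the orientation is used.
\begin{enumerate}
\item Over $u$, the fibre of the right-hand side is $\mathrm{Maps}(u^*\mathbb{T}_{\underline{\mathrm{Res}}/S},\EuScript{M}_T)$.
\item Dualizing your cotangent formula, using $g_+F\simeq(g_*F^\vee)^\vee$ for perfect $F$, gives $u^*\mathbb{T}_{\underline{\mathrm{Res}}/S}\simeq g_*\phi_u^*\mathbb{T}_{E/X}$.
\item Non-degeneracy of $[X]_{\EuScript{L}}$, base-changed to $T$, says $(g_*G)^\vee\simeq g_*(G^\vee\otimes\EuScript{L})$ for perfect $G$.
\item Combined with the projection formula and $g^*\dashv g_*$, this makes $g^*(-)\otimes\EuScript{L}$ right adjoint to $g_*$ on perfect objects. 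Hence
\[
\mathrm{Maps}\big(g_*\phi_u^*\mathbb{T}_{E/X},\EuScript{M}_T\big)\simeq\mathrm{Maps}\big(\phi_u^*\mathbb{T}_{E/X},\,g^*\EuScript{M}_T\otimes\EuScript{L}\big),
\]
and the right side genuinely is the fibre of $\mathsf{T}^*_{\EuScript{L}\otimes f^*\EuScript{M}}(E/X)$ over $\phi_u$.
\end{enumerate}
In other words, your identity becomes correct once $g_+$ is replaced by $g_*$ and $\mathbb{L}_{E/X}$ by $\mathbb{T}_{E/X}$.

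This needs $\mathbb{L}_{E/X}$ perfect and $g_*$ to preserve perfect complexes (universal perfectness), as you anticipated. It is this condition, not universal cocontinuity (which gives a right adjoint of $g_*$), that supplies $g_+$. The paper itself gives no argument here; it simply imports the statement from \cite[Prop.~3.9]{calaque2024shifted}.
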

This equivalence is uniquely characterized by the commutative diagram
\[
\begin{tikzcd}
\underline{\mathrm{Res}}_{X/S}\big(\mathsf{T}_{\EuScript{L}\otimes f^*\EuScript{M}}(E/X)\big)\arrow[dr,"\underline{\mathrm{Res}}_{X/S}(p)"] \arrow[rr] & & \mathsf{T}_{\EuScript{M}}^*\big(\underline{\mathrm{Res}}_{X/S}(E)/X\big)\arrow[dl,"\widetilde{p}"]
    \\
    & \underline{\mathrm{Res}}_{X/S}(E) &
\end{tikzcd}
\]
and the equivalence of the corresponding canonical $1$-forms,
$\int_{X/S}\widetilde{\lambda}_E\simeq \widetilde{\lambda}_{\underline{\mathrm{Res}}_{X/S}(E)}.$

Looking to future applications in Section.~\ref{sec: LagCorrNAHType}, we recall \cite[Example~3.11]{calaque2024shifted}.
\subsubsection{Example: Shifted Higgs bundles}
\label{ShiftedHiggs}
Let $X$ be a smooth (proper) variety. Let $\mathcal{K}_X:=K_X[\dim_{\mathbb{C}}X]$ be the graded canonical bundle. Let $G$ be a reductive algebraic group and consider
$\underline{\mathrm{Bun}}_G(X):=\Map(X,\mathsf{B}G).$
By Proposition \ref{CS24Prop3.9}, there is an equivalence of $n$-shifted symplectic derived stacks,
\begin{equation}
\label{DerHiggsBunG}
\mathsf{T}^*[n]\underline{\mathrm{Bun}}_G(X)\simeq\underline{\mathrm{Res}}_{X/pt}\big(\mathsf{T}_{K_X[n+\dim X]}^*\big((\mathsf{B}G\times X)/X\big)\big),
\end{equation}
since 
\begin{equation}
    \label{TwistedCotangent}
\mathsf{T}_{K_X[n+d]}^*(\mathsf{B}G\times X/X)\simeq \mathsf{T}^*[n+d]\mathsf{B}G\times^{\mathbb{G}_m} K_X^{\times}.\end{equation}
Via its functor of points description e.g. over $\mathbb{C}$, the stack $\underline{\mathrm{Res}}_{X/pt}\big(\mathsf{T}_{K_X[n+d]}^*(\mathsf{B}G\times X/X)\big)$ parameterizes  $G$-bundles $P\to X$, together with 
$\phi\in \Gamma(X,K_X\otimes \mathrm{coAd}(P)\big)[n+d-1],$
where $\mathrm{coAd}(P)$ is the coadjoint bundle.
\begin{remark}
The section $\phi$ is shifted with degree $n+d-1$. Thus, \eqref{DerHiggsBunG}  describes a higher-dimensional version of the moduli stack of Higgs bundles (classically arising from $n=0,d=1$ case).
\end{remark}
The equivalence \eqref{DerHiggsBunG} thus provides a derived generalization to higher dimensional smooth proper varieties $X$, of the $\lambda\to 0,$ component of \eqref{TwistedBunGDiagramIntro}. 
Since mapping stacks are particular cases of Weil restriction, we describe the derived generalization of the $\lambda \to 1$ component. First, let us remind the case for curves.
\begin{remark}
\label{Remark:TwBunDR}
Let $C$ be a smooth proper curve and $(Z,\omega)$ an $n$-shifted symplectic derived stack. Consider the graded mixed morphism (cf, \cite[Sect.~2.1]{PTVV13}),
$\mathrm{DR}(Z\times C)\to \mathrm{DR}(Z)\otimes \mathrm{DR}(C),$ and the natural map (of graded mixed complexes) given by Serre duality,
$$\int_C:\mathrm{DR}(C)\to \mathbb{C}(1)[-1].$$
Letting $\int_C\mathrm{ev}^*\omega$ be the image of $\omega$ under
\begin{equation}
    \label{eqn: DR-composites}
\mathrm{DR}(Z)\to \mathrm{DR}\big(\Map(C,Z)\times C)\to \mathrm{DR}(\Map(C,Z))\otimes\mathrm{DR}(C)\to \mathrm{DR}(\Map(C,Z))\otimes\mathbb{C}(1)[-1].\end{equation}
By \cite[Prop.~1.24]{Safronov2023} there is an equivalence of derived mapping stacks,
\begin{equation}
    \label{LocSysTwistedCot1}
\Map(C_{\mathrm{DR}},Z)\simeq\mathsf{T}_{\int_C\mathrm{ev}^*\omega}^*[n-2]\Map(C,Z).\end{equation}
\end{remark}
Following the method of \cite{calaque2024shifted}, one may generalize \eqref{LocSysTwistedCot1} of Remark \ref{Remark:TwBunDR} to higher dimensional smooth proper varieties\footnote{See related   \cite[Conj.~1.25]{Safronov2023}.}. 
What we will do in Sect.~\ref{sec: LagCorrNAHType}, is analytify these constructions, and work with $\Perf$ in place of $\mathsf{B}G,$ to study the $\lambda \neq 0,1$ component of \eqref{TwistedBunGDiagramIntro}, in terms of an analytic moduli stack of $\lambda$-connections.

\section{Hyperk\"ahler structures in derived geometry}
\label{sec:HKinDAG}
In this section,  we aim to provide an interpretation of the results of \cite{LRT2022} in the setting of derived geometry. First, we make precise the candidate notion of a \emph{derived twistor family} proposed in \cite{KPS2021}. We then construct the Deligne--Simpson moduli stack as a derived  hyperk\"ahler reduction, in Sect.~\ref{sec: Derived NAH and KW}. 

To formulate twistor geometry for derived stacks over 
$\mathbb{C}$ we begin by establishing several technical preliminaries. The reader may wish to proceed directly to Subsect.~\ref{PreTwistorSection}, returning to these results as needed. For convenience, we summarize their main content:
\begin{itemize}
    \item \textit{Subsect.~\ref{ssec: C2Diags}}: proposes a category for derived stacks over $\mathbb{C}$, with real structure. Since $\mathsf{dSt}_{\mathbb{C}}$ is an $\infty$-topos it has all limits and colimts, and thus the category of $I$-diagrams $\mathsf{Diag}(I,\mathsf{dSt}_{\mathbb{C}})$ is an $\infty$-topos for small $\infty$-categories $I.$ In particular, this holds for $\mathsf{B}C_2$, since $C_2$ is finite, hence $\mathsf{dSt}_{\mathbb{C}}[C_2]$ is an $\infty$-topos;
    \begin{enumerate}
    \item \textit{Subsect.~\ref{sssec: DerivedGalois}}: relates the $\infty$-topoi of presheaves on derived affines over $K,$ and over $L$, with $L/K$ a finite field extension. The main result is Proposition \ref{GaloisLemma}, relating $\infty$-presheaves over $K$ satisfying \'etale hyperdescent and \'etale sheaves over $L$ which are $\mathrm{Gal}(L/K)$-equivariant\footnote{Important results on the base change of stability structures for field extensions $L/K$ are given in \cite[Thm.~12.17]{BLMNPS2021}.};

    \item \textit{Subsect.~\ref{sssec:C2Groth}}: states explicitly a standard generalization of the $\infty$-Grothendieck construction \cite[Thm.~3.2.0.1]{Lur09} to the equivariant setting (see Proposition \ref{C2Grothendieck}). This is used to make sense of $C_2$-equivariant presheaves of $\mathcal{O}_S$-modules on a derived prestack $S$ with $C_2$-action, given by Proposition \ref{PresheavesC2Stacks};

    \item \textit{Subsect.~\ref{C2WaldCons}}: we explicitly describe a version of the Waldhausen construction for $2$-Segal spaces \cite{dyckerhoff2019higher}, for $\mathsf{B}C_2$-shaped diagrams in $\mathsf{dSt}_{\mathbb{C}}$. The main result is Proposition \ref{C2Waldhausen}.
    \end{enumerate}
\end{itemize}
These results are applied to the case with $K=\mathbb{R},L=\mathbb{C}$ with 
$\mathrm{Gal}(\mathbb{C}/\mathbb{R})=C_2.$ 
We understand $\mathbb{P}^1(\mathbb{C})$ as a complex manifold with sheaf of holomorphic functions $\mathcal{O}_{\mathbb{P}^1(\mathbb{C})}$, whose underlying topological space has a $\mathrm{Gal}(\mathbb{C}/\mathbb{R})$-action, for which the non-trivial element $\sigma$ acts via complex conjugation.
\begin{remark}
This $\mathrm{Gal}(\mathbb{C}/\mathbb{R})$-action induces a sheaf isomorphism,
$\sigma^{\sharp}:\sigma^{-1}\mathcal{O}_{\mathbb{P}^1(\mathbb{C})}\xrightarrow{\simeq}\mathcal{O}_{\mathbb{P}^1(\mathbb{C})},$ defined by $\sigma^{\sharp}(f)(z):=\overline{f(\overline{z})},$ satisfying  $\sigma^{\sharp}\circ \sigma^{-1}(\sigma^{\sharp})\simeq \mathrm{id}_{\mathcal{O}_{\mathbb{P}^1(\mathbb{C})}}:\mathcal{O}_{\mathbb{P}^1(\mathbb{C})}\simeq \mathcal{O}_{\mathbb{P}^1(\mathbb{C})}.$
\end{remark}
Recall from \cite{Simpson1997MixedTwistor}, an \emph{antipodal real twistor structure} is a vector bundle $E$ on $\mathbb{P}^1$ equipped with an antilinear involution
$
\sigma: E \longrightarrow \sigma^*E$
covering $\sigma_{\mathbb{P}^1}$, and satisfying $\sigma^2 = \mathrm{id}$. 
Sheaf-theoretically, the pullback $\sigma^*E$ is defined by
$
\sigma^*(E)(U) := E(\sigma_{\mathbb{P}^1}(U)),$
with the $\mathcal{O}_{\mathbb{P}^1}$-module structure twisted by $\sigma$. 
\begin{remark}
Similarly, one defines \emph{circular real twistor structures} using the antiholomorphic involution $\tau_{\mathbb{P}^1}$ preserving the unit circle, with analogous notions of involution and compatible filtrations.
\end{remark}
We will now generalize this to consider objects of the $\infty$-topos $\mathsf{dAnSt}_{/\bfP}[C_2],$ of relative derived analytic stacks for the analytic \'etale topology, admitting a relative analytic cotangent complex $\mathbb{L}_{Z/\bfP}^{\an},$ endowed with compatible real structures (covering $\sigma_{\bfP}$), induced from the $\mathrm{Gal}(\mathbb{C}/\mathbb{R})$-action.

\subsection{Diagrams and homotopy-fixed points}
\label{ssec: C2Diags}
Complex conjugation is real-analytic, so in particular does not define a morphism in the $\mathbb{C}$-analytic category. This motivates introducing an auxiliary category of derived stacks over $\mathbb{C}$, with additional datum of a homotopy-coherent real involution; the real structure is encoded via descent data.
More precisely, since $\mathbb{C}$ is an $\mathbb{R}$-algebra, we may study derived stacks over $\mathbb{C}$ with \emph{effective Galois descent} along $\mathrm{Spec}(\mathbb{C})\to \mathrm{Spec}(\mathbb{R}).$ This is equivalent to the $\infty$-category
\begin{equation}
\label{eqn:RealDAnSt}
\mathsf{dSt}_{\mathbb{C}}[C_2]:=\mathsf{Diag}(\mathsf{B}C_2,\mathsf{dSt}_{\mathbb{C}}),
\end{equation}
of $\mathsf{B}C_2$-shaped diagrams in derived stacks over $\mathbb{C}$, with $C_2:=\mathbb{Z}/2\mathbb{Z}$ the cyclic group.
Let $C_2 = \mathbb{Z}/2\mathbb{Z} = \{e,\sigma\}$ denote the cyclic group of order two. Its classifying space $\mathsf{B}C_2$ is the $\infty$-category with a single object $\ast$ and endomorphism space given by $C_2$, i.e.\ 
\[
\mathrm{End}_{\mathsf{B}C_2}(\ast) \simeq C_2,
\]
with composition induced by the group law. Concretely, $\mathsf{B}C_2$ may be viewed as the simplicial nerve of the group $C_2$, whose nondegenerate simplices encode the relation $\sigma^2 = e$.

We may also speak of  Galois conjugate algebraic varieties or schemes, which fail to have the same topology, in fact have isomorphic cohomologies with $\mathbb{C}$-coefficients.
\begin{example}
Given a number field $K$, and $X$ an algebraic variety over $K$, for any embedding $\sigma:K\to \mathbb{C},$ we can consider the algebraic variety $X_{\sigma}:=X\times_{\mathrm{Spec}(K)}\mathrm{Spec}(\mathbb{C}).$ 
If $\sigma':K\to \mathbb{C}$ is another embedding, we call $X_{\sigma}$ and $X_{\sigma'},$ \emph{Galois conjugate} as varieties.
\end{example}
We will now extend this conjugation (by Kan extension) to derived stacks. 

\subsubsection{Derived stacks with Galois descent}
\label{sssec: DerivedGalois}
Let $L/K$ be a finite field extension with Galois group $\mathrm{Gal}(L/K).$ 
Assume that there is a good notion of connective derived geometry over $L$ and over $K$.
\begin{remark}
  More precisely, we tacitly assume the existence of homotopical geometric contexts \cite{TV2}, satisfying conditions for geometry relative to simplicial objects in an exact category, in the sense of \cite{BenBassatKellyKremnizer2024}. See also \cite[Sect.1.1]{CPTVV17}. It is of course satisfied by our case of interest i.e. for $\mathbb{C}/\mathbb{R}.$
\end{remark}
Let $\mathsf{dAff}_{L},\mathsf{dAff}_{K}$ denote the corresponding $\infty$-categories of affine derived schemes.
Let $A\in\mathsf{cdga}_{L}^{\leq 0}$. Then, every element of $\mathrm{Gal}(L/K)$ defines a Galois conjugate scheme via $\mathsf{Spec}(g^*A),$ where $g^*A:=A\otimes_{L,g}A,g\in\mathrm{Gal}(L/K).$
For $S$ not necessarily affine, we have $g^*S$ defined via its functor of points as 
$g^*S(\mathsf{Spec}B):=S\big(\mathsf{Spec}(g^*B)\big).$ This yields an endofunctor $g^*(-):\mathsf{dAff}_{/L}\to \mathsf{dAff}_{/L}.$ Base-change gives a functor
\begin{equation}
    \label{L/Kbc}
    (-)_L:\mathsf{dAff}_{/K}\to \mathsf{dAff}_{/L},\hspace{2mm}\mathsf{Spec}_KA\to \mathsf{Spec}_K\big(A\otimes_K L).
    \end{equation}
Note $\mathrm{Gal}(L/K)$ acts naturally on $\mathsf{dAff}_{/L}$ via conjugation and the essential image of \eqref{L/Kbc} is characterized by $\mathrm{Gal}(L/K)$-equivariant derived affine schemes i.e. which are endowed with equivalences 
$$(\mathsf{Spec}A)_L\xrightarrow{\simeq}g^*\big((\mathsf{Spec}A)_L\big),$$
for each group element $g.$ Consequently, we have the following.
\begin{proposition}
\label{GaloisLemma}
    There is an induced functor
$\mathsf{dSt}_K\to \mathsf{dSt}_L^{\mathrm{Gal}(L/K)},Z\mapsto Z_L,$
which is an equivalence of $\infty$-categories, whose inverse equivalence is the functor of homotopy fixed points i.e. $Z\simeq(Z_L)^{h\mathrm{Gal}(L/K)}.$
In particular, for an affine $K$-scheme $\mathsf{Spec}A$ the following statements hold:
\begin{enumerate}
    \item For every $K$-stack $Z$, we have
    $$Z\big(\mathsf{Spec}_KA)\simeq\big(Z_L(\mathsf{Spec}_L(A\otimes_KL))\big)^{h\mathrm{Gal}(L/K)},$$

    \item There is an induced full-faithful functor $i:\mathsf{dAff}_K\to \mathsf{dAff}_{L}^{\mathrm{Gal}(L/K)},$ whose right-Kan extension to $\mathsf{PShv}\big(\mathsf{dAff}_{L}^{\mathrm{Gal}(L/K)}\big)$, denoted by $i_*:\mathsf{PShv}\big(\mathsf{dAff}_L^{\mathrm{Gal}(L/K)}\big)\to \mathsf{PShv}\big(\mathsf{dAff}_K\big),$ is computed object-wise by
    $$(i_*\EuScript{F})(\mathsf{Spec}_KA)\simeq \underset{\mathsf{Spec}B\in(\mathsf{dAff}_L^{\mathrm{Gal}(L/K)})_{/i(\mathsf{Spec}A)}}{\mathrm{lim}}\hspace{1mm}\EuScript{F}(\mathsf{Spec}B).$$
\end{enumerate}
\end{proposition}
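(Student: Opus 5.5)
The plan is to reduce everything to Galois descent at the level of affines and then propagate it to stacks by Kan extension and hyperdescent. First, I will prove part (2). The base-change functor \eqref{L/Kbc} lands in $\mathrm{Gal}(L/K)$-equivariant derived affines, giving $i:\mathsf{dAff}_K\to\mathsf{dAff}_L^{\mathrm{Gal}(L/K)}$. Full faithfulness amounts to classical Galois descent for connective cdgas: for $A\in\mathsf{cdga}_K^{\leq 0}$ the map $A\to (A\otimes_K L)^{h\mathrm{Gal}(L/K)}$ is an equivalence. Since $L/K$ is finite Galois and $K$ has characteristic zero, $|\mathrm{Gal}(L/K)|$ is invertible, so homotopy fixed points are computed by strict invariants and the claim follows from $L^{\mathrm{Gal}}=K$ and flatness of $L$ over $K$. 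Equivariant maps $A\otimes_K L\to B\otimes_K L$ then correspond to $K$-maps $A\to B$. The formula for $i_*$ is the usual pointwise formula for right Kan extension along $i^{\mathrm{op}}$, so it needs no further argument.

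Next, I will show that $i$ is essentially surjective onto equivariant affines, which is effective Galois descent. The key is that $\mathsf{Spec}L\to\mathsf{Spec}K$ is a finite \'etale $\mathrm{Gal}(L/K)$-torsor. Its \v{C}ech nerve is therefore $\mathrm{Gal}(L/K)^{\bullet}\times\mathsf{Spec}L$, and modules and algebras descend along this faithfully flat cover. Concretely, an equivariant $B$ satisfies $B\simeq B^{h\mathrm{Gal}}\otimes_K L$. This makes $i$ an equivalence $\mathsf{dAff}_K\simeq\mathsf{dAff}_L^{\mathrm{Gal}(L/K)}$. Moreover, $i$ is compatible with \'etale covers: a family is an \'etale cover over $K$ iff its base change is an equivariant \'etale cover.

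I will then extend to stacks. I define $Z_L$ as the \'etale sheafification of the left Kan extension of $Z$, equivalently the restriction of $Z$ along Weil restriction $R_{L/K}$, so that $Z_L(\mathsf{Spec}B)=Z(\mathsf{Spec}B)$ for $B$ viewed over $K$. Conjugation by $g$ makes $Z_L$ an object of $\mathsf{dSt}_L^{\mathrm{Gal}(L/K)}=\mathsf{Diag}(\mathsf{B}\mathrm{Gal},\mathsf{dSt}_L)$. For statement (1) I evaluate at $\mathsf{Spec}(A\otimes_K L)$ and use that the \v{C}ech nerve of $\mathsf{Spec}(A\otimes_K L)\to\mathsf{Spec}A$ is the simplicial object $\mathrm{Gal}^{\bullet}\times\mathsf{Spec}(A\otimes_K L)$. \'Etale (hyper)descent for $Z$ then gives
\[
Z(\mathsf{Spec}A)\simeq \lim_{\Delta} Z\big(\mathrm{Gal}^{\bullet}\times \mathsf{Spec}(A\otimes_KL)\big)\simeq Z_L(\mathsf{Spec}(A\otimes_KL))^{h\mathrm{Gal}(L/K)},
\]
since a cosimplicial diagram of this bar form computes homotopy fixed points.

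Finally, I assemble the equivalence. The inverse functor sends an equivariant stack $\EuScript{F}$ to $\mathsf{Spec}A\mapsto \EuScript{F}(\mathsf{Spec}(A\otimes_KL))^{h\mathrm{Gal}}$, which is $i_*$ precomposed with the equivalence of sites. This functor preserves \'etale sheaves because homotopy fixed points commute with limits. Statement (1) gives the unit $Z\simeq (Z_L)^{h\mathrm{Gal}}$. For the counit, I check on equivariant affines that $\EuScript{F}(B)\simeq \EuScript{F}(B^{h\mathrm{Gal}}\otimes_KL)^{h\mathrm{Gal}}$ recovers $\EuScript{F}$ after base change, again by descent along the torsor. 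I expect the main obstacle to be the hypercompleteness bookkeeping: one must ensure that sheafification used in defining $Z_L$ does not break the identification $Z_L(\mathsf{Spec}B)=Z(\mathsf{Spec}B)$. To handle this I will avoid sheafifying and define $Z_L$ directly by restriction along the (finite, hence \'etale-local) Weil restriction, which already preserves hypersheaves.
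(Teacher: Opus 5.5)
Your proposal is correct and follows essentially the paper's route. Both arguments rest on an equivalence of sites $\mathsf{dAff}_K\simeq\mathsf{dAff}_L^{\mathrm{Gal}(L/K)}$, coming from Galois descent for affines and compatible with \'etale covers, combined with \'etale descent along $\mathsf{Spec}(A\otimes_KL)\to\mathsf{Spec}A$ to get the homotopy fixed point formula. Your identification of the \v{C}ech nerve with $\mathrm{Gal}(L/K)^{\bullet}\times\mathsf{Spec}(A\otimes_KL)$ makes explicit the step the paper only calls a formal consequence of the topos equivalence.

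One slip: to get $Z_L(\mathsf{Spec}B)=Z(\mathsf{Spec}B)$ (with $B$ viewed over $K$), you must restrict $Z$ along the forgetful functor $\mathsf{dAff}_L\to\mathsf{dAff}_K$, i.e.\ restriction of scalars on algebras, which is left adjoint to base change of schemes. It is not the Weil restriction $R_{L/K}$, which is the right adjoint and would produce a different presheaf; the formula you actually use is the correct one.
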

We will actually prove a stronger statement, namely that base-change defines a geometric morphism of $\infty$-topoi compatible with \'etale (hyper)descent for objects of $\mathsf{PShv}$. In other words, it extends to sheaves.
\begin{proof}
    Let $b:\mathsf{dAff}_K\to \mathsf{dAff}_{L}^{\mathrm{Gal}(L/K)},$ be the base-change, whose target is $\mathsf{Funct}\big(\mathsf{B}\mathrm{Gal}(L/K),\mathsf{dAff}_{/L}\big)$. It is essentially surjective with inverse functor
    $$(-)^{h\mathrm{Gal}(L/K)}:\mathsf{dAff}_{/L}^{\mathrm{Gal}(L/K)}\to \mathsf{dAff}_K.$$
    Via the usual \'etale topologies, $b$ is a morphism of sites i.e. preserves \'etale covers, and defines an equivalence of $\infty$-topoi of \'etale sheaves, 
    $$b_*:\mathsf{Shv}(\mathsf{dAff}_K)\xrightarrow{\simeq}\mathsf{Shv}\big(\mathsf{dAff}_{L}^{\mathrm{Gal}(L/K)}\big),$$
    given by right-Kan extension along $b$, with inverse $b^*$ given via precomposition with $b$. Explicitly, for an \'etale sheaf $\EuScript{F}$ on the $\infty$-site over $K$, we will show that
    $$\big(b_*\EuScript{F}\big)(Y)\simeq \EuScript{F}\big(Y^{h\mathrm{Gal}}\big),$$
    for each $Y\in\mathsf{dAff}_{/L}^{\mathrm{Gal}}.$
First, note the forgetful functor $u:\mathsf{dAff}_{/L}^{\mathrm{Gal}}\to \mathsf{dAff}$ sends an equivariant affine $L$-scheme to its underlying $L$-scheme. It is both continuous and cocontinuous for the \'etale topologies, and hence induces a geometric morphism of topoi
$$u_*:\mathsf{Shv}(\mathsf{dAff}_{/L}^{\mathrm{Gal}(L/K)})\to \mathsf{Shv}(\mathsf{dAff}),$$
with a left-adjoint $u^*.$ Base-change $\EuScript{F}_L$ of a $K$-stack $\EuScript{F}$ is given by left-Kan extension along $\mathsf{dAff}_K\hookrightarrow \mathsf{dAff}_{L},$ thus $\EuScript{F}_L\simeq u_*(b_*\EuScript{F}).$ That is, for every $L$-algebra $B$, 
$$\EuScript{F}_L(\mathsf{Spec}B)\simeq \underset{A\to B^{hG}}{\mathrm{colim}}\hspace{1mm}\EuScript{F}(\mathsf{Spec}A)\simeq (b_*\EuScript{F})(\mathsf{Spec}B)\simeq u_*(b_*\EuScript{F})(\mathsf{Spec}_LB),$$
where we consider $(b_*\EuScript{F})(\mathsf{Spec}B)$ with trivial $\mathrm{Gal}(L/K)$-action.
To prove descent, let $\EuScript{F}\in \mathsf{dSt}_{K}$ and let $A$ be a $K$-algebra. Consider the object $Y_0:=\mathsf{Spec}A\otimes_KL$ with $\mathrm{Gal}(L/K)$-action and let $Y$ denote the same underlying $L$-scheme with trivial action. There is a canonical map $Y_0\to Y$ acting as the identity on the underlying scheme, which is moreover $\mathrm{Gal}(L/K)$-equivariant. Moreover, the following diagram commutes: 
\[
\begin{tikzcd}
    \EuScript{F}(\mathsf{Spec}A)\arrow[d,"\mathrm{id}"]\arrow[r,"\sim"] & b_*\EuScript{F}(Y_0)\arrow[d]
    \\
    \EuScript{F}_L\big(\mathsf{Spec}(A\otimes_KL)\big)\arrow[r,"\sim"] & b_*\EuScript{F}(Y).
\end{tikzcd}
\]
Indeed, the top equivalence is given by $\EuScript{F}(\mathsf{Spec}A)\simeq b_*\EuScript{F}(Y_0)$ as $Y_0^{h\mathrm{Gal}}\simeq\mathsf{Spec}A,$ and the bottom follows by definition since $\EuScript{F}_L=u_*b_*\EuScript{F}.$
The vertical right map is induced from $Y_0\to Y$ by applying $b_*\EuScript{F}(-)$, viewed as a morphism in $\mathsf{dAff}_{/L}^{\mathrm{Gal}}.$ Since $b_*\EuScript{F}$ is $\mathrm{Gal}$-equivariant, $b_*\EuScript{F}(Y_0)$ has a canonical $\mathrm{Gal}$-action induced from $Y_0$ and the morphism $b_*\EuScript{F}(Y_0)\to b_*\EuScript{F}(Y),$ is $\mathrm{Gal}$-equivariant, where we take the target with its trivial action. Via descent for \'etale sheaves it follows that 
$b_*\EuScript{F}(Y_0)\simeq\big(b_*\EuScript{F}(Y)\big)^{h\mathrm{Gal}},$ which is a formal consequence of the equivalence of $\infty$-topoi, 
$$\mathsf{Shv}\big(\mathsf{dAff}_K\big)\xrightarrow{\simeq}\mathsf{Shv}(\mathsf{dAff}_{L}^{\mathrm{Gal}(L/K)}).$$
In other words, for a $\mathrm{Gal}(L/K)$-equivariant sheaf $\EuScript{G}$ and an object $Z\in\mathsf{dAff}_L^{\mathrm{Gal}(L/K)},$ we have $\EuScript{G}(Z)\simeq \EuScript{G}\big(uZ\big)^{h\mathrm{Gal}(L/K)}.$
Then, applying this formula to $\EuScript{G}=b_*\EuScript{F}$ and $Z=Y_0,$
it follows that
$$b_*\EuScript{F}(Y_0)\simeq\big(b_*\EuScript{F}(uY_0)\big)^{h\mathrm{Gal}}\simeq\big(b_*\EuScript{F}(Y)\big)^{h\mathrm{Gal}}.$$
In particular, we have
$$\EuScript{F}(\mathsf{Spec}A)\simeq b_*\EuScript{F}(Y_0)\simeq(b_*\EuScript{F}(Y))^{h\mathrm{Gal}}\simeq\big(\EuScript{F}_K(\mathsf{Spec}A\otimes_KL)\big)^{h\mathrm{Gal}},$$
as required.
\end{proof}
We now apply Proposition \ref{GaloisLemma} to our case of interest, when $K=\mathbb{R},L=\mathbb{C}$ with 
$\mathrm{Gal}(\mathbb{C}/\mathbb{R})=C_2.$ 

\subsubsection{Derived analytic stacks with $\mathrm{Gal}(\mathbb{C}/\mathbb{R})$-action}

Let $G := \mathrm{Gal}(\mathbb{C}/\mathbb{R}) \simeq \mathbb{Z}/2\mathbb{Z}$, viewed as a discrete group stack in derived analytic stacks \cite[Def.~7.2.2.1]{Lur09}. To emphasize this point of view, we denote it by $\mathcal{G}(\mathbb{C}/\mathbb{R})_{\bullet}:\Delta^{\mathrm{op}}\to \mathsf{dAnSt}.$
The functor of classifying stacks is understood to be $\mathsf{B}\mathcal{G}_{\bullet}\simeq\underset{n\in\Delta^{\mathrm{op}}}{\mathrm{colim}}\hspace{1mm}\mathcal{G}_n.$
\begin{remark}[Notation]
Throughout, where context will be clear, we will understand notation $\mathsf{B}\mathrm{Gal}(\mathbb{C}/\mathbb{R})$, to mean either the (co)simplicial diagram
$$\bigg[\cdots \;\substack{\longrightarrow\\[-0.3em]
               \longrightarrow\\[-0.3em]
               \longrightarrow\\[-0.3em]\longrightarrow}\; \mathrm{Gal}(\mathbb{C}/\mathbb{R})^{\times 3}   \;\substack{\longrightarrow\\[-0.3em]
               \longrightarrow\\[-0.3em]
               \longrightarrow}\; \mathrm{Gal}(\mathbb{C}/\mathbb{R})\times \mathrm{Gal}(\mathbb{C}/\mathbb{R}) \;\substack{\longrightarrow\\[-0.3em]
               \longrightarrow}\; \mathrm{Gal}(\mathbb{C}/\mathbb{R})\bigg],$$
or its geometric realization. 
\end{remark}

Consider a group object in the relative setting of $\mathsf{dAnSt}_{/\mathbf{P}^1}.$ They form a full $\infty$-subcategory of $\mathsf{Funct}(\Delta^{\mathrm{op}},\mathsf{dAnSt}_{/\mathbf{P}^1})$ spanned by functors $\mathcal{G}_{\bullet}:\Delta^{\mathrm{op}}\to \mathsf{dAnSt}_{/\bfP},$ such that: $\mathcal{G}_{\bullet}(0)\simeq \bfP,$ and for every partition $[n]=I_1\cup I_2$ of $[n]\in\Delta,$ with cardinality of intersection $1$, then $\mathcal{G}_n\to \mathcal{G}_{\bullet}(I_1)\times_{\mathbf{P}^1}\mathcal{G}_{\bullet}(I_2)$ is an equivalence.

 Following \cite[Def.~4.2.2.2]{Lur17}, the $\infty$-category of derived analytic stacks with $\mathrm{Gal}(\mathbb{C}/\mathbb{R})$-action is the full subcategory
\begin{equation}
    \label{C2dAnStIntro}
\mathsf{dAnSt}_{/\bfP}^{\mathrm{Gal}(\mathbb{C}/\mathbb{R})}
\subseteq
\mathsf{Funct}(\Delta^{\mathrm{op}}, \mathsf{dAnSt}_{/\bfP})_{/\mathsf{B}\mathrm{Gal}(\mathbb{C}/\mathbb{R})},
\end{equation}
consisting of morphisms $Z_{\bullet}\to \mathcal{G}(\mathbb{C}/\mathbb{R})_{\bullet}$ of functors $\Delta^{\mathrm{op}}\to \mathsf{dAnSt}_{/\bfP}$ i.e. simplicial objects, such that for every $[n]\in \Delta$, the canonical map
\begin{equation}
\label{nSimplexRelativeIntro}
Z_n\to Z_0\times_{\bfP}\mathcal{G}(\mathbb{C}/\mathbb{R})_n,
\end{equation}
induced from $[0]\to [n],0\mapsto n$ in $\Delta$, is an equivalence. 

The following result states internal hom-stacks inherit a natural $\mathrm{Gal}$-action when the target is endowed with one.
\begin{proposition}
    \label{ActAnMaps}
 Suppose that $Z$ is has an action of the group stack associated with $\mathrm{Gal}(\mathbb{C}/\mathbb{R})$ and $W$ is an arbitrary derived analytic stack. Consider
$$\mathsf{dAnSt}_{/\bfP}^{\mathrm{act}}\subseteq \mathsf{Funct}\big(\Delta^1\times \Delta^{\mathrm{op}},\mathsf{dAnSt}_{/\bfP}\big),$$
the full subcategory of group stack actions on objects of $\mathsf{dAnSt}_{/\bfP}.$ Then there is an induced group stack action on $\underline{\mathrm{AnMap}}_{/\bfP}(W,Z)$.
\end{proposition}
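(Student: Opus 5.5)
The plan is to use the fact that $\AnMap_{/\bfP}(W,-)$ is a right adjoint, and therefore preserves limits; in particular it preserves the fibre products that define a group action. The whole argument lives in $\mathsf{dAnSt}_{/\bfP}$, which is an $\infty$-topos and hence cartesian closed. This closedness comes from hypercomplete étale sheaves on $\mathsf{dAfd}_{\mathbb{C}}$, sliced over $\bfP$. So for each $W\to\bfP$ the functor $(-)\times_{\bfP}W$ has a right adjoint, and that adjoint is $\AnMap_{/\bfP}(W,-)$.

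Next I would encode the action as a simplicial object. Following \eqref{C2dAnStIntro}, the action on $Z$ is a simplicial object $Z_\bullet$ together with a map $Z_\bullet\to\mathcal{G}_\bullet:=\mathcal{G}(\mathbb{C}/\mathbb{R})_\bullet$. It is required to satisfy two conditions: $\mathcal{G}_\bullet$ is a group object, meaning $\mathcal{G}_0\simeq\bfP$ and the Segal maps are equivalences; and \eqref{nSimplexRelativeIntro} holds, that is, $Z_n\simeq Z_0\times_{\bfP}\mathcal{G}_n$.

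I would then apply $M:=\AnMap_{/\bfP}(W,-)$ levelwise, producing the simplicial object $M(Z_\bullet)\to M(\mathcal{G}_\bullet)$. Since $M$ preserves fibre products over $\bfP$ and sends $\bfP$ to itself, two things follow. First, $M(\mathcal{G}_\bullet)$ is again a group object. Second, $M(Z_n)\simeq M(Z_0)\times_{\bfP}M(\mathcal{G}_n)$, so the diagram lies in the full subcategory \eqref{C2dAnStIntro} for the group $M(\mathcal{G}_\bullet)$.

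The remaining step is to replace $M(\mathcal{G}_\bullet)$ by $\mathcal{G}_\bullet$ itself. The group $\mathcal{G}_n$ is the constant stack $\coprod_{C_2^{n}}\bfP$, a finite coproduct, so there is a diagonal map $\mathcal{G}_n\to M(\mathcal{G}_n)$. This map is the unit of the constant-map adjunction, adjoint to the projection $\mathcal{G}_n\times_{\bfP}W\to\mathcal{G}_n$, and it is natural in $[n]$ and a map of group objects. Pulling $M(Z_\bullet)$ back along it gives
\[
\AnMap_{/\bfP}(W,Z)_\bullet:=M(Z_\bullet)\times_{M(\mathcal{G}_\bullet)}\mathcal{G}_\bullet .
\]
Its $0$-simplices are $M(Z_0)=\AnMap_{/\bfP}(W,Z)$, and the condition \eqref{nSimplexRelativeIntro} is preserved because pullbacks commute with fibre products. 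This yields the required group-stack action, functorially in the diagram in $\mathsf{Funct}(\Delta^1\times\Delta^{\mathrm{op}},\mathsf{dAnSt}_{/\bfP})$.

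The main obstacle is a point of interpretation. Over a general base, the naive "$\sigma$ acts by postcomposition" formula only makes sense internally, and in the relative setting one must check that the action covers $\sigma_{\bfP}$ correctly. The pullback along $\mathcal{G}_\bullet\to M(\mathcal{G}_\bullet)$ is exactly what handles this. Everything else is formal: all the constructions are limits, and $M$ preserves them.
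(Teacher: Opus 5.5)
Your proof is correct, and it rests on the same principle as the paper's: $\AnMap_{/\bfP}(W,-)$ is right adjoint to $(-)\times_{\bfP}W$ in the $\infty$-topos $\mathsf{dAnSt}_{/\bfP}$, so it preserves the finite limits that define group objects and their actions. The difference is that the paper only asserts that the internal hom ``clearly'' lifts to a functor $\mathsf{dAnSt}_{/\bfP}^{\mathrm{op}}\times\mathsf{dAnSt}^{\mathrm{act}}_{/\bfP}\to\mathsf{dAnSt}^{\mathrm{act}}_{/\bfP}$, whereas you actually construct the lift.

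Your construction also exposes a step the paper skips. Applying $M$ levelwise gives an action of the group stack $M(\mathcal{G}_\bullet)=\AnMap_{/\bfP}(W,\mathcal{G}_\bullet)$, not of $\mathcal{G}_\bullet$. That already proves the statement as literally worded (``a group stack action''). However, the later uses of the proposition need an action of the Galois group stack itself, for instance the $C_2$-equivariance of the evaluation square in the proof of Theorem~\ref{MainTheorem2Body}. Your restriction along the unit $\mathcal{G}_\bullet\to M(\mathcal{G}_\bullet)$ supplies exactly this. Applied levelwise, the unit is a map of simplicial objects between group objects, hence a homomorphism, and it is the internal form of ``act by postcomposition''. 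The unit exists for every object, so the fact that $\mathcal{G}_n$ is a finite coproduct plays no role.

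Your final paragraph needs one correction. In the formalism of \eqref{C2dAnStIntro}, one has $\mathcal{G}_0\simeq\bfP$ and all structure maps live in $\mathsf{dAnSt}_{/\bfP}$. The action is therefore $\bfP$-linear, and there is no covering of $\sigma_{\bfP}$ to verify. The pullback along $\mathcal{G}_\bullet\to M(\mathcal{G}_\bullet)$ only changes the acting group; it does not handle any antipodal covering.

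If you instead wanted an action on $Z$ covering the antipodal map, as in Definition~\ref{RealStructure}, the situation changes. Postcomposing a point $W\times_{\bfP}T\to Z$ with $\sigma_Z$ gives a map lying over $\sigma_{\bfP}$. This defines a point of the relative mapping stack only after identifying $\sigma_{\bfP}^*W$ with $W$. That version therefore needs a compatible action on $W$, as for $W=X\times\bfP$ in Theorem~\ref{MainTheorem2Body}, and fails for arbitrary $W$.
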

\begin{proof}
    Recall from \eqref{eqn: WeilRes} the internal mapping derived stack (inner hom)
$$\underline{\mathrm{AnMap}}_{/\bfP}:\mathsf{dAnSt}_{/\bfP}^{\mathrm{op}}\times \mathsf{dAnSt}_{/\bfP}\to \mathsf{dAnSt}_{/\bfP}.$$
It is characterized by the universal property:
$$\mathrm{Maps}_{\mathsf{dAnSt}_{/\bfP}}\big(T,\underline{\mathrm{AnMap}}_{/\bfP}(Z,W)\big)\simeq \mathrm{Maps}_{\mathsf{dAnSt}_{/\bfP}}(T\times_{\bfP}Z,W).$$
Consider \eqref{C2dAnStIntro} and the full subcategory
$$\mathsf{dAnSt}_{/\bfP}^{\mathrm{act}}\subseteq \mathsf{Funct}\big(\Delta^1\times \Delta^{\mathrm{op}},\mathsf{dAnSt}_{/\bfP}\big),$$
of group stack actions on objects of $\mathsf{dAnSt}_{/\bfP}.$ By definition an object is given by a morphism $Z_{\bullet}\to \mathcal{G}_{\bullet}$ of functors $\Delta^{\mathrm{op}}\to \mathsf{dAnSt}_{/\bfP},$ with $\mathcal{G}_{\bullet}$ a group stack and $Z_{\bullet}$ an object of \eqref{C2dAnStIntro}.
It is clear there is a lift of $\underline{\mathrm{AnMap}}_{/\bfP},$ to an inner-hom functor
\begin{equation}
\label{eqn: ActAnMaps}
\underline{\mathrm{AnMap}}_{\bfP}^{\mathrm{act}}:\mathsf{dAnSt}_{/\bfP}^{\mathrm{op}}\times \mathsf{dAnSt}_{/\bfP}^{\mathrm{act}}\to \mathsf{dAnSt}_{/\bfP}^{\mathrm{act}}.
\end{equation}
Since $Z$ has an action of the group stack associated with $\mathrm{Gal}(\mathbb{C}/\mathbb{R})$ and $W$ is an arbitrary derived analytic stack, $Z$ is an object of $\mathsf{dAnSt}_{/\bfP}^{\mathrm{act}},$ and the claim follows by identifying $\underline{\mathrm{AnMap}}_{/\bfP}(W,Z)$ together with its group-stack action with the functor \eqref{eqn: ActAnMaps}.
\end{proof}

\subsubsection{$C_2$-unstraightening}
\label{sssec:C2Groth}

Let $\mathsf{B}C_2$ denote the classifying stack of $C_2.$ Let $\mathsf{C}$ be an $\infty$-category, for which we assume that all $\mathsf{B}C_2$-shaped diagrams exist. A $C_2$-action on $\mathsf{C}$ is an $\infty$-functor $\mathsf{B}C_2\to \mathsf{C}.$ 

The $\infty$-category of $C_2$-categories is denoted by $\mathsf{Cat}_{\mathrm{st},\mathrm{pres}}^{\infty}[C_2],$ whose objects are called \emph{$\infty$-categories with $C_2$-action}. A morphism is a $C_2$-equivariant functor $F:\mathsf{C}\to\mathsf{D},$ the totality of which is denoted $\mathsf{Funct}^{C_2}(\mathsf{C},\mathsf{D}).$
Here is a fact that we will use without proof, whose justification may be found, for instance in \cite{CalmesDottoHarpazHebestreitLandNardinMoiNikolausSteimle2023} and references cited therein.
\begin{remark}
\label{UsefulRmk}
Consider a $C_2$-equivariant functor $F:\EuScript{C}\to \EuScript{D}$ between objects of $\mathsf{Cat}_{\mathrm{st},\mathrm{pres}}^{\infty}[C_2]$. Assume that the underlying functor (after forgetting the $C_2$-actions) admits a right adjoint. Then this right adjoint refines to a $C_2$-equivariant functor, such that the unit and counit
are $C_2$-equivariant natural transformations. \end{remark}

We need a $C_2$-equivariant $\infty$-Grothendieck construction \cite[Thm.~3.2.0.1]{Lur09}
Classify $F:I\to \mathsf{Cat}^{\infty}$ by cocartesian fibrations $X\to I$ resulting in equivalence:
$$\mathsf{Funct}(I,\mathsf{Cat}^{\infty})\simeq (\mathsf{Cat}_{\infty}^{cocart})_{/I},$$
the rhs is the subcategory of cocartesian fibrations to $I$ and
functors over $I$ preserving cocartesian lifts. Straightening relative to $\mathsf{B}C_2$, there is an equivalence
$$\mathsf{Cat}_{\mathrm{st},\mathrm{pr}}^{\infty}[C_2]\simeq \mathsf{Cat}_{\mathrm{st},\mathrm{pr}/\mathsf{B}C_2}^{\infty},\hspace{2mm} \EuScript{C}\mapsto \EuScript{C}_{hC_2}.$$

Letting $\mathrm{I}$ be an $\infty$-category with $C_2$-action, a $C_2$-equivariant functor $F:\mathrm{I}\to \mathsf{Cat}_{\infty}$, with $\mathsf{Cat}_{\infty}$ carrying trivial action, is classified by: $C_2$-equivariant cocartesian fibrations $\mathsf{X}\to\mathsf{I}$.

The refined Grothendieck constrution, states the $C_2$-equivariant functor $F$ corresponds to a functor 
$I_{hC_2}\to \mathsf{Cat}^{\infty}$
classified by cocartesian fibration $Y\to I_{hC2}$ over $\mathsf{B}C_2$.

Straightening over $\mathsf{B}C_2$ is precisely classified by $C_2$-equivariant $X\to I$ whose underlying functor after forgetting the $C_2$-action (cf., Remark \ref{UsefulRmk}), is a cocartesian fibration by \cite[Prop.~2.4.2.11]{Lur09} which classifies the functor
$$F:I\to I_{hC_2}\to \mathsf{Cat}^{\infty}.$$
\begin{proposition}
\label{C2Grothendieck}
Let $I$ be a small $\infty$-category with $C_2$-action i.e. an object of $\mathsf{Cat}_{\mathrm{st},\mathrm{pres}}^{\infty}[C_2].$ Then, there is an equivalence of $\infty$-categories,
\begin{equation}
\label{C2GrothCons}
\mathsf{Funct}_{\mathsf{B}C_2}\big( I,\, \mathsf{Cat}_\infty \big) \;\simeq\; \big( \mathsf{Cat}_{\infty,/I}^{\mathrm{cocart}}\big)[C_2].
\end{equation}
\end{proposition}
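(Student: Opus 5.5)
The plan is to deduce \eqref{C2GrothCons} from the ordinary straightening/unstraightening equivalence \cite[Thm.~3.2.0.1]{Lur09}, applied over the homotopy orbit category $I_{hC_2}$, and then to identify both sides with structures over $\mathsf{B}C_2$. Concretely, I first interpret a $C_2$-action on $I$ as a functor $\mathsf{B}C_2\to\mathsf{Cat}_\infty$ with value $I$ at $\ast$. Unstraightening it gives a cocartesian fibration $p:I_{hC_2}\to\mathsf{B}C_2$ whose fiber over $\ast$ is $I$; this is the lax colimit, which for a groupoid-indexed diagram is the colimit. Under this identification, $\mathsf{Funct}_{\mathsf{B}C_2}(I,\mathsf{Cat}_\infty)$, the $C_2$-equivariant functors into $\mathsf{Cat}_\infty$ with trivial action, is the $\infty$-category of $\mathsf{B}C_2$-equivariant maps. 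Equivalently it is $\lim_{\mathsf{B}C_2}\mathsf{Funct}(I,\mathsf{Cat}_\infty)$, the homotopy fixed points $\mathsf{Funct}(I,\mathsf{Cat}_\infty)^{hC_2}$ for the action by precomposition.

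The first step is to check that $\mathsf{Funct}(-,\mathsf{Cat}_\infty)$ sends colimits of small $\infty$-categories to limits. This gives $\mathsf{Funct}(I_{hC_2},\mathsf{Cat}_\infty)\simeq \mathsf{Funct}(I,\mathsf{Cat}_\infty)^{hC_2}$, which identifies the left side of \eqref{C2GrothCons} with $\mathsf{Funct}(I_{hC_2},\mathsf{Cat}_\infty)$. The second step applies ordinary unstraightening over $I_{hC_2}$, giving $\mathsf{Funct}(I_{hC_2},\mathsf{Cat}_\infty)\simeq(\mathsf{Cat}^{\mathrm{cocart}}_{\infty})_{/I_{hC_2}}$. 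The third step identifies $(\mathsf{Cat}^{\mathrm{cocart}}_{\infty})_{/I_{hC_2}}$ with $(\mathsf{Cat}^{\mathrm{cocart}}_{\infty,/I})^{hC_2}=(\mathsf{Cat}^{\mathrm{cocart}}_{\infty,/I})[C_2]$. For this I use that unstraightening is natural in the base: pullback along $I\to I_{hC_2}$ is compatible with straightening. I also use that the assignment $J\mapsto(\mathsf{Cat}^{\mathrm{cocart}}_\infty)_{/J}\simeq\mathsf{Funct}(J,\mathsf{Cat}_\infty)$ sends the colimit $I_{hC_2}=\mathrm{colim}_{\mathsf{B}C_2}I$ to a limit. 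Cocartesian fibrations satisfy descent because they are the unstraightening of a limit-preserving functor.

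Concretely, the equivalence sends $X\to I_{hC_2}$ to its pullback $X\times_{I_{hC_2}}I\to I$. This pullback carries the $C_2$-action induced from the deck action of $I\to I_{hC_2}$ over $\mathsf{B}C_2$. By \cite[Prop.~2.4.2.11]{Lur09}, the underlying functor is cocartesian exactly when the composite is. The inverse sends a $C_2$-equivariant cocartesian fibration $X\to I$ to $X_{hC_2}\to I_{hC_2}$. By Remark~\ref{UsefulRmk} the relevant adjoints, namely pullback and its adjoint, are $C_2$-equivariant, with equivariant unit and counit, so the two constructions are mutually inverse.

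The main obstacle is the third step. Making the $C_2$-action on $\mathsf{Cat}^{\mathrm{cocart}}_{\infty,/I}$ coherent, and proving that descent for cocartesian fibrations along $I\to I_{hC_2}$ holds in the $\infty$-categorical sense rather than only up to homotopy, is where naturality of straightening in the base must be used seriously. I would handle it by packaging everything as the limit-preserving functor $\mathsf{Funct}(-,\mathsf{Cat}_\infty)^{\mathrm{op}}$ on $\mathsf{Cat}_\infty$ and evaluating it on the $\mathsf{B}C_2$-diagram defining $I_{hC_2}$.
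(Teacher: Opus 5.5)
Your proposal is correct and follows the same route as the paper: identify $C_2$-equivariant functors $I\to\mathsf{Cat}_\infty$ with functors out of $I_{hC_2}$, straighten over $I_{hC_2}$, and then match cocartesian fibrations over $I_{hC_2}$ (which are also cocartesian over $\mathsf{B}C_2$) with $C_2$-equivariant cocartesian fibrations over $I$ via \cite[Prop.~2.4.2.11]{Lur09}. Your descent argument through naturality of straightening in the base supplies the coherence the paper leaves implicit, and it already shows the two constructions are mutually inverse, so you do not need Remark~\ref{UsefulRmk}, which is stated for stable presentable categories anyway.
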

The results of the preceeding section and this one are now applied to study equivariant quasi-coherent sheaves on $C_2$-diagrams of stacks.
That is, we return to our case of interest and consider $\mathsf{cdga}^{\leq 0}[C_2].$ By Proposition \ref{GaloisLemma}, and Proposition \ref{C2Grothendieck}, we have the following equivariant analogue of \cite[Prop.~1.17]{calaque2024shifted}, by applying \cite[Prop.~7.1]{GHN2017}

\begin{proposition}
\label{PresheavesC2Stacks}
    Let $S$ be a derived Artin stack with $C_2$-action i.e. a functor $S:\mathsf{B}C_2\to \mathsf{dSt}^{\mathrm{Art}}.$ Then, the $\infty$-category of $C_2$-equivariant presheaves of $\mathcal{O}_S$-modules, $\mathsf{Mod}_{\mathcal{O}_S}^{\mathrm{pre},C_2},$ is equivalent to 
    $$\mathsf{Mod}_{\mathcal{O}_S}^{\mathrm{pre},C_2}\simeq \int_{T\in\mathsf{dAff}^{C_2,\mathrm{op}}_{/S}}\mathsf{Funct}^{C_2}\big(\mathsf{dAff}_{T// S}^{C_2,\mathrm{op}},\mathsf{QCoh}^{C_2}(T)\big).$$
\end{proposition}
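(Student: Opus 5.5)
The plan is to reduce the statement to the non-equivariant description \cite[Prop.~1.17]{calaque2024shifted} and then take homotopy fixed points for $C_2$, using Proposition \ref{C2Grothendieck} to pass between the two sides.

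\textbf{Step 1 (non-equivariant input).} Forget the $C_2$-action on $S$. The non-equivariant statement identifies presheaves of $\mathcal{O}_S$-modules with sections of the cocartesian fibration $\underline{\mathsf{Q}}$ restricted to $\mathsf{dAff}_{/S}^{\mathrm{op}}$. In other words, a presheaf of modules is the compatible datum, for each $T\to S$, of a functor $\mathsf{dAff}_{T//S}^{\mathrm{op}}\to \mathsf{QCoh}(T)$. This is exactly the Grothendieck integral $\int_{T}\mathsf{Funct}(\mathsf{dAff}_{T//S}^{\mathrm{op}},\mathsf{QCoh}(T))$ over the indexing category.

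\textbf{Step 2 (the $C_2$-structure on the indexing data).} The action $S:\mathsf{B}C_2\to\mathsf{dSt}^{\mathrm{Art}}$ induces a $C_2$-action on $\mathsf{dAff}_{/S}$ by post-composition with $\sigma_S$ and the coherence $\beta$. Since $\mathsf{QCoh}^*$ is functorial, $T\mapsto\mathsf{QCoh}(T)$ becomes a $C_2$-equivariant functor to $\mathsf{Cat}_\infty$ with the trivial action on the target. By Proposition \ref{GaloisLemma}, applied with $\mathbb{C}/\mathbb{R}$, the equivariant affines $\mathsf{dAff}^{C_2}_{/S}$ are the correct site, because \'etale descent is compatible with the equivalence $b_*$. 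The relative slices $\mathsf{dAff}_{T//S}^{C_2}$ inherit the action functorially in $T$.

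\textbf{Step 3 (equivariant straightening).} Apply Proposition \ref{C2Grothendieck} with $I=\mathsf{dAff}^{C_2,\mathrm{op}}_{/S}$. Under that proposition, $C_2$-equivariant cocartesian fibrations over $I$ correspond to functors $I_{hC_2}\to\mathsf{Cat}_\infty$. In particular, the fibration $\underline{\mathsf{Q}}|_I$ is classified by the equivariant functor $T\mapsto\mathsf{Funct}(\mathsf{dAff}_{T//S}^{\mathrm{op}},\mathsf{QCoh}(T))$.

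\textbf{Step 4 (fixed points of sections).} A $C_2$-equivariant presheaf of modules is a $C_2$-fixed section of this fibration. To identify these, I would invoke \cite[Prop.~7.1]{GHN2017}, which states that lax limits (sections of cocartesian fibrations) commute with the relevant $C_2$-homotopy fixed points, that is, with limits over $\mathsf{B}C_2$. This identifies $(\int_T\mathsf{Funct}(\dots))^{hC_2}$ with $\int_{T}\mathsf{Funct}^{C_2}(\mathsf{dAff}_{T//S}^{C_2,\mathrm{op}},\mathsf{QCoh}^{C_2}(T))$. Here the fibrewise fixed points are equivariant functors into the equivariant quasi-coherent categories, with $\mathsf{QCoh}^{C_2}(T)=\mathsf{QCoh}(T)^{hC_2}$. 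Remark \ref{UsefulRmk} guarantees that the pushforward adjoints used to define the cocartesian/cartesian dual description remain equivariant, so the cocartesian and cartesian pictures agree.

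\textbf{Main obstacle.} I expect Step 4 to be the hardest part. One has to verify that $hC_2$ commutes with the Grothendieck integral, which involves non-trivial coherence because $C_2$ acts simultaneously on the base $I$ and on the fibres. I would handle this by rewriting the integral as a lax limit over $I$ and then using that limits commute with limits: a lax limit is an honest limit over the twisted arrow category, and $(-)^{hC_2}$ is a limit over $\mathsf{B}C_2$. Finally, one must check that equivariant affines over $S$ are cofinal among the $C_2$-orbits of affines over $S$. This cofinality follows from Step 2 via Proposition \ref{GaloisLemma}.
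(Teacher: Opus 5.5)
The gap is in Step 4, at the passage from $C_2$-fixed points of the non-equivariant integral to an integral indexed by \emph{equivariant} affines.

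Put $I_0:=\mathsf{dAff}_{/S}$ with the action from Step 2. Your backup argument is fine as far as it goes. Since $(-)^{hC_2}$ is a limit over $\mathsf{B}C_2$, it commutes with the lax limit over $I_0$. But the result is a lax limit indexed by the homotopy orbits $(I_0)_{hC_2}$, i.e.\ the Grothendieck construction of the action, which is what Proposition \ref{C2Grothendieck} gives you in Step 3. The statement is instead indexed by the fixed points $I_0^{hC_2}=\mathsf{dAff}^{C_2}_{/S}$, and the cofinality you invoke to link the two is false.

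Consider the evaluation functor $u:I_0^{hC_2}\to (I_0)_{hC_2}$. Its mapping spaces do not match: $\mathrm{Map}_{I_0^{hC_2}}(E,E')\simeq\mathrm{Map}_{I_0}(E,E')^{hC_2}$, whereas $\mathrm{Map}_{(I_0)_{hC_2}}(uE,uE')\simeq C_2\times\mathrm{Map}_{I_0}(E,E')$. Moreover, $u$ sends equivariant maps into the identity component. Hence every undercategory $u_{T/}$ splits as a disjoint union of two non-empty pieces indexed by $C_2$; both contain the induced object $T\sqcup\sigma T$ with the swap. So $u_{T/}$ is not weakly contractible.

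This is the Borel-versus-genuine distinction. For instance, with $S$ a point with trivial action, a presheaf on $\mathsf{dAff}^{C_2}_{/S}$ carries independent data on free objects such as $\mathrm{Spec}(\mathbb{C}\times\mathbb{C})$ with the swap action, which Borel fixed points do not see. Proposition \ref{GaloisLemma} cannot supply the missing step. It is Galois descent for the conjugation action along $\mathbb{C}/\mathbb{R}$, not a statement about an arbitrary action on $S$, and its proof uses \'etale descent, which bare presheaves of modules do not satisfy. Note also that in Step 3 you already set $I=\mathsf{dAff}^{C_2,\mathrm{op}}_{/S}$ while classifying the non-equivariant functor $T\mapsto\mathsf{Funct}(\mathsf{dAff}^{\mathrm{op}}_{T//S},\mathsf{QCoh}(T))$. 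That silent change of indexing category is the same unjustified step.

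The fibrewise identification in Step 4 has the same defect. $\mathsf{Funct}(A,B)^{hC_2}$ is the category of equivariant functors $A\to B$, not of functors $A^{hC_2}\to B^{hC_2}$; take $A$ to be the discrete category $C_2$ with free action, so that $A^{hC_2}=\emptyset$. Also, the fibre over a non-equivariant $T$ carries no $C_2$-action at all. Separately, \cite[Prop.~7.1]{GHN2017} identifies sections of a cocartesian fibration (the lax limit) with the end $\int_c\mathrm{Fun}(\mathcal{C}_{c/},F(c))$. That accounts for the shape of the integral in the statement, but it says nothing about commuting with $hC_2$.

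The paper never takes fixed points of the non-equivariant result. It sets $I=\mathsf{dAff}^{C_2}_{/S}$ from the start and writes the integral as an end, computed as a limit over the equivariant twisted arrow category of $I$. It then feeds in $\mathsf{QCoh}^{C_2}$ and reruns the argument of \cite[Prop.~1.17]{calaque2024shifted} on that site. That is the route to take. If you instead define the left-hand side as Borel fixed points, the correct right-hand side is indexed by $(\mathsf{dAff}_{/S})_{hC_2}$. Comparing that with equivariant affines would need a descent hypothesis, namely restricting to quasi-coherent objects and descending along free equivariant affines, which presheaves do not have.
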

\begin{proof}
Let $I:=(\mathsf{dAff}^{C_2})_S$. Then, following \cite[Section~5.2.1]{Lur17}, let $\mathsf{Tw}^{C_2}(I)$ be $\infty$-category of equivariant twisted arrows in $I$. In particular, an object is a morphism $f:i\to j$ together with an isomorphism $\varphi:\sigma f\xrightarrow{\sim} f,$ where $\sigma\in C_2,$ with higher coherences. By construction, there is a canonical projection
$p^{C_2}:\mathsf{Tw}^{C_2}(I)\to I^{op}\times I$, whose fiber of $(i,j)$ is the mapping space $\mathrm{Map}_{I}^{C_2}(i,j)$ of $C_2$-equivariant morphism in $I$ from $i$ to $j.$ Then, for any $C_2$-biequivariant bifunctor $G:I^{op}\times I\to \mathsf{Cat}_{\infty},$ one computes the end,
$$\int_{i\in I}^{C_2}G(i,i)=\underset{\mathsf{Tw}^{C_2}(I)^{op}}{\mathrm{lim}}\hspace{1mm}\big(G\circ p^{C_2}\big).$$
Then, applying this to $\underline{\mathsf{QCoh}}^{C_2}:I^{op}\to \mathsf{Cat}_{\infty},$ from Proposition \ref{C2Grothendieck} we have that 
$$\mathsf{Mod}_{\mathcal{O}_S}^{\mathrm{pre,C_2}}\simeq \int_{T\in I}\mathsf{QCoh}^{C_2}(T)\simeq \underset{\mathsf{Tw}^{C_2}(I)^{op}}{\mathrm{lim}}(\mathsf{QCoh}^{C_2}(-)\circ p^{C_2}),$$
and the result follows by the proof of \cite[Prop.~1.17]{calaque2024shifted}.
\end{proof}
\subsubsection{$C_2$-Waldhausen construction}
\label{C2WaldCons}
In this subsection we elaborate a certain combinatorial structure defining a simplicial object in $\mathsf{dSt}[C_2]$. Specifically, we adapt the Waldhausen construction for $2$-Segal spaces \cite{dyckerhoff2019higher}, to $\mathsf{B}C_2$-shaped diagrams in $\mathsf{dSt}_{\mathbb{C}}$.

In Section \ref{ssec: Analytification}, we will also need it in the $\mathbb{C}$-analytic category, namely, $\mathsf{dAnSt}$. It will suffice to first perform the construction in the algebraic setting, extending to analytic stacks via the derived analytification functor $(-)^{\mathrm{an}}:\mathsf{dSt}^{afp}\to \mathsf{dAnSt},$ as it commutes with finite limits and arbitrary colimits \cite{HolsteinPorta2025}. 
Its relevance is that it leads to a canonical $\infty$-functor
$$2-\mathrm{Seg}(\mathsf{dSt}[C_2])\to \mathrm{Alg}_{\mathbb{E}_1}\big(\mathrm{Corr}(\mathsf{dSt}[C_2])\big),$$
so that we assign to every $2$-Segal object, an associative monoid object in the $\infty$-category of $C_2$-equivariant correspondences in derived stacks over $\mathbb{C}.$ This makes use of \cite[Thm.~11.1.6]{dyckerhoff2019higher}.

\begin{proposition}
\label{C2Waldhausen}
    Let $G$ be a reductive algebraic group with complexification $G^{\mathbb{C}}$ acting on $Z\in \mathsf{dSt}_{\mathbb{C}}[C_2].$ Then, there is a $C_2$-equivariant simplicial object in derived stacks over $\mathsf{B}G^{\mathbb{C}},$ 
    $$\EuScript{S}_{\bullet}\RPerf_{G^{\mathbb{C}}}(Z):\Delta^{op}\to \mathsf{dSt}_{\mathbb{C}/\mathsf{B}G^{\mathbb{C}}}[C_2],$$
    with $\Delta$ given the canonical $C_2$-structure, mapping $[n]$ to $[n]$ and $\alpha:[n]\to [m]$ to $i\mapsto m-\alpha(n-i).$ Furthermore, there is a canonical equivalence,
    \begin{equation}
\label{eqn: G equiv versus stack}
\EuScript{S}_{\bullet}\RPerf_{G^{\mathbb{C}}}(Z)\simeq\big[\EuScript{S}_{\bullet}\RPerf(Z)/G^{\mathbb{C}}\big],
\end{equation}
    inducing an equivalence 
    $$\mathsf{Coh}^b(\RPerf_{G^{\mathbb{C}}}(Z)\big)\simeq \mathsf{Coh}_{G^{\mathbb{C}}}^b(\RPerf(Z)\big),$$
    in $\mathsf{Cat}_{\mathbb{C},\mathrm{st}}^{\infty}[C_2],$ where the right hand side is the $G^{\mathbb{C}}$-equivariant stable $\infty$-category of bounded coherent complexes on the derived stack $\RPerf(Z).$
\end{proposition}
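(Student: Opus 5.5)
The plan is to build $\EuScript{S}_{\bullet}\RPerf(Z)$ first, with no group action, as a $C_2$-equivariant simplicial derived stack. After that I would add the $G^{\mathbb{C}}$-equivariance by quotienting levelwise.

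\emph{Step 1: the simplicial object.} For each $[n]$ I define $\EuScript{S}_n\RPerf(Z)$ as the derived stack whose $T$-points are $\mathsf{Perf}(Z\times T)$-valued $n$-step Waldhausen flags. Concretely, these are functors from the arrow category $\mathrm{Ar}[n]$ into $\mathsf{Perf}(Z\times T)$ that vanish on the diagonal and whose squares are bicartesian. By the standard argument, each level is a finite limit of copies of $\RPerf(Z)$ and of the stacks of morphisms between perfect complexes. It is therefore locally geometric by \cite{toenvaquie2007}, and $2$-Segal by \cite{dyckerhoff2019higher}.

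\emph{Step 2: the $C_2$-structure.} The involution on $\Delta$ given by $\alpha\mapsto(i\mapsto m-\alpha(n-i))$ is realised on flags by two combined operations. The first is the duality $E\mapsto E^{\vee}$ on perfect complexes, which reverses arrows in $\mathrm{Ar}[n]$. The second is the real structure $\sigma_Z$ on $Z$, acting by pullback $\sigma_Z^*$. Duality is a contravariant equivalence squaring canonically to the identity, so the composite $E\mapsto\sigma_Z^*(E^{\vee})$ gives an involution. The main point is homotopy coherence: one needs a genuine functor out of $\mathsf{B}C_2$, not merely an involution up to homotopy.

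I would obtain this coherence from Proposition \ref{C2Grothendieck}. The idea is to view $T\mapsto\mathsf{Perf}(Z\times T)$ with its duality and $\sigma_Z^*$ as a $C_2$-equivariant functor of categories with duality, in the sense of \cite{CalmesDottoHarpazHebestreitLandNardinMoiNikolausSteimle2023}. One then checks that the $C_2$-equivariant Waldhausen construction of Poincaré categories, which is edgewise-real, is compatible with the reversed simplicial structure. Together with Proposition \ref{GaloisLemma} to descend along $\mathbb{C}/\mathbb{R}$, this yields $\EuScript{S}_{\bullet}\RPerf(Z)\colon\Delta^{op}\to\mathsf{dSt}_{\mathbb{C}}[C_2]$. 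I expect this coherence step to be the main obstacle.

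\emph{Step 3: equivariance and the equivalence \eqref{eqn: G equiv versus stack}.} The $G^{\mathbb{C}}$-action on $Z$ induces, by functoriality of $\Map(-,\Perf)$ and of the flag construction, a levelwise action on $\EuScript{S}_{\bullet}\RPerf(Z)$ that commutes with the simplicial maps. I assume $\sigma$ normalises $G^{\mathbb{C}}$, acting through the real form, so that the action also commutes with the $C_2$-structure.

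I then define $\EuScript{S}_{\bullet}\RPerf_{G^{\mathbb{C}}}(Z)$ as the $G^{\mathbb{C}}$-equivariant version: flags of $G^{\mathbb{C}}$-equivariant perfect complexes on $[Z/G^{\mathbb{C}}]\times_{\mathsf{B}G^{\mathbb{C}}}T$, which lives over $\mathsf{B}G^{\mathbb{C}}$. Comparing functors of points over $T\to\mathsf{B}G^{\mathbb{C}}$, that is, over a $G^{\mathbb{C}}$-torsor $P\to T$, I identify the equivariant version levelwise with the quotient $[\EuScript{S}_n\RPerf(Z)/G^{\mathbb{C}}]$. This is descent for $\mathsf{Perf}$ along $P\to T$.

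Since quotients commute with the finite limits defining the levels, and the identification is natural in $[n]$ and in $\sigma$, this gives \eqref{eqn: G equiv versus stack} as $C_2$-equivariant simplicial objects.

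\emph{Step 4: coherent complexes.} Finally, the $G^{\mathbb{C}}$-quotient gives $\mathsf{Coh}^b([Y/G^{\mathbb{C}}])\simeq\mathsf{Coh}^b_{G^{\mathbb{C}}}(Y)$ by definition of equivariant sheaves via descent along $Y\to[Y/G^{\mathbb{C}}]$. Here $G^{\mathbb{C}}$ is reductive, so the relevant limits are well behaved, and $Y$ is locally geometric. By Remark \ref{UsefulRmk} and Proposition \ref{PresheavesC2Stacks}, this equivalence is $C_2$-equivariant, so it holds in $\mathsf{Cat}^{\infty}_{\mathbb{C},\mathrm{st}}[C_2]$.
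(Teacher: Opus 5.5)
Your treatment of the $G^{\mathbb{C}}$-equivariance is essentially the paper's. The paper defines $\EuScript{S}_{\bullet}\RPerf_{G^{\mathbb{C}}}(Z):=\Map_{/\mathsf{B}G^{\mathbb{C}}}([Z/G^{\mathbb{C}}],\EuScript{S}_{\bullet}\Perf\times\mathsf{B}G^{\mathbb{C}})$, which is your ``flags of equivariant perfect complexes''. It notes that the fiber over $\mathrm{Spec}(\mathbb{C})\to\mathsf{B}G^{\mathbb{C}}$ is $\Map(Z,\EuScript{S}_{\bullet}\Perf)$, and gets \eqref{eqn: G equiv versus stack} from descent in $\infty$-topoi, which is your torsor-by-torsor comparison. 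It deduces the $2$-Segal property from $\Map_{/\mathsf{B}G^{\mathbb{C}}}([Z/G^{\mathbb{C}}],-\times\mathsf{B}G^{\mathbb{C}})$ preserving limits, rather than from Dyckerhoff--Kapranov applied pointwise. Like you, it treats the $\mathsf{Coh}^b$ statement as immediate.

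The genuine difference is the $C_2$-structure covering the reversal of $\Delta$. The paper only builds the involution $(-)^{o}$ on $\mathsf{Cat}_{\infty}$, from reversal on $\mathsf{PShv}(\Delta)$ preserving complete Segal spaces. Otherwise it lets $C_2$ act levelwise through $\sigma_Z$, by functoriality of the mapping stack. Such an action commutes with each face map $d_i$ rather than exchanging $d_i$ with $d_{n-i}$. Compatibility with the reversal therefore really requires $\mathsf{Perf}$ to be a fixed point of $(-)^{o}$, i.e.\ a duality, and that is exactly what you supply via $E\mapsto E^{\vee}$. Your route costs more machinery, but it actually delivers the compatibility the statement asserts.

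The coherence you need comes from the symmetric Poincar\'e structure on $T\mapsto\mathsf{Perf}(Z\times T)$, not from Proposition~\ref{C2Grothendieck}. Pullbacks are symmetric monoidal and so commute coherently with duality.

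Two caveats. First, merging $\sigma_Z^*$ and duality into a single involution replaces the levelwise action induced by $\sigma_Z$ alone, which is what the paper's construction produces. Since the two commute, it is cleaner to keep them as two commuting involutions. Second, the local geometricity claimed in your Step~1 needs finiteness hypotheses on $Z$ that the statement does not impose, though nothing in your argument depends on it.
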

\begin{proof}
The claim for $\Delta$ is clear, and denoted by $(\Delta,(-)^{o})\in\mathsf{Cat}_{\infty}[C_2],$ the $C_2$-structure. Since $\mathsf{PShv}(\Delta)$ has an obvious induced $C_2$-action, then by localizing 
$\mathsf{PShv}(\Delta)\leftrightarrows \mathsf{Cat}_{\infty}$, whose local objects are complete Segal spaces \cite[Thm.~14.6.3]{BarwickSchommerPries2021}, since the $C_2$-action on simplicial presheaves preserves complete Segal spaces, it restricts to a non-trivial $C_2$-action on $\mathsf{Cat}_{\infty},$ denoted again by $(-)^o.$

    For any grouplike $\mathbb{E}_1$-monoid in $\mathsf{dSt}_{\mathbb{C}}^{\times},$  $G$ with complexification $G^{\mathbb{C}},$ the $G^{\mathbb{C}}$-action may be encoded by

\[
\begin{tikzcd}[column sep=huge]
\cdots 
  \arrow[r, shift left=3]
  \arrow[r, shift left=1]
  \arrow[r, shift right=1]
  \arrow[r, shift right=3]
& G^{\mathbb{C}}\times G^{\mathbb{C}}\times X 
  \arrow[d]
  \arrow[r, shift left=2]
  \arrow[r]
  \arrow[r, shift right=2]
& G^{\mathbb{C}}\times Z 
  \arrow[d]
  \arrow[r, shift left]
  \arrow[r, shift right]
& Z
  \arrow[d]
\\
\cdots
  \arrow[r, shift left=3]
  \arrow[r, shift left=1]
  \arrow[r, shift right=1]
  \arrow[r, shift right=3]
& G^{\mathbb{C}}\times G^{\mathbb{C}}
  \arrow[r, shift left=2]
  \arrow[r]
  \arrow[r, shift right=2]
& G^{\mathbb{C}}
  \arrow[r, shift left]
  \arrow[r, shift right]
& \mathrm{Spec}(\mathbb{C})
\end{tikzcd}\simeq \hspace{5mm} \begin{tikzcd}\big[Z/G^{\mathbb{C}}\big]
\arrow[d]
\\
\mathsf{B}G^{\mathbb{C}}
\end{tikzcd},
\]
    and thus a single diagram since the monoidal structure on $\mathsf{dSt}$ is cartesian by \cite[Prop.~4.2.2.9]{Lur17}
    $$Act_{G^{\mathbb{C}}}(Z):\Delta^{op}\times \Delta^1\to \mathsf{dSt}_{\mathbb{C}}[C_2],$$
    which is $C_2$-equivariant satisfying the relative $1$-Segal condition, and whose geometric realizations give a canonical map $[Z/G^{\mathbb{C}}]\to \mathsf{B}G^{\mathbb{C}},$ as indicated. Then,
    $$\EuScript{S}_{\bullet}\RPerf_{G^{\mathbb{C}}}(Z):\Delta^{op}\to \mathsf{dSt}[C_2]_{/\mathsf{B}G^{\mathbb{C}}},$$
    is defined 
$\EuScript{S}_{\bullet}\RPerf_{G^{\mathbb{C}}}(Z):=\Map_{/\mathsf{B}G^{\mathbb{C}}}([Z/G^{\mathbb{C}}],\EuScript{S}_{\bullet}\Perf\times \mathsf{B}G^{\mathbb{C}}).$
The canonical decomposition of $\mathbb{G}_m$-equivariant perfect complexes is compatible with the real
structure \cite[Prop.~7]{NABMHS2000}.
Note that 
$$\mathrm{Spec}(\mathbb{C})\times_{\mathsf{B}G^{\mathbb{C}}}\EuScript{S}_{\bullet}\RPerf_{G^{\mathbb{C}}}(Z)\simeq \Map(Z,\EuScript{S}_{\bullet}\Perf).$$
By descent of $\infty$-topoi \cite[Thm.~6.1.3.9, Proposition.~6.1.3.10]{Lur09}, we obtain the first equivalence, which extends to a $2$-Segal object precisely since the functor 
$$\Map_{/\mathsf{B}G^{\mathbb{C}}}([Z/G^{\mathbb{C}}],-\times \mathsf{B}G^{\mathbb{C}}):\mathsf{dSt}[C_2]\to \mathsf{dSt}_{/\mathsf{B}G^{\mathbb{C}}}[C_2],$$
commutes with limits. The final equivalence of $\infty$-categories is clear.
\end{proof}

\subsubsection{The fixed-points cotangent complex}
\label{sssec: FixedPoints}
For an object $Z\in \mathsf{dSt}_{\mathbb{P}^1}[C_2],$ we have 
$(Z/\mathbb{P}^1)^{hC_2}:\mathsf{dSt}_{\mathbb{P}^1}^{op}\to \mathsf{Spc},$ such that for each $T$-point over $\mathbb{P}^1$,
$$\mathrm{Maps}_{\mathsf{dSt}_{\mathbb{P}^1}}(T,(Z/\mathbb{P}^1)^{hC_2})\simeq \mathrm{Maps}_{\mathsf{dSt}_{\mathbb{P}^1}}\big(T,\Map_{/\mathbb{P}^1}^{C_2}(\mathbb{P}^1,Z)\big)\simeq \mathrm{Maps}_{\mathsf{dSt}_{\mathbb{P}^1}}^{C_2}(T,Z).$$
The identity map $\mathrm{Id}_{(Z/\mathbb{P}^1)^{hC_2}}:(Z/\mathbb{P}^1)^{hC_2}\to (Z/\mathbb{P}^1)^{hC_2},$ induces a canonical $C_2$-equivariant $\mathbb{P}^1$-morphism of derived stacks,
$\mu:(Z/\mathbb{P}^1)^{hC_2}\to Z.$ The relative homotopy fixed points stack can be equivalently desribed as Weil restriction of $[Z/C_2]\to \mathsf{B}C_2$ along the canonical map $\mathsf{B}C_2 \to\mathbb{P}^1,$ i.e. as the homotopy fiber product,
\begin{equation}
\label{eqn: HoFP as Res}
\begin{tikzcd}
(Z/\mathbb{P}^1)^{hC_2}\arrow[d]\arrow[r] & \Map_{/\mathbb{P}^1}(\mathsf{B}C_2,[Z/C_2])
\arrow[d]
\\
\mathbb{P}^1\arrow[r,"\mathrm{Id}_{\mathsf{B}C_2}"] & \Map_{/\mathbb{P}^1}(\mathsf{B}C_2,\mathsf{B}C_2).
\end{tikzcd}
\end{equation}
From the presentation \eqref{eqn: HoFP as Res}, it is clear how to compute the relative cotangent complex $\mathbb{L}_{(Z/\mathbb{P}^1)^{hC_2}/\mathbb{P}^1}.$ 
\begin{proposition}
\label{FixedPointCotangents}
Let $\mu:(Z/\mathbb{P}^1)^{hC_2}\to Z$ denote the canonical map, $q:(Z/\mathbb{P}^1)^{hC_2}\times \mathsf{B}C_2\simeq[Z^{hC_2}/C_2]\to [Z/C_2],$ the quotient map of $\mu$ and $p:(Z/\mathbb{P}^1)^{hC_2}\times_{\mathbb{P}^1}\mathsf{B}C_2\to (Z/\mathbb{P}^1)^{hC_2},$ the canonical projection map.
Then, 
$$\mathbb{L}_{(Z/\mathbb{P}^1)^{hC_2}/\mathbb{P}^1}\simeq p_+q^*\big(\mathbb{L}_{([Z/C_2]/\mathsf{B}C_2)/\mathbb{P}^1}\big).$$
\end{proposition}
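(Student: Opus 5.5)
The proof runs through the presentation \eqref{eqn: HoFP as Res}. It exhibits $(Z/\mathbb{P}^1)^{hC_2}$ as the base change of a Weil restriction, namely the mapping stack $\Map_{/\mathbb{P}^1}(\mathsf{B}C_2,[Z/C_2])$, along the section $\mathrm{Id}_{\mathsf{B}C_2}$. We then apply three tools in order:
\begin{enumerate}
\item the base-change property of relative cotangent complexes (Proposition \ref{AnCotangentProperties}, or its algebraic counterpart);
\item the cotangent complex formula for mapping stacks and Weil restrictions obtained in Proposition \ref{AnMapCotComplex} and the discussion following \eqref{UniverAnResEval};
\item the triviality of the cotangent complex of the bottom-right corner.
\end{enumerate}

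The first step treats the right vertical map of \eqref{eqn: HoFP as Res}. The map $\Map_{/\mathbb{P}^1}(\mathsf{B}C_2,[Z/C_2])\to \Map_{/\mathbb{P}^1}(\mathsf{B}C_2,\mathsf{B}C_2)$ is induced by postcomposition with $[Z/C_2]\to \mathsf{B}C_2$. Its fibre over $\mathrm{Id}$ is the Weil restriction $\underline{\mathrm{Res}}_{\mathsf{B}C_2/\mathbb{P}^1}([Z/C_2])$, computed along $\mathsf{B}C_2\times\mathbb{P}^1\to\mathbb{P}^1$ with $[Z/C_2]$ regarded as a stack over $\mathsf{B}C_2$.

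Base change along $\mathbb{P}^1\to\Map_{/\mathbb{P}^1}(\mathsf{B}C_2,\mathsf{B}C_2)$ identifies $\mathbb{L}_{(Z/\mathbb{P}^1)^{hC_2}/\mathbb{P}^1}$ with the pullback of the relative cotangent complex of the right vertical map. The bottom-right corner $\Map_{/\mathbb{P}^1}(\mathsf{B}C_2,\mathsf{B}C_2)$ is étale over $\mathbb{P}^1$: its tangent complex is $\Gamma(\mathsf{B}C_2,\mathfrak{c}_2[1])=0$, because $C_2$ is finite and we are in characteristic zero. Hence we may equivalently compute $\mathbb{L}$ relative to $\mathbb{P}^1$ directly.

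The second step runs the derivation argument of Proposition \ref{AnMapCotComplex}, now with the base $\mathsf{B}C_2$. For an affine $u:U\to (Z/\mathbb{P}^1)^{hC_2}$, the classified map is $f_u:U\times_{\mathbb{P}^1}\mathsf{B}C_2\to[Z/C_2]$ over $\mathsf{B}C_2$, which is exactly $q$ restricted to $U$. Derivations then satisfy
$$\mathrm{Der}(U,\mathcal{E})\simeq \mathrm{Maps}\big(f_u^*\mathbb{L}_{[Z/C_2]/\mathsf{B}C_2},p_u^*\mathcal{E}\big)\simeq\mathrm{Maps}\big(p_{u+}f_u^*\mathbb{L}_{[Z/C_2]/\mathsf{B}C_2},\mathcal{E}\big).$$
The last identification uses the left adjoint $p_+$ to $p^*$, and base change for $p_+$ identifies $p_{u+}f_u^*$ with $u^*p_+q^*$. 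The relevant cotangent complex is that of $[Z/C_2]$ relative to $\mathsf{B}C_2$ and over $\mathbb{P}^1$, which is what the statement writes as $\mathbb{L}_{([Z/C_2]/\mathsf{B}C_2)/\mathbb{P}^1}$. By descent this complex pulls back to $\mathbb{L}_{Z/\mathbb{P}^1}$ along $Z\to[Z/C_2]$.

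The main obstacle is justifying the $+$-pushforward along $p:(Z/\mathbb{P}^1)^{hC_2}\times_{\mathbb{P}^1}\mathsf{B}C_2\to (Z/\mathbb{P}^1)^{hC_2}$. This map is not proper flat in the schematic sense. However, $\mathsf{B}C_2$ is a finite groupoid, and in characteristic zero $p_*$ is the exact functor of taking $C_2$-invariants. Taking invariants commutes with base change and preserves perfect complexes. Using the ambidexterity (norm) equivalence for finite groups in characteristic zero, $p_+\simeq p_*$, and $p_+$ is left adjoint to $p^*$. With this identification, base change and universal perfectness hold. I would state this as a short lemma before the computation, checking the adjunction $p_+\dashv p^*$ via the norm map.

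Finally, one must confirm that the resulting object is perfect and corepresents derivations for all coherent $\mathcal{E}$. This follows because $q^*\mathbb{L}$ is perfect, and taking $C_2$-invariants is a direct summand operation, so it preserves perfectness.
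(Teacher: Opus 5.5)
Your proposal is correct and follows the paper's route. The paper's proof reads the cotangent complex off the Weil-restriction presentation \eqref{eqn: HoFP as Res} by citing the cotangent complex formula for Weil restrictions \cite[Prop.~19.1.4.3]{Lurie2018}, which is exactly what your base-change step and derivation argument (modelled on Proposition~\ref{AnMapCotComplex}) reproduce. Your extra lemma identifying $p_+$ with $C_2$-coinvariants via the characteristic-zero norm equivalence, with its base change and perfectness, makes explicit a point the paper leaves implicit.
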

\begin{proof}
See e.g. \cite[Prop.~19.1.4.3]{Lurie2018}.
\end{proof}
Proposition \ref{FixedPointCotangents} states the relative cotangent complex $\mathbb{L}_{(Z/\mathbb{P}^1)^{hC_2}/\mathbb{P}^1}$ is given by taking (derived) $C_2$-coinvariants $p_+$, of the pullback to $Z^{hC_2}$ of the equivariant relative cotangent complex of $Z\to \mathbb{P}^1$ (i.e. the relative cotangent complex with canonical $C_2$-action).

\subsection{Derived (pre-)twistor families}
\label{PreTwistorSection}
In this subsection we turn to defining twistor structures. Consider a relative derived analytic stack locally geometric and finitely presented $Z\in \mathsf{dAnSt}_{\mathbb{C}}.$ Let $\mathbb{L}_{Z}^{\mathrm{an}}$ denote its analytic cotangent complex. Viewing $\mathbb{P}_{\mathbb{C}}^1$ as a derived analytic stack, we consider the $\infty$-category $\mathsf{dAnSt}_{\mathbf{P}_{\mathbb{C}}^1}$ of derived analytic stacks relative to $\mathbf{P}^1$ over $\mathbb{C},$  where we also view $\mathbf{P}^1$ as gluing two copies $\mathbf{A}^1_{\mathbb{C}}$. They have conjugate coordinate variables, and if $\beta\in\mathbb{G}_m$, the conjugate action is $\beta^{\mathrm{conj}}:=1/\beta\in\mathbb{G}_m.$
We would like to construct an $\infty$-functor,
$$(-)^{\mathrm{conj}}:\mathsf{dAnSt}_{\mathbb{C}}\to \mathsf{dAnSt}_{\mathbb{C}},$$
defined via the base-change along the `conjugation' morphism $\overline{(-)}:\mathrm{Spec}(\mathbb{C})\to \mathrm{Spec}(\mathbb{C}),$ defined for analytic algebras and then by Kan extension to stacks. This is defined in terms of involutive structures from the action of $\mathrm{Gal}(\mathbb{C}/\mathbb{R}).$ Specifically, from the functor of points description, extend first $\overline{(-)}$ to connective derived analytic $\mathbb{C}$-algebras $(-)^{\mathrm{conj}}:\mathsf{dAnRing}_{\mathbb{C}}\to \mathsf{dAnRing}_{\mathbb{C}},$ via base-change\footnote{As mentioned in \cite[Sect.~1.3]{Pridham2020dgDerivedAnalytic}, complex conjugation extends to any derived analytic setting modelled on non-Archimedean entire functional calculus algebras as a semilinear operation.}. Then, 
$$Z^{\mathrm{conj}}:\mathsf{dAnRing}_{\mathbb{C}}^{op}\to \mathsf{Spc},\hspace{2mm} A\mapsto Z^{\mathrm{conj}}(A):=Z(A^{\mathrm{conj}}).$$
Similarly we obtain a base-change $\infty$-functor $A\mathrm{mods}\to A^{\mathrm{conj}}\mathrm{mods}.$
A \emph{real} structure on a relative derived analytic stack $Z\in \mathsf{dAnSt}_{\mathbb{P}_{\mathbb{C}}^1},$ is a morphism $\sigma_Z:Z\to Z^{\mathrm{conj}},$ such that $(\sigma_Z)^{\mathrm{conj}}\circ \sigma_Z\simeq \mathrm{id}_Z.$

\begin{definition}
    \label{RealStructure}
Let $Z$ be an object of $\mathsf{dAnSt}_{\mathbb{P}_{\mathbb{C}}^1}$ equipped with a real structure $\sigma_Z:Z\to Z^{\mathrm{conj}}\simeq \sigma^*Z$.
The real structure on $\eta_Z:Z\to \mathbb{P}^1$ is said to \emph{cover the antipodal map} (on $\mathbb{P}_{\mathbb{C}}^1$) if the following diagram commutes in $\mathsf{dAnSt}_{\mathbb{P}^1}:$
\begin{equation}
    \label{eqn: CoveringAntipodal}
\begin{tikzcd}
Z\arrow[d,"\eta_Z"]\arrow[r,"\sigma_Z"]& Z^{\mathrm{conj}}\arrow[d,"\eta_{Z^{\mathrm{conj}}}"] 
\\
\mathbb{P}_{\mathbb{C}}^1\arrow[r,"a"] & \mathbb{P}_{\mathbb{C}}^1.
\end{tikzcd}
\end{equation}
\end{definition}
In other words, $\overline{\eta}:Z^{\mathrm{conj}}\to \overline{\mathbb{P}}^1$ is the conjugate of $\eta_Z$, and we identify $\mathbb{P}_{\mathbb{C}}^1$ with its conjugate as analytic stacks. Via the conjugate decomposition $\mathbf{P}_{\mathbb{C}}^1$ into $\mathbf{A}^1\cup \mathbf{A}^1,$ any $Z\in\mathsf{dAnSt}_{/\mathbf{P}^1}$ has a $\mathbb{G}_m$-equivariant structure morphism to $\mathbf{A}^1$, compatible with conjugations.

Given such a structure $(\eta_Z:Z\to \mathbb{P}^1,\sigma_Z),$ note the following properties of conjugation as an $\infty$-functor.

\begin{proposition}
\label{AnalyticDRConj}
Let $Z$ be a derived analytic stack with perfect analytic cotangent complex $\mathbb{L}_{Z}^{\mathrm{an}}.$ Then, there is an equivalence $(Z^{\mathrm{conj}})^{\mathrm{conj}}\simeq Z,$ and an equivalence
$$\mathbb{L}_{Z^{\mathrm{conj}}}^{\mathrm{an}}\simeq (\mathbb{L}_Z^{\mathrm{an}})^{\mathrm{conj}},$$ of $\mathcal{O}_{Z^{\mathrm{conj}}}$-modules.
\end{proposition}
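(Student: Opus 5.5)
The plan is to reduce both claims to the level of functors of points, where conjugation is just precomposition with an involution of the site of derived analytic rings. Write $c:=(-)^{\mathrm{conj}}:\mathsf{dAnRing}_{\mathbb{C}}\to \mathsf{dAnRing}_{\mathbb{C}}$ for the base-change along complex conjugation. The first step is to check that $c\circ c\simeq \mathrm{id}$ as $\infty$-functors. On each derived Stein algebra $A$, the double conjugate $(A^{\mathrm{conj}})^{\mathrm{conj}}$ is the base change along $\overline{(-)}\circ\overline{(-)}=\mathrm{id}_{\mathbb{C}}$. The natural identification is then functorial, because base change is pseudofunctorial in the base morphism.

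Next, $c$ preserves the analytic (\'etale) topology: conjugation is a homeomorphism of underlying spaces sending open immersions and covers to open immersions and covers. So precomposition $Z\mapsto Z\circ c$ preserves (hypercomplete) sheaves, and $(Z^{\mathrm{conj}})^{\mathrm{conj}}=Z\circ c\circ c\simeq Z$. This gives the first equivalence, and it is natural in $Z$. In particular $(-)^{\mathrm{conj}}$ is an autoequivalence of $\mathsf{dAnSt}_{\mathbb{C}}$, hence commutes with all limits and colimits.

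For the cotangent complex, I would use the universal property recalled in the proof of Proposition~\ref{AnMapCotComplex}. Here $\mathbb{L}^{\an}_Z$ corepresents analytic derivations: for $u:U\to Z$ with $U$ derived affinoid/Stein and $\mathcal{E}\in\mathsf{Coh}^{\geq 0}(U)$,
\[
\underline{\mathrm{Der}}^{\an}_Z(U,\mathcal{E})\simeq \mathrm{Maps}(u^*\mathbb{L}^{\an}_Z,\mathcal{E}),
\]
where a derivation is a lift of $u$ along the square-zero extension $U\to U[\mathcal{E}]$. The key step is that $c$ is compatible with analytic square-zero extensions. Concretely, $U[\mathcal{E}]^{\mathrm{conj}}\simeq U^{\mathrm{conj}}[\mathcal{E}^{\mathrm{conj}}]$, where $\mathcal{E}^{\mathrm{conj}}$ is the semilinear base change of modules mentioned before Definition~\ref{RealStructure}. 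This should follow because the analytic split square-zero extension $\mathcal{O}_U\oplus\mathcal{E}$ is built from the $\EuScript{T}_{\an}$-structure by a functorial construction, and conjugation acts on $\EuScript{T}_{\an}$ by $V\mapsto\overline{V}$, which carries holomorphic functions to holomorphic functions after conjugating.

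Granting this, for $u:U\to Z^{\mathrm{conj}}$ (equivalently $u^{\mathrm{conj}}:U^{\mathrm{conj}}\to Z$) one gets
\[
\underline{\mathrm{Der}}^{\an}_{Z^{\mathrm{conj}}}(U,\mathcal{E})\simeq \underline{\mathrm{Der}}^{\an}_{Z}(U^{\mathrm{conj}},\mathcal{E}^{\mathrm{conj}})\simeq \mathrm{Maps}\big(u^{\mathrm{conj},*}\mathbb{L}^{\an}_Z,\mathcal{E}^{\mathrm{conj}}\big)\simeq \mathrm{Maps}\big(u^*(\mathbb{L}^{\an}_Z)^{\mathrm{conj}},\mathcal{E}\big).
\]
The last equivalence uses that module conjugation is an equivalence, with inverse itself, commuting with pullbacks. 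This shows $(\mathbb{L}^{\an}_Z)^{\mathrm{conj}}$ corepresents derivations on $Z^{\mathrm{conj}}$. Perfectness is preserved, and the identifications are compatible with descent along an atlas, so Yoneda gives $\mathbb{L}^{\an}_{Z^{\mathrm{conj}}}\simeq(\mathbb{L}^{\an}_Z)^{\mathrm{conj}}$ as $\mathcal{O}_{Z^{\mathrm{conj}}}$-modules.

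The main obstacle is the compatibility of conjugation with analytic square-zero extensions and with the $\EuScript{T}_{\an}$-structure, since conjugation is not a holomorphic map. I would handle it by defining conjugation on $\EuScript{T}_{\an}$ itself as the functor $V\mapsto\overline{V}$, $f\mapsto\overline{f(\overline{\,\cdot\,})}$. This is an automorphism of the pregeometry preserving admissible morphisms and covers, so it induces an automorphism of $\mathsf{AnRing}(\EuScript{X})$ compatible with $(-)^{\mathrm{alg}}$. It is also compatible with the corepresenting object of derivations, because the latter is characterized intrinsically in terms of the pregeometry (Porta--Yu).
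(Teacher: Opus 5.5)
Your proposal is correct, but it takes a different route from the paper.

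\textbf{The paper's route.} It treats conjugation as base change of the structure sheaf, $\mathcal{O}_{\EuScript{X}_\rho}=\mathcal{O}_{\EuScript{X}}\otimes_{K,\rho}\mathbb{C}$, along a field embedding $\rho$. It then appeals to the base-change property of the analytic cotangent complex (Proposition~\ref{AnCotangentProperties}) to get $\mathbb{L}^{\an}_{\EuScript{X}_\rho}\simeq\mathbb{L}^{\an}_{\EuScript{X}}\otimes_{K,\rho}\mathbb{C}$. The involutivity $(Z^{\mathrm{conj}})^{\mathrm{conj}}\simeq Z$ is left implicit, and the proof additionally records the analogous statement for the $\EuScript{O}(2)$-twisted relative de Rham algebra.

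\textbf{Your route.} You realize conjugation as precomposition with the involutive automorphism $\kappa\colon V\mapsto\overline{V}$, $f\mapsto\overline{f(\overline{\,\cdot\,})}$ of the pregeometry $\EuScript{T}_{\an}$. Involutivity then holds on the nose because $\kappa^2=\mathrm{id}$. The cotangent-complex identification becomes a transport-of-structure argument through corepresentability of analytic derivations.

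\textbf{Comparison.} Your route gains precision. Complex conjugation is not a $\mathbb{C}$-analytic morphism, so Proposition~\ref{AnCotangentProperties}, which concerns pullback squares of $\mathbb{C}$-analytic spaces, does not literally apply to it. By contrast, $-\circ\kappa$ gives an honest meaning to ``semilinear base change'' of analytic rings and their modules. The paper's route gains brevity and a formally uniform statement for arbitrary embeddings.

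\textbf{Two points to make explicit when writing it up.}
\begin{enumerate}
\item Since $\kappa$ restricts to coefficient conjugation on polynomial maps between affine spaces, $(\mathcal{O}\circ\kappa)^{\mathrm{alg}}$ is $\mathcal{O}^{\mathrm{alg}}$ with conjugated scalars, not $\mathcal{O}^{\mathrm{alg}}$ itself. This is precisely why $\mathcal{E}\mapsto\mathcal{E}^{\mathrm{conj}}$ is a semilinear equivalence of module categories.
\item The compatibility with square-zero extensions is cleanest stated as follows. The functor $-\circ\kappa$ is a finite-limit-preserving equivalence $\mathsf{AnRing}(\EuScript{X})_{/\mathcal{O}}\simeq\mathsf{AnRing}(\EuScript{X})_{/\mathcal{O}\circ\kappa}$. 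It therefore induces an equivalence of stabilizations sending $\mathcal{O}\oplus\mathcal{F}$ to $(\mathcal{O}\circ\kappa)\oplus\mathcal{F}^{\mathrm{conj}}$ and $\mathbb{L}^{\an}_{\mathcal{O}}$ to $\mathbb{L}^{\an}_{\mathcal{O}\circ\kappa}$. This is the precise form of your appeal to Porta--Yu.
\end{enumerate}
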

\begin{proof}
This follows from a more general fact: let $\rho: K\hookrightarrow \mathbb{C}$ be any sub-field. Consider the structured analytic $\infty$-topose $\EuScript{X}$ associated. Then there is a base-change topoi $\EuScript{X}_{\rho}:=(\EuScript{X},\mathcal{O}_{\EuScript{X}_{\rho}})$ with underlying topological space the same, but with $\mathcal{O}_{\EuScript{X}_{\rho}}:=\mathcal{O}_{\EuScript{X}}\otimes_{K,\rho}\mathbb{C}$, where $\mathbb{C}$ is a $K$-module via $\rho.$ Using base-change via Proposition \ref{AnCotangentProperties}, the result follows since there is an equivalence 
$\mathbb{L}_{\EuScript{X}_{\rho}}^{\mathrm{an}}\simeq \mathbb{L}_{\EuScript{X}}^{\mathrm{an}}\otimes_{K,\rho}\mathbb{C}.$
Similarly,
$$\tau^*(\mathrm{DR}(Z/\mathbb{P}^1)\otimes \eta^*\EuScript{O}(2))\simeq \mathrm{DR}(\tau^*Z/\mathbb{P}^1)\otimes \eta_{\tau^*Z}^*\EuScript{O}(2)\simeq \mathrm{DR}(\tau^*Z/\mathbb{P}^1)\otimes \eta_Z^*\EuScript{O}(2),$$
since
$\tau^*(\eta^*\EuScript{O}(2))\simeq \eta_Z^*\EuScript{O}(2).$
\end{proof}
Suppose $Z\to \mathbb{P}^1$ has an $\EuScript{O}_{\mathbb{P}^1}(2)$-twisted $n$-shifted symplectic form $\omega.$ 
We can always obtain a conjugate $2$-form $\overline{\omega}$ on $Z^{\mathrm{conj}}.$ Fixing a choice of a real structure $\sigma_Z:Z\to Z^{\mathrm{conj}},$ we can compare $\omega$ with $\sigma_Z^*(\overline{\omega}).$ 

Indeed, fixing a choice of real structure $\sigma_Z$ on $Z,$ then there is a canonical map 
$$\sigma_Z^*:\Gamma(Z^{\mathrm{conj}},\wedge^2\mathbb{L}_{Z^{\mathrm{conj}}/\mathbb{P}^1}^{\mathrm{an}}[n]\otimes \eta_{Z^{\mathrm{conj}}}^*\EuScript{O}_{\mathbb{P}^1}(2)\big)\to \Gamma\big(Z,\wedge^2\mathbb{L}_{Z/\mathbb{P}^1}^{\mathrm{an}}[n]\otimes \eta_Z^*\mathcal{O}_{\mathbb{P}}^1(2)\big).$$
Indeed, via Proposition \ref{AnalyticDRConj}, there are equivalences
$$\sigma_Z^*\Theta_{\overline{\omega}}\simeq \Theta_{\sigma_Z^*(\overline{\omega})}:\mathbb{T}_{Z/\mathbb{P}^1}^{\mathrm{an}}\to \mathbb{L}_{Z/\mathbb{P}^1}^{\mathrm{an}}[n]\otimes \eta_Z^* a^*\EuScript{O}_{\mathbb{P}^1}(2),$$
from which we may use the diagram \eqref{eqn: CoveringAntipodal} to see $\sigma^*(\eta_{Z^{\mathrm{conj}}})^*\simeq (a\circ \eta_Z)^*.$ Thus there are induced maps on spaces of non-degenerate closed $2$-forms.

The condition of real-compatibility of a relative shifted two form is formulated in terms of spaces of $\mathrm{Gal}$-equivariant and $\mathrm{Gal}$-invariant forms.
\begin{definition}
Let $Z \to \mathbf{P}^1$ be a derived analytic stack with perfect relative cotangent complex $\mathbb{L}_{Z/\bfP}.$ Suppose that $Z$ is equipped with a $\mathrm{Gal}(\mathbb{C}/\mathbb{R})$-action covering the antipodal map on $\bfP$. For a sheaf $\EuScript{F}$ on $\bfP$, and $n\in\mathbb{Z}$, we have:
\begin{enumerate}
    \item The space of \emph{$\mathrm{Gal}(\mathbb{C}/\mathbb{R})$-invariant $\EuScript{F}$-twisted closed $p$-forms of degree $n$}, 
    $$\EuScript{A}^{p,\mathrm{cl}}_{\mathsf{B}\mathrm{Gal}(\mathbb{C}/\mathbb{R})\times\bfP}\big([\![Z/\mathrm{Gal}(\mathbb{C}/\mathbb{R})]\!],n\,;\,\EuScript{F}\big):=\mathrm{Maps}\big(\mathcal{O}_{\mathsf{B}\mathrm{Gal}(\mathbb{C}/\mathbb{R})},\mathrm{Fil}^p\mathrm{DR}([\![Z/\mathrm{Gal}(\mathbb{C}/\mathbb{R})]\!]\otimes \EuScript{F}[n+p]\big),$$
    where $\mathrm{Maps}$ is the mapping space in the $\infty$-category of coherent sheaves on the relative group stack $\mathsf{B}\mathrm{Gal}(\mathbb{C}/\mathbb{R})_{,\mathbf{P}^1}.$

    \item The space of \emph{$\mathrm{Gal}(\mathbb{C}/\mathbb{R})$-equivariant $\EuScript{F}$-twisted closed $p$-forms of degree $n$,}
    $$\EuScript{A}_{\bfP}^{p,\mathrm{cl}}([\![Z/\mathrm{Gal}(\mathbb{C}/\mathbb{R})]\!],n\,;\,\EuScript{F})=\mathrm{Maps}\big(\mathcal{O}_{\bfP},\mathrm{Fil}^p\mathrm{DR}([\![Z/\mathrm{Gal}(\mathbb{C}/\mathbb{R})]\!]\otimes \EuScript{F}[n+p]\big).$$
\end{enumerate}
\end{definition}
All results in this section work for general prestacks $Z\to S$ where $Z$ has an action by a group stack $G$, and sheaves $\EuScript{F}\in\mathsf{Perf}(S).$ For our intended applications we always take $\EuScript{F}$ to be $\EuScript{O}(2):=\EuScript{O}_{\bfP}(2)$ and state all results for $\EuScript{O}(2)$ and $C_2=\mathrm{Gal}(\mathbb{C}/\mathbb{R}).$

For every $p\geq 0,$ there is a commutative diagram of spaces,
\begin{equation}
    \label{EqInvForms}
    \begin{tikzcd}
\EuScript{A}_{\bfP}^{p,\mathrm{cl}}\big([\![Z/C_2]\!], n\,;\,\EuScript{O}(2)\big)\arrow[d]\arrow[dr,"\mathrm{oblv}_{\mathrm{eq}}"]\arrow[rr] & & \arrow[dl, swap,"\mathrm{oblv}_{\mathrm{inv}}"]\EuScript{A}^{p,\mathrm{cl}}_{\mathsf{B}C_{2,\bfP}}\big([\![Z/C_2]\!],n\,;\,\EuScript{O}(2)\big)\arrow[d]
    \\
   \EuScript{A}_{\bfP}^{p}\big([\![Z/C_2]\!],n\,;\,\EuScript{O}(2)\big)\arrow[dr] & \EuScript{A}_{\bfP}^{p,\mathrm{cl}}\big(Z,n\,;\,\EuScript{O}(2)\big)\arrow[d] & \arrow[dl] \EuScript{A}^{p}_{\mathsf{B}C_{2,\bfP}}\big([\![Z/C_2]\!],n\,;\,\EuScript{O}(2)\big)
   \\
   & \EuScript{A}_{\bfP}^{p}\big(Z,n\,;\,\EuScript{O}(2)\big) & 
    \end{tikzcd}
\end{equation}
where the vertical morphisms are the usual maps sending a closed $p$-form to its underlying $p$-form \cite{PTVV13}, and $\mathrm{oblv}_{C_2-\mathrm{eq}}$ and $\mathrm{oblv}_{C_2-\mathrm{inv}}$ are the maps forgetting the $C_2$-equivariance and $C_2$-invariance, respectively.
There is also an induced map $\EuScript{A}_{\bfP}^{p}\big([\![Z/C_2]\!],n\,;\,\EuScript{O}(2)\big)\rightarrow \EuScript{A}^{p}_{\mathsf{B}C_{2,\bfP}}\big([\![Z/C_2]\!],n\,;\,\EuScript{O}(2)\big).$

\subsubsection{Symplectic $\mathrm{Gal}(\mathbb{C}/\mathbb{R})$-actions.}
A $\mathrm{Gal}(\mathbb{C}/\mathbb{R})$-action on a relative $n$-shifted symplectic derived analytic stack $(Z \to \mathbf{P}^1, \omega)$ is \emph{symplectic} if there exists a $C_2$-invariant closed $2$-form $\omega^{inv}\in \EuScript{A}_{C_2-\mathrm{inv}}^{2,\mathrm{cl}}(Z/\mathbf{P}^1,n\,;\,\EuScript{O}(2)),$ and an equivalence 
$$\text{oblv}_{\mathrm{inv}}(\omega^{inv})\simeq \omega,\hspace{5mm}\text{ in }\EuScript{A}^{2,\mathrm{cl}}_{\mathbf{P}^1}(Z,n\,;\,\EuScript{O}(2)\big),$$
via the diagram \eqref{EqInvForms}.
Recall that $\mathrm{Symp}(Z,n)= \EuScript{A}^{2,\mathrm{nd}}(Z,n)\times_{\EuScript{A}^2(Z,n)}\EuScript{A}^{2,\mathrm{cl}}(Z,n)\in\EuScript{S}$, is the space of $n$-shifted symplectic forms on $Z$ \cite[Def.~1.18 (2)]{PTVV13}. The \emph{space of symplectic $\mathrm{Gal}$-actions}, is the homotopy fiber product,
\[
\begin{tikzcd} \EuScript{S}\mathrm{ymp}\big(\mathrm{Gal} \circlearrowright Z/\mathbf{P}^1\big)\arrow[d]\arrow[r] & \EuScript{A}_{C_2-\mathrm{inv}}^{2,\mathrm{cl}}\big(Z/\mathbf{P}^1,n\,;\,\EuScript{O}(2)\big)\arrow[d,"\mathrm{oblv}_{\mathrm{inv}}"]
    \\
    *\arrow[r,"\omega"] & \EuScript{A}_{\mathbf{P}^1}^{2,\mathrm{cl}}\big(Z,n\,;\,\EuScript{O}(2)\big).
\end{tikzcd}\]
Following \cite[Prop.~1.11]{PTVV13}, we let $\underline{\EuScript{A}}_{\mathbf{P}^1,C_2}^{2,\mathrm{cl}}(n\,;\,\EuScript{O}(2))$ be the object of $\mathsf{dAnSt}_{/\mathbf{P}^1}^{C_2},$ satisfying the universal property that
$$\mathrm{Maps}_{\mathsf{dAnSt}_{/\mathbf{P}^1}^{C_2}}\big(Z,\underline{\EuScript{A}}_{\mathbf{P}^1,C_2}^{2,\mathrm{cl}}(n\,;\,\EuScript{O}(2))\big)\simeq \EuScript{A}_{C_2-\text{inv}}^{2,\mathrm{cl}}(Z/\mathbf{P}^1,n\,;\,\EuScript{O}(2)).$$ 
Using the general formalism in \cite{HHLN23}, we study higher categories of correspondences (see e.g. \cite{Haugseng2017HigherMorita,Haugseng2018}); we consider
\begin{equation}
    \label{CorrGalCat}
\mathrm{Corr}_{/\mathbf{P}^1}^{\mathrm{Gal}(\mathbb{C}/\mathbb{R})}:=\mathrm{Corr}\big(\mathsf{dAnSt}_{/\mathbf{P}^1}^{\mathrm{Gal}(\mathbb{C}/\mathbb{R})}\big).\end{equation}
More explicitly, an object of \eqref{CorrGalCat} is a derived analytic stack $Z$ over $\bfP$ with $\mathrm{Gal}$-action, while a morphism $Z\to Z'$ in \eqref{CorrGalCat} is a $\mathrm{Gal}$-invariant diagram $Z\leftarrow Y\rightarrow Z'$ of derived analytic stacks over $\bfP$. 
We may consider the generalization of \eqref{nPreSymp}, or more precisely its analytic version \eqref{nPreSympAnalytic} to include group-stack actions,
$$\mathsf{PrSymp}_{\mathbf{P}^1,n}^{C_2}:=\mathrm{Corr}_{/\underline{\EuScript{A}}_{\mathbf{P}^1,C_2}^{2,\mathrm{cl}}(n\,;\,\EuScript{O}(2))}^{C_2}\equiv \mathrm{Corr}\big(\mathsf{dAnSt}_{/\underline{\EuScript{A}}_{\mathbf{P}^1,C_2}^{2,\mathrm{cl}}(n\,;\,\EuScript{O}(2))}^{\mathrm{Gal}(\mathbb{C}/\mathbb{R})}\big).$$
Similarly for $\mathrm{Symp}_{\mathbf{P}^1,n}^{C_2}$. 
\begin{definition}
    The category of \emph{$n$-shifted $\EuScript{O}(2)$-twisted-symplectic derived analytic stacks over $\mathbf{P}^1$ with symplectic $\mathrm{Gal}$-action}, is
$\mathsf{SympAct}_{\mathbf{P}^1,n}^{\mathrm{Gal}}:=\mathsf{Symp}_{/\mathbf{P}^1\times\mathsf{B}\mathrm{Gal},n}.$
\end{definition}

This is well-defined, in particular since the stack 
$\underline{\EuScript{A}}_{\mathbf{P}^1,C_2}^{2,\mathrm{cl}}(n\,;\,\EuScript{O}(2))$ may be interpreted via an action of a right-adjoint.

\begin{proposition}
\label{HomotopyFixedPoints1}
Let $X_*$ be a smooth Segal groupoid object in derived Artin stacks over $\mathbb{P}^1$ with $C_2$-action, with quotient $Z=|X_*|.$ Then for each $n\in\mathbb{Z}$,  
$$\mathsf{Symp}(Z/\mathbb{P}^1;n)^{hC_2}\simeq \underset{\mathbb{R}\mathsf{Spec}(A)\to Z\in (\mathsf{dAff}_{/Z/\mathbb{P}^1}[C_2])^{op}}{\mathrm{lim}}\mathsf{Symp}(A,n)^{hC_2},$$
 is an equivalence in $\mathsf{Spc}.$
\end{proposition}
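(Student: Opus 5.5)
The plan is to reduce the claim to two formal facts: shifted symplectic forms satisfy smooth descent, and homotopy fixed points, being a limit, commute with limits. First, following \cite[Prop.~1.14]{PTVV13} and its relative version in \cite{CalaqueHaugsengScheimbauer2025}, I would write the space of relative closed $2$-forms of degree $n$ on $Z\to\mathbb{P}^1$ as a limit over the $\infty$-category of affines mapping to $Z$ over $\mathbb{P}^1$:
\[
\EuScript{A}^{2,\mathrm{cl}}_{\mathbb{P}^1}(Z,n)\simeq \underset{\mathbb{R}\mathsf{Spec}(A)\to Z}{\mathrm{lim}}\EuScript{A}^{2,\mathrm{cl}}(A/\mathbb{P}^1,n).
\]
This holds because $\EuScript{A}^{2,\mathrm{cl}}(-,n)$ is a derived stack for the \'etale (indeed smooth) topology. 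Since $Z=|X_*|$ with $X_*$ a smooth Segal groupoid, the same limit can be computed as a totalization over $X_*$. Non-degeneracy is a property that can be checked smooth-locally: the relative tangent complex of $Z$ is recovered from those of the $X_k$ together with the normal directions of the groupoid. So $\mathsf{Symp}(Z/\mathbb{P}^1;n)$ is the union of components of this limit that are non-degenerate on $Z$.

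Second, I would make the $C_2$-action part of the indexing. Using Proposition \ref{GaloisLemma} and Proposition \ref{C2Grothendieck}, the $C_2$-action on $Z$ over $\mathbb{P}^1$ induces a $C_2$-action on the slice $\mathsf{dAff}_{/Z/\mathbb{P}^1}$. Conjugating a point $\mathbb{R}\mathsf{Spec}(A)\to Z$ gives another point, and Proposition \ref{AnalyticDRConj} identifies the de Rham data. The functor $A\mapsto \mathsf{Symp}(A,n)$ is therefore a $C_2$-equivariant diagram, and the limit inherits the action. Taking $(-)^{hC_2}=\mathrm{lim}_{\mathsf{B}C_2}$ and exchanging the two limits gives
\[
\big(\mathrm{lim}_{A}\mathsf{Symp}(A,n)\big)^{hC_2}\simeq \mathrm{lim}_{\mathsf{B}C_2}\mathrm{lim}_A(\cdots)\simeq \mathrm{lim}_{(\mathsf{dAff}_{/Z/\mathbb{P}^1})_{hC_2}}(\cdots).
\]
The last limit is then rewritten as a limit over the equivariant site $\mathsf{dAff}_{/Z/\mathbb{P}^1}[C_2]$, with $C_2$-fixed points taken at each stage. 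This is the unstraightening step: a limit over the lax quotient of a $C_2$-category equals the limit of the fiberwise homotopy fixed points, in the spirit of the proof of Proposition \ref{PresheavesC2Stacks}.

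Third, I would check that the non-degeneracy condition is compatible with this exchange. A $C_2$-fixed closed form is non-degenerate if and only if its underlying form is, because the forgetful map $\mathrm{oblv}$ in \eqref{EqInvForms} does not change the underlying morphism $\Theta_\omega$. So the non-degenerate components are preserved on both sides, and the equivalence of spaces of closed forms restricts to the claimed equivalence of spaces of symplectic forms.

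I expect the main obstacle to be the second step. There one has to make precise that the $C_2$-action on the site $\mathsf{dAff}_{/Z/\mathbb{P}^1}$ is honestly a functor $\mathsf{B}C_2\to\mathsf{Cat}_\infty$, given that it covers the antipodal map and involves conjugation. One also has to identify the limit over the lax quotient with a limit over equivariant affines. My first attempt would be to use the cofinality of the equivariant objects $A\otimes_{\mathbb{R}}\mathbb{C}$ supplied by Proposition \ref{GaloisLemma}. That reduces the claim to descent for $\mathsf{Symp}$ along $\mathsf{Spec}\,\mathbb{C}\to\mathsf{Spec}\,\mathbb{R}$, which holds by Galois descent.
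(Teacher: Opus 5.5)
Your outline is correct in substance, but it handles the $C_2$-action by a different mechanism than the paper.

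\textbf{How the paper argues.} The paper never lets $C_2$ act on the indexing category. It picks a $C_2$-equivariant affine cover $\{U_i\to Z\}$, so the \v{C}ech nerve $U_\bullet$ lives in $\mathsf{dAff}_{/Z}[C_2]$. Descent then writes $\mathsf{Symp}(Z/\mathbb{P}^1;n)$ as a $\Delta$-indexed limit \emph{in} $\mathsf{Spc}[C_2]$, with $C_2$ acting termwise and trivially on $\Delta$. Finally $(-)^{hC_2}$, being right adjoint to $\mathrm{triv}$, commutes with that limit. There is no lax quotient and no cofinality step. On the other hand, the paper leaves implicit the passage from one equivariant \v{C}ech nerve to the limit over all of $\mathsf{dAff}_{/Z/\mathbb{P}^1}[C_2]$; that step needs descent for $A\mapsto F(A)^{hC_2}$ on the equivariant site.

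\textbf{How your route differs.} You let $C_2$ permute the affines over $Z$ and use $\big(\mathrm{lim}_{I}F\big)^{hC_2}\simeq\mathrm{lim}_{I_{hC_2}}F$. This is valid because over the groupoid $\mathsf{B}C_2$ the unstraightening is both cartesian and cocartesian. You are then forced to compare the lax quotient with the equivariant site, which is exactly the step the paper sidesteps by starting from an equivariant atlas. For the Galois action, your reduction through Proposition~\ref{GaloisLemma} is the more conceptual of the two. Both sides become closed forms on the descended $\mathbb{R}$-stack $Z^{hC_2}$, and Galois descent for the de Rham algebra concludes (invariants are exact in characteristic zero). For a $\mathbb{C}$-linear involution the $A\otimes_{\mathbb{R}}\mathbb{C}$ cofinality is unavailable. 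There you would instead use induced covers $U\sqcup U\xrightarrow{(u,\,\sigma_Z u)}Z$ with the swap action, which is essentially the paper's device.

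\textbf{One step to repair.} Your Steps 1 and 3 are more careful than the paper: you use descent for \emph{closed} forms and impose non-degeneracy globally on $Z$, whereas the paper writes $\mathsf{Symp}(U_\bullet/\mathbb{P}^1,k)$ inside the descent. But Step 2 then calls $A\mapsto\mathsf{Symp}(A,n)$ a $C_2$-equivariant diagram, and this is not a functor on $\mathsf{dAff}_{/Z/\mathbb{P}^1}$. Pullback along $\mathrm{Spec}\,A\to Z$ does not preserve non-degeneracy. For instance, if $n\neq 0$ and $A$ is smooth of positive relative dimension, then $\mathbb{T}_{A/\mathbb{P}^1}$ and $\mathbb{L}_{A/\mathbb{P}^1}[n]$ sit in different degrees, so $\mathsf{Symp}(A,n)=\emptyset$ even when $Z$ carries a symplectic form.

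Run Step 2 with $A\mapsto\EuScript{A}^{2,\mathrm{cl}}(A/\mathbb{P}^1,n;\EuScript{O}(2))$ instead. What you then prove is that $\mathsf{Symp}(Z/\mathbb{P}^1;n)^{hC_2}$ is the union of those components of $\mathrm{lim}_A\,\EuScript{A}^{2,\mathrm{cl}}(A/\mathbb{P}^1,n;\EuScript{O}(2))^{hC_2}$ whose image is non-degenerate on $Z$. That is the only coherent reading of the right-hand side of the statement. Step 3 should be phrased as identifying this locus, not as restricting to a literal limit of the $\mathsf{Symp}(A,n)^{hC_2}$.
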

\begin{proof}
First, note there is a canonical morphism 
\begin{eqnarray*}
\underset{\mathsf{B}C_2}{\mathrm{lim}}\hspace{1mm}\mathrm{Maps}_{\mathsf{QCoh}(Z)}(\mathcal{O}_Z,\mathrm{DR}(Z/\mathbb{P}^1)\otimes \eta^*\EuScript{O}(2)\big)&\simeq &\mathrm{Maps}_{\mathsf{Spc}[C_2]}(\star,\mathrm{Maps}_{\mathsf{QCoh}(Z)}(\mathcal{O}_Z,\mathrm{DR}(Z/\mathbb{P}^1)\otimes \eta^*\EuScript{O}(2)\big)
\\
&\to& \mathrm{Maps}_{\mathsf{QCoh}(Z)}(\mathcal{O}_Z,\mathrm{DR}(Z/\mathbb{P}^1)\otimes \eta^*\EuScript{O}(2)\big),
\end{eqnarray*}
in $\mathsf{Spc}.$
It restricts to simplicial sets $\mathsf{Symp}(Z/\mathbb{P}^1,n).$
Indeed, for any simplicial set $K_{\bullet}\in\mathsf{Spc}[C_2]$ consider the unit 
$$K_{\bullet}\to \mathrm{Triv}(K_{\bullet}^{hC_2}),$$
where the trivial action functor
$\mathrm{triv}$ has a right-adjoint $(-)^{hC_2}.$ As a right-adjoint, it commutes with limits, and we can consider the composition of functors,
$$\big(\mathsf{dAff}_{/Z/\mathbb{P}^1}[C_2]\big)^{op}\xrightarrow{\mathsf{Symp}(-/\mathbb{P}^1,k)}\mathsf{Spc}[C_2]\to \mathsf{Spc},$$
to obtain that, for each $C_2$-equivariant affine cover $\{U_i\to Z\}$, form the nerve $U_{\bullet}$ as a simplicial object in $\mathsf{dAff}_{/Z}[C_2]$, and thus 
$\mathsf{Symp}(Z/\mathbb{P}^1,k)\simeq \mathrm{lim}_{\Delta}\mathrm{Sypl}(U_{\bullet}/\mathbb{P}^1,k)$ in $C_2$-spaces. Then, we obtain
$$\mathsf{Symp}(Z/\mathbb{P}^1;n)^{hC_2}\simeq \underset{\mathbb{R}\mathsf{Spec}(A)\to Z\in (\mathsf{dAff}_{/Z/\mathbb{P}^1}[C_2])^{op}}{\mathrm{lim}}\mathsf{Symp}(A,n)^{hC_2},$$
which is an equivalence in $\mathsf{Spc}.$
Moreover, from Proposition \ref{C2Waldhausen}, by possibly assuming cofibrancy in the local model structure on the category of stacks, from \eqref{eqn: G equiv versus stack} we may compute that
$$\mathrm{Maps}\big(\EuScript{S}_{\bullet}\RPerf_{G^{\mathbb{C}}}(Z),\EuScript{A}^{2}_d)\simeq \mathrm{holim}_k\mathrm{Maps}(\EuScript{S}_{k}\RPerf_{G^{\mathbb{C}}}(Z),\EuScript{A}^2_d)
    \simeq \mathrm{holim}_k\EuScript{A}^2([\EuScript{S}_k\RPerf(Z)/G^{\mathbb{C}}];d),$$
    which coincides with $\mathrm{holim}_k\EuScript{A}^{2}(G^{\mathbb{C}\times k}\times Z;d).$
\end{proof}
The relative $2$-form $\omega \in \EuScript{A}^{2,cl}(Z/\mathbb{P}^1,n)$ is said to be \emph{real} if $\sigma_Z^*(\overline{\omega})\simeq\omega,$ for some real structure $\sigma_Z$ on $Z.$
More precisely, by Proposition \ref{HomotopyFixedPoints1},
there is a canonical map 
\begin{equation}
\label{eqn: RealSymplForget}
\mathsf{Symp}^{\sigma}(Z/\mathbb{P}^1,n):=\mathsf{Symp}(Z/\mathbb{P}^1,n)^{hC_2}\to \mathsf{Symp}(Z/\mathbb{P}^1,n),
\end{equation}
in $\mathsf{Spc}.$ The essential image of \eqref{eqn: RealSymplForget} consists of symplectic forms that are equivalent to their conjugates.
Thus, by a form with real structure, we mean lies in this essential image.

\subsubsection{Pre-twistor data}
We can now make precise the idea sketched in \cite{KPS2021}, which formulate a twistor family of hyperk\"ahler type.
\begin{definition}
\label{DerivedTwistor}
Let $n\in \mathbb{Z}.$
    An \emph{$n$-shifted derived pre-twistor family of hyperk\"ahler type} is a pair $(Z,\omega_{rel})$, consisting of an object $Z\in \mathsf{dSt}_{\mathbb{P}^1}[C_2]\simeq \mathsf{Funct}(\mathsf{B}C_2,\mathsf{dSt}_{/\mathbb{P}_{\mathbb{C}}^1})$, namely,  a relative derived stack with a real structure covering the antipode, meaning a $C_2$-equivariant map
    $\eta:(Z,\tau)\to (\mathbb{P}^1,\sigma),$
    together with an element $\omega_{rel}\in \mathsf{Symp}^{\EuScript{O}(2)}(Z/\mathbb{P}^1;n),$ compatible with the $C_2$-action. 
    
    \end{definition}
Recalling an $n$-shifted presymplectic stack over a base $S$ is an object \ref{nPreSymp},
we may also characterize the $\infty$-category of shifted pre-twistor families.
\begin{corollary}
\label{nShiftedPreTwistorGroth}
    The $\infty$-category of $n$-shifted derived pre-twistor spaces is equivalent to
    $$\mathsf{PreTwdSt}(n)\simeq \int_{Z\in \mathsf{dAnSt}_{/\mathbb{P}^1}[C_2]}\mathsf{Symp}^{\EuScript{O}(2)}(Z/\mathbb{P}^1,n)^{hC_2}.$$
\end{corollary}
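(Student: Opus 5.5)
The statement is an unstraightening result, and I would prove it with the $\infty$-Grothendieck construction. The idea is to show that $Z\mapsto \mathsf{Symp}^{\EuScript{O}(2)}(Z/\mathbb{P}^1,n)^{hC_2}$ is a functor on $\mathsf{dAnSt}_{/\mathbb{P}^1}[C_2]^{\mathrm{op}}$ with values in spaces. The right fibration it classifies is then, by construction, the category of pairs $(Z,\omega_{rel})$ in Definition \ref{DerivedTwistor}.

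The first step is functoriality. I would start from the presheaf $\underline{\EuScript{A}}^{2,\mathrm{cl}}_{\mathbf{P}^1,C_2}(n;\EuScript{O}(2))$ introduced above. Its universal property identifies maps $Z\to \underline{\EuScript{A}}^{2,\mathrm{cl}}_{\mathbf{P}^1,C_2}(n;\EuScript{O}(2))$ in $\mathsf{dAnSt}_{/\mathbf{P}^1}^{C_2}$ with $C_2$-invariant twisted closed $2$-forms. So the functor $Z\mapsto \EuScript{A}^{2,\mathrm{cl}}_{C_2\text{-inv}}(Z/\mathbf{P}^1,n;\EuScript{O}(2))$ is representable, hence a presheaf of spaces. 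Pullback $f^*\omega=\omega\circ f$ gives its contravariant functoriality.

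The second step is to compare this with homotopy fixed points. By the unit/adjunction argument in the proof of Proposition \ref{HomotopyFixedPoints1}, together with Remark \ref{UsefulRmk}, the invariant forms are computed as $\lim_{\mathsf{B}C_2}$ of the $C_2$-space of forms on $Z$. This identifies $\EuScript{A}^{2,\mathrm{cl}}_{C_2\text{-inv}}$ with $\big(\EuScript{A}^{2,\mathrm{cl}}_{\mathbf{P}^1}(Z,n;\EuScript{O}(2))\big)^{hC_2}$, naturally in $Z$. Non-degeneracy, as in \eqref{TwistedNDIntro}, is a property: it can be tested after forgetting the action, and it is preserved under conjugation by Proposition \ref{AnalyticDRConj}. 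Hence $\mathsf{Symp}^{\EuScript{O}(2)}(Z/\mathbb{P}^1,n)^{hC_2}$ is a union of components of the invariant closed forms, and it remains functorial along the morphisms under which non-degeneracy is preserved (equivalences; in general one works on the wide subcategory where it is). Proposition \ref{HomotopyFixedPoints1} also shows that this functor satisfies descent along $C_2$-equivariant covers, so it is an honest sheaf.

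The third step applies Lurie's straightening, in its $C_2$-equivariant form Proposition \ref{C2Grothendieck}, to the functor
\[
\mathsf{Symp}^{\EuScript{O}(2)}(-/\mathbb{P}^1,n)^{hC_2}:\mathsf{dAnSt}_{/\mathbb{P}^1}[C_2]^{\mathrm{op}}\to\mathsf{Spc}.
\]
This produces the right fibration $\int_Z \mathsf{Symp}^{\EuScript{O}(2)}(Z/\mathbb{P}^1,n)^{hC_2}$. Equivalently, it is the full subcategory of $\mathsf{dAnSt}^{C_2}_{/\underline{\EuScript{A}}^{2,\mathrm{cl}}_{\mathbf{P}^1,C_2}}$ on the non-degenerate objects, in the spirit of the remark $\mathrm{ClDF}^{p,n}_S\simeq\mathsf{dSt}_{S/A^{p,\mathrm{cl}}_S(n)}$. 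Its objects are exactly pairs $(Z,\omega_{rel})$ with $\omega_{rel}$ compatible with the $C_2$-action. Its morphisms are equivariant maps together with a path $f^*\omega'\simeq\omega$. This is $\mathsf{PreTwdSt}(n)$ by definition.

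I expect the main obstacle to be the second step: making the identification of invariant forms with homotopy fixed points natural in $Z$. The $C_2$-action on forms is only semilinear, through $\sigma_Z^*\overline{\omega}$ and the antipodal map $a$ in \eqref{eqn: CoveringAntipodal}. I would handle this by regarding the $\mathbb{C}$-linear stacks with conjugation as $\mathbb{R}$-objects via Proposition \ref{GaloisLemma}, where the action becomes honestly linear. I would then use $a^*\EuScript{O}(2)\simeq\EuScript{O}(2)$ to make the twisting line bundle equivariant.
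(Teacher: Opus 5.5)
Your proposal is correct and follows essentially the paper's route: straighten via Proposition~\ref{C2Grothendieck}, identify the fibres with homotopy fixed points using Proposition~\ref{HomotopyFixedPoints1}, and recognize the total space as a slice over the stack of invariant closed forms. The only difference is that you get the slice description directly from the representing object $\underline{\EuScript{A}}^{2,\mathrm{cl}}_{\mathbf{P}^1,C_2}(n;\EuScript{O}(2))$, whereas the paper deduces it from preservation of weakly contractible limits by the forgetful functors, as in \cite[Rmk.~B.12.6]{CalaqueHaugsengScheimbauer2025}. One small point: $\mathsf{Symp}$ is not functorial along arbitrary maps, so the integral should be read as your step-three description (the full subcategory of the slice on non-degenerate objects), not as the unstraightening over a wide subcategory you hedge toward in step two, which in general has fewer morphisms. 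The paper also passes over this subtlety.
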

\begin{proof}
Both  $\mathsf{dSt}_{\mathbb{P}^1/\EuScript{A}_{\mathbb{P}^1}^{2,\mathrm{cl}}(n)^{\EuScript{O}(2)}}$ and $\mathsf{Funct}(\mathsf{B}C_2,\mathsf{dSt}_{\mathbb{P}^1/\EuScript{A}_{\mathbb{P}^1}^{2,\mathrm{cl}}(n)^{\EuScript{O}(2)}})$ have all limits, and the forgetful functor $\mathsf{dSt}_{\mathbb{P}^1/\EuScript{A}_{\mathbb{P}^1}^{2,\mathrm{cl}}(n)^{\EuScript{O}(2)}}[C_2]\to \mathsf{dSt}_{\mathbb{P}^1}[C_2]$ preserves weakly contractible limits. Since $(-)^{hC_2}$ commutes with limits, the composite functor $\mathsf{dSt}_{\mathbb{P}^1/\EuScript{A}_{\mathbb{P}^1}^{2,\mathrm{cl}}(n)^{\EuScript{O}(2)}}[C_2]\to \mathsf{dSt}_{\mathbb{P}^1}[C_2]\to \mathsf{dSt}_{\mathbb{P}^1},$ also preserves such limits. Therefore, by 
Proposition \ref{C2Grothendieck} and Proposition \ref{HomotopyFixedPoints1}, the result follows by the same argument given in \cite[Rmk.~B.12.6]{CalaqueHaugsengScheimbauer2025}.
\end{proof}
Consider the $\mathbb{G}_m$-equivariant line bundle $\EuScript{O}_{\mathbb{P}^1}(2)$ on $\mathbb{P}^1$ associated to the character $t\mapsto t^2.$ We provide an alternative characterization of pretwistor data via \emph{weighted} symplectic structures \cite{GinRoz}. 

Given a $\mathbb{G}_m$-stack $Z'\to X$, with $X$ any smooth projective variety with $L\in\mathrm{Pic}(X),$ letting $Z_{\mathbb{L}}:=Z\times_{\mathsf{B}\mathbb{G}_m}X$ be the associated bundle via the classifying map $X\to\mathsf{B}\mathbb{G}_m$ for $L$. 

For $m\geq 1,$ let $\EuScript{A}^{2,\mathrm{cl}}(Z)(m),$ denote the weight $m$-component. Then, there is a canonical map
\begin{equation}
    \label{TwistingMap}
    \mathrm{twist}_{L}:\EuScript{A}^{2,\mathrm{cl}}(Z)(m)\to \mathbb{R}\Gamma\big(X,\widetilde{\mathcal{A}}_X^{2,\mathrm{cl}}(Z_{\mathbb{L}}(m)\otimes L^{\otimes m}\big),
\end{equation}
sending an $n$-shifted symplectic form of weight $m$ on the $\mathbb{G}_m$-stack $Z$ to a \emph{relative} $n$-shifted closed $2$-form on $Z_{\mathbb{L}}\to X$, twisted by the line bundle $L^{\otimes m}$ on $X$.

Applying this to $X=\mathbb{P}_{\mathbb{C}}^1$ with $L=\mathcal{O}_{\mathbb{P}^1}(1),$ then an $n$-shifted form of weight $2$ on a $\mathbb{G}_m$-stack $Z$ produces a relative $n$-shifted form on $Z_{\mathcal{O}(1)}\to \mathbb{P}_{\mathbb{C}}^1$, twisted by $\EuScript{O}_{\mathbb{P}^1}(2)=\mathcal{O}(1)^{\otimes 2}.$

\begin{proposition}
    \label{WeightVersion}
Let $\eta:Z\to \mathbb{P}^1$ be a relative derived Artin stack. Then, the following are equivalent:
\begin{itemize}
\item[(i)] $(\eta:Z\to \mathbb{P}_{\mathbb{C}}^1,\omega,\sigma_Z)$ has the structure of an $n$-shifted pretwistor family;
\item[(ii)] there exists a $\mathbb{G}_m$-stack $Z'$ with $C_2$-action and invariant $n$-shifted symplectic form $\omega_{Z'}$ of weight $2$, with an equivalence
$$Z\simeq Z_{\mathcal{O}_{\mathbb{P}^1}(1)}:=Z'\times_{\mathsf{B}\mathbb{G}_m}\mathbb{P}^1,$$
where $\mathbb{P}^1\to\mathsf{B}\mathbb{G}_m$ classifies $\mathcal{O}_{\mathbb{P}^1}(1),$ and where
the relative symplectic form on $Z/\mathbb{P}^1$ is the image of $\omega_{Z'}$ by $\mathrm{twist}_{\mathcal{O}(1)}.$
\end{itemize} 
\end{proposition}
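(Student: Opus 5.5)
The plan is to pass through the equivalence between $\mathbb{G}_m$-equivariant objects over $\mathsf{B}\mathbb{G}_m$ and objects over $\mathbb{P}^1$ obtained by pulling back along the classifying map $c:\mathbb{P}^1\to\mathsf{B}\mathbb{G}_m$ of $\mathcal{O}_{\mathbb{P}^1}(1)$. I then check that the three pieces of pretwistor data (relative form, non-degeneracy, real structure) match under this passage.

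For (ii)$\Rightarrow$(i), start from $(Z',\omega_{Z'},C_2\text{-action})$ and set $Z:=Z'\times_{\mathsf{B}\mathbb{G}_m}\mathbb{P}^1$. The relative cotangent complex satisfies $\mathbb{L}_{Z/\mathbb{P}^1}\simeq \mathrm{pr}^*\mathbb{L}_{[Z'/\mathbb{G}_m]/\mathsf{B}\mathbb{G}_m}$, by base change as in Proposition \ref{AnCotangentProperties} in its algebraic form. The relative de Rham algebra therefore pulls back as well: $\mathrm{DR}(Z/\mathbb{P}^1)\simeq c^*\mathrm{DR}(Z'/\mathsf{B}\mathbb{G}_m)$ as graded mixed algebras.

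A weight-$2$ closed form on $Z'$ is a $\mathbb{G}_m$-equivariant map $\mathcal{O}\to \mathrm{DR}(Z')\otimes\chi^{-2}$, where $\chi$ is the weight-one character. Pulling back along $c$ turns $\chi$ into $\mathcal{O}(1)$. This is precisely the map $\mathrm{twist}_{\mathcal{O}(1)}$ of \eqref{TwistingMap}, and it produces an $\EuScript{O}(2)$-twisted relative closed $2$-form $\omega$ as in \eqref{eqn:Ltwistedpresymp}.

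Non-degeneracy is checked pointwise. Since $\mathbb{T}_{Z/\mathbb{P}^1}\to\mathbb{L}_{Z/\mathbb{P}^1}[n]\otimes\eta^*\EuScript{O}(2)$ is the pullback of the equivariant map $\Theta_{\omega_{Z'}}:\mathbb{T}_{Z'}\to\mathbb{L}_{Z'}[n]\otimes\chi^{2}$, it is an equivalence by hypothesis. This gives \eqref{TwistedNDIntro}.

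For the real structure, I use the group $\mathsf{H}=\mathbb{G}_m\cdot\sigma_{\mathbb{P}^1}$ of Remark \ref{ConformalRemark}. The antipodal involution lifts to $\mathsf{B}\mathbb{G}_m$ (via $t\mapsto \bar t^{-1}$ on the weight character) compatibly with $c$. The $C_2$-action on $Z'$ then induces $\sigma_Z$ covering $a$ as in \eqref{eqn: CoveringAntipodal}. Invariance of $\omega_{Z'}$ yields a point of $\mathsf{Symp}^{\EuScript{O}(2)}(Z/\mathbb{P}^1;n)^{hC_2}$ via Proposition \ref{HomotopyFixedPoints1} and \eqref{eqn: RealSymplForget}, using that $\sigma^*\EuScript{O}(2)\simeq\EuScript{O}(2)$ as in Proposition \ref{AnalyticDRConj}.

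For (i)$\Rightarrow$(ii) I reverse the construction: take $Z'$ to be the total space of the $\mathbb{G}_m$-torsor $\mathcal{O}(-1)^{\times}\to\mathbb{P}^1$ pulled back to $Z$, i.e. $Z':=Z\times_{\mathbb{P}^1}(\mathbb{A}^2\setminus 0)$. Then $[Z'/\mathbb{G}_m]\simeq Z$, and this is exactly $Z'\times_{\mathsf{B}\mathbb{G}_m}\mathbb{P}^1$ because $\mathbb{A}^2\setminus 0\to\mathbb{P}^1$ is the $\mathcal{O}(-1)^\times$-torsor.

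Pulling $\omega$ back to $Z'$ trivializes $\EuScript{O}(2)$ up to the weight-$2$ character, giving an equivariant weight-$2$ form. This step is essentially $\mathrm{twist}^{-1}$: pullback along the torsor is an equivalence onto $\mathbb{G}_m$-equivariant data by descent. The form $\omega_{Z'}$ is non-degenerate relative to $\mathbb{A}^2\setminus 0$; to make it absolutely symplectic of weight $2$ in the sense of \cite{GinRoz}, I interpret $Z'$ as a $\mathbb{G}_m$-stack over the base $\mathsf{B}\mathbb{G}_m\times_{}\!$ via $c$ as in (ii). The $C_2$-action lifts to $\mathbb{A}^2\setminus0$ by $(x,y)\mapsto(\bar y,-\bar x)$, and it squares to $-1\in\mathbb{G}_m$ rather than to the identity.

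I expect this last point to be the main obstacle. The involution is only a $C_2$-action up to the central element $-1$, which acts trivially on weight-$2$ forms. I would handle it by working with the extension $\mathsf{H}$ and observing that $-1$ acts trivially on $[Z'/\mathbb{G}_m]$, so the homotopy-coherent $C_2$-structure exists on the quotient data that (ii) actually uses. The remaining work is formal: both constructions are mutually inverse because $c^*$ and torsor pullback are inverse equivalences of $\infty$-topoi ($\mathsf{dSt}_{/\mathbb{P}^1}\simeq \mathsf{dSt}^{\mathbb{G}_m}_{/\mathbb{A}^2\setminus0}$), and all the form-level spaces are limits that commute with these equivalences, as in Corollary \ref{nShiftedPreTwistorGroth}.
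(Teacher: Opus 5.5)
Your treatment of the forms in (ii)$\Rightarrow$(i) is sound: base change of $\mathbb{L}$ and $\mathrm{DR}$ along the cartesian square over $c:\mathbb{P}^1\to\mathsf{B}\mathbb{G}_m$, the weight character becoming $\mathcal{O}(1)$, and non-degeneracy by pullback. This is essentially all the paper's proof establishes: it starts from a $\mathbb{G}_m$-stack, forms the associated bundle, and compares weight-$2$ forms with $\mathcal{O}(2)$-twisted relative forms via $\mathrm{twist}_{\mathcal{O}(1)}$.

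The first gap is in (i)$\Rightarrow$(ii): the step ``$[Z'/\mathbb{G}_m]\simeq Z$, and this is exactly $Z'\times_{\mathsf{B}\mathbb{G}_m}\mathbb{P}^1$'' is false. The fibre product in (ii) is $[Z'/\mathbb{G}_m]\times_{\mathsf{B}\mathbb{G}_m}\mathbb{P}^1$, as you use it in the other direction. For your $Z'=Z\times_{\mathbb{P}^1}(\mathbb{A}^2\setminus 0)$, the structure map $[Z'/\mathbb{G}_m]=Z\to\mathsf{B}\mathbb{G}_m$ factors through $\eta$. Hence $Z'\times_{\mathsf{B}\mathbb{G}_m}\mathbb{P}^1\simeq Z\times_{\mathbb{P}^1}(\mathbb{P}^1\times_{\mathsf{B}\mathbb{G}_m}\mathbb{P}^1)$ is a $\mathbb{G}_m$-torsor over $Z\times\mathbb{P}^1$, whose fibre over a point of the second factor is all of $Z'$. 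The equivalence $\mathsf{dSt}_{/\mathbb{P}^1}\simeq\mathsf{dSt}^{\mathbb{G}_m}_{/\mathbb{A}^2\setminus 0}$ is correct, but under it $c^*$ becomes $Z'\mapsto Z'\times(\mathbb{A}^2\setminus 0)$, and forgetting the map to $\mathbb{A}^2\setminus 0$ is not its inverse. Your remark that $\omega_{Z'}$ is only non-degenerate relative to $\mathbb{A}^2\setminus 0$ is a symptom of this.

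Nor can this direction be repaired as stated. Since $\mathcal{O}(1)$ is Zariski-locally trivial, $c^*[Z'/\mathbb{G}_m]|_U\simeq Z'\times U$ over any trivializing open $U\subset\mathbb{P}^1$. So every family as in (ii) has all fibres equivalent to $Z'$, and nothing in Definition~\ref{DerivedTwistor} forces this. A counterexample is the rank-one Deligne--Hitchin space of an elliptic curve $E$ (the twistor space of the flat hyperk\"ahler manifold $E\times\mathbb{C}$). Its fibres are $E\times\mathbb{C}$ over $0$ and $(\mathbb{C}^*)^2$ over $1$, which are not biholomorphic. So (i)$\Rightarrow$(ii) needs the extra hypothesis that $Z$ is already presented as an associated bundle, which the paper's proof tacitly assumes.

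The second gap is the real structures. In (ii)$\Rightarrow$(i) you lift $\sigma_{\mathbb{P}^1}$ to $\mathsf{B}\mathbb{G}_m$ compatibly with $c$, but no such lift exists:
for the real form $t\mapsto\bar t^{-1}$, the induced involution of line bundles is $L\mapsto\sigma^*\overline{L}^{\vee}$, which sends $\mathcal{O}(1)$ to $\mathcal{O}(-1)$;
for $t\mapsto\bar t$, every antilinear lift of $\sigma_{\mathbb{P}^1}$ to $\mathcal{O}(1)$ is $\lambda j$ with $(\lambda j)^2=-|\lambda|^2$, so $\mathcal{O}(1)$ is quaternionic, not real.
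This is the same $-1$ you meet later, and it contradicts your first half.

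Consequently, for an honest $C_2$-action on $Z'$, the induced map $[\sigma_{Z'}\times j]$ on $Z'\times^{\mathbb{G}_m}\mathcal{O}(1)^\times$ squares to the action of $-1\in\mathbb{G}_m$ on the fibre $Z'$. This is nontrivial in general. Your remedy (that $-1$ acts trivially on $[Z'/\mathbb{G}_m]$) holds only for your torsor-$Z'$, whose quotient is $Z$, whereas (ii) asks for a $C_2$-action on $Z'$ itself.

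Test case: $Z'=\mathbb{C}^2$ with the weight-one action and $\omega=dx\wedge dy$ gives $Z=\mathrm{Tot}(\mathcal{O}(1)^{\oplus 2})$, the twistor space of $\mathbb{H}$. Every fibrewise-linear real structure on it covering $\sigma_{\mathbb{P}^1}$ has the form $j\otimes A$ with $A$ antilinear and $A^2=-1$. That is a quaternionic structure on $Z'$, not a $C_2$-action. To make (ii)$\Rightarrow$(i) correct you must do one of the following:
require $\sigma_{Z'}(tz)=\bar t\,\sigma_{Z'}(z)$ together with $\sigma_{Z'}^2=(-1)\cdot(-)$; or
assume $-1$ acts trivially on $Z'$, so that $Z\simeq Z'\times^{\mathbb{G}_m}\mathcal{O}(2)^\times$ (with $\mathbb{G}_m/\{\pm 1\}\cong\mathbb{G}_m$ acting) and the real structure of $\mathcal{O}(2)$ can be used.
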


\begin{proof}
    Let $(\pi: Z \to \bfP)$ be a derived $\mathbb{G}_m$-stack, and let $\mathcal{O}_{\mathbb{P}^1}(1)^\times \to \mathbb{P}^1$ denote the associated $\mathbb{G}_m$-torsor. Define the associated bundle $
Z_{\mathcal{O}(1)} := Z \times_{\mathbb{G}_m} \mathcal{O}_{\mathbb{P}^1}(1)^\times.$
Let
$
\Sect_{\mathbb{P}^1}(Z_{\mathcal{O}(1)})
:=
{\mathrm{id}_{\mathbb{P}^1}}
\times_{\Map(\mathbb{P}^1,\mathbb{P}^1)}
\Map(\mathbb{P}^1, Z_{\mathcal{O}(1)})$
denote the derived stack of sections of the projection $Z_{\mathcal{O}(1)} \to \mathbb{P}^1$.

The $\mathbb{G}_m$-action on $Z$ induces a fiberwise $\mathbb{G}_m$-action on the associated bundle $Z_{\mathcal{O}(1)} \to \mathbb{P}^1$, and hence a natural $\mathbb{G}_m$-action on $\Sect_{\mathbb{P}^1}(Z_{\mathcal{O}(1)})$ by Proposition \ref{ActAnMaps}.

Moreover, if $\pi: Z \to \mathbb{P}^1$ is equipped with a relative $\EuScript{O}_{\mathbb{P}^1}(2)$-twisted $n$-shifted symplectic structure of $\mathbb{G}_m$-weight $2$, then the induced structure on $\Sect_{\mathbb{P}^1}(Z_{\mathcal{O}(1)})$ is naturally compatible with this $\mathbb{G}_m$-action, with weight $2$ arising from the identification $
\EuScript{O}_{\mathbb{P}^1}(2) \simeq \mathcal{O}_{\mathbb{P}^1}(1)^{\otimes 2}.$
Letting $q:\mathbb{P}_{\mathbb{C}}^1\times\mathsf{B}\mathbb{G}_m\to \mathsf{B}\mathbb{G}_m$ be the natural map classifying the canonical line bundle $\mathcal{O}_{\mathbb{P}^1}(1)\boxtimes \mathcal{O}(-1),$ note there is a cartesian diagram
\[
\begin{tikzcd}
    Z_{\mathcal{O}(1)}\arrow[r]\arrow[d] & Z_{\mathcal{O}(1)}/\mathsf{B}\mathbb{G}_m\arrow[d,"\alpha"] \arrow[r] & \mathbb{P}_{\mathbb{C}}^1\times Z'/\mathbb{G}_m\arrow[d,"\beta"]
    \\
    \mathbb{P}_{\mathbb{C}}^1\arrow[r] & \mathbb{P}_{\mathbb{C}}^1\times\mathsf{B}\mathbb{G}_n\arrow[r,"p_X\times q"] & \mathbb{P}_{\mathbb{C}}^1\times\mathsf{B}\mathbb{G}_m.
\end{tikzcd}
\]
It induces an equivalence,
\begin{eqnarray*}
\alpha_*\big(\wedge^2\mathbb{L}_{(Z_{\mathcal{O}(1)}/\mathbb{G}_m)/\mathbb{P}_{\mathbb{C}}^1\times\mathsf{B}\mathbb{G}_m}\big)&=&\mathbb{R}\Gamma_{\mathbb{P}_{\mathbb{C}}^1\times\mathsf{B}\mathbb{G}_m}\big(Z_{\mathcal{O}(1)}/\mathbb{G}_m,\wedge^2\mathbb{L}_{(Z_{\mathcal{O}(1)}/\mathbb{G}_m)/\mathbb{P}_{\mathbb{C}}^1\times\mathsf{B}\mathbb{G}_m}\big)
\\
&
\simeq& (p_{\mathbb{P}^1}\times q)^*\mathbb{R}\Gamma_{\mathbb{P}_{\mathbb{C}}^1\times\mathsf{B}\mathbb{G}_m}\big(\mathbb{P}^1\times Z'/\mathbb{G}_m,\wedge^2\mathbb{L}_{\mathbb{P}^1\times Z'/\mathbb{G}_m/\mathbb{P}_{\mathbb{C}}^1\times\mathsf{B}\mathbb{G}_m}\big).
\end{eqnarray*}
 We have a commutative diagram,
\[
\begin{tikzcd}
\widetilde{\mathcal{A}}_{\mathbb{P}_{\mathbb{C}}^1\times\mathsf{B}\mathbb{G}_m}(Z_{\mathcal{O}(1)}/\mathsf{B}\mathbb{G}_m)\arrow[d] \arrow[r,"\sim"] & (p_{\mathbb{P}^1}\times q)^*\big(\widetilde{\mathcal{A}}_{\mathbb{P}_{\mathbb{C}}^1\times\mathsf{B}\mathbb{G}_m}^{2,\mathrm{cl}}(\mathbb{P}^1\times Z'/\mathbb{G}_m)\big)\arrow[d]
 \\
\mathbb{R}\Gamma_{\mathbb{P}_{\mathbb{C}}^1\times\mathsf{B}\mathbb{G}_m}\big(Z_{\mathcal{O}(1)}/\mathbb{G}_m,\wedge^2\mathbb{L}_{(Z_{\mathcal{O}(1)}/\mathbb{G}_m)/\mathbb{P}_{\mathbb{C}}^1\times\mathsf{B}\mathbb{G}_m}\big)\arrow[r] & (p_{\mathbb{P}^1}\times q)^*\mathbb{R}\Gamma_{\mathbb{P}_{\mathbb{C}}^1\times\mathsf{B}\mathbb{G}_m}\big(\mathbb{P}^1\times Z'/\mathbb{G}_m,\wedge^2\mathbb{L}_{\mathbb{P}^1\times Z'/\mathbb{G}_m/\mathbb{P}_{\mathbb{C}}^1\times\mathsf{B}\mathbb{G}_m}\big).
\end{tikzcd}
\]
The vertical maps are simply the canonical maps sending a closed $2$-form to its underlying $2$-form.
Since the diagram is cartesian, via base change (see e.g, \cite[Thm.~2.7]{calaque2024shifted}), comparing the weight $2$-component of
$\widetilde{\mathcal{A}}_{\mathbb{P}_{\mathbb{C}}^1}^{2,\mathrm{cl}}(Z_{\mathcal{O}(1)})$, we see it is given by a twist of $\widetilde{\mathcal{A}}^{2,\mathrm{cl}}(Z')(2)$ by powers of the tatuological line bundle and we conclude via \eqref{TwistingMap}, by noticing there is a homotopy pullback diagram
\[
\begin{tikzcd} Z \ar[r] \ar[d] & Z' \ar[d] \\ \mathbb{P}^1 \ar[r] &\mathsf{B}\mathbb{G}_m, \end{tikzcd}
\]
where the right vertical map is the projection from the $\mathbb{G}_m$-stack $Z'$ to the classifying stack and the bottom map classifies $\mathcal{O}(1),$ and the $n$-shifted symplectic form on $Z'$ has weight $2.$

\end{proof}

Motivated by classical theorems of 
Hitchin--Karlhede--Lindstr\"om--Roček \cite{HKLR87}, we now introduce a geometric condition which characterizes when a pre-twistor family is a twistor family of hyperk\"ahler type.

To this end, recall Weil restriction is defined by the universal property,
$$\mathrm{Maps}_{\mathsf{dSt}_{\mathbb{P}_{\mathbb{C}}^1}}(T,\underline{\mathrm{Res}}(Z/\mathbb{P}^1)\big)\simeq \mathrm{Maps}(T\times \mathbb{P}^1,Z),$$
and $\Phi$ is defined by the universal evaluation map (also stated via \eqref{evaldiagram} below):
\[
\begin{tikzcd} \mathbb{P}^1\times \underline{\mathrm{Res}}(Z/\mathbb{P}^1)\arrow[d,"\mathrm{pr}"]\arrow[r,"\mathrm{ev}"] & Z\arrow[d,"\eta"] \\ \underline{\mathrm{Res}}(Z/\mathbb{P}^1)\arrow[r] & \mathbb{P}^1. \end{tikzcd}
\]

Derived twistor families are cocartesian sections, of the $C_2$-Grothendeick construction (as in Subsection \ref{ssec: C2Diags}), by applying \eqref{C2GrothCons}, classifying
$$\big( \mathsf{dSt}_{/\bfP}\big)_{/ \mathsf{Symp}^{\eta^*\EuScript{O}(2)}(-/\mathbb{P}^1, n)}[C_2],$$
by imposing geometric constraints on the homotopy fixed points of the analytic Weil restriction. Informally, there is an equivalence $\mathcal{X}\times \mathbb{P}^1\to Z$ of $C^{\infty}$-differential graded manifolds (cf. \cite{BKSY2025}), stating that $Z$ is constructed from real sections, smoothly over $\mathbb{P}^1$ and that the holomorphic structures are allowed to vary nontrivially along 
$\mathbb{P}^1.$\footnote{This is an analogue of the classical twistor space reconstruction of a hyperk\"ahler manifold.}

\begin{definition}
    \label{DerivedTwistor2}
Let $\eta:(Z,\tau)\to (\mathbb{P}^1,\sigma_{\mathbb{P}}^1)$ be an $n$-shifted pre-twistor family with relative symplectic form $\omega.$ It is a  \emph{twistor family of hk-type} if there exists a (union of) connected components $\mathcal{X}\subset t_0\underline{\mathrm{Res}}(Z/\mathbb{P}^1)^{hC_2},$ of the fixed points sections such that:  Zariski locally there exists an open substack $U\subset t_0(Z)$ admitting a good moduli space $p:U\to \mathrm{Tw}(M)$ for some \emph{underlying hyperkahler manifold} $M$, such that the canonical evaluation map  
$\Phi:\mathcal{X}\times\mathbb{P}^1\xrightarrow{\sim} U,$
is a $C^{\infty}$-equivalence compatible with $\mathrm{Tw}(M)\simeq M\times \mathbb{P}^1.$
\end{definition}
This definition is a first approximation to the existence of derived substacks of real sections\footnote{What could tentatively be called \emph{derived twistor lines.}}.

Thus, 
$\mathcal{X}$ can be viewed as spanned by those sections $s\in \underline{\mathrm{Res}}(Z/\mathbb{P}^1)^{hC_2}$, for which the following holds: consider the associated truncation $t_0(s):\mathbb{P}^1\to t_0(Z)$, then there exists $\sigma':\mathbb{P}^1\to U$ and a composite section $\tau_{\sigma'}:=p\circ \sigma \in \mathrm{Sec}(\mathbb{P}^1,\mathrm{Tw}(M)),$ which is a horizontal twistor line in the standard sense \cite{HKLR87}, making the diagram commute:
\begin{equation}
    \label{eqn: Twistor Diag}
\begin{tikzcd}
& U\arrow[d]\arrow[r,"p"] & \mathrm{Tw}(M)
\\
\mathbb{P}^1\arrow[ur,dotted,"\sigma'"]\arrow[r,"t_0(\sigma)"] & t_0(Z) &
\end{tikzcd}
\end{equation}
Note from Proposition \ref{WeightVersion}, that $\underline{\mathrm{Res}}(Z/\mathbb{P}^1)^{hC_2}$ is naturally endowed with a real torus action.

\begin{remark}
In our intended application, given in Sect. \ref{sec: Derived NAH and KW}, we will have that $Z$ is the (analytic) derived Deligne-stack $\RPerf_{\Del}(X)^{\an}$ and $U=t_0(\RPerf_{\Del}(X)^{\an})^{ss}$ is the semistable locus of its classical truncation. 
\end{remark}
The first interesting example of a pretwistor space in this level of generality is associated with the relative derived analytic stack $\AnPerf(\mathrm{Tw}(M)/\bfP)\to \bfP$
of perfect complexes along fibers of the twistor space $\mathrm{Tw}(M)\to \bfP$ of a classical hyperkahler manifold $M$ e.g. a smooth projective $K3$ surface for which the fiberwise derived moduli stack is the analytification of the $0$-shifted symplectic structure given in \cite{STV}. Details and further examples will be given elsewhere. 
\section{Existence of shifted twistor structures}
In this section we prove existence results for $n$-shifted twistor familes of hyperk\"ahler type, as introduced in Definition \ref{DerivedTwistor}. 
It is mainly achieved by expoiting the familiar existence theorems from (relative) derived symplectic geometry \cite{PTVV13,calaque2024shifted,CalaqueHaugsengScheimbauer2025}, adapted to the $\mathsf{B}C_2$-equivariant setting of derived (analytic) stacks relative to $\mathbf{P}^1.$

\subsection{Transgression with derived hyperk\"ahler structures}
\label{ssec: TransgressionHKStructures}
In this subsection, we prove an analogue of \cite[Thm.~2.5]{PTVV13}, for derived twistor families of hyperk\"ahler type.
\begin{remark}[Notation]
Let $Z\to S$ be an $n$-shifted $S$-prestack and let $X$ be an $S$-prestack with relative $d$-orientation $[X].$ We follow \cite[\S.3.2]{CalaqueHaugsengScheimbauer2025} and denote the transgression map to the mapping stack $\Map_{/S}(X,Z)$ by 
\begin{equation}
    \label{AKSZ}
\mathrm{aksz}_{(X,Z)}(-):=\int_{X/S}\mathrm{ev}^*(-):\EuScript{A}_{S}^{2,\mathrm{cl}}(Z,n)\to \EuScript{A}_{S}^{2,\mathrm{cl}}\big(\Map_{/S}(X,Z),n-d).
\end{equation}
\end{remark}
We now state the AKSZ-twistor theorem for constructing twistor structures on mapping stacks in either of the algebraic and analytic settings.
    \begin{theorem}
 \label{MainTheorem2Body}
Fix $n\in\mathbb{Z}$ and an integer $d\geq 0.$ Let $X$ be a proper geometric derived algebraic stack with a relative $d$-orientation $[X].$ 
\begin{enumerate}
\item Let $Z\to \mathbb{P}_{\mathbb{C}}^1$ be an $n$-shifted algebraic twistor family. Assume that
 \begin{itemize}
     \item Z is Tannakian,
     \item The $\infty$-category of quasi-coherent sheaves on $Z$ is saturated, $\mathsf{QCoh}(Z)\simeq\mathsf{Ind}\big(\mathsf{Perf}(Z)\big),$
     \item The mapping stack $\eta_M:\Map_{/\mathbb{P}_{\mathbb{C}}^1}(X\times\mathbb{P}_{\mathbb{C}}^1,Z) \to\mathbb{P}_{\mathbb{C}}^1$ is geometric.
 \end{itemize}
 Then, its analytification
 $$\eta_{\mathrm{Map}}^{\an}:\Map_{/\mathbb{P}_{\mathbb{C}}^1}(X\times \mathbb{P}_{\mathbb{C}}^1,Z)^{\an}\to \bfP,$$
 is canonically endowed with the structure of an $(n-d)$-shifted analytic pretwistor structure of hyperkahler-type.
 
\item Let $\eta_Z: Z \to \bfP$ be an $n$-shifted derived analytic pretwistor family such that 
$\eta_{\mathrm{AnMap}}:\AnMap_{/\bfP}(X^{\an}\times \bfP,Z)\to \bfP$ is locally geometric. 
Then it is canonically an $(n-d)$-shifted analytic pretwistor family.
\end{enumerate}
Moreover, if $Z\to\mathbb{P}_{\mathbb{C}}^1$ is a derived algebraic stack satisfying (1). Then the canonical map of derived analytic stacks
$$\Map_{/\mathbb{P}_{\mathbb{C}}^1}(X\times\mathbb{P}_{\mathbb{C}}^1,Z)^{\an}\rightarrow\AnMap_{/\bfP}(X^{\an}\times \bfP,Z^{\an}),$$
 is an equivalence of $(n-d)$-shifted twistor structures.
\end{theorem}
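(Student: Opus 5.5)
The plan is to prove (2) directly by an analytic, $C_2$-equivariant version of the relative AKSZ/PTVV transgression of \cite[Thm.~2.35]{calaque2024shifted}. Then (1) and the final comparison statement follow from the equivalence \eqref{eqn: Map to AnMap} of \cite[Thm.~6.14]{HolsteinPorta2025}.

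\textbf{The form and non-degeneracy.} Write $M:=\AnMap_{/\bfP}(X^{\an}\times\bfP,Z)$, with evaluation $\mathrm{ev}:X^{\an}\times M\to Z$ and projection $\pi$ over $\bfP$. First transport the relative $d$-orientation $[X]$ to $X^{\an}\times\bfP\to\bfP$ using Proposition \ref{AnDeRhamAlgs}, which identifies $\mathrm{DR}(X)^{\an}$ with $\mathrm{DR}^{\an}(X^{\an})$. Define $\omega_M:=\mathrm{aksz}_{(X,Z)}(\omega_Z)=\int_{X/\bfP}\mathrm{ev}^*\omega_Z$ as in \eqref{AKSZ}. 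Because $\mathrm{ev}$ is a map over $\bfP$, we have $\mathrm{ev}^*\eta_Z^*\EuScript{O}(2)\simeq\pi^*\eta_M^*\EuScript{O}(2)$, and the projection formula for $\pi_+$ shows $\omega_M$ is $\EuScript{O}(2)$-twisted of degree $n-d$. For non-degeneracy, use Proposition \ref{AnMapCotComplex}, which gives $\mathbb{L}^{\an}_{M/\bfP}\simeq\pi_+\mathrm{ev}^*\mathbb{L}^{\an}_{Z/\bfP}$. Relative Serre/orientation duality along the proper map $\pi$, coming from $[X]$, identifies $(\pi_+\mathcal{E})^\vee\simeq\pi_+(\mathcal{E}^\vee)[d]$. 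The map $\Theta_{\omega_M}$ then factors as $\pi_+\mathrm{ev}^*(\Theta_{\omega_Z})$ composed with this duality, so both pieces are equivalences. This reproduces the usual PTVV argument, with the $\EuScript{O}(2)$-twist carried along by the projection formula.

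\textbf{The real structure.} Proposition \ref{ActAnMaps} gives $M$ a $C_2$-action from the one on $Z$, together with the $C_2$-action on the source $X^{\an}\times\bfP$ coming from $X$ and the antipodal map. The hypothesis that $[X]$ is compatible with the $C_2$-action makes $\int_{X/\bfP}$ a $C_2$-equivariant map of spaces of forms. So $\mathrm{aksz}$ sends $\mathsf{Symp}^{\EuScript{O}(2)}(Z/\bfP;n)^{hC_2}$ to $\mathsf{Symp}^{\EuScript{O}(2)}(M/\bfP;n-d)^{hC_2}$. Here I would use Proposition \ref{HomotopyFixedPoints1} to reduce to $C_2$-equivariant affinoid charts, and the identification $\mathbb{L}^{\an}_{Z^{\mathrm{conj}}}\simeq(\mathbb{L}^{\an}_Z)^{\mathrm{conj}}$ of Proposition \ref{AnalyticDRConj} to see that $\sigma_M$ covers the antipode. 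By Corollary \ref{nShiftedPreTwistorGroth}, this is precisely an $(n-d)$-shifted pretwistor structure.

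\textbf{Part (1) and the comparison.} For (1), apply the algebraic relative AKSZ theorem to $\Map_{/\mathbb{P}^1_{\mathbb{C}}}(X\times\mathbb{P}^1_{\mathbb{C}},Z)$ and analytify, using Proposition \ref{AnDeRhamAlgs} to carry the closed form and its non-degeneracy across. The Tannakian and $\mathsf{QCoh}(Z)\simeq\mathsf{Ind}(\mathsf{Perf}(Z))$ hypotheses let \cite[Thm.~6.14]{HolsteinPorta2025} show that $C_{X,Z}$ is an equivalence. To prove it is an equivalence of pretwistor structures, check that $C_{X,Z}$ intertwines the two evaluation maps. Then $C_{X,Z}^*\int\mathrm{ev}^{\an*}\omega_Z^{\an}\simeq(\int\mathrm{ev}^*\omega_Z)^{\an}$, by naturality of integration along $X$ under analytification, which in turn comes from GAGA for $\pi_*$ on the proper $X$. $C_2$-equivariance follows from functoriality of $C_{X,Z}$ with respect to conjugation.

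\textbf{Main obstacle.} I expect the hardest step to be building the analytic integration map $\int_{X/\bfP}$ on de Rham algebras, as a map of graded mixed complexes compatible with the twist and with $C_2$. It requires knowing that $\pi_+$ commutes with $\mathrm{DR}$ and with base change, which is the analytic input replacing \cite[\S2]{PTVV13}. The approach I would try first is to reduce, via universal GAGA for $X$, to the algebraic integration map.
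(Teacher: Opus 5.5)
Your proposal follows essentially the same route as the paper. You set $\omega_M=\int_{X/\bfP}\mathrm{ev}^*\omega_Z$ with the $\EuScript{O}(2)$-twist carried by the projection formula, get non-degeneracy from $\mathbb{L}^{\an}_{M/\bfP}\simeq\pi_+\mathrm{ev}^*\mathbb{L}^{\an}_{Z/\bfP}$ (Proposition~\ref{AnMapCotComplex}) plus orientation duality, get reality by transgressing the invariant form, and handle the algebraic case and the comparison via analytification and \cite{HolsteinPorta2025}. Your use of the $C_2$-compatibility of $[X]$ is what actually justifies commuting $\int_{[X]}$ with conjugation; the paper attributes this to $\mathbb{C}$-linearity alone.

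One bookkeeping slip needs fixing. Proposition~\ref{AnMapCotComplex} only holds if $\pi_+$ is the left adjoint of $\pi^*$, and with that convention orientation duality reads $(\pi_+\mathcal{E})^\vee\simeq\pi_+(\mathcal{E}^\vee)[-d]$, not $[d]$. Only with this sign does $\Theta_{\omega_M}$ land in degree $n-d$; as written you would get $n+d$.
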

\begin{proof}
The relative $\EuScript{O}_{\mathbb{P}^1}(2)$-twisted $2$-form of degree $n$ along $\eta_Z$, denoted $\omega_Z^{rel}$ has underlying $2$-form a section 
$\omega_Z\in \Gamma\big(Z,\wedge^2\mathbb{T}_{Z/\mathbb{P}^1}^{\vee}\otimes \eta_Z^*\EuScript{O}_{\mathbb{P}^1}(2)\big)[n].$ It is non-degenerate, thus induces a quasi-isomorphism,
$$\mathbb{T}_{Z/\mathbb{P}^1}\xrightarrow{\simeq}\mathbb{T}_{Z/\mathbb{P}^1}^{\vee}[n]\otimes \eta_Z^*\EuScript{O}_{\mathbb{P}^1}(2).$$
The compatibility with AKSZ-construction in relative algebraic setting follows from \cite[Prop.~3.2.1]{CalaqueHaugsengScheimbauer2025}. We will prove the transgressed form, fiber-wise nondegenerate, is compatible with the induced real structures and is preserved under analyitifcation. In other words, we prove that symplectic $\mathrm{Gal}$-action is compatible with algebraic AKSZ-procedure  \eqref{AKSZ}, which analytifies to our desired analytic result using the canonical morphism $(\mathbb{L}_{Z/\mathbb{P}_{\mathbb{C}}^1})^{\an}\to \mathbb{L}_{Z^{\an}/\bfP}^{\an}$ and Proposition \ref{AnDeRhamAlgs}.
To this end, note the diagram
\begin{equation}
    \label{evaldiagram}
\begin{tikzcd}
X\times \Map_{/\mathbb{P}^1}(X,Z)\arrow[d,"\mathrm{pr}_{M}"]\arrow[r,"\mathrm{ev}"] & Z\arrow[d,"\eta_Z"]
\\
\Map_{/\mathbb{P}^1}(X,Z)\arrow[r,"\eta_{M}"] & \mathbb{P}^1_{\mathbb{C}}
\end{tikzcd}
\end{equation}
is $C_2$-equivariant by the construction of the real structure on the relative mapping stack, in Proposition \ref{ActAnMaps},  which holds in both the algebraic and analytic category. We have that 
$$\mathrm{ev}^*\eta_Z^*\EuScript{O}_{\mathbb{P}^1}(2)\simeq (\eta_M\circ \mathrm{pr}_M)^*\EuScript{O}_{\mathbb{P}^1}(2)\simeq \mathrm{pr}_M^*\eta_M^*\EuScript{O}_{\mathbb{P}^1}(2),$$
thus a twisted closed $2$-form of degree $n$, given by the section
\begin{equation}
    \label{EvOmega}
\underline{\omega}_{\mathrm{ev}}:=\underline{\mathrm{ev}^*\omega_Z}\in \Gamma\big(X\times \Map_{/\mathbb{P}^1}(X,Z)\wedge^2\mathbb{T}_{X\times \Map_{/\mathbb{P}^1}(X,Z)/\mathbb{P}^1}^{\vee}\otimes \mathrm{pr}_M^*\eta_M^*\EuScript{O}_{\mathbb{P}^1}(2)\big)[n].
\end{equation}
Thus, 
$$\mathrm{ev}^*\Theta_{\omega_Z}:\mathrm{ev}^*\mathbb{T}_{Z/\bfP}^{\an}\xrightarrow{\simeq}\mathrm{ev}^*\mathbb{L}_{Z/\bfP}^{\an}[n]\otimes (\mathrm{ev}^*\circ \eta^*)\EuScript{O}(2)\simeq \mathrm{ev}^*\mathbb{L}_{Z/\bfP}^{\an}[n]\otimes (\mathrm{pr}_M^*\circ \eta_M^*)\EuScript{O}(2).$$
Applying $\mathrm{pr}_{M,+}$ we have
$$\mathrm{pr}_{M,+}\mathrm{ev}^*\mathbb{T}_{Z/\bfP}^{\an}\xrightarrow{\simeq}\mathrm{pr}_{M,+}\big(\mathrm{ev}^*\mathbb{L}_{Z/\bfP}^{\an}[n]\otimes \mathrm{pr}_M^*\eta_M^*\EuScript{O}(2)\big)\simeq (\mathrm{pr}_{M,+}\mathrm{ev}^*\mathbb{L}_{Z/\bfP}^{\an}[n]\big)\otimes \eta_M^*\EuScript{O}(2),$$
and using the orientation, we obtain the map $\mathrm{pr}_+\mathrm{ev}^*\mathbb{T}_{Z/\bfP}^{\an}\to (\mathrm{pr}_+\mathrm{ev}^*\mathbb{T}_{Z/\bfP}^{\an})^{\vee}[n-d]\otimes \eta_M^*\EuScript{O}(2).$
Thus, by base-change induced from evaluation $\mathrm{ev}^*\mathbb{L}_{Z/\bfP}^{\an}\to \mathbb{L}_{X\times_{\bfP}\Map(X,Z)/\bfP}^{\an}\simeq \mathrm{pr}_M^*\big(\mathrm{pr}_{M,+}\mathrm{ev}^*\mathbb{T}_{Z/\bfP}^{\an}\big)^{\vee},$ corresponding to the counit $\mathrm{pr}^*\mathrm{pr}_+\mathrm{ev}^*\mathbb{T}_{Z/\bfP}^{\an}\to \mathrm{ev}^*\mathbb{T}_{Z/\bfP}^{\an}.$ We thus have a map
$$\wedge^2\mathrm{pr}_+\mathrm{ev}^*\mathbb{T}_{Z/\bfP}^{\an}\to \eta_M^*\EuScript{O}(2)[n-d],$$
which by the computation of the analytic cotangent complex in Proposition 
\ref{AnMapCotComplex}, gives the desired equivalence,
$$\Theta_{\omega_M}:\mathbb{T}_{\Map_{/\bfP}(X,Z)/\bfP}^{\an}\xrightarrow{\simeq}\mathbb{L}_{\Map_{\bfP}(X,Z)/\bfP}^{\an}[n-d]\otimes\eta_M^*\EuScript{O}(2).$$
To observe the compatibility with real structures, since $Z\to\bfP$ has a symplectic $\mathrm{Gal}$-action, there is an invariant form $\omega_Z^{\mathrm{inv}}$ and a homotopy $\mathrm{oblv}_Z^{\mathrm{inv}}(\omega_Z^{\mathrm{inv}})\simeq\omega_Z.$
We need to show that there exists an invariant form $\omega_M^{\mathrm{inv}}$ on the mapping stack and a homotopy $\mathrm{oblv}_{M}^{\mathrm{inv}}(\omega_M^{\mathrm{inv}})\simeq \omega_M.$
For this, consider the diagram \eqref{EqInvForms} and the corresponding maps \eqref{AKSZ}, giving
\[
\begin{tikzcd}
\EuScript{A}_{C_2-inv}^{2,\mathrm{cl}}\big(Z/\bfP,n;\EuScript{O}(2)\big)\arrow[d,"\mathrm{obvl}_Z^{\mathrm{inv}}"] \arrow[r,"\mathrm{aksz}^{inv}"]& \EuScript{A}_{C_2-inv}^{2,\mathrm{cl}}\big(\Map(X,Z)/\bfP; n-d,\EuScript{O}(2)\big)\arrow[d,"\mathrm{oblv}_M^{inv}"]
\\
\EuScript{A}_{\bfP}^{2,\mathrm{cl}}(Z,n;\EuScript{O}(2))\arrow[r,"\mathrm{aks}_{Z}"]& \EuScript{A}_{\bfP}^{2,cl}\big(\Map_{/\bfP}(X,Z)/\bfP,n-d;\EuScript{O}(2)\big).
\end{tikzcd}
\]
Setting $\omega_M^{\mathrm{inv}}:=\mathrm{aksz}_{(X,Z)}^{\mathrm{inv}}(\omega_Z^{\mathrm{inv}})$, we need to show the diagram commutes. One has 
$\mathrm{oblv}_M^{\mathrm{inv}}\omega_M^{\mathrm{inv}}\simeq \int_{X/\bfP}\mathrm{ev}^*\omega_Z,$ and by applying \eqref{AKSZ} to $\omega,$ we see
$$\mathrm{aksz}_{(X,Z)}(\mathrm{oblv}_Z^{\mathrm{inv}}\omega_Z^{\mathrm{inv}})\simeq \int_{X/\bfP}\mathrm{ev}^*\mathrm{oblv}_Z^{\mathrm{inv}}(\omega_Z^{\mathrm{inv}})\simeq \int_{X/\bfP}\mathrm{ev}^*\omega_Z=\omega_M,$$
as required. 
We now turn to the twistor assumption. First, observe if $\sigma_Z:Z\to Z$ is the real structure covering $\sigma_{\mathbb{P}^1}$ i.e.
$\eta_Z\circ \sigma_{\mathbb{P}^1}\simeq \sigma_{\mathbb{P}^1}\circ \eta_Z,$ where $\sigma_Z$ is anti-holomorphic write $\sigma_Z^*(\omega_Z)\simeq \overline{\omega_Z},$ for the conjugate $\mathbb{C}$-linear form.
Considering the diagram
\[
\begin{tikzcd}
    X\times \Map_{/\bfP}(X,Z)\arrow[d,"\mathrm{id}_X\times \sigma_M"]\arrow[r,"\mathrm{ev}"] & Z\arrow[d,"\sigma_Z"]
    \\
    X\times \Map_{/\bfP}(X,Z)\arrow[r,"\mathrm{ev}"]& Z.
\end{tikzcd}
\]
Then, we have that $(\mathrm{id}_X\times \sigma_M)^*\circ \mathrm{ev}^*\simeq \mathrm{ev}^*\circ \sigma_Z^*,$ and since $\sigma_Z^*(\omega_Z)\simeq \overline{\omega_Z},$ we obtain that 
$(\mathrm{id}_X\times \sigma_M)^*(\mathrm{ev}^*\omega_Z)\simeq\overline{\mathrm{ev}^*\omega_Z},$
in the complex of $\mathrm{pr}_M^*\eta_M^*\EuScript{O}_{\mathbb{P}^1}(2)$-twisted relative closed $2$-forms. Since $\int_{[X]}$ is $\mathbb{C}$-linear
$\sigma_M^*(\omega_M)\simeq \sigma_M^*\big(\int_{[X]}\mathrm{ev}^*\omega_Z\big)\simeq \int_{[X]}(\mathrm{id}_X\times \sigma_M)^*\mathrm{ev}^*\omega_Z\simeq\overline{\omega_M}.$
\end{proof}

 \subsection{Lagrangian intersections}
We now establish the twistor-family version of the lagrangian intersection theorem \cite[Thm.~2.9]{PTVV13}.
Consider a morphism of relative analytic prestacks with a Lagrangian structure $f:L\to Z$. It is said to have a \emph{real structure} if the following commutes
\[\begin{tikzcd} L \arrow[r,"f"] \arrow[d,"\rho_L"'] & Z \arrow[d,"\rho_Z"] \\ \sigma^*L \arrow[r,"\sigma^*f"] & \sigma^*Z . \end{tikzcd}\]
Following the standard Definition \ref{LagStructure} of a Lagrangian structure on a morphism of prestacks, it is convenient to enunciate the current setting with the following similar notion.
\begin{definition}
\label{defn:HyperLagStructure}
Let $\eta_Z:Z\to \mathbb{P}_{\mathbb{C}}^1$ be an $n$-shifted twistor family of hyperkahler-type. Consider a morphism $f:L\to Z$ of relative analytic prestacks. A \emph{real Lagrangian structure on $f$} is the datum of a compatible real structure $(L,\sigma_L)$ on $L$, and a $\sigma$-equivariant twisted isotropic structure
$\gamma: 0\sim f^*\omega,$
in $\mathcal{A}^{2,cl}(L/\mathbb{P}^1,n)^{\sigma},$ such that 
$$\Theta_{\gamma}:\mathbb{T}_{L/\mathbb{P}^1}^{\an}\xrightarrow{\simeq} \mathbb{L}_{L/Z}^{\an}[n-1]\otimes (f^*\circ\eta_Z^*)\EuScript{O}_{\bfP}(2),$$
is a quasi-isomorphism.
Dropping the real structure, we simply call this a \emph{Lagrangian structure}.
\end{definition}
We are mainly interested in Lagrangian structures, as a condition more closely related with classical hyper-Lagrangian condition on complex submanifolds in a K\"ahler manifold \cite{leung2007hyper}.

\begin{proposition}
\label{LagSectProp}
    Let $\eta:Z\to \mathbb{P}_{\mathbb{C}}^1$ be an $n$-shifted twistor family of hyperkahler type. 
    \begin{itemize}
        \item[1.] The derived stack of sections $\Sect(Z/\mathbb{P}_{\mathbb{C}}^1)$ is canonically $(n-1)$-shifted symplectic.
        \item[2.] Let $f:L\to Z$ be equipped with a relative Lagrangian structure (in the sense of Defn.\ref{defn:HyperLagStructure}). Then the induced map 
    \begin{equation}
        \label{RelLagSects}
f_*:\Sect(L/\mathbb{P}_{\mathbb{C}}^1)\to \Sect(Z/\mathbb{P}_{\mathbb{C}}^1),
\end{equation}
has a Lagrangian structure.
\item[3.] The homotopy-fixed points $\Sect(Z/\mathbb{P}_{\mathbb{C}}^1)^{hC_2}$ inherit the symplectic structure and preserve the relative Lagrangian structure on \eqref{RelLagSects}.
\end{itemize}
\end{proposition}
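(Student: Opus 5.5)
The plan is to deduce all three parts from the relative AKSZ transgression, Theorem \ref{MainTheorem2Body}, applied with source $X=\mathbb{P}^1$ over the base $\mathbb{P}^1$. In other words, we view $\Sect(Z/\mathbb{P}^1)$ as the Weil restriction $\underline{\mathrm{Res}}_{\mathbb{P}^1/\mathrm{pt}}(Z)$, i.e. the mapping stack of sections over the point. The key input is an $\EuScript{O}(2)$-twisted orientation on $\mathbb{P}^1\to\mathrm{pt}$. Serre duality $K_{\mathbb{P}^1}\simeq\EuScript{O}(-2)$ gives a canonical map
\[
\Gamma(\mathbb{P}^1,\EuScript{O}(-2)\otimes\EuScript{O}(2)\otimes K_{\mathbb{P}^1})\simeq H^1(\mathbb{P}^1,K_{\mathbb{P}^1})[-1]\to\mathbb{C}[-1].
\]
This is a $1$-orientation in the twisted sense of \cite[Def.~2.17]{calaque2024shifted}, once we pair the $\eta^*\EuScript{O}(2)$-twist of $\omega$ against $K_{\mathbb{P}^1}$. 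The twisted AKSZ statement \cite[Thm.~2.35]{calaque2024shifted} (equivalently, the proof of Theorem \ref{MainTheorem2Body} with $\int_{\mathbb{P}^1}$ in place of $\int_{X/\mathbb{P}^1}$) then transgresses the $\EuScript{O}(2)$-twisted relative $n$-shifted form to an untwisted $(n-1)$-shifted closed $2$-form $\int_{\mathbb{P}^1}\mathrm{ev}^*\omega$ on $\Sect(Z/\mathbb{P}^1)$.

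Non-degeneracy follows the argument in Theorem \ref{MainTheorem2Body}. The tangent complex at a section $s$ is $\Gamma(\mathbb{P}^1,s^*\mathbb{T}_{Z/\mathbb{P}^1})$. The form gives
\[
s^*\mathbb{T}_{Z/\mathbb{P}^1}\simeq s^*\mathbb{L}_{Z/\mathbb{P}^1}[n]\otimes\EuScript{O}(2)\simeq (s^*\mathbb{T}_{Z/\mathbb{P}^1})^\vee\otimes K_{\mathbb{P}^1}^{-1}[n].
\]
Serre duality on $\mathbb{P}^1$ then yields $\Gamma(s^*\mathbb{T})\simeq\Gamma(s^*\mathbb{T})^\vee[n-1]$. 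The relevant cotangent complex is the one computed in Proposition \ref{AnMapCotComplex}. This proves part 1.

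For part 2, the plan is to transgress the isotropic structure $\gamma:0\sim f^*\omega$ to $\int_{\mathbb{P}^1}\mathrm{ev}^*\gamma$, which is a nullhomotopy of $f_*^*\int\mathrm{ev}^*\omega$. This is a relative version of Calaque's theorem \cite[Thm.~2.10]{Calaque15}, run with the twisted orientation. To check non-degeneracy, apply $\Gamma(\mathbb{P}^1,s^*(-))$ to the fiber sequence $\mathbb{T}_{L/Z}\to\mathbb{T}_{L/\mathbb{P}^1}\to f^*\mathbb{T}_{Z/\mathbb{P}^1}$. Then use the quasi-isomorphism $\Theta_\gamma$ of Definition \ref{defn:HyperLagStructure} together with Serre duality. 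This identifies $\mathbb{T}_{\Sect(L)}$ with $\mathbb{L}_{\Sect(L)/\Sect(Z)}[n-2]$, which is exactly the Lagrangian condition for the $(n-1)$-shifted target.

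For part 3, the real structures are handled as in the last step of Theorem \ref{MainTheorem2Body}. Proposition \ref{ActAnMaps} gives the $C_2$-action on $\Sect$, so the evaluation square is $C_2$-equivariant. The antipodal map $\sigma_{\mathbb{P}^1}$ is orientation-compatible on $H^1(K_{\mathbb{P}^1})$ after conjugation, so $\int_{\mathbb{P}^1}$ intertwines conjugation. The invariant forms $\omega^{\mathrm{inv}}$ and $\gamma^{\mathrm{inv}}$ therefore transgress to invariant data. Proposition \ref{HomotopyFixedPoints1} then descends the structure to $\Sect^{hC_2}$, since homotopy fixed points commute with the limits defining forms. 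Non-degeneracy on fixed points uses Proposition \ref{FixedPointCotangents}: $C_2$-coinvariants are exact in characteristic zero, so the equivalences $\Theta$ survive.

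The main obstacle I expect is the orientation/sign bookkeeping. One must verify that the $\EuScript{O}(2)$-twist matches $K_{\mathbb{P}^1}^{-1}$ canonically and $C_2$-equivariantly, with the antipodal map preserving the real form of $\int_{\mathbb{P}^1}$, rather than only up to sign. I would attempt it first by working on the weight description of Proposition \ref{WeightVersion}.
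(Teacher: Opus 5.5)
Your route is the paper's route. It uses twisted AKSZ \cite[Thm.~2.35]{calaque2024shifted} with a $K_{\mathbb{P}^1}$-orientation for part 1, transgression of the isotropic structure for part 2 (the paper simply cites \cite{GinRoz}), and passage to $hC_2$ for part 3. However, the central step fails. The paper's one-line justification (``$\EuScript{O}(2)\otimes K_{\mathbb{P}^1}$ is trivial'') has the same defect, so following it will not rescue you.

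\textbf{The twist is wrong.} A $K_{\mathbb{P}^1}$-orientation integrates $K_{\mathbb{P}^1}$-valued forms. In the twisted AKSZ theorem, an $\mathcal{L}$-orientation on $\mathbb{P}^1$ transgresses $\mathcal{L}$-twisted relative forms. So you would need $\EuScript{O}(2)\simeq K_{\mathbb{P}^1}$, not $\EuScript{O}(2)\otimes K_{\mathbb{P}^1}\simeq\EuScript{O}$.

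\begin{itemize}
\item After pulling $\omega$ back along the universal section, the pairing on $s^*\mathbb{T}_{Z/\mathbb{P}^1}$ lands in $\EuScript{O}(2)[n]$. Since $H^1(\mathbb{P}^1,\EuScript{O}(2))=0$, every map $\mathbb{R}\Gamma(\mathbb{P}^1,\EuScript{O}(2))\to\mathbb{C}[-1]$ is zero. The extra factor $\EuScript{O}(-2)$ in your displayed orientation has no source.
\item Your non-degeneracy check shows the same error. Serre duality is $\mathbb{R}\Gamma(E)^\vee\simeq\mathbb{R}\Gamma(E^\vee\otimes K_{\mathbb{P}^1})[1]$. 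So from $s^*\mathbb{T}\simeq(s^*\mathbb{T})^\vee\otimes K_{\mathbb{P}^1}^{-1}[n]$ you get
\[
\mathbb{R}\Gamma(s^*\mathbb{T})\simeq\mathbb{R}\Gamma\big((s^*\mathbb{T})^\vee\otimes K_{\mathbb{P}^1}^{-1}\big)[n],\qquad \mathbb{R}\Gamma(s^*\mathbb{T})^\vee[n-1]\simeq\mathbb{R}\Gamma\big((s^*\mathbb{T})^\vee\otimes K_{\mathbb{P}^1}\big)[n].
\]
These differ by a twist by $\EuScript{O}(4)$.
\item The same mismatch breaks your part 2 computation, because $\Theta_\gamma$ in Definition \ref{defn:HyperLagStructure} carries the same $\EuScript{O}(2)$.
\end{itemize}

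\textbf{Part 1 is false as stated.} This is not a bookkeeping issue.

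\begin{itemize}
\item Take the classical twistor space of a hyperk\"ahler $M$ of real dimension $4k>0$ (e.g.\ flat $\mathbb{H}^k$ or a K3 surface), with $n=0$. A twistor line $s$ has normal bundle $\EuScript{O}(1)^{\oplus 2k}$. So the tangent complex of $\Sect(Z/\mathbb{P}^1)$ at $s$ is $\mathbb{R}\Gamma(\mathbb{P}^1,\EuScript{O}(1)^{\oplus 2k})\simeq\mathbb{C}^{4k}$ in degree $0$.
\item The stack of sections is therefore smooth of dimension $4k$ there. A $(-1)$-shifted symplectic structure would instead force $H^0(\mathbb{T})\simeq H^{-1}(\mathbb{L})=0$.
\item What $\omega$ actually transgresses to is an $n$-shifted $2$-form on $\Sect(Z/\mathbb{P}^1)$ with values in $H^0(\mathbb{P}^1,\EuScript{O}(2))\simeq\mathbb{C}^3$, with no drop in shift. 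Classically this is the complexified hyperk\"ahler triple.
\item The $(n-1)$-shifted statement you are aiming for holds only for relative forms twisted by $K_{\mathbb{P}^1}$ itself, as in the $K_X$-twisted Higgs example of Subsection \ref{ShiftedHiggs}.
\end{itemize}

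Part 3 inherits this problem. In addition, the involution is antiholomorphic. Classically, $\Sect(Z/\mathbb{P}^1)^{hC_2}$ is the real manifold $M$ of twistor lines. It carries a triple of real forms, not a single complex shifted symplectic form.
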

\begin{proof}
(1.) The result follows from \cite[Thm.~2.35]{calaque2024shifted} using the twisted $K_{\mathbb{P}^1}$-orientation since the evaluation map $\Sect(Z/\mathbb{P}_{\mathbb{C}}^1)\times\mathbb{P}^1\to Z$, induces the required form since $\mathcal{O}_{\mathbb{P}_{\mathbb{C}}^1}(2)\otimes K_{\mathbb{P}_{\mathbb{C}}^1}$ is the trivial bundle.
(2.) Follows from \cite[Prop.~2.4 (ii)]{GinRoz}. (3.) is clear since homotopy fixed points are a right-adjoint and the $C_2$-action is symplectic.
\end{proof}
\begin{remark}
    A more general statement is true: suppose $X\to \mathbb{P}^1$ is a $d$-oriented $\mathcal{O}$-compact derived stack with relative $K_{\mathbb{P}^1}$-orientation $[X].$ Let $f:L\to Z$ be a morphism of relative prestacks over $\mathbb{P}^1$ with a Lagrangian structure. Then the induced morphism 
    $F:=f_*:\Map_{/\mathbb{P}^1}(X,L)\to \Map_{/\mathbb{P}^1}(X,Z),$
    has the structure of a Lagrangian morphism.
\end{remark}
We now prove a twistor-family analogue of the 
derived intersection theorem for shifted Lagrangian structures.

\begin{theorem}
\label{MainTheorem3Body}
Let $\eta:Z\to \bfP$ be an $n$-shifted analytic twistor family of hk-type, with real involution $\sigma_Z.$ Let $f_1:L_1\to Z$ and $f_2:L_2\to Z$ be morphisms of relative analytic stacks with perfect relative cotangent complexes. Suppose $f_1,f_2$ have a Lagrangian structure. Then their homotopy fiber product
$L:=L_1\times_{Z}^h L_2 \to \bfP,$
taken in $\mathsf{dAnSt}_{/\bfP},$ has the the structure of a $(n-1)$-shifted derived analytic twistor family of hk-type.
\end{theorem}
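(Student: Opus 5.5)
We follow the PTVV Lagrangian intersection argument \cite[Thm.~2.9]{PTVV13}, carried out relative to $\bfP$ with $\EuScript{O}(2)$-twisted forms and analytic cotangent complexes. There are three structures to produce on $L$: the twisted shifted symplectic form, the real structure, and the hk-type condition of Definition \ref{DerivedTwistor2}.

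\emph{Step 1: the form on $L$.} Let $p_i:L\to L_i$ be the projections and $g:=f_1p_1\simeq f_2p_2$. The Lagrangian structures give nullhomotopies $\gamma_i:0\sim f_i^*\omega$ in the space of $\EuScript{O}(2)$-twisted relative closed $2$-forms of degree $n$. Pulling both back to $L$ produces two nullhomotopies of $g^*\omega$. Their difference $p_1^*\gamma_1-p_2^*\gamma_2$ is then a self-homotopy of $0$, that is, a closed twisted relative $2$-form $\omega_L$ of degree $n-1$. Formally this is the composite square $L\to L_1\times_{\bfP}L_2\to A^{2,cl}_{\bfP}(\EuScript{O}(2)[n])$, exactly as in the algebraic case.

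\emph{Step 2: non-degeneracy.} By the base-change property in Proposition \ref{AnCotangentProperties}, applied to the homotopy pullback in $\mathsf{dAnSt}_{/\bfP}$, we have
\[
\mathbb{L}^{\an}_{L/L_2}\simeq p_1^*\mathbb{L}^{\an}_{L_1/Z},
\]
and similarly with the roles of $L_1$ and $L_2$ exchanged. Combined with the transitivity fiber sequences, this gives a cartesian square of tangent complexes
\[
\mathbb{T}^{\an}_{L/\bfP}\simeq p_1^*\mathbb{T}^{\an}_{L_1/\bfP}\times_{g^*\mathbb{T}^{\an}_{Z/\bfP}}p_2^*\mathbb{T}^{\an}_{L_2/\bfP}.
\]
Under this identification, $\Theta_{\omega_L}$ is assembled from three maps, each twisted by $g^*\eta^*\EuScript{O}(2)$:
\begin{itemize}
\item $\Theta_{\gamma_1}:\mathbb{T}^{\an}_{L_1/\bfP}\simeq\mathbb{L}^{\an}_{L_1/Z}[n-1]$,
\item $\Theta_{\gamma_2}:\mathbb{T}^{\an}_{L_2/\bfP}\simeq\mathbb{L}^{\an}_{L_2/Z}[n-1]$,
\item $\Theta_\omega:\mathbb{T}^{\an}_{Z/\bfP}\simeq\mathbb{L}^{\an}_{Z/\bfP}[n]$,
\end{itemize}
all of which are equivalences by Definition \ref{defn:HyperLagStructure}. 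The five-lemma diagram chase of PTVV, performed in $\mathsf{Perf}(L)$ and tensored with the line bundle, then identifies $\Theta_{\omega_L}$ with a map between fiber sequences that is an equivalence on the outer terms. Hence $\Theta_{\omega_L}$ is an equivalence. Tensoring with a line bundle is exact, so the twist causes no trouble.

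\emph{Step 3: the real structure.} Let $\sigma_{L_i}$ be the real structures and assume the $\gamma_i$ are $\sigma$-equivariant, i.e.\ real Lagrangian structures. The fiber product in $\mathsf{dAnSt}_{/\bfP}^{\mathrm{Gal}}$ is computed underlying, because the forgetful functor preserves limits. So $L$ acquires a $C_2$-action covering the antipodal map, and the difference construction of $\omega_L$ is $C_2$-equivariant. Equivalently, one can run Steps 1 and 2 in the space of $C_2$-invariant forms, using diagram \eqref{EqInvForms} and the limit description of Proposition \ref{HomotopyFixedPoints1}, since homotopy fixed points commute with the fiber product and with the difference of homotopies. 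This makes $(L,\omega_L,\sigma_L)$ an $(n-1)$-shifted pretwistor family.

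\emph{Step 4: the hk-type condition.} We expect this to be the main obstacle, since Definition \ref{DerivedTwistor2} concerns classical truncations and good moduli spaces, and these do not obviously commute with derived fiber products. The plan is as follows.
\begin{enumerate}
\item Use that $\underline{\mathrm{Res}}(-/\bfP)^{hC_2}$ is a right adjoint, so it preserves fiber products: $\underline{\mathrm{Res}}(L/\bfP)^{hC_2}$ is the fiber product of the section stacks of $L_1$, $L_2$ over that of $Z$. This is the Lagrangian intersection of the maps in Proposition \ref{LagSectProp}(2),(3).
\item Take $\mathcal{X}_L$ to be the union of the components of $t_0$ of this fiber product lying over $\mathcal{X}_Z$.
\item Take $U_L:=t_0(L)\times_{t_0(Z)}U$, which is open in $t_0(L)$.
\item Use the $C^\infty$-trivialization $\Phi_Z:\mathcal{X}_Z\times\bfP\simeq U$. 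Pulling it back, and assuming the evaluation maps of $L_1$, $L_2$ are compatible with it, gives $\Phi_L:\mathcal{X}_L\times\bfP\simeq U_L$.
\item For the good moduli space, take the image of $U_L$ in $\mathrm{Tw}(M)$. This is where one needs the Lagrangians to be hyper-Lagrangian in the classical sense over $U$, so that the image is $\mathrm{Tw}(M_L)$ for a hyperkähler submanifold $M_L\subset M$ (cf.\ \cite{leung2007hyper}).
\end{enumerate}
I would try to establish item 5 by verifying it Zariski locally, where the hypotheses of Definition \ref{DerivedTwistor2} are themselves formulated. If necessary I would add the hyper-Lagrangian hypothesis on the $L_i$ explicitly.
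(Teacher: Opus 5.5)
Your Steps 1--3 follow the paper's argument: a relative, $\EuScript{O}(2)$-twisted version of the PTVV diagram chase via Proposition \ref{AnCotangentProperties}, with the $C_2$-action on $L$ inherited from the factors. The skeleton of your Step 4 is also the paper's. Section stacks and $(-)^{hC_2}$ preserve fibre products, $\mathcal{X}_L$ is cut out over $\mathcal{X}_Z$, and $U_L=U_{L_1}\times_U U_{L_2}$ with $U_{L_i}=t_0(f_i)^{-1}(U)$, which is your $t_0(L)\times_{t_0(Z)}U$. Your explicit assumption that the $\gamma_i$ are \emph{real} Lagrangian structures is necessary, since otherwise $L$ carries no $C_2$-action. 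The paper uses it tacitly (``since $f_1,f_2$ are invariant-isotropic''), even though the statement only says ``Lagrangian''.

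The gap you flag in Step 4 is genuine, and the paper's proof does not close it either. It simply asserts maps $p_{L_i}:U_{L_i}\to\mathrm{Tw}(M_{L_i})$ for which $\Phi_{L_i}:\mathcal{X}_{L_i}\times\bfP\to U_{L_i}$ are $C^\infty$-equivalences; that is, it assumes each $L_i$ is itself of hk-type, compatibly with $Z$. It then cites \cite[Prop.~7.9]{alper2013good} to identify the good moduli space of $U_L$ with $(M_{L_1}\times_{M}M_{L_2})\times\bfP$, where $M=M_Z$. Neither step follows from the hypotheses:
\begin{itemize}
\item Pulling $\Phi_Z$ back along $U_L\to U$ gives $(\mathcal{X}_Z\times\bfP)\times_U U_L$. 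Identifying this with $\mathcal{X}_L\times\bfP$ requires that, for each $\ell\in U_L$, the twistor line through the image of $\ell$ in $U$ lifts uniquely to a real section of $L$ through $\ell$. That is exactly the missing hk-type condition.
\item Good moduli spaces do not commute with fibre products over a stack. For example, $\mathrm{pt}\times_{\mathsf{B}G}\mathrm{pt}\simeq G$, whereas the fibre product of the good moduli spaces is a point. Alper's gluing statements concern open covers, not fibre products.
\end{itemize}

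Your proposed remedy does not work as stated either. For an embedded $0$-shifted Lagrangian, a trianalytic $M_L\subset M$ would carry the nondegenerate restriction of $\omega_\zeta$ for every $\zeta$. This contradicts isotropy unless $\dim M_L=0$; Leung's hyper-Lagrangians are a different notion. For non-monomorphic $f_i$, the image of $U_L$ in $\mathrm{Tw}(M)$ is not its good moduli space in any case.

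The hypothesis to add is the one the paper uses implicitly. Each $L_i$ should carry compatible hk-type data $(U_{L_i},\mathrm{Tw}(M_{L_i}),\Phi_{L_i})$. In addition you need a base-change condition, e.g.\ $U_{L_1}\simeq U\times_{\mathrm{Tw}(M)}\mathrm{Tw}(M_{L_1})$, so that Alper's base-change property yields $(M_{L_1}\times_M M_{L_2})\times\bfP$ as the good moduli space of $U_L$.
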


\begin{proof}
The statement is clear from a relative version of the proof of \cite[Thm.2.9]{PTVV13} using the properties of the analytic cotangent complex in Proposition \ref{AnCotangentProperties}. Namely, consider the cartesian diagram in $\mathsf{dAnSt}_{\bfP}$,
\[
\begin{tikzcd} L \arrow[r,"p_2"] \arrow[d,"p_1"] & L_2 \arrow[d, "f_2"] \\ L_1 \arrow[r, "f_1"] & Z. \end{tikzcd}\]
Write $\pi:L\to Z$ and $\eta_{L}:L\to \bfP.$ Since $f_1,f_2$ are invariant-isotropic structures, there are homotopies 
$h_1:f_1^*\omega_Z\sim 0,h_2:f_2^*\omega_Z\sim 0,$
whose induced morphisms
$$\mathbb{T}_{L_1/\bfP}^{\an}\to \mathbb{L}_{L_1/Z}^{\an}[n-1]\otimes (\eta_Z\circ f_1)^*\EuScript{O}_{\bfP}(2) ,\hspace{3mm}\mathbb{T}_{L_2/\bfP}^{\an}\to \mathbb{L}_{L_2/Z}^{\an}[n-1]\otimes (\eta_Z\circ f_2)^*\EuScript{O}_{\bfP}(2) ,$$
are equivalences. Via the corresponding fiber sequences for pullbacks, there is a commutative diagram with exact rows
    \begin{equation}
\begin{tikzcd}
\mathbb{T}_{L/\bfP}^{\mathrm{an}} \arrow[r,"\alpha"] \arrow[d,"\Theta_{R}"] & p_1^*\mathbb{T}_{L_1/\bfP}^{\mathrm{an}} \oplus p_2^*\mathbb{T}_{L_2/\bfP}^{\mathrm{an}} \arrow[r,"\beta"] \arrow[d,"\simeq"] & \pi^*\mathbb{T}_{Z/\bfP}^{\mathrm{an}} \arrow[d,"\simeq"] 
\\ 
\mathbb{L}_{L/\bfP}^{\mathrm{an}}[n-1] \otimes \pi_{L}^*\EuScript{O}_{\bfP}(2) \arrow[r] & \big( p_1^*\mathbb{L}_{L_1/Z}^{\mathrm{an}}[n-1] \oplus p_2^*\mathbb{L}_{L_2/Z}^{\mathrm{an}}[n-1] \big) \otimes \pi^*\EuScript{O}_{\bfP}(2) \arrow[r] & \pi^*\big( \mathbb{L}_{Z/\bfP}^{\mathrm{an}}[n] \otimes \eta_Z^*\EuScript{O}_{\bfP}(2)  \big) 
\end{tikzcd}
\end{equation}
As the second and third vertical arrows are isomorphismns, so is the left-most; there is an equivalence $\mathbb{T}_{L_{}/\bfP}^{\an}\simeq \mathbb{L}_{L/Z}^{\an}[n-1]\otimes \pi^*\EuScript{O}_{\bfP}(2).$

We now turn to the twistor property \eqref{eqn: Twistor Diag}. 
Since the $C_2$-action is symplectic, $L$ has an induced $C_2$-action with invariant $2$-form. The analytic stack of sections $\mathbb{R}\underline{\mathrm{AnSec}}(L/\bfP)$ is a Weil-restriction, thus is limit-preserving as an $\infty$-right adjoint. Hence, there is an equivalence
\begin{equation}
\label{ASectLagInt}
\ASect(L_1\times_Z^hL_2/\bfP)\simeq \ASect(L_1/\bfP)\times_{\ASect(Z/\bfP)}^h\ASect(L_2/\bfP),\end{equation}
of derived analytic stacks. Note the left-hand side has a natural $(n-2)$-shifted symplectic structure by Proposition \ref{LagSectProp} (1.), while \eqref{ASectLagInt} is itself a relative derived Lagrangian intersection by Proposition \ref{LagSectProp} (2.), and thus has an $(n-2)$-shifted symplectic structure by \cite[Thm.2.9]{PTVV13}. Moreover, from \eqref{eqn: HoFP as Res}, homotopy fixed-points $(-)^{hC_2}$ also preserves limits, and so the stack of $\sigma$-invariant analytic sections of $\pi:L\to \bfP$ is equivalent to
$$\ASect(L_1/\bfP)^{hC_2}\times_{\ASect(Z/\bfP)^{hC_2}}^h\ASect(L_2/\bfP)^{hC_2}.$$

By assumption that $Z$ is twistor, there exists $\mathcal{X}_{Z}\subset t_0(\ASect(Z/\bfP)^{hC_2})$ and an open substack $U_Z\subset t_0(Z)$ admitting a good moduli $p_Z:U_Z\to \mathrm{Tw}(M_Z)\simeq M_Z\times\bfP,$ and a $C^{\infty}$-equivalence $\Phi_{Z}:\mathcal{X}_{Z}\times \bfP\simeq U_Z.$
Consider the induced morphisms \eqref{RelLagSects} on truncations by the Lagrangian morphisms in Proposition \ref{LagSectProp}:
$$t_0(f_{i,*}):t_0(\ASect(L_i/\bfP)^{hC_2})\to t_0(\ASect(Z/\bfP)^{hC_2}),i=1,2.$$
Let $\mathcal{X}_{L_i},i=1,2$ be the union of connect components maping to $\mathcal{X}_Z$ and put $\mathcal{X}_{L}:=\mathcal{X}_{L_1}\times_{\mathcal{X}_{Z}}\mathcal{X}_{L_2}.$ Preservation of limits under truncation gives that $\mathcal{X}_{L}$ is a component of $t_0(\ASect(L/\bfP)^{hC_2}),$ and since $U_Z\subset t_0(Z)$ is open, $U_{L_i}:=t_0(f_i)^{-1}(U_Z)\subset t_0(L_i),i=1,2$ are open. There is an induced map $p_{L_i}:U_{L_i}\to \mathrm{Tw}(M_{L_i})\simeq M_{L_i}\times\bfP,$ such that $\Phi_{L_i}:\mathcal{X}_{L_i}\times \bfP\to U_{L_i},i=1,2$ are $C^{\infty}$-equivalences. Set $U_L:=U_{L_1}\times_{U_Z}U_{L_2}$. Then it is open in $t_0(L_1)\times_{t_0(Z)}t_0(L_2)\simeq t_0(L)$, since the maps are continuous. Thus $U_L\subset t_0(L)$ is open. 
By \cite[Prop.~7.9]{alper2013good}, the gluing criteria states that $U_L$ is a good moduli for $(M_{L_1}\times\bfP)\times_{(M_Z\times\bfP)}(M_{L_2}\times \bfP)\simeq (M_{L_1}\times_{M_Z}M_{L_2})\times\bfP,$ and we put $M_L:=M_{L_1}\times_{M_Z}M_{L_2}.$
Then, there are cartesian diagrams of evaluation maps and of the open substacks,
\[
\begin{tikzcd} \mathcal X_L\times \mathbb P^1 \arrow[r] \arrow[d] & \mathcal X_{L_1}\times \mathbb P^1 \arrow[d,"\Phi_{L_1}"] \\ \mathcal X_{L_2}\times \mathbb P^1 \arrow[r,"\Phi_{L_2}"] & \mathcal X_Z\times \mathbb P^1 \end{tikzcd}\hspace{2mm} \begin{tikzcd} U_L \arrow[r] \arrow[d] & U_{L_1} \arrow[d,"p_{L_1}"] \\ U_{L_2} \arrow[r,"p_{L_2}"] & U_Z. \end{tikzcd}
\]
Explicitly, due to \eqref{ASectLagInt}, for each $s\in\mathcal{X}_{L}$ we may write $s=(s_1,s_2),s_i\in\mathcal{X}_{L_i},i=1,2$ and their images agree in $\mathcal{X}_Z,$ so evaluation at $\lambda \in\bfP,$ gives 
$\mathrm{ev}_{L}(s,\lambda):=\big(\mathrm{ev}_{L_1}(s_1,\lambda),\mathrm{ev}_{L_2}(s_2,\lambda)\big)\in U_{L_1}\times_{U_Z}U_{L_2},$ which we define to be $U_{L}.$ This defines $\Phi_{L}:\mathcal{X}_{L}\times\bfP\to U_L$ and by the universal property of fiber products, is the uniquely determined $C^{\infty}$-equivalence via 
$\mathcal{X}_{L}\times \bfP\simeq (\mathcal{X}_{L_1}\times_{\mathcal{X}_{Z}}\mathcal{X}_{L_2})\times\bfP\to U_{L_1}\times_{U_Z}U_{L_2}=U_L,$ as required.
\end{proof}

An important technique for constructing new hyperk\"ahler manifolds from old ones is hyperk\"ahler
reduction. Indeed, many of the interesting examples of hyperk\"ahler manifolds arise from the moduli spaces of solutions to
the anti-self-dual instanton equations, and thus can be viewed as arising from infinite-dimensional
hyperk\"ahler reduction. In the next subsection, we discuss this in the context of derived symplectic reduction \cite{Safronov2016}, using Theorem \ref{MainTheorem3Body}.
\subsection{Derived hyperk\"ahler reduction}
\label{ssec: SymplecticReduction} 
We begin by recalling symplectic reduction in derived algebraic geometry.
\subsubsection{Symplectic reduction}
Let $G$ be a reductive algebraic group with classifying stack $\mathsf{B}G.$ For an integer $n$, the shifted cotangent stack $\mathsf{T}^*[n]\mathsf{B}G$ can be identified with the shifted coadjoint quotient stack $\mathfrak{g}^*[n-1]/G,$ where $\mathfrak{g}$ is the Lie algebra of $G.$ In other words, for any $n\in \mathbb{Z},$
there is an equivalence 
$\mathsf{T}^*[n+1]\mathsf{B}G\simeq \big[\mathfrak{g}^*[n]/G\big],$
of $(n+1)$-shifted symplectic stacks.
\begin{remark}
    The Lie algebra $\mathfrak{g}$ of an affine group $G$ admits adjoint and 
coadjoint actions, $G\times \mathfrak{g}\to \mathfrak{g}$ and $G\times\mathfrak{g}^*\to \mathfrak{g}^*.$
They are the natural actions by $G$ on the tangent and cotangent
complexes of $\mathsf{B}G.$
Under this identification, the tautological $1$-form $\theta$ of degree $n$ on the (shifted) cotangent stack is $\theta=\sum_i x_i\otimes \xi^i,$ for a basis $(x_i)$ of $\mathfrak{g}$ and dual basis $(\xi^i).$ Note that $(x_i)$ provide a choice of linear coordiantes on $\mathfrak{g}^*.$ The symplectic form is thus $\omega=\sum_i (d_{\mathrm{DR}}x_i)\otimes \xi^i.$ 
\end{remark}
The canonical morphisms,
$$\mathsf{B}G\to \mathsf{T}^*[n]\mathsf{B}G\simeq\mathfrak{g}^*[n-1]/G,\hspace{3mm} \mathfrak{g}^*[n-1]\to \mathfrak{g}^*[n-1]/G,$$
given by the zero sectio and the shifted conormal map $\mathsf{N}^*[n](\mathrm{pt}\to \mathsf{B}G),$ both have canonical Lagrangian structures.
\begin{definition}
    \label{defn: k-shiftedmomentmap}
Let $Z$ be a geometric derived algebraic stack with $n$-shifted symplectic structure, equipped with a $G$-action and a $G$-equivariant map 
$\mu:Z\to \mathfrak{g}^*[n].$ The map $\mu$ is said to have an
\emph{$n$-shifted moment map structure} if the following conditions are satisfied:
\begin{enumerate}
    \item There is a Lagrangian structure on the induced map $[\mu]:[Z/G]\to \big[\mathfrak{g}^*[n]/G\big]$,
    \item There is an equivalence 
    \begin{equation}
        \label{SympRed1}
        Z\simeq \big[Z/G\big]\times_{[\mathfrak{g}^*[n]/G]}^h\mathfrak{g}^*[n],
    \end{equation}
    of $n$-shifted symplectic derived stacks.
\end{enumerate}
\end{definition}
Given the structure of a $n$-shifted moment map $\mu:Z\to \mathfrak{g}^*[n]$ on a derived stack $Z$, the \emph{derived symplectic reduction} of $Z$ along $\mu$ is the homotopy fiber product,
\begin{equation}
\label{SymRedPb}
\begin{tikzcd}
Z_{\mathrm{Red}}\arrow[d]\arrow[r] & \mathsf{B}G\arrow[d]
    \\
\big[Z/G\big]\arrow[r,"\mu"] & \big[\mathfrak{g}^*[n]/G\big].
\end{tikzcd}
\end{equation}
This is a Lagrangian intersection
\begin{equation}
    \label{SympRed2}
Z_{\mathrm{Red}}:=[Z/G]\times_{[\mathfrak{g}^*[n]/G]}^h\mathsf{B}G,
\end{equation}
inside a $(n+1)$-shifted symplectic derived stack, therefore the derived symplectic reduction \eqref{SympRed2} is canonically an $n$-shifted symplectic derived stack.

\subsubsection{Symplectic reduction for hk-families}
We now give a \emph{algebro-geometric model} for the hyperk\"ahler reduction of moduli spaces which appear in the context of infinite-dimensional gauge theory.
\begin{remark}
It is only a model as we do not work in the setting of derived differential geometry. See \cite[\S ~5.1]{GinRoz} and \cite[\S.~5.4]{AnelCalaque2022} for details, and \cite[Obs.~8.1]{KSY2} for related remarks.
\end{remark}

Consider a groupoid object in $\mathsf{dAnSt}_{\mathbb{P}_{\mathbb{C}}^1}.$ We need to describe behaviour of face and degeneracy maps with real structures. Write the corresponding simplicial diagram with $C_2$-equivariant face maps $d_i^n:G^{\times n}\times Z\to G^{\times (n-1)}\times Z$
\[\cdots \;\substack{\longrightarrow\\[-0.3em]
               \longrightarrow\\[-0.3em]
               \longrightarrow\\[-0.3em]\longrightarrow}\; G\times G\times X  \;\substack{\longrightarrow\\[-0.3em]
               \longrightarrow\\[-0.3em]
               \longrightarrow}\; G\times X \;\substack{\longrightarrow\\[-0.3em]
               \longrightarrow}\; X.\]
 Let $[Z/G]\simeq \mathrm{hocolim}_{\Delta^{op}\ni [n]}B_{\bullet}(G,Z)$ be the corresponding colimit, taken in $\mathsf{dAnSt}_{\mathbb{P}_{\mathbb{C}}^1}[C_2].$

Let $f:Z\to W$ be a $G$-equivariant morphism of relative derived analytic stacks. Suppose $\sigma_G:G\to G^{\mathrm{conj}}$ has a real stucture e.g. $\sigma_g\circ m\simeq m^{\mathrm{conj}}\circ (\sigma_G\times \sigma_G)$ (i.e. $G$ is a group object with involution). The $G$-action on $Z$ is \emph{compatible with real structures} if
\[
\begin{tikzcd}
    G\times Z\arrow[d,"\sigma_G\times \sigma_Z"] \arrow[r,"act_Z"] & Z\arrow[d,"\sigma_Z"]
    \\
    G^{\mathrm{conj}}\times Z^{\mathrm{conj}}\arrow[r,"act_Z^{\mathrm{conj}}"] & Z^{\mathrm{conj}}
\end{tikzcd}
\]
is commutative. 
If $f:(Z,\sigma_Z)\to (W,\sigma_W)$ is a $G$-equivariant morphism of relative derived analytic stacks, it is said to be \emph{compatible with real structures} if $\sigma_W\circ f\simeq f^{\mathrm{conj}}\circ \sigma_Z.$

We now describe the quotient stack and its induced compatible real structure.
\subsubsection{$G$-equivariant real structures}
In this subsection we give a technical result supplying quotient stacks by $G$-actions with induced real structures.
\begin{proposition}
\label{RealOnQuotients}
Let $(Z\to \mathbb{P}^1)$ be a relative derived analytic stack with real structure $\sigma_Z.$ Suppose $Z$ admits a $G$-action which is compatible with real structures. Then $Z_{\bullet}:\Delta^{op}\to \mathsf{dAnSt}_{\mathbb{P}^1}[C_2]$ is a simplicial object in $C_2$-derived analytic stacks and there exists an induced real structure on geometric realizations,
$$\sigma_{[Z/G]/\mathbb{P}^1}:|Z_{\bullet}|\to \big|Z_{\bullet}^{\mathrm{conj}}\big|\simeq \big|Z_{\bullet}\big|^{\mathrm{conj}}.$$
That is, for any $Z\to \mathbb{P}^1$ with a real $G$-action, the relative derived analytic quotient stack 
$[Z/G]\to \mathbb{P}^1,$
inherits a canonical real structure.
\end{proposition}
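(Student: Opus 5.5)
The plan is to build the real structure levelwise on the action groupoid and then pass to geometric realizations, using that conjugation is an auto-equivalence of $\mathsf{dAnSt}_{\mathbb{P}^1}$ and therefore commutes with colimits.

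First, write the bar construction $Z_\bullet = B_\bullet(G,Z)$ with $Z_n = G^{\times n}\times Z$. Define $\sigma_n := \sigma_G^{\times n}\times\sigma_Z : Z_n \to (G^{\mathrm{conj}})^{\times n}\times Z^{\mathrm{conj}} \simeq (Z_n)^{\mathrm{conj}}$. The last identification holds because $(-)^{\mathrm{conj}}$ is defined on functors of points by precomposition with the base-change autoequivalence $A\mapsto A^{\mathrm{conj}}$, so it preserves finite products over $\mathbb{P}^1$; here $\mathbb{P}^1$ is identified with its conjugate via the antipodal map $a$, as in Definition \ref{RealStructure}.

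Next, check that the $\sigma_n$ assemble into a map of simplicial objects $Z_\bullet \to Z_\bullet^{\mathrm{conj}}$. The face maps are built from the multiplication $m$, the action $act_Z$ and projections, and the degeneracies from the unit. Compatibility with $\sigma$ is precisely the two squares assumed: $\sigma_G\circ m\simeq m^{\mathrm{conj}}\circ(\sigma_G\times\sigma_G)$ and the action square. Projections commute with $\sigma$ tautologically.

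The main obstacle is homotopy coherence: in the $\infty$-setting one needs a natural transformation of functors $\Delta^{op}\to\mathsf{dAnSt}_{\mathbb{P}^1}$, not merely levelwise squares. To avoid constructing the higher coherences by hand, I would phrase the data as a $C_2$-action and use the straightening of Proposition \ref{C2Grothendieck}. Concretely, regard $(G,\sigma_G)$ as a group object in $\mathsf{dAnSt}_{\mathbb{P}^1}[C_2]$ and $(Z,\sigma_Z)$ as a module object for it; the compatibility squares, together with their coherences, are exactly what this means. Since $\mathsf{dAnSt}_{\mathbb{P}^1}[C_2]=\mathsf{Funct}(\mathsf{B}C_2,\mathsf{dAnSt}_{\mathbb{P}^1})$ has finite products computed pointwise, the bar construction can be formed directly in this category. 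This gives $Z_\bullet:\Delta^{op}\to\mathsf{dAnSt}_{\mathbb{P}^1}[C_2]$, which is the first assertion, and its restriction along $*\to\mathsf{B}C_2$ recovers the levelwise maps above.

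Finally, take the colimit in $\mathsf{dAnSt}_{\mathbb{P}^1}[C_2]$. Colimits in a functor category are computed pointwise, so the underlying object is $|Z_\bullet|=[Z/G]$. Its $C_2$-structure gives $\sigma_{[Z/G]}:|Z_\bullet|\to|Z_\bullet^{\mathrm{conj}}|$. Because $(-)^{\mathrm{conj}}$ is an equivalence of $\infty$-categories, with $(Z^{\mathrm{conj}})^{\mathrm{conj}}\simeq Z$ by Proposition \ref{AnalyticDRConj}, it preserves colimits, so $|Z_\bullet^{\mathrm{conj}}|\simeq|Z_\bullet|^{\mathrm{conj}}$.

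It remains to check two properties of $\sigma_{[Z/G]}$:

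1. The involutivity $\sigma^{\mathrm{conj}}\circ\sigma\simeq\mathrm{id}$ is inherited from the $2$-simplex of $\mathsf{B}C_2$.
2. Covering the antipodal map follows because each $Z_n\to\mathbb{P}^1$ satisfies the square \eqref{eqn: CoveringAntipodal}, and this square passes to the colimit.
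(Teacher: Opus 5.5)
Your proof is correct and has the same skeleton as the paper's. Both use the levelwise maps $\sigma_n=\sigma_G^{\times n}\times\sigma_Z$ on the bar construction. Both get compatibility with the simplicial structure maps from the two equivariance squares. Both pass to the geometric realization using that $(-)^{\mathrm{conj}}$ commutes with colimits.

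The difference is in how coherence is obtained. The paper checks compatibility with $d_0$ and the identity $\sigma^{(k)}\circ\sigma^{(k)}\simeq\mathrm{id}$ on points, level by level, and then takes the homotopy colimit. Levelwise homotopies of this kind do not by themselves assemble into a natural transformation of $\Delta^{op}$-diagrams in an $\infty$-category, nor into a coherent involution of the colimit. You instead read the hypothesis as saying that $(G,\sigma_G)$ is a group object and $(Z,\sigma_Z)$ a module over it inside the $C_2$-category, and you form the bar construction there. The simplicial object in $C_2$-stacks, the map $\sigma_\bullet$ and the involutivity of $\sigma_{[Z/G]}$ then all come out of the formalism at once. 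The cost is that the hypothesis must be taken in its fully coherent form rather than as two homotopy-commutative squares, which a coherent conclusion needs anyway. Your justification of $|Z_\bullet^{\mathrm{conj}}|\simeq|Z_\bullet|^{\mathrm{conj}}$, via $(-)^{\mathrm{conj}}$ being an autoequivalence, is also the right one; the paper appeals to limits instead.

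One point of bookkeeping, inherited from the paper's notation. An object of $\mathsf{Funct}(\mathsf{B}C_2,\mathsf{dAnSt}_{\mathbb{P}^1})$, with $C_2$ acting trivially on $\mathsf{dAnSt}_{\mathbb{P}^1}$, is an automorphism of $Z$ over $\mathrm{id}_{\mathbb{P}^1}$ squaring coherently to the identity. It is not a map $Z\to Z^{\mathrm{conj}}$ covering $a$. For your final step to literally produce $\sigma_{[Z/G]}\colon|Z_\bullet|\to|Z_\bullet|^{\mathrm{conj}}$ covering the antipode, the ambient category should be the homotopy fixed points of the $C_2$-action $Z\mapsto Z^{\mathrm{conj}}$ on $\mathsf{dAnSt}_{/\mathbb{P}^1}$, with the base identified with its conjugate via $a$. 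Equivalently, these are the sections of the associated cocartesian fibration over $\mathsf{B}C_2$, which is where your appeal to the equivariant straightening points. Nothing else changes. Since $C_2$ acts by autoequivalences, the forgetful functor from these homotopy fixed points to $\mathsf{dAnSt}_{/\mathbb{P}^1}$ creates finite products and colimits, so your bar construction and realization go through verbatim.
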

\begin{proof}
Let $\sigma_Z,\sigma_G$ denote the real structures. By definition, $\sigma_Z\circ a_{\mathbb{P}^1}\simeq a_{\mathbb{P}^1}\circ (\sigma_G\times \sigma_Z).$
Define
$$\sigma^{(k)}:G^{\times k}\times Z\to G^{\times k}\times Z,\hspace{2mm} \sigma^{(k)}(g_1,\ldots,g_k;z):=\big(\sigma_Gg_1,\ldots,\sigma_G g_k;\sigma_Zz\big).$$
It is compatible with face maps $d_0^k:$ as
$$\sigma^{(k-1)}\circ d_0^k(g_1,\ldots,g_k,z)=\sigma^{(k-1)}(g_2,\ldots,g_k;g_1^{-1}z)=\big(\sigma_Gg_2,\ldots,s\sigma_Gg_k;\sigma_Zg_1^{-1}z).$$
Then, since
\begin{eqnarray*}
    d_0^k\circ \sigma^{(k)}(g_1,\ldots,g_k;z)&=&
    d_0^k\big(\sigma_G g_1,\ldots,\sigma_G g_k;\sigma_Z z\big)
    \\
    &=&\big(\sigma_G g_1,\ldots,\sigma_G g_k;\sigma_Gg_1^{-1}\sigma_Zz\big),
\end{eqnarray*}
since $\sigma_Z(g^{-1}\cdot z)\simeq \sigma_G(g_1)^{-1}\cdot \sigma_Z(z),$ by equivariance there are homotopies $\sigma^{(k-1)}\circ d_0^k\simeq d_0^k\simeq \sigma^{(k)}.$
Now, we have $\sigma^{(\bullet)}:=\{\sigma^{(k)}|[k]\in \Delta^{op}\},$ defines a map of simplicial objects in derived analytic stacks,
$$\sigma^{(\bullet)}:B_{\bullet}(G,Z)\to B_{\bullet}(G,Z),$$
that moreover covers the antipodal map $a_{\mathbb{P}^1}:\mathbb{P}^1\to \mathbb{P}^1.$
Taking the homotopy colimit we obtain a map
$$\sigma_{[Z/G]}:=\mathrm{hocolim}_{[k]\in \Delta^{op}}\sigma^{(k)}:[Z/G]\to [Z/G],$$
which is involutive since
$\sigma^{(k)}\circ \sigma^{(k)}(g_1,\ldots,g_k;z)=\big(\sigma_G^2g_1,\ldots,\sigma_G^2g_k;\sigma_Z^2z\big)\simeq (g_1,\ldots,g_k,z)$ as $\sigma_G\circ \sigma_G\simeq \mathrm{id}_G$ and $\sigma_Z\circ \sigma_Z\simeq \mathrm{id}_Z.$
In particular, $\mathsf{dAnSt}_{\mathbb{P}^1}[C_2]$ has colimites and $|Z_{\bullet}|\simeq \underset{\Delta^{op}}{\mathrm{colim}}Z_{\bullet},$ where $\Delta$ is given the $C_2$-structure as in Proposition \ref{C2Waldhausen}. Then, 
$\underset{\Delta^{op}}{\mathrm{colim}}Z_{\bullet}^{\mathrm{conj}}\simeq (\underset{\Delta^{op}}{\mathrm{colim}}Z_{\bullet})^{\mathrm{conj}},$ and we have the canonical map
$$\sigma_{[Z/G],\mathbb{P}^1}:=\underset{\Delta^{op}}{\mathrm{colim}}\sigma^{(\bullet)}:\underset{\Delta^{op}}{\mathrm{colim}}Z_{\bullet}\to \big(\underset{\Delta^{op}}{\mathrm{colim}}Z_{\bullet}\big)^{\mathrm{conj}},\hspace{2mm} \in \mathsf{Funct}(\Delta^{op},\mathsf{dSt}_{\mathbb{P}^1}[C_2]),$$
since limits are computed objectwise in $\mathsf{Funct}(\Delta^{op},\mathsf{dSt}_{\mathbb{P}^1}[C_2])$ and $(-)^{\mathrm{conj}}$ commutes with limits.
The quotient construction is denoted by 
$$[\![-/G]\!]:\mathsf{dAnSt}^G \xrightarrow{\sim} \mathsf{dAnSt}_{\mathsf{B}G},$$
which is an equivalence of $\infty$-categories whose action on objects is computed by
$$[\![Z/G]\!]\simeq \underset{[n]\in\Delta^{\mathrm{op}}}{\mathrm{hocolim}}\hspace{1mm}Z_n.$$
In the relative setting, for the $\infty$-category \eqref{C2dAnStIntro}, there is a quotient map over simplicial diagrams \eqref{nSimplexRelativeIntro}, denoted the same $[\![Z/G]\!].$
\end{proof}

\subsubsection{Reduction}
We now use the results of the previous two subsections to prove an analogue of symplectic reduction for
$n$-shifted derived twistor families $\eta_Z:Z\to \mathbb{P}^1.$ It can also be taken as the definition of a $n$-shifted moment map structure on a derived twistor family.

We will phrase the result for derived algebraic stacks locally almost of finite presentation over $\mathbb{C},$ obtaining the analyic version by applying $(-)^{\an}.$ In this case, it is important to emphasize that from \cite{PortaYuHigherGaga2016}, if $G$ is a reductive algebraic group over $\mathbb{C}$, then $\mathsf{B}G$ satisfies the GAGA property \cite[Prop.~5.33]{HolsteinPorta2025}, even though it is not proper as in \cite[Def.~4.8]{PortaYuHigherGaga2016}.

\begin{theorem}
\label{MainTheorem4Body}
Let $(Z\xrightarrow{\eta_Z}\mathbb{P}^1,\omega_Z)$ be an $n$-shifted derived pretwistor family of hyperk\"ahler type with fiber-wise $G$-action. Let $\mathfrak{g}^*[n]_{\mathbb{P}^1}:=\mathfrak{g}^*[n]\times \mathbb{P}^1,$ which we understood to be endowed with $\mathfrak{g}^*[n]\otimes \EuScript{O}(2).$
Assume there exists a $G$-equivariant morphism of derived stacks over $\mathbb{P}^1$
$\mu:Z\to \mathfrak{g}^*[n]_{\mathbb{P}^1}$ which satisfies the following: 
\begin{itemize}
\item[(i)] $\mu_{red}:[Z/G]\to [\mathfrak{g}^*[n]/G]\times\mathbb{P}^1$ has an  $\EuScript{O}_{\mathbb{P}^1}(2)$-twisted relative Lagrangian structure,
\item[(ii)] There is an equivalence 
$[Z/G]\times_{\mathfrak{g}^*[n]/G\times\mathbb{P}^1}^h\big(\mathfrak{g}^*[n]\times\mathbb{P}_{\mathbb{C}}^1)\simeq Z$
of $\mathbb{P}^1$-derived stacks compatible with the $\EuScript{O}(2)$-twisted relative shifted-symplectic structure and $\mathrm{Gal}(\mathbb{C}/\mathbb{R})$-action.
\end{itemize}
Then the derived $\mathbb{P}^1$-stack
\begin{equation}
    \label{eqn: DerHypRed1}
    \eta_{Z_{HK}}:Z_{\mathrm{HK}-\mathrm{red}}:=[\![Z/G]\!]\times_{\mathfrak{g}^*[n]/G\times\mathbb{P}_{\mathbb{C}}^1}^h\big(\mathsf{B}G\times\mathbb{P}_{\mathbb{C}}^1\big)\to \mathbb{P}_{\mathbb{C}}^1 
\end{equation}
is canonically equipped with the structure of an $n$-shifted derived pre-twistor family of hyperk\"ahler-type, which is moreover compatible with analytification.
\end{theorem}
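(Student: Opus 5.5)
The plan is to realize $Z_{\mathrm{HK}-\mathrm{red}}$ as a relative Lagrangian intersection in an $\EuScript{O}(2)$-twisted $(n+1)$-shifted pretwistor family over $\mathbb{P}^1$. The Lagrangian intersection argument of Theorem~\ref{MainTheorem3Body} then supplies the relative symplectic form and the real structure. The two $C_2$-equivariant Lagrangians will be $[\mu]$ from hypothesis (i) and the zero section $0:\mathsf{B}G\times\mathbb{P}^1\to[\mathfrak{g}^*[n]/G]\times\mathbb{P}^1$.

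\textbf{Step 1: the ambient family.} I first make $W:=[\mathfrak{g}^*[n]/G]\times\mathbb{P}^1\simeq \mathsf{T}^*[n+1]\mathsf{B}G\times\mathbb{P}^1$ into an $(n+1)$-shifted pretwistor family. The form is the canonical symplectic form $\omega=\sum_i d_{\mathrm{DR}}x_i\otimes\xi^i$ tensored with the tautological section of $\EuScript{O}(2)$. To fit the weight-$2$ description of Proposition~\ref{WeightVersion}, I let $\mathbb{G}_m$ act on $\mathfrak{g}^*$ by scalars of weight $2$; then $\omega$ has weight $2$, and twisting by $\mathcal{O}(1)$ yields exactly $\mathfrak{g}^*[n]\otimes\EuScript{O}(2)$. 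For the real structure I take a compact real form of $G$, with $\sigma_G$ the associated antiholomorphic involution. This induces a conjugation on $\mathfrak{g}^*$, and I combine it with the antipodal map, using the identification $\sigma_{\mathbb{P}^1}^*\EuScript{O}(2)\simeq\overline{\EuScript{O}(2)}$. Proposition~\ref{RealOnQuotients} then descends these involutions to $[\mathfrak{g}^*[n]/G]$ and to $[\![Z/G]\!]$. I must also check that $\omega$ is real. This is a computation on the single term $d x_i\otimes\xi^i$, together with the sign coming from $\sigma_{\mathbb{P}^1}(z)=-1/\bar z$ acting on the weight-$2$ coordinate.

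\textbf{Step 2: the two Lagrangians.} The zero section $\mathsf{B}G\to\mathsf{T}^*[n+1]\mathsf{B}G$ carries its canonical Lagrangian structure. Pulling it back along the projection to $\mathbb{P}^1$ and twisting gives an $\EuScript{O}(2)$-twisted relative Lagrangian in the sense of Definition~\ref{defn:HyperLagStructure}. This structure is $C_2$-equivariant because $0$ is fixed by the conjugation. For $[\mu]$, hypothesis (i) gives the Lagrangian structure. Its $C_2$-equivariance follows from the $C_2$-equivariance of $\mu$ combined with hypothesis (ii), which identifies the symplectic structure on $Z$ with the one recovered from $[\mu]$, compatibly with the Galois action.

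\textbf{Step 3: the intersection.} I apply the relative form of \cite[Thm.~2.9]{PTVV13} exactly as in the proof of Theorem~\ref{MainTheorem3Body}. This uses the comparison diagram of tangent and cotangent fiber sequences, where the twist factors through because both legs pull back $\eta^*\EuScript{O}(2)$. The result is an $n$-shifted $\EuScript{O}(2)$-twisted relative symplectic form on \eqref{eqn: DerHypRed1}. Homotopy fiber products commute with $(-)^{hC_2}$ and with the Galois diagrams, so the $C_2$-invariant forms glue and the result is pretwistor. Compatibility with analytification follows from three facts: $(-)^{\an}$ preserves finite limits; Proposition~\ref{AnDeRhamAlgs} identifies the de Rham algebras; and $\mathsf{B}G$ satisfies GAGA \cite[Prop.~5.33]{HolsteinPorta2025}.

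I expect the main obstacle to be Step 1, namely the real structure on $\mathfrak{g}^*\otimes\EuScript{O}(2)$. Classically the hyperk\"ahler moment map is real only after combining the compact-form conjugation with the antipodal sign, and I will have to check that these conventions make $\omega$ and the zero section equivariant at the level of homotopy coherent $C_2$-data, not merely on underlying forms. The hyperk\"ahler-type twistor condition of Definition~\ref{DerivedTwistor2} is not needed, since the statement only asserts a pre-twistor structure. Where it is available, it would follow from the truncation argument in the proof of Theorem~\ref{MainTheorem3Body}.
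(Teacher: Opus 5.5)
This is essentially the paper's proof. You realize $Z_{\mathrm{HK}-\mathrm{red}}$ as the intersection of $[\mu]$ with the zero section inside the $\EuScript{O}(2)$-twisted $(n+1)$-shifted family $[\mathfrak{g}^*[n]/G]\times\mathbb{P}^1\simeq\mathsf{T}^*[n+1]\mathsf{B}G\times\mathbb{P}^1$, take the real structure on $[\![Z/G]\!]$ from Proposition~\ref{RealOnQuotients}, obtain the form from the Lagrangian-intersection argument of Theorem~\ref{MainTheorem3Body}, and get analytification via Proposition~\ref{AnDeRhamAlgs} and GAGA for $\mathsf{B}G$. You are more explicit than the paper about the compact real form and the weight-$2$ twist, and you use only the symplectic half of Theorem~\ref{MainTheorem3Body}, so the ambient need not be of hk-type.

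The one soft spot, which the paper shares tacitly, is your Step~2 claim that reality of the Lagrangian structure on $[\mu]$ follows from (ii). This holds only if (ii) is read as an equivalence of Galois-equivariant Hamiltonian (equivalently, Lagrangian) data, because invariance of the induced form on $Z$ alone does not determine the isotropic structure on $[\![Z/G]\!]$.
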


\begin{remark}
Since the symplectic reduction is in particular a derived Lagrangian intersection, which by Theorem \ref{MainTheorem3Body} has a natural derived twistor structure, we are motivated by Definition \ref{defn: k-shiftedmomentmap} to call the map $\mu$ satisfying (i) and (ii) a \emph{$n$-shifted moment map of twistor-type}. Similarly, in analogy with \eqref{SympRed2}, we refer to the corresponding relative derived stack \eqref{eqn: DerHypRed1} as the \emph{derived hyperk\"ahler reduction}.
\end{remark}

\begin{proof}[Proof of Theorem \ref{MainTheorem4Body}]
 By definition $Z_{\mathrm{HK}-\mathrm{red}}:=(Z/\mathbb{P}_{\mathbb{C}}^1)_{\mathsf{hk}-\mathrm{red}}=[Z/G]\times_{[\mathfrak{g}^*[n]_{\EuScript{O}_{\mathbb{P}^1}(2)}/G]}^h(\mathsf{B}G\times\mathbb{P}_{\mathbb{C}}^1),$ whose first factor $[Z/G]$ carries an induced real structure by Proposition \ref{RealOnQuotients}. Note that the real structure on $Z$ is $G$-equivariant and the canonical map $0:\mathsf{B}G\times\mathbb{P}_{\mathbb{C}}^1\to \big[\mathfrak{g}^*[n]_{\EuScript{O}(2)}/G\big]$ is equivariant for the real structure.
We need to check this map has the structure of a Lagrangian morphism, in the sense of Definition \ref{defn:HyperLagStructure}.
Indeed by assumption, the $\EuScript{O}(2)$-twisted relative moment map denoted
\[
\begin{tikzcd}
    Z\arrow[dr]\arrow[rr,"\mu"] & &  \mathfrak{g}^*[n]\times\mathbb{P}_{\mathbb{C}}^1\arrow[dl]
    \\
    & \mathbb{P}_{\mathbb{C}}^1 &
\end{tikzcd}\]
is $G$-equivariant relative to $\mathbb{P}_{\mathbb{C}}^1.$
For the $G$-action $G\times (\mathfrak{g}^*[n]\times \mathbb{P}^1)\to \mathfrak{g}^*[n]\times \mathbb{P}^1,$ the associated Segal groupoid object has $m$-simplices $G^m\times \mathfrak{g}^*[n]\times \mathbb{P}^1,$ and the fiber-wise action induces an equivalence of $\mathbb{P}^1$-stacks $[\mathfrak{g}^*[n]_{\mathbb{P}^1}/G]\simeq [\mathfrak{g}^*[n]/G]\times\mathbb{P}^1.$ By Proposition \ref{AnCotangentProperties} and Proposition \ref{AnDeRhamAlgs}, we obtain a morphism of fiber sequences of relative cotangent complexes
\begin{equation}
    \label{HKReductionAn}
\begin{tikzcd}
    (\mathbb{L}_{Z_{hk-red}/\mathbb{P}^1})^{\an}\arrow[d]\arrow[r] & \big(\mathbb{L}_{[Z/G]/\mathbb{P}^1}\oplus \mathbb{L}_{\mathsf{B}G\times\mathbb{P}^1/\mathbb{P}^1}\big)^{\an} \arrow[d]\arrow[r] & (\mathbb{L}_{[\mathfrak{g}^*[n]/G]/\mathbb{P}^1})^{\an}\arrow[d]
    \\
    \mathbb{L}^{\an}_{Z_{hk-red}^{\an}/\bfP}\arrow[r] & \mathbb{L}_{[Z^{\an}/G^{\an}]/\bfP}^{\an}\oplus \mathbb{L}^{\an}_{\mathsf{B}G^{\an}\times \bfP/\bfP}\arrow[r] & \mathbb{L}^{\an}_{[\mathfrak{g}^*[n]_{\bfP}/G^{\an}]/\bfP},
\end{tikzcd}\end{equation}
where we have used that analytification is a left-adjoint thus commutes with colimits. In particular, $\mathsf{B}G^{\an}\simeq \mathsf{B}(G^{\an}).$ In \eqref{HKReductionAn} we have omitted the pull-backs of the relative cotangent complexes in the middle column, and note that the equivalence constructed in the proof of Proposition \ref{AnDeRhamAlgs} implies all vertical maps in the diagram are equivalences. Furthermore, by essentially the same argument as in Proposition \ref{PropsAn} (1), we obtain $(\mathsf{T}^*[n+1]\mathsf{B}G)^{\an}\simeq (\mathsf{T}^*)^{\an}[n+1]\mathsf{B}(G^{\an}),$ which holds for shifted $\EuScript{O}(2)$-twisted cotangent bundles \eqref{TwistedCotangent}.
The claim follows by noting that $\mathbb{L}_{\mathsf{B}G}\simeq \mathfrak{g}^*[-1]$ and $\mathbb{L}_{[\mathfrak{g}^*[n]/G]}\simeq\mathfrak{g}^*[-n-1],$ all twisted by $\EuScript{O}_{\mathbb{P}^1}(2),$ and using
Theorem \ref{MainTheorem3Body} which states the homotopy fiber product \eqref{eqn: DerHypRed1} is a Lagrangian intersection and is thus $n$-shifted (pre)twistor.
\end{proof}

\section{Deligne shape and the Hodge stack}
For any derived (pre)stack $X$ with a nil-isomorphism $X\rightarrow Y$ where $Y\in \mathrm{PSt}_{\mathrm{laft-def}}$, following \cite[§ 5.1.3]{GR17b}, there is a formal moduli problem under $X$. We are interested in the formal moduli problem $X\to X_{\mathrm{DR}}$, and the associated deformation to the normal bundle construction \cite[Sect.~9.2.4]{GR17b}, and the associated derived stacks of perfect complexes, and coherent complexes on it.
\subsection{Filtered, graded prestacks}
Put $\mathsf{Perf}^{\mathrm{filt}}:=\mathsf{Perf}\big([\mathbb{A}_{\mathbb{C}}^1/\mathbb{G}_m]\big)$ and $\mathsf{Perf}^{\mathrm{gr}}:=\mathsf{Perf}(\mathsf{B}\mathbb{G}_m).$
Given a derived prestack $Z$, put
\begin{equation}
\label{eqn:FiltGrPst}
\mathsf{Filt}(Z):=\Map([\mathbb{A}^1/\mathbb{G}_m],Z),\hspace{5mm}\mathsf{Grad}(Z):=\Map(\mathsf{B}\mathbb{G}_m,Z).
\end{equation}
The canonical map $\mathsf{B}\mathbb{G}_m\xrightarrow{j}[\mathbb{A}_{\mathbb{C}}^1/\mathbb{G}_m]$ induces $\infty$-functor $j^*:\mathsf{Perf}^{\mathrm{filt}}\to \mathsf{Perf}^{\mathrm{gr}}.$ 
Let $A$ be a connective commutative dg-algebra, then for any filtered complex $E\in \mathsf{Perf}(\mathsf{Spec}A\times [\mathbb{A}^1/\mathbb{G}_m]),$ there is a cartesian diagram,
\[
\begin{tikzcd}
  \mathbb{R}\Gamma\big([\mathbb{A}^1/\mathbb{G}_m],E\big)\arrow[d]\arrow[r] & \mathbb{R}\Gamma(*,1^*E)\arrow[d]
  \\
\mathbb{R}\Gamma\big([*,\mathbb{G}_m],0^*E\big)\arrow[r] & \mathbb{R}\Gamma\big([\mathbb{A}^1/\mathbb{G}_m],E^{\vee}\big)^{\vee}.
\end{tikzcd}
\]
This fact is used in \cite[Prop.~5.17]{KinjoParkSafranov2024} to prove that the diagram (of derived stacks),
\begin{equation}
\label{CoSpan}
[*/\mathbb{G}_m]\xrightarrow{0}[\mathbb{A}^1/\mathbb{G}_m]\xleftarrow{1} *
\end{equation}
defines a $0$-oriented cospan.
Note further that the morphisms $0,1:*\to [\mathbb{A}^1/\mathbb{G}_m],$ and $[*/\mathbb{G}_m]\xrightarrow{0}[\mathbb{A}^1/\mathbb{G}_m]$
induced morphisms of derived stacks, $\mathsf{Grad}(Z)\xleftarrow{\mathrm{gr}}\mathsf{Filt}(Z),$ and $\mathrm{ev}_0,\mathrm{ev}_1:\mathsf{Filt}(Z)\to Z.$ Write $\mathrm{tot}:\mathsf{Grad}(Z)\to Z.$ 

Assume now that we have a derived Artin stack $(Z,\omega)$ with a $n$-shifted symplectic structure. Then the  diagram
\begin{equation}
\label{CartTangentFiltDiagram}
\begin{tikzcd}
\mathbb{T}_{\mathsf{Filt}(Z)}\arrow[d]\arrow[r] & \mathrm{gr}^*\mathbb{T}_{\mathsf{Grad}(Z)}\arrow[d]
\\
\mathrm{ev}_1^*\mathbb{T}_Z\arrow[r] & \mathbb{L}_{\mathsf{Filt}(Z)}[n],
\end{tikzcd}
\end{equation}
is a cartesian diagram in $\mathsf{Perf}\big(\mathsf{Filt}(Z)\big).$ Denote the canonical evaluation map by 
$\mathrm{ev}:[\mathbb{A}^1/\mathbb{G}_m]\times \mathsf{Filt}(Z)\to Z$ and the canonical projection map by $q:[\mathbb{A}^1/\mathbb{G}_m]\times\mathsf{Filt}(Z)\to \mathsf{Filt}(Z).$ Then there are equivalences 
$\mathbb{T}_{\mathsf{Grad}(Z)}\simeq \mathrm{tot}^*(\mathbb{T}_Z)_0,$ where $(-)_0$ indicates the weight $0$-part of of the $\mathbb{G}_m$-action, and $\mathbb{T}_{\mathsf{Filt}(Z)}\simeq q_*\mathrm{ev}^*(\mathbb{T}_Z)$ (see e.g. \cite[Prop.~5.11]{KinjoParkSafranov2024}, \cite[Lem.~1.2.2]{Hal14}).

\subsection{Deformation to the normal bundle}
The idea we follow produces an $\mathbb{A}^1$-family of prestacks $X\times\mathbb{A}^1\rightarrow Y_{scaled}$ from an object $X\rightarrow Y\in \mathrm{FMP}_{X/}$ where $Y_{scaled}$ as constructed in \cite[Ch.~9]{GR17b}. It is left-lax equivariant with respect to $\mathbb{A}^1$ acting by multiplication on itself. Equivariance with respect to $\mathbb{G}_m\subset \mathbb{A}^1$ implies all fibers $Y_{t}$ of $Y_{scaled}$ at $t\neq 0\in \mathbb{A}^1$ are canonically isomorphic to $X.$ The fiber at $0\in \mathbb{A}^1$ identifies with the formal version of the total space of the normal bundle $\mathsf{T}(X/Y)[1]$ of $X$ in $Y$.

Considering the canonical maps
$0:\mathsf{B}\mathbb{G}_m\rightarrow \AG,$ and $1:Spec(k)=pt\rightarrow \AG,$
we can consider for a given object
$X\in \mathrm{PSt}$, its base-change along $0$, denoted by $Gr(X).$ Similarly, base-change along $1$ defines an underlying-stack object $Un(X).$

The formal deformation
to the normal bundle $\widehat{\mathcal{N}}_{X/Y}$ is a prestack over $[\mathbb{A}^1/\mathbb{G}_m]$ with generic fiber $Y$ and special fiber a formal moduli problem under $X$.
\begin{remark}
The latter is given by a colimit of split square-zero extensions by symmetric powers to the normal bundle i.e. $\mathrm{Sym}^n(\mathsf{T}_{X/Y}[1]),$ for each $n.$
\end{remark}
Denote the substacks $[\mathbb{G}_m/\mathbb{G}_m]\simeq *$ by $[1]$ and $[\{0\}/\mathbb{G}_m]\simeq \mathsf{B}\mathbb{G}_m$ by $[0].$

\begin{definition}
\label{Definition: Hodge Prestack}
The \emph{Hodge prestack} $X_{\mathrm{Hod}}$ of $X$ is the deformation to the normal bundle construction of the moduli problem $X\rightarrow X_{\mathrm{DR}}$, described by a prestack
$$\widehat{\mathcal{N}}_{X/X_{\mathrm{DR}}}\in \mathrm{PSt}_{/(\mathbb{A}^1/\mathbb{G}_m)}\simeq \mathrm{FMP}_{X\times \mathbb{A}^1/ / X_{\mathrm{DR}}\times\mathbb{A}^1}:=\big(\mathrm{FMP}_{/X_{\mathrm{DR}}\times \mathbb{A}^1}\big)_{X\times \mathbb{A}^1/}.$$
\end{definition}
In particular, $X_{\mathrm{Hod}}$ is a filtered prestack with generic fiber (fiber over $[1]$) given by $X_{\mathrm{Hod},[1]}\simeq X_{\mathrm{DR}}$ and special fiber (fiber over $[0]$) the formal moduli problem under $X\times[\mathbb{A}^1/\mathbb{G}_m]$ denoted
\begin{equation}
    \label{eqn: Hodge Structure Map}
i:X\times[\mathbb{A}^1/\mathbb{G}_m]\rightarrow X_{\mathrm{Hod}}.
\end{equation}
The special fiber is the {\it Dolbeault stack} $X_{\mathrm{Dol}}$ over $\mathsf{B}\mathbb{G}_m,$ with natural $\mathbb{G}_m$-action.
In other words, it is the Dolbeault degeneration of $X_{\mathrm{DR}}$ to (total space) of $\mathsf{T}[1]X.$

\subsection{Deformation theory}
\label{ssec: DefTheory}
We compute the tangent complexes for de Rham, Dolbeault and Hodge complexes, and perfect complexes. Some general results will be needed.
We make tacit use of Porta--Sala’s notion of a \emph{categorically proper morphism} between derived stacks \cite{PortaSala}. Such a morphism satisfies that perfect complexes pushforward to perfect complexes \cite[Prop 2.3.23]{PortaSala}. 
We will write $\sharp\in \big\{\mathrm{DR},\mathrm{Dol},\mathrm{B},\mathrm{Hod}\big\},$ to mean any of the shape operations. In particular, we also include the trivial shape $(-)_{\emptyset}$ for which $X_{\emptyset}=X.$

All maps $X_{\sharp}\to \mathbb{A}^1,$ are categorically proper \cite[Prop.3.1.1, Prop.4.1.1, Lemma.5.3.2]{PortaSala}. This means that pushforwards along projections $\pi_2$ of the canonical diagram
\begin{equation}
    \label{eqn: RPerf Diagram1}
        \begin{tikzcd}
    X_{\sharp}& \arrow[l,"\pi_1"] X_{\sharp}\times\Map(X_{\sharp},Z)\arrow[d,"\pi"]  \arrow[r,"\mathrm{ev}"] & Z
\\
&\Map(X_{\sharp},Z)&
\end{tikzcd}
\end{equation}
preserves the subcategories $\mathsf{Perf}(-)\subset\mathsf{QCoh}(-)$, of perfect complexes. 

In particular, for $X_{\emptyset}$, and $Z=\Perf$, the diagram \eqref{eqn: RPerf Diagram1} is
\begin{equation}
    \label{eqn: RPerf Diagram}
    \begin{tikzcd}
    X& \arrow[l,"q_X"] X\times \RPerf(X)\arrow[d,"q"] \arrow[r,"\mathrm{ev}"] & \RPerf(X)
    \\
    & \RPerf(X) &
    \end{tikzcd}
\end{equation}
For $\mathsf{Spec}(A)\in \mathsf{dAff}_k$ with $x:\mathsf{Spec}(A)\to \RPerf(X),$
set
\[
\begin{tikzcd}
    X_A\arrow[d]\arrow[r,"\mathrm{id}_X\times x"] & X\times \RPerf(X)\arrow[d,"q"]
    \\
    \mathsf{Spec}_k(A)\arrow[r,"x"] & \RPerf(X),
\end{tikzcd}
\]
where $X_A:=X\times \mathsf{Spec}_k(A)$ and $p_A:X_A\to \mathsf{Spec}_k(A).$
Note that $x^*\circ q_+\simeq p_{A+}(\mathrm{id}_X\times x)^*.$


\subsubsection{Symplectic structures on Higgs relative perfect complexes} 
We now prove there exists symplectic forms on relative Dolbeault mapping stacks.
Let $f:\mathcal{X}\to \mathcal{B}$ be a smooth family of varieties of relative dimension $d$. Let $\mathcal{X}_b:=\mathcal{X}\times_{\mathcal{B}}\{b\},$ for be a generic smooth fiber over $b\in \mathcal{B}.$ 
Denote by $\mathbb{T}_{\mathcal{X}/\mathcal{B}},$ the relative tangent complex, and set $\mathcal{X}_{\mathrm{Dol}/\mathcal{B}}:=(\mathcal{X}/\mathcal{B})_{\mathrm{Dol}},$ the relative Dolbeault stack over $\mathcal{B}$ with natural $\mathbb{G}_m$-action, and put 
\begin{equation}
    \label{eqn: Dolbeault maps}
\mathcal{X}_{\mathrm{Dol}/\mathcal{B}}\xrightarrow{q}\mathcal{X}\xrightarrow{r}\mathcal{B},
\end{equation}
with $p:\mathcal{X}_{\mathrm{Dol}/\mathcal{B}}\to \mathcal{B}.$
\begin{proposition}
\label{Prop:DolbeaultShiftedSympl}
The derived mapping stack $
\Map(\mathcal{X}_{\mathrm{Dol}/\mathcal{B}}, \Perf\times\mathcal{X})$
admits a canonical $(2-d)$-shifted symplectic structure relative to $\mathcal{B}$. More precisely, for any $A \in \mathsf{cdga}_\mathcal{B}^{\le 0}$ and $\mathcal{E}\in \Perf(\mathcal{X}_{\mathrm{Dol}/\mathcal{B}}\times_\mathcal{B} \mathsf{Spec} A)$, the canonical map
\[
-\cap \eta_A: p_{A*}\mathcal{E} \longrightarrow (p_{A*}(\mathcal{E}^\vee))^\vee[2-d]
\]
is a quasi-isomorphism of dg $A$-modules, where $p_A: \mathcal{X}_{\mathrm{Dol}/\mathcal{B}} \times_\mathcal{B} \mathsf{Spec}(A)\to \mathsf{Spec}(A)$ is the projection and $\eta_A$ is the pullback of a global $\mathcal{O}$-orientation $
    \eta:p_*\mathcal{O}_{\mathcal{X}_{\mathrm{Dol}/\mathcal{B}}} \longrightarrow \mathcal{O}_\mathcal{B}[-d],$
    constructed from the relative dualizing complex and Serre duality.
\end{proposition}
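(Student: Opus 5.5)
The plan is to deduce the statement from the relative AKSZ/PTVV transgression theorem (in the form of \cite[Thm.~2.5]{PTVV13} and its relative version \cite[Prop.~3.2.1]{CalaqueHaugsengScheimbauer2025}). The target $\Perf\times\mathcal{X}\to\mathcal{X}$ carries the standard $2$-shifted symplectic form, pulled back from $\Perf$. The source $p:\mathcal{X}_{\mathrm{Dol}/\mathcal{B}}\to\mathcal{B}$ is $\mathcal{O}$-compact, since it is categorically proper by \cite{PortaSala}, exactly as used for diagram \eqref{eqn: RPerf Diagram1}. It therefore suffices to construct a relative $d$-orientation $\eta$ on $p$; transgression then gives a $(2-d)$-shifted closed $2$-form relative to $\mathcal{B}$, whose non-degeneracy is the displayed cap-product statement applied to $\mathcal{E}=\mathrm{ev}^*\mathbb{T}_{\Perf}\simeq \mathcal{E}nd(\mathcal{E}_{\mathrm{univ}})[1]$.

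\emph{Step 1: the orientation.} Use the factorization \eqref{eqn: Dolbeault maps}, $\mathcal{X}_{\mathrm{Dol}/\mathcal{B}}\xrightarrow{q}\mathcal{X}\xrightarrow{r}\mathcal{B}$. Since $\mathcal{X}_{\mathrm{Dol}/\mathcal{B}}=\mathsf{B}_{\mathcal{X}}\widehat{\mathsf{T}}_{\mathcal{X}/\mathcal{B}}$, one has $q_*\mathcal{O}\simeq \mathrm{Sym}(\mathbb{L}_{\mathcal{X}/\mathcal{B}}[-1])=\bigoplus_i\Omega^i_{\mathcal{X}/\mathcal{B}}[-i]$ with zero differential (Higgs/Dolbeault cohomology of $(\mathcal{O},0)$, Example \ref{UniversalDolbeault}). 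Define $\eta$ as the projection onto the top summand $\Omega^d_{\mathcal{X}/\mathcal{B}}[-d]=\omega_{\mathcal{X}/\mathcal{B}}[-d]$, followed by the Grothendieck--Serre trace $r_*\omega_{\mathcal{X}/\mathcal{B}}\to\mathcal{O}_{\mathcal{B}}[-d]$; this uses smoothness and properness of $r$. The shifts combine to $\mathcal{O}_{\mathcal{B}}[-2d]$ for the naive Dolbeault degree count, but the weight grading from the $\mathbb{G}_m$-action, i.e.\ the convention that $\mathsf{T}[1]$ contributes $\Omega^i[-i]$ in the relative Hodge filtration of \cite{PTVV13}, is what produces the stated degree $-d$. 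Pinning this convention down carefully is the first thing to check. Because both constructions commute with base change $\mathsf{Spec}A\to\mathcal{B}$, one obtains $\eta_A$ by pullback.

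\emph{Step 2: non-degeneracy, which is the main obstacle.} For $\mathcal{E}\in\Perf(\mathcal{X}_{\mathrm{Dol}/\mathcal{B}}\times_{\mathcal{B}}\mathsf{Spec}A)$, write $\mathcal{E}=(E,\phi)$ with $E\in\Perf(\mathcal{X}_A)$ and a Higgs field. Then $p_{A*}\mathcal{E}=r_{A*}\mathrm{Dol}(E,\phi)$, where $\mathrm{Dol}(E,\phi)=(E\otimes\Omega^\bullet_{\mathcal{X}/\mathcal{B}},\wedge\phi)$. I would filter $\mathrm{Dol}(E,\phi)$ by form degree. On associated gradeds, the cap-product map becomes the relative Serre duality pairing
\[
r_{A*}(E\otimes\Omega^i)\otimes r_{A*}(E^\vee\otimes\Omega^{d-i})\to r_{A*}\omega\to A[-d],
\]
which is perfect by Grothendieck duality for the smooth proper map $r_A$. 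This perfection holds over arbitrary $A$ by base change and perfectness of pushforward. The pairing $\Omega^i\otimes\Omega^{d-i}\to\Omega^d$ is compatible with $\wedge\phi$ up to sign, since $\phi$ acts as a derivation. Hence the map of filtered objects is a quasi-isomorphism on gradeds, and because the filtration is finite it is a quasi-isomorphism. This gives the displayed statement.

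\emph{Step 3: conclusion.} Apply the relative transgression with $\mathcal{E}=\mathrm{ev}^*\mathbb{T}_{\Perf}$ over each $\mathsf{Spec}A$, using $\mathbb{T}_{\Map}\simeq\pi_*\mathrm{ev}^*\mathbb{T}_{\Perf}$. Step 2 then shows that the induced map $\mathbb{T}\to\mathbb{L}[2-d]$ relative to $\mathcal{B}$ is an equivalence. Closedness of the form is automatic from the AKSZ construction, which yields a relative $(2-d)$-shifted symplectic structure.
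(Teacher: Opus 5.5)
Your overall route is the paper's: compute $q_*\mathcal{O}_{\mathcal{X}_{\mathrm{Dol}/\mathcal{B}}}$ to get $\mathcal{O}$-compactness, build $\eta$ from the top form degree followed by the Grothendieck--Serre trace, check non-degeneracy of the cap product on $r_{A*}$ of the Dolbeault complex paired with its dual, and transgress. Your form-degree filtration in Step 2 is a more careful version of the paper's one-line appeal to Grothendieck--Serre duality, and it is sound. In the Koszul model of $q_*\mathcal{E}$, the Higgs part of the differential raises form degree by exactly one. So $E\otimes\Omega^{\geq p}_{\mathcal{X}/\mathcal{B}}$ is a finite filtration with graded pieces $E\otimes\Omega^p_{\mathcal{X}/\mathcal{B}}[-p]$, and on these the pairing is relative Serre duality.

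The genuine problem is the degree in Step 1, and your attempted repair is wrong. Your ``naive'' count is the correct one. The top summand of $p_*\mathcal{O}$ is $r_*\Omega^d_{\mathcal{X}/\mathcal{B}}[-d]$, and the trace $r_*\Omega^d_{\mathcal{X}/\mathcal{B}}\to\mathcal{O}_{\mathcal{B}}[-d]$ contributes a further $-d$, so $\eta\colon p_*\mathcal{O}\to\mathcal{O}_{\mathcal{B}}[-2d]$. The $\mathbb{G}_m$-weight is an extra grading and does not move cohomological degrees. The rule that $\mathsf{T}[1]$ contributes $\Omega^i[-i]$ is exactly what produces $-2d$, not $-d$.

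No map of degree $-d$ can satisfy the non-degeneracy test. Take $\mathcal{B}=\mathrm{pt}$, $d=1$, $\mathcal{E}=\mathcal{O}$. Then $p_*\mathcal{O}$ has cohomology in degrees $0,1,2$, while $(p_*\mathcal{O})^\vee[-1]$ has cohomology in degrees $-1,0,1$. The statement's version $(p_*\mathcal{O})^\vee[1]$, with shift $[2-d]$, has cohomology in degrees $-3,-2,-1$. So the displayed claim is false as written for general $\mathcal{E}$. It also conflates the orientation shift with the symplectic shift.

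What your Steps 2--3 actually prove, and what you should state, is the following. The map $\eta$ is a relative $2d$-orientation: $p_{A*}\mathcal{E}\to(p_{A*}\mathcal{E}^\vee)^\vee[-2d]$ is a quasi-isomorphism for every perfect $\mathcal{E}$. The transgressed form is therefore $(2-2d)$-shifted. The symplectic shift enters only when you specialise to $\mathcal{E}=\mathrm{ev}^*\mathbb{T}_{\Perf}$ and use $\mathbb{L}_{\Perf}\simeq\mathbb{T}_{\Perf}[-2]$.

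This matches \cite[Cor.~2.13]{PTVV13} and the rest of the paper: the introduction's $2(1-d)$-shifted $\RPerf_{\Dol}(X)$, and the $2d$-orientation $[X_{\Dol}]$ in Theorem \ref{MainTheoremBodyBody}. The paper's own proof of this proposition makes the same slip; its ``trivialization'' $p_*\mathcal{O}\simeq\mathcal{O}_S[-d]$ is not correct. Replace $-d$ by $-2d$ throughout, including $[2-2d]$ in place of $[2-d]$ in Step 3, and drop the appeal to a weight convention. With those changes your argument goes through.
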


\begin{proof}
Let $q:\mathcal{X}_{\mathrm{Dol}/S}\to \mathcal{X}$ be the projection to the underlying family. By definition, $
\mathcal{O}_{\mathcal{X}_{\mathrm{Dol}/S}} \simeq \mathrm{Sym}_{\mathcal{O}_\mathcal{X}}^\bullet(\mathbb{T}_{\mathcal{X}/S}[1]),$ thus applying $q_*\equiv \mathbb{R}q_*$, we obtain a complex $
q_* \mathcal{O}_{\mathcal{X}_{\mathrm{Dol}/S}} \simeq \bigoplus_{i=0}^d \wedge^i \mathbb{T}_{\mathcal{X}/S}[-i],$
with each $\wedge^i \mathbb{T}_{\mathcal{X}/S}$ is perfect over $\mathcal{O}_\mathcal{X}$. Hence $q_* \mathcal{O}_{\mathcal{X}_{\mathrm{Dol}/S}}$ is perfect over $S$. This shows that $\mathcal{X}_{\mathrm{Dol}/S}$ is \emph{$\mathcal{O}$-compact relative to $S$}.

Consider $p:\mathcal{X}_{\mathrm{Dol}/S}\to S$. We need an isomorphism
\[
\eta:p_* \mathcal{O}_{\mathcal{X}_{\mathrm{Dol}/S}} \longrightarrow (Rp_* \mathcal{O}_{\mathcal{X}_{\mathrm{Dol}/S}})^\vee[-d].
\]
Since $Rp_* \mathcal{O}_{\mathcal{X}_{\mathrm{Dol}/S}} \simeq \bigoplus_{i=0}^d Rf_* \wedge^i \mathbb{T}_{\mathcal{X}/S}[-i],$ by Grothendieck–-Serre duality, we have for each $i$:
\[
(Rf_* \wedge^i \mathbb{T}_{\mathcal{X}/S}[-i])^\vee[-d] \simeq Rf_* (\wedge^{d-i} \Omega^1_{\mathcal{X}/S}[d-i]).
\]

In particular, the top wedge gives $H^0(Rp_* \mathcal{O}_{\mathcal{X}_{\mathrm{Dol}/S}}) \simeq \mathcal{O}_S$, and we choose a trivialization
\[
\eta:p_*\mathcal{O}_{\mathcal{X}_{\mathrm{Dol}/S}} \xrightarrow{\sim} \mathcal{O}_S[-d],
\]
defining the relative $\mathcal{O}$-orientation.

Let $A \in \mathsf{cdga}_S^{\le 0}$ and $\mathcal{E}\in \Perf(\mathcal{X}_{\mathrm{Dol}/S}\times_S \mathsf{Spec}(A))$. Locally on $\mathcal{X}$, the Dolbeault complex of $(\mathcal{E},\phi)$ is the complex
\[
\mathsf{Dol}(\mathcal{E}_A,\phi_A) := 
\big[\mathcal{E}_A \xrightarrow{\phi_A} \mathcal{E}_A \otimes \Omega^1_{\mathcal{X}_A/A} \xrightarrow{\phi_A} \dots \xrightarrow{\phi_A} \mathcal{E}_A \otimes \Omega^d_{\mathcal{X}_A/A}\big],
\]
concentrated in cohomological degrees $0,\dots,d$, where $\phi_A$ is the relative Higgs field 
\(\phi_A: \mathcal{E}_A \to \mathcal{E}_A \otimes \Omega^1_{\mathcal{X}_A/A}\) satisfying $\phi_A\wedge \phi_A = 0$.  Equivalently, $\mathcal{E}_A$ is a complex on $\mathcal{X}_A$ equipped with a module structure over $\mathrm{Sym}^\bullet \mathbb{T}_{\mathcal{X}_A/A}[-1]$, and $\mathsf{Dol}(\mathcal{E}_A,\phi_A)$ is the pushforward to $S$ along $r_A: \mathcal{X}_A \to \mathsf{Spec} A$:
\[
p_{A*} \mathcal{E}_A \simeq r_{A*} \mathsf{Dol}(\mathcal{E}_A,\phi_A),
\]
a complex in degrees $0$ to $d$.
Define the dual Dolbeault complex
\[
\mathsf{Dol}(\mathcal{E}_A,\phi_A)^\vee := 
\big[\mathcal{E}_A^\vee \xrightarrow{-\phi_A^\vee} \mathcal{E}_A^\vee \otimes \Omega^1_{\mathcal{X}_A/A} \to \dots \to \mathcal{E}_A^\vee \otimes \Omega^d_{\mathcal{X}_A/A}\big],
\]
concentrated in cohomological degrees $-d,\dots,0$. Then
\[
(p_{A*}(\mathcal{E}_A^\vee))^\vee[2-d] \simeq r_{A*}(\mathsf{Dol}(\mathcal{E}_A,\phi_A)^\vee)^\vee[2-d].
\]
By Grothendieck–Serre duality for the map $r_A:\mathcal{X}_A\to \mathsf{Spec} A$, the pairing
\[
\mathsf{Dol}(\mathcal{E}_A,\phi_A) \otimes \mathsf{Dol}(\mathcal{E}_A,\phi_A)^\vee \longrightarrow \mathcal{O}_{\mathcal{X}_A}[-d] \xrightarrow{\eta_A} A[-d]
\]
induces a quasi-isomorphism of dg-$A$-modules
\[
-\cap \eta_A: p_{A*} \mathcal{E}_A \longrightarrow (p_{A*}(\mathcal{E}_A^\vee))^\vee[2-d].
\]

The map $-\cap \eta_A$ provides the non-degenerate $(2-d)$-shifted symplectic pairing on the tangent complex of the derived mapping stack.  

Hence $\Map(\mathcal{X}_{\mathrm{Dol}/S},\Perf)$ carries a canonical $(2-d)$-shifted symplectic structure relative to $S$.

\end{proof}

We now recall that for any $A$-point, there is a canonical equivalence of $\infty$-categories
\begin{equation}
\label{DolbeaultPerfCATBNR}
\mathsf{Perf}\big(X_{\mathrm{Dol}}\times \mathsf{Spec}_k(A)\big)\simeq \mathsf{Mod}_{\mathrm{Sym}_{\mathcal{O}_{X_A}}(p_X^*\mathsf{T}_X)}\big(\mathsf{Perf}(X_A)\big),
\end{equation}
which is essentially the content of the of the Beauville--Narasimhan--Ramanan correspon
dence for perfect complexes, which we now recall in detail due to its relevance. See \cite[Lem.~6.8]{Simpson1994ModuliII} and more generally, \cite[Cor.~4.42]{DPFSV2026}.
\begin{proposition}
\label{DolDeformationTheory}
Let $X$ be a smooth and proper scheme with cotangent bundle $\pi:\mathsf{T}^*X\to X.$
Then the pushforward functor
$\pi_*^{\mathsf{QCoh}}: \mathsf{QCoh}(\mathsf{T}^*X) \to \mathsf{QCoh}(X),$
restricts to an equivalence
$$\mathsf{Perf}_{\pi-\text{prop}}(\mathsf{T}^*X)\simeq \mathsf{Perf}(X_{\mathrm{Dol}}).$$
Consider the full subcategory 
$\mathsf{Perf}_{X}(\mathsf{T}^*X)\hookrightarrow \mathsf{Perf}(\mathsf{T}^*X)$ of perfect complexes set-theoretically supported on the zero section $X\hookrightarrow \mathsf{T}^*X.$
Then $\pi_*^{\mathsf{QCoh}}$ restrictrs to an equivalence
$$\pi_*^{\mathsf{Perf}}|_{X\subset \mathsf{T}^*X}:\mathsf{Perf}_X(\mathsf{T}^*X) \xrightarrow{\ \sim\ } \mathsf{Perf}(X_{\mathrm{Dol}}^{\mathrm{nil}}).$$

\end{proposition}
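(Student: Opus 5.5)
The plan is to identify $\mathsf{Perf}(X_{\mathrm{Dol}})$ with modules over a sheaf of commutative algebras on $X$, and then apply relative-affine descent along $\pi$. First, using \eqref{DolbeaultPerfCATBNR}, we have $\mathsf{Perf}(X_{\mathrm{Dol}})\simeq \mathsf{Mod}_{\mathrm{Sym}_{\mathcal{O}_X}(\mathsf{T}_X)}(\mathsf{Perf}(X))$. More precisely, these are $\mathrm{Sym}_{\mathcal{O}_X}(\mathsf{T}_X)$-module objects whose underlying $\mathcal{O}_X$-module is perfect. Since $\pi:\mathsf{T}^*X\to X$ is affine with $\pi_*\mathcal{O}_{\mathsf{T}^*X}\simeq \mathrm{Sym}_{\mathcal{O}_X}(\mathsf{T}_X)$, the pushforward gives the standard equivalence
\[
\pi_*^{\mathsf{QCoh}}:\mathsf{QCoh}(\mathsf{T}^*X)\xrightarrow{\sim}\mathsf{Mod}_{\mathrm{Sym}_{\mathcal{O}_X}(\mathsf{T}_X)}(\mathsf{QCoh}(X)).
\]
This is Barr--Beck--Lurie for the affine morphism $\pi$: the functor $\pi_*$ is conservative and preserves colimits, and the resulting monad is $\pi_*\mathcal{O}\otimes-$. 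Combining the two equivalences reduces the first claim to identifying the correct full subcategories on each side.

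The key step is this matching of subcategories. We define $\mathsf{Perf}_{\pi\text{-prop}}(\mathsf{T}^*X)$ to be the objects $\mathcal{F}\in\mathsf{Perf}(\mathsf{T}^*X)$ such that $\pi_*\mathcal{F}\in\mathsf{Perf}(X)$; equivalently, the cohomology sheaves of $\mathcal{F}$ have support proper over $X$. With this definition, pushforward lands in $\mathrm{Sym}(\mathsf{T}_X)$-modules with perfect underlying object, so the image lies in $\mathsf{Perf}(X_{\mathrm{Dol}})$.

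For essential surjectivity, take $M$ a $\mathrm{Sym}(\mathsf{T}_X)$-module that is perfect over $\mathcal{O}_X$. The corresponding quasi-coherent $\widetilde M$ on $\mathsf{T}^*X$ then has cohomology coherent and finite over $X$, hence it is almost perfect. We must still show $\widetilde M$ is perfect, i.e.\ of finite Tor-amplitude. Smoothness of $\mathsf{T}^*X$ gives this: on a regular scheme, any bounded complex with coherent cohomology is perfect. Boundedness follows because perfect objects over $\mathcal{O}_X$ are bounded and $\pi_*$ is t-exact for affine $\pi$. This regularity argument is the main obstacle, and it is exactly where smoothness of $X$ is used. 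Full faithfulness is inherited from the $\mathsf{QCoh}$-level equivalence, since both subcategories are full.

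For the second claim, $X^{\mathrm{nil}}_{\mathrm{Dol}}=\mathsf{B}_X\mathsf{T}X$ and $X_{\mathrm{Dol}}=\mathsf{B}_X\widehat{\mathsf{T}X}$ differ in that one uses the formal completion. Correspondingly, we identify the relevant perfect modules as those on which $\mathsf{T}_X\subset\mathrm{Sym}(\mathsf{T}_X)$ acts locally nilpotently. Concretely, this means $\mathrm{Sym}(\mathsf{T}_X)$-module structures that extend continuously over the completion at the augmentation ideal $I=\mathrm{Sym}^{\geq1}$. This is the Koszul-dual description of $\mathsf{B}_X$ of a formal group versus an honest one, and it is the step requiring the most care; I would state it with reference to \cite[Cor.~4.42]{DPFSV2026}.

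Under $\pi_*$, local nilpotence of the $I$-action on cohomology is equivalent to set-theoretic support on $V(I)$, which is the zero section. Therefore the equivalence from the first part restricts to $\mathsf{Perf}_X(\mathsf{T}^*X)\simeq\mathsf{Perf}(X^{\mathrm{nil}}_{\mathrm{Dol}})$. Properness is automatic here, since the zero section is proper over $X$.
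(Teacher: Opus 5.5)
Your proposal is correct and follows essentially the same route as the paper. Both identify $\mathsf{QCoh}(\mathsf{T}^*X)$ with $\mathrm{Sym}_{\mathcal{O}_X}(\mathsf{T}_X)$-modules in $\mathsf{QCoh}(X)$ via the affine pushforward $\pi_*$, match this with $\mathsf{QCoh}(X_{\mathrm{Dol}})$, and restrict to the full subcategories cut out by perfectness of $\pi_*$ and the proper (resp.\ zero-section) support condition; you additionally make explicit two points the paper's proof leaves implicit, namely regularity of $\mathsf{T}^*X$ to upgrade bounded coherent to perfect, and the nilpotent case via local nilpotence of the augmentation ideal being equivalent to support on the zero section.
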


\begin{proof}
The functor $\pi_*$ induces a canonical equivalence
$$
\mathsf{QCoh}(\mathsf{T}^*X)
\simeq
\mathsf{Mod}_{\mathrm{Sym}_{\mathcal{O}_X}(\mathsf{T}_X)}(\mathsf{QCoh}(X))
\simeq
\mathsf{QCoh}(X_{\mathrm{Dol}}),$$
where the second equivalence follows from the identification of the Dolbeault stack
$X_{\mathrm{Dol}}$ with modules over the commutative algebra
$\mathrm{Sym}_{\mathcal{O}_X}(\mathsf{T}_X),$ where $\mathsf{T}_X$ is the usual tangent bundle $T_X$ as $X$ is smooth and proper. Then \eqref{DolbeaultPerfCATBNR} arises due to restricting this equivalence to properly supported complexes. If $F \in \mathsf{QCoh}(\mathsf{T}^*X)$ is such that $\pi_*(F) \in \mathsf{Perf}(X_{\mathrm{Dol}})$,
one may show $F$ is properly supported over $X$. Since $X$ is smooth, $F$ has bounded
cohomology, i.e. $\pi_i(F) = 0$ for all but finitely many $i$. It suffices to check
that each cohomology sheaf $\pi_i(F)$ is properly supported over $X$. This follows from the
fact that $\pi_*(F)$ is perfect, hence coherent with proper support, and the equivalence above
preserves support conditions. Indeed, $\pi:\mathsf{T}^*X\to X$ has finite tor-amplitude and hence so does $\pi_*F$. The cohomology spectral sequence implies convergence 
$\mathbb{R}^i\pi_*^{\mathsf{QCoh}}(\pi_jF)\Rightarrow \mathbb{R}^{i+j}\pi_*^{\mathsf{QCoh}}F,$ hence each $\mathbb{R}^i\pi_*^{\mathsf{QCoh}}F$ is coherent and vanishes for sufficiently large $i.$ In other words, $F \in \mathsf{Perf}_{\pi\text{-prop}}(\mathsf{T}^*X)$.
\end{proof}


\subsection{Recollections on the Deligne shape}
We recall in some detail the Deligne-shape \cite[Sect. 6.1]{PortaSala}.
Let $X_{\mathrm{Del},\mathbb{G}_m}^{\bullet}$ be the formal completion defined therein and its geometric realization,
$$X_{\Hod}:=\big|X_{\Hod}^{\bullet}\big|\in \mathsf{dSt}_{/\Theta}.$$
\begin{remark}[Notation]
Pulling back via the atlas $\beta:\mathbb{A}^1\to \Theta,$ which forgets the $\mathbb{G}_m$-action, is denoted in \emph{loc.cit} by $X_{\mathrm{Del}}:=\beta^*(X_{\Hod}).$ We denote it here by $X_{\Hod}\times\mathbb{A}^1.$
    When $X$ is a smooth proper scheme over $\mathbb{C},$ a quasi-coherent complex $E\in \mathsf{QCoh}(X_{\Hod}\times\mathbb{A}^1)$ has, as its fiber $E|_{\lambda}$ over $\lambda\in \mathbb{A}^1,$ a quasi-coherent $\lambda$-connection on $X.$
\end{remark}
We recall and generalize the construction to the relative setting of a smooth morphism $f:\mathcal{X}\to \mathcal{B}$ of smooth projective varieties, of relative dimension $d.$ 

Let $\mathbb{A}_{\mathcal{B}}^1:=\mathbb{A}^1\times\mathcal{B},$ set $\Theta_{\mathcal{B}}:=[\mathbb{A}_{\mathcal{B}}^1/\mathbb{G}_m].$
$$\underline{\mathsf{Del}}^{\bullet}_{\mathcal{B}}:\Delta\to \mathsf{dAff}_{/\mathbb{A}^1_{\mathcal{B}}},$$
be the cosimplicial derived affine scheme over $\mathbb{A}_{\mathcal{B}}^1$, with simplices,
$\underline{\mathsf{Del}}^n:=\mathsf{Spec}(\mathbb{C}[x,y]/(x^n-y^n)),$ with structure map to $\mathbb{A}^1=\mathrm{Spec}(\mathbb{C}[t])$ given by $t\to y.$
The $\mathbb{G}_m$-action on $\mathsf{Del}^n$ is $\mathbb{A}^1$-equviariant, and there is cosimplicial derived stack,
$$\underline{\mathsf{Del}}_{\mathbb{G}_m/\mathcal{B}}^{\bullet}:=[\underline{\mathsf{Del}}^{\bullet}_{\mathcal{B}}/\mathbb{G}_m]:\Delta\to \mathrm{dStk}_{/\Theta_{\mathcal{B}}},$$
where the $\mathbb{G}_m$-action on $\mathcal{B}$ is trivial.
Then,
$$\Map_{/\Theta_{\mathcal{B}}}(\underline{\mathsf{Del}}_{\mathbb{G}_m/\mathcal{B}}^{\bullet},\mathcal{X}\times_{\mathcal{B}} \Theta_{\mathcal{B}}),$$
is simplicial object over $\Theta_{\mathcal{B}}.$ 
Pull-back via $\beta$ gives the $\mathbb{A}^1_{\mathcal{B}}$-cosimplicial object
$\Map_{/\mathbb{A}^1_{\mathcal{B}}}(\mathsf{Del}^{\bullet}_{\mathcal{B}},\mathcal{X}\times_{\mathcal{B}}\mathbb{A}^1_{\mathcal{B}}).$
The natural map $\mathsf{Del}_{\mathbb{G}_m/\mathcal{B}}^{\bullet}\to \Theta_{\mathcal{B}},$ gives a map
$$\delta:\mathcal{X}\times\Theta_{\mathcal{B}}\to \Map_{/\Theta_{\mathcal{B}}}(\mathsf{Del}_{\mathbb{G}_m/\mathcal{B}}^{\bullet},\mathcal{X}\times\Theta_{\mathcal{B}}),$$
and for each $[n]\in \Delta,$ gives a family of morphisms,
\[
\begin{tikzcd}
\mathcal{X}\times\mathbb{A}^1_{\mathcal{B}}\arrow[dr]\arrow[rr] && \Map_{/\mathbb{A}^1_{\mathcal{B}}}(\mathsf{Del}^n,\mathcal{X}\times\mathbb{A}^1_{\mathcal{B}})\arrow[dl]
\\
& \mathbb{A}^1\times\mathcal{B} &
\end{tikzcd}
\]
Noting the morphism $\mathcal{X}\times \mathbb{A}^1\to \mathcal{B}\times\mathbb{A}^1$ is simply $f\times \mathrm{id}_{\mathbb{A}^1},$ then this is nothing but the deformation to the normal cone construction applied to the relative diagonal $\Delta_
{\mathcal{X}/\mathcal{B}}^n:\mathcal{X}\to \mathcal{X}_{\mathcal{B}}^n.$ In particular, for a fiber over $(b,t)\in \mathbb{A}_{\mathcal{B}}^1,$ letting $(\mathcal{X}\times\mathbb{A}^1)_{(b,t)}\simeq \mathcal{X}_b,$ fiberwise this is the relative diagonal $\Delta_{\mathcal{X}_n}^n:\mathcal{X}_b\to \mathcal{X}_b^n,$ for $b\in \mathcal{B}$ and $[n]\in \Delta.$
Then, we obtain a new simplicial object, defined by the homotopy fiber product,
\[
\begin{tikzcd}
    (\mathcal{X}/\mathcal{B})_{\mathrm{Del},\mathbb{G}_m}^{\bullet}\arrow[d]\arrow[r] & \Map_{/\Theta_{\mathcal{B}}}(\mathsf{Del}_{\mathbb{G}_m/\mathcal{B}}^{\bullet},\mathcal{X}\times_{\mathcal{B}} \Theta_{\mathcal{B}})
\arrow[d]
\\
(\mathcal{X}\times\Theta_{\mathcal{B}})_{\mathrm{DR}}\arrow[r,"\delta_{\mathrm{DR}}"] & \Map_{/\Theta_{\mathcal{B}}}(\mathsf{Del}_{\mathbb{G}_m/\mathcal{B}}^{\bullet},\mathcal{X}\times[\mathbb{A}^1_{\mathcal{B}}\times\mathbb{G}_m])_{\mathrm{DR}}.
\end{tikzcd}
\]
Then $(\mathcal{X}/\mathcal{B})_{\mathrm{Del},\mathbb{G}_m}$ is the corresponding formal completion with $(\mathcal{X}/\mathcal{B})_{\mathrm{Del}},$ the pull-back along $\mathbb{A}^1\times\mathcal{B}\to [\mathbb{A}^1/\mathbb{G}_m]\times\mathcal{B}.$
\begin{proposition}
\label{DelDeformationTheory}
Let $f:\mathcal{X}\to \mathcal{B}$ be smooth of relative dimension $d.$ Consider,
$\Map_{/\mathcal{B}\times\mathbb{A}^1}\big((\mathcal{X}/\mathcal{B})_{\mathrm{Del}},\Perf\times \mathbb{A}_{\mathcal{B}}^1\big).$
It is naturally endowed with a relative (to $\mathbb{A}_{\mathcal{B}}^1$) $(2-d)$-shifted symplectic structure.
\end{proposition}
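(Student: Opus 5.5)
The plan is to apply the relative AKSZ/PTVV transgression theorem of \cite[Prop.~3.2.1]{CalaqueHaugsengScheimbauer2025} (in the twisted form of \cite[Thm.~2.35]{calaque2024shifted}) to the target $\Perf\times\mathbb{A}^1_{\mathcal{B}}$. The target carries the constant relative $2$-shifted symplectic form pulled back from $\Perf$. The source is $(\mathcal{X}/\mathcal{B})_{\mathrm{Del}}\to\mathbb{A}^1_{\mathcal{B}}$. This reduces the statement to two facts:
\begin{enumerate}
\item[(a)] $(\mathcal{X}/\mathcal{B})_{\mathrm{Del}}\to\mathbb{A}^1_{\mathcal{B}}$ is $\mathcal{O}$-compact relative to $\mathbb{A}^1_{\mathcal{B}}$;
\item[(b)] it carries a relative $d$-orientation $[\mathcal{X}_{\mathrm{Del}}]:p_*\mathcal{O}\to\mathcal{O}_{\mathbb{A}^1_{\mathcal{B}}}[-d]$ which is non-degenerate after base change to every $\mathsf{Spec}A\to\mathbb{A}^1_{\mathcal{B}}$.
\end{enumerate}
Granting (a) and (b), transgression gives a closed relative $2$-form of degree $2-d$. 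Its non-degeneracy follows from the pairing argument in the proof of Proposition \ref{Prop:DolbeaultShiftedSympl}, using the description of the tangent complex as $q_+\mathrm{ev}^*\mathbb{T}_{\Perf}$ with $\mathbb{T}_{\Perf}$ given by $\mathrm{End}(\mathcal{E})[1]$.

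For (a), use that $X_{\sharp}\to\mathbb{A}^1$ is categorically proper \cite[Lem.~5.3.2]{PortaSala}, and extend this to the relative case by working fiberwise over $\mathcal{B}$ with $f$ smooth. Concretely, for a point $\lambda$ of $\mathbb{A}^1$, the structure sheaf of $(\mathcal{X}/\mathcal{B})_{\mathrm{Del}}$ pushes forward to the relative $\lambda$-de Rham complex
\[
\big(\Omega^\bullet_{\mathcal{X}/\mathcal{B}},\lambda d\big),
\]
computed as a filtered object through the cosimplicial presentation $\underline{\mathsf{Del}}^\bullet_{\mathcal{B}}$. The Hodge filtration on it has associated graded $\bigoplus_i\Omega^i_{\mathcal{X}/\mathcal{B}}[-i]$, which is perfect over $\mathcal{B}$ by properness and smoothness of $f$. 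Hence the pushforward along $\mathbb{A}^1_{\mathcal{B}}$ is perfect, flat in $\lambda$, and commutes with base change. This interpolates between the Dolbeault computation of Proposition \ref{Prop:DolbeaultShiftedSympl} at $\lambda=0$ and relative de Rham cohomology at $\lambda\neq0$.

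For (b), the orientation is the trace $R^{2d}$ of the top piece of the Hodge filtration. The source is $Rf_*\Omega^d_{\mathcal{X}/\mathcal{B}}[-d]$, and this composes with Grothendieck--Serre trace $Rf_*\omega_{\mathcal{X}/\mathcal{B}}\to\mathcal{O}_{\mathcal{B}}[-d]$. It has to be checked that this trace is compatible with the differential $\lambda d$. This holds because $d$ lands in degree $>2d$ relative to the top forms, so the map factors through the quotient by $\mathrm{Fil}^{d}$-lower terms and is $\mathbb{G}_m$-equivariant, of weight matching $\mathcal{O}_{\Theta}(1)^{\otimes d}$ if one wants the $\Theta$-version. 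Non-degeneracy is checked on the associated graded, i.e. reduced to Proposition \ref{Prop:DolbeaultShiftedSympl}. There a filtered map that is an equivalence on the associated graded is an equivalence, since the filtration is finite of length $d+1$, uniformly in $A$.

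The main obstacle is step (b): making the trace a genuine map of (filtered) complexes over all of $\mathbb{A}^1_{\mathcal{B}}$, rather than fiberwise, and verifying compatibility with the cosimplicial/formal-completion definition of $(\mathcal{X}/\mathcal{B})_{\mathrm{Del}}$. I would first try to identify $p_*\mathcal{O}$ with the Rees module of the Hodge-filtered relative de Rham complex. The orientation is then the Rees construction applied to the standard de Rham trace, and non-degeneracy on the fiber at $\lambda=1$ is relative Poincaré duality. The finite filtration argument then handles all $\lambda$ simultaneously.
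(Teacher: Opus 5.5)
The overall strategy is sound (relative AKSZ with target $\Perf\times\mathbb{A}^1_{\mathcal{B}}$, an orientation built from the Hodge-filtered relative de Rham complex, non-degeneracy by a finite filtration argument). But the degree bookkeeping breaks exactly where you try to land on $(2-d)$.

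\textbf{The degree gap.} In (b) you announce a $d$-orientation $p_*\mathcal{O}\to\mathcal{O}_{\mathbb{A}^1_{\mathcal{B}}}[-d]$. The map you actually build is the Serre trace on the top Hodge piece, $Rf_*\Omega^d_{\mathcal{X}/\mathcal{B}}[-d]\to\mathcal{O}_{\mathcal{B}}[-2d]$, which is a $2d$-orientation. Transgressing the $2$-shifted form of $\Perf$ along it gives a $(2-2d)$-shifted form.

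This cannot be repaired in the other direction. Take $d\geq 1$ the complex relative dimension. Then $p_*\mathcal{O}$ has fibrewise cohomology in degrees $[0,2d]$ with $\mathcal{H}^{2d}\neq 0$: it is relative de Rham cohomology at $\lambda=1$ and $R^df_*\Omega^d_{\mathcal{X}/\mathcal{B}}$ at $\lambda=0$. Meanwhile $(p_*\mathcal{O})^\vee[-d]$ lives in degrees $[-d,d]$. So no degree-$d$ orientation is non-degenerate, already for $E=\mathcal{O}$.

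Hence the statement as printed holds only if $d$ means the real relative dimension. For the complex relative dimension the correct shift is $2-2d$, which is what the paper uses everywhere else:
\begin{itemize}
\item $2(1-d)$ in the introduction, Theorem~\ref{MainTheorem5} and Theorem~\ref{AnalyticNAH};
\item $0$-shifted for curves in Proposition~\ref{RMDel0shifted};
\item $(-2)$-shifted for surfaces in Corollary~\ref{RelDim2NAHLag}.
\end{itemize}
The paper's own proof just says to compute $\mathbb{R}p_{A*}\mathcal{E}_A$ and its shifted dual and repeat Proposition~\ref{Prop:DolbeaultShiftedSympl}, so it inherits the same slip. You should state a $2d$-orientation and conclude a $(2-2d)$-shifted structure.

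\textbf{Comparison with the paper, once corrected.} Your route is then a genuinely more complete version of the paper's. The paper only asserts, $A$-point by $A$-point, that the duality pairing on the $\lambda$-twisted de Rham complex is perfect. It never produces the orientation as a single map over $\mathbb{A}^1_{\mathcal{B}}$, and it leaves closedness of the form implicit. In your route AKSZ gives closedness for free. Your Rees-module identification $p_*\mathcal{O}\simeq Rf_*\big(\Omega^\bullet_{\mathcal{X}/\mathcal{B}}[\lambda],\lambda d\big)$, with its length-$(d+1)$ stupid filtration, gives perfectness, a $\lambda$-uniform orientation, and non-degeneracy for all $A$ at once.

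\textbf{Justifications to repair.}
\begin{itemize}
\item $\mathrm{Fil}^d=\Omega^d_{\mathcal{X}/\mathcal{B}}[-d]$ is a \emph{sub}complex, so ``projecting to the top piece'' is not a chain map. You must extend the Serre trace from $\mathrm{Fil}^d$ to $p_*\mathcal{O}$ through $p_*\mathcal{O}\to\tau^{\geq 2d}p_*\mathcal{O}$. Since $\mathcal{H}^{2d}(p_*\mathcal{O})$ is the cokernel of $\lambda d:R^df_*\Omega^{d-1}_{\mathcal{X}/\mathcal{B}}[\lambda]\to R^df_*\Omega^{d}_{\mathcal{X}/\mathcal{B}}[\lambda]$, the actual input is $\mathrm{tr}\circ d=0$ on $R^df_*\Omega^{d-1}_{\mathcal{X}/\mathcal{B}}$. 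It is not that ``$d$ lands in degree $>2d$''.
\item The associated graded of the stupid filtration on $p_{A*}E$ is $\bigoplus_p Rf_{A*}(E\otimes\Omega^p)[-p]$ with zero differential. So the graded check is plain relative Serre duality between $E\otimes\Omega^p$ and $E^\vee\otimes\Omega^{d-p}$. It is not a reduction to Proposition~\ref{Prop:DolbeaultShiftedSympl}, whose proof carries the same degree error.
\item You use properness of $f$, which is only available from the ambient assumption that $\mathcal{X}$ and $\mathcal{B}$ are smooth projective.
\item The flatness in $\lambda$ you mention needs Hodge--de Rham degeneration, but it is never used.
\end{itemize}
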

\begin{proof}
 As before, let $p:\mathcal{X}_{\mathrm{Del}/\mathcal{B}}\to \mathcal{B}$ for simplicity and $A\in \mathsf{cdga}_{\mathcal{B}}^{\leq 0},$ and consider
  $(\mathcal{E}_{A},\nabla_A^{\lambda})\in \mathsf{Perf}\big(\mathcal{X}_{\mathrm{Del}/\mathcal{B}}\times_{\mathcal{B}}\mathsf{Spec}(A)\big).$
  The result follows by computing $\mathbb{R}p_{A*}(\mathcal{E}_A)$ and $\big(\mathbb{R}p_{A*}(\mathcal{E}_A^{\vee})\big)[d-2],$ following the same proof as Proposition \ref{Prop:DolbeaultShiftedSympl}.
\end{proof}

\section{Lagrangian correspondences of nonabelian Hodge-type}
\label{sec: LagCorrNAHType}
In this section, we establish a Lagrangian correspondence between the derived moduli stacks underlying the nonabelian Hodge correspondence by interpreting the Hodge stack as a filtration of the de Rham moduli. This yields a natural shifted Poisson structure via Proposition~\ref{LagThicken}. In the curve case, we relate this to Hitchin’s integrable system (Proposition~\ref{NonAbelianCompleteIntegrable}).

Let $(Z,\omega_Z)$ be a derived Artin stack with an $n$-shifted symplectic structure. Then the derived stack of graded objects $\mathsf{Grad}(Z)$ defined by \eqref{eqn:FiltGrPst}, has a canonical $n$-shifted symplectic form $[\omega_{\mathsf{Grad}(Z)}]$. Namely, let $[\mathsf{B}\mathbb{G}_m]:\mathbb{R}\Gamma(\mathsf{B}\mathbb{G}_m,\mathcal{O})\to\mathbb{C}$ be the canonical $0$-orientation \cite[Prop.~5.14]{KinjoParkSafranov2024}. Then the corresponding $2$-form of degree $n$ is induced by the composition,
\[
\begin{tikzcd}
\mathbb{C}[-2-n]
\arrow[r,"{[\omega_{Z}]}"]
\arrow[drr,dotted,swap,"{[\omega_{\mathsf{Grad}(Z)}]}"]
& \mathrm{DR}(Z)
\xrightarrow{\mathrm{eval}^*}
\mathrm{DR}\big(\mathsf{B}\mathbb{G}_m\times \mathsf{Grad}(Z)\big)
\arrow[r,"\kappa"]
& \mathbb{R}\Gamma(\mathsf{B}\mathbb{G}_m,\mathcal{O})\otimes\mathrm{DR}\big(\mathsf{Grad}(Z)\big)
\arrow[d,"{[\mathsf{B}\mathbb{G}_m]}\otimes \mathrm{Id}"]
\\
& &
\mathrm{DR}\big(\mathsf{Grad}(Z)\big)
\end{tikzcd}
\]
where $\mathrm{eval}:\mathsf{B}\mathbb{G}_m\times\mathsf{Grad}(Z)\to Z$ is the canonical evaluation map and $\kappa$ is the K\"unneth morphism.

Since $\mathrm{Spec}(\mathbb{C})$ is $0$-oriented, we prove the following result based on the fact that cospans of derived stacks are sent to Lagrangian correspondences under the derived AKSZ-construction \cite[Prop.~3.4.2]{CalaqueHaugsengScheimbauer2025}.

\begin{proposition}
    \label{MainTheoremBody}
Let $X$ be a smooth proper algebraic variety over $\mathbb{C}$. Let $X^{\an}$ be its analytification. Then 
$$\mathsf{AnFilt}(\RAnPerf(X_{\DR}^{\an})\big): \mathsf{AnGrad}\big(\RAnPerf(X_{\DR}^{\an})\big)\dashrightarrow \RPerf_{\DR}(X)^{\an},$$
is a $2(1-d)$-shifted Lagrangian correspondence of derived analytic stacks with $\mathbf{G}_m$-actions, where $\RAnPerf(X_{\DR}^{\an})$ has the trivial action. Thus, $\mathsf{AnFilt}(\RAnPerf_{\DR}(X^{\an})\big)$ is naturally endowed with $(1-2d)$-shifted Poisson structure.
\end{proposition}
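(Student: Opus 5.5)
The plan is to prove the algebraic statement first and then transport it to the analytic side. Set $F:=\RPerf_{\DR}(X)=\Map(X_{\DR},\Perf)$. This is a locally geometric derived stack locally of finite presentation. Its $2(1-d)$-shifted symplectic form is obtained by AKSZ transgression of the $2$-shifted form on $\Perf$ along the $d$-orientation of $X_{\DR}$ coming from $[X]$ \cite[Cor.~2.13]{PTVV13}.

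Next I apply $\Map(-,F)$ to the $0$-oriented cospan \eqref{CoSpan}, namely $\mathsf{B}\mathbb{G}_m\xrightarrow{0}\Theta\xleftarrow{1}*$ \cite[Prop.~5.17]{KinjoParkSafranov2024}. By \cite[Prop.~3.4.2]{CalaqueHaugsengScheimbauer2025}, an oriented cospan is sent to a Lagrangian correspondence
\[
\mathsf{Grad}(F)\xleftarrow{\mathrm{gr}}\mathsf{Filt}(F)\xrightarrow{\mathrm{ev}_1} F .
\]
This correspondence carries the shift $2(1-d)$ because the orientation has degree $0$. The form on $\mathsf{Grad}(F)$ is $[\omega_{\mathsf{Grad}(F)}]$, defined via $[\mathsf{B}\mathbb{G}_m]$ as above, and the tangent square is \eqref{CartTangentFiltDiagram}. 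This is essentially \cite[Cor.~5.19]{KinjoParkSafranov2024}. The $\mathbb{G}_m$-actions come from the action of $\mathbb{G}_m$ on $\Theta$ and $\mathsf{B}\mathbb{G}_m$, and they act trivially on $\Map(*,F)=F$. By Proposition \ref{ActAnMaps}, in its algebraic form, they are compatible with evaluation, and the transgressed forms are $\mathbb{G}_m$-invariant.

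Then I analytify. Since $\mathsf{Filt}$ and $\mathsf{Grad}$ are mapping stacks out of $\Theta$ and $\mathsf{B}\mathbb{G}_m$, and $\Perf$ is Tannakian with $\mathsf{QCoh}\simeq\mathsf{Ind}\,\mathsf{Perf}$, I invoke \cite[Thm.~6.14]{HolsteinPorta2025}. Here I use that $\Theta\times X_{\DR}$ and $\mathsf{B}\mathbb{G}_m\times X_{\DR}$ satisfy universal GAGA, where $\mathsf{B}\mathbb{G}_m$ satisfies GAGA by \cite[Prop.~5.33]{HolsteinPorta2025}. Together with $X_{\DR}^{\an}\simeq (X^{\an})_{\DR}$ \cite[Lem.~5.24]{HolsteinPorta2025}, this gives
\[
\mathsf{Filt}(F)^{\an}\simeq \mathsf{AnFilt}(\RAnPerf(X^{\an}_{\DR})),\qquad \mathsf{Grad}(F)^{\an}\simeq\mathsf{AnGrad}(\RAnPerf(X^{\an}_{\DR})) .
\]
Analytification is exact and commutes with the relevant colimits. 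By Proposition \ref{AnDeRhamAlgs} it identifies the de Rham algebras, so the closed forms and the isotropic nullhomotopy analytify to analytic ones. By the canonical equivalence $(\mathbb{L})^{\an}\simeq\mathbb{L}^{\an}$, the cartesian square \eqref{CartTangentFiltDiagram} is sent to the cartesian square required in Definition \ref{LagStructure}. Non-degeneracy is therefore preserved, and the analytification of the algebraic Lagrangian correspondence is an analytic one.

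Finally, the Lagrangian structure on $\mathsf{AnFilt}\to\mathsf{AnGrad}\times\overline{F^{\an}}$ yields a $(2(1-d)-1)=(1-2d)$-shifted Poisson structure by Proposition \ref{LagThicken}. To apply it analytically, I would transport the algebraic Poisson structure through the same analytification equivalences rather than reprove that proposition.

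The main obstacle is the GAGA step. The source $\Theta\times X_{\DR}$ is not proper, so I must check that the mapping-stack comparison \eqref{eqn: Map to AnMap} is still an equivalence, and that $\mathsf{Filt}(F)$ is locally geometric; local geometricity of $\mathsf{Filt}$ of a locally geometric stack is \cite{Hal14}. I would first try to rewrite $\mathsf{Filt}(F)=\Map(X_{\DR},\mathsf{Filt}(\Perf))$, where $\mathsf{Filt}(\Perf)$ is the stack of filtered perfect complexes, and check that this target is Tannakian. That reduces the comparison to the proper source $X_{\DR}$.
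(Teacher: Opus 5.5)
Your proposal follows essentially the paper's argument: the algebraic Lagrangian correspondence $\mathsf{Grad}(F)\leftarrow\mathsf{Filt}(F)\to F$ comes from the $0$-oriented cospan \eqref{CoSpan} (as in \cite[Cor.~5.19]{KinjoParkSafranov2024}), is transported to the analytic side via the Holstein--Porta comparison \eqref{eqn: Map to AnMap}, the identification $(\mathbb{L})^{\an}\simeq\mathbb{L}^{\an}$ and Proposition~\ref{AnDeRhamAlgs}, and the Poisson structure comes from Proposition~\ref{LagThicken}. Two small points: the orientation on $X_{\DR}$ has degree $2d$, not $d$; and your rewriting $\mathsf{Filt}(F)\simeq\Map(X_{\DR},\mathsf{Filt}(\Perf))$ does not remove the non-proper source but reduces it to the comparison $\mathsf{Filt}(\Perf)^{\an}\simeq\mathsf{AnFilt}(\AnPerf)$ over $\Theta$ alone, which the paper also does not prove and only covers by citing \cite[Prop.~5.2, Thm.~5.5]{HolsteinPorta2025}.
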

\begin{proof}
We proceed via direct computation of the (relative) tangent complexes using, for instance, the cartesian diagram \eqref{CartTangentFiltDiagram}, as well as Propositions \ref{DolDeformationTheory} and \ref{DelDeformationTheory}.
By our assumptions on $X$, we may proceed in the algebraic setting, and analytify the result by the canonical morphism \eqref{eqn: Map to AnMap}, using Proposition \ref{AnDeRhamAlgs}. 
Since $\mathsf{Filt}\big(\RPerf_{\DR}(X)\big)\to \mathsf{Grad}\big(\RPerf_{\DR}(X)\big)\times \RPerf_{\DR}(X),$ has the structure of a $2(1-d)$-shifted Lagrangian correspondence, let 
$$h:0\sim (\mathsf{Gr},\mathsf{Un})^*(\omega_{\mathsf{Grad}}\times\omega_{\mathrm{DR}}),\hspace{2mm} \text{ in }\mathcal{A}^{2,\mathrm{cl}}\big(\mathsf{Filt}(\RPerf_{\DR}(X), 2(1-d)\big),$$
be the homotopy defining the isotropic structure i.e. it is the homotopy filling the diagram:
\begin{equation}
\begin{tikzcd}
\label{IsoDiagFilt}
\mathbb{T}_{\mathsf{Filt}(\RPerf_{\DR}(X)}\arrow[d]\arrow[ddr,"0"] & 
\\
(\mathsf{Gr},\mathsf{Un})^*\mathbb{T}_{\mathsf{Grad}(\RPerf_{\DR}(X))\times \RPerf_{\DR}(X)} \arrow[d,"(\mathsf{Gr}\times\mathsf{Un})^*\Theta_{\omega}"]& 
\\
(\mathsf{Gr},\mathsf{Un})^*\mathbb{L}_{\mathsf{Grad}(\RPerf_{\DR}(X))\times \RPerf_{\DR}(X)}[n]\arrow[r] & \mathbb{L}_{\mathsf{Filt}(\RPerf_{\DR}(X))}[2(1-d)].
\end{tikzcd}
\end{equation}
Note explicitly the quasi-isomorphism
\begin{equation}
    \label{ThetaLagHod1}
\Theta_h:\mathbb{T}_{\mathsf{Filt}(\RPerf_{\DR}(X))}\rightarrow \mathbb{L}_{\mathsf{Filt}(\RPerf_{\DR}(X))/\mathsf{Grad}(\RPerf_{\DR}(X))\times \RPerf_{\DR}(X)}[1-2d].
\end{equation}

By adjunction 
$$
\Map([\mathbb{A}^1/\mathbb{G}_m], \Map(X_{\mathrm{DR}}, \Perf))
\simeq
\Map\big(X_{\mathrm{DR}} \times [\mathbb{A}^1/\mathbb{G}_m], \Perf\big),$$
it is clear for each base derived affine scheme $T$, there is a symmetric monoidal equivalence
$$\mathsf{Perf}\big([\mathbb{A}^1/\mathbb{G}_m]\times X_{\mathrm{DR}}\times T)\simeq \mathsf{Fil}_{\mathbb{Z}_{\geq }}\big(\mathsf{Perf}(X_{\mathrm{DR}}\times T)\big),$$
where we mean that 
$$\mathsf{Perf}\big([\mathbb{A}_{\mathbb{C}}^1/\mathbb{G}_m]\times X_{\mathrm{DR}}\times T\big)\simeq \mathrm{colim}_{I\subset \mathbb{Z}_{\geq },\text{finite,connected}}\mathsf{Funct}\big(I,\mathsf{Perf}(X_{\mathrm{DR}}\times T)\big),$$
whose objects correspond to diagrams $\cdots E_1\to E_0\to E_{-1}\to \cdots,$ in $\mathsf{Perf}(X_{\mathrm{DR}}\times T)$ with $E_m=0,m\gg0$ and $E_m\simeq E_{m-1}$ an isomorphism for $m\ll0.$ Equivalently, we have a filtered perect complex on $X_{\mathrm{DR}}\times T$, with descending filtration $\supset \mathrm{Fil}^pE^{\bullet}\supset \mathrm{Fil}^{p+1}E^{\bullet}\supset \cdots,$ indexed by $\mathbb{Z}$ for which $\mathrm{Fil}^pE^{\bullet}\equiv 0,$ for large $p$ and $\mathrm{Fil}^pE^{\bullet}\simeq \mathrm{Fil}^{p-1}E^{\bullet}$ an isomorphism for $p$ small, whose total object is $E\simeq\mathrm{colim}_p \mathrm{Fil}^pE,$ given by the fiber over $1$.
which moreover is compatible in the sense that for $f:S\to T$ a morphism of derived affine schemes, denoting all pullbacks by abuse of notation by $f^*,$ we have a commutative diagram
\[
\begin{tikzcd}
    \mathsf{Perf}([\mathbb{A}^1/\mathbb{G}_m]\times X_{\mathrm{DR}}\times T)\arrow[d,"f^*"]\arrow[r,"\sim"] & \mathsf{Fil}_{\mathbb{Z}_{\geq }}\big(\mathsf{Perf}(X_{\mathrm{DR}}\times T)\big)\arrow[d,"f^*"]
    \\
    \mathsf{Perf}([\mathbb{A}^1/\mathbb{G}_m]\times X_{\mathrm{DR}}\times S)\arrow[r,"\sim"]& \mathsf{Fil}_{\mathbb{Z}_{\geq }}\big(\mathsf{Perf}(X_{\mathrm{DR}}\times S)\big).
\end{tikzcd}
\]
Evaluation on $T$-points,
\begin{equation*}
\mathsf{Fil}_{\mathbb{Z}_{\geq}}\big(\mathsf{Perf}(X_{\mathrm{DR}}\times T)\big)
\simeq \mathsf{Perf}([\mathbb{A}^1/\mathbb{G}_m]\times X_{\mathrm{DR}}\times T)^{\simeq}
\simeq \mathsf{Filt}\big(\RPerf
_{\mathrm{DR}}(X)\big)(T).
\end{equation*}
Similarly, $\mathsf{Grad}(\RPerf_{\DR}(X))(T)\simeq \mathsf{Perf}(\mathsf{B}\mathbb{G}_m\times X_{\mathrm{DR}}\times T)^{\simeq}\simeq\mathsf{Perf}^{\mathrm{gr}}(X_{\mathrm{DR}}\times T)^{\simeq}.$
Finally, the existence of the shifted Poisson structure claim follows since
$$\mathsf{Filt}\big(\RPerf_{\DR}(X)\big)\to \mathsf{Grad}\big(\RPerf_{\DR}(X)\big)\times \overline{\RPerf_{\DR}}(X),$$
is Lagrangian with target a $2(1-d)$-shifted symplectic stack, thus is $2(1-d)-1$-shifted Poisson via Proposition \ref{LagThicken}.

  Analytification for stacks is a left Kan extension, thus it preserves colimits. By analytifying the groupoid presentation 
$\mathbb{G}_m\times\mathbb{A}_{\mathbb{C}}^1\rightrightarrows \mathbb{A}_{\mathbb{C}}^1,$ for $[\mathbb{A}_{\mathbb{C}}^1/\mathbb{G}_m],$ we see $[\mathbb{A}_{\mathbb{C}}^1/\mathbb{G}_m]^{\mathrm{an}}\simeq [\mathbf{A}_{\mathbb{C}}^1/\mathbb{C}^{\times}],$ since $(\mathbb{G}_m\times\mathbb{A}_{\mathbb{C}}^1)^{\mathrm{an}}\simeq (\mathbb{G}_m)^{\mathrm{an}}\times (\mathbb{A}_{\mathbb{C}}^1)^{\mathrm{an}}\simeq \mathbb{C}^{\times}\times\mathbf{A}_{\mathbb{C}}^1.$ Thus we consider the analytic stack quotient $[\mathbf{A}_{\mathbb{C}}^1/\mathbb{C}^{\times}]$ for the holomorphic action $\mathbb{C}^{\times}\times\mathbf{A}_{\mathbb{C}}^1\to \mathbf{A}_{\mathbb{C}}^1$ and following \eqref{eqn:FiltGrPst}, we define
\begin{equation}
    \label{eqn:FiltGrAnPst}\mathsf{AnFilt}(-):=\AnMap\big([\mathbf{A}_{\mathbb{C}}^1/\mathbb{C}^{\times}],-\big),\hspace{5mm} \mathsf{AnGrad}(-):=\AnMap\big(\mathbb{G}_m^{\mathrm{an}},-\big).
    \end{equation}
Then, by \cite[Prop.~5.2, Thm.~5.5]{HolsteinPorta2025}, 
\begin{eqnarray}
\label{FiltAn}
\mathsf{Filt}\big(\RPerf(X_{\mathrm{DR}})\big)^{\mathrm{an}}&=& \Map([\mathbb{A}_{\mathbb{C}}^1/\mathbb{G}_m],\RPerf(X_{\mathrm{DR}})\big)^{\mathrm{an}} \nonumber
\\
&\simeq& \AnMap\big([\mathbf{A}_{\mathbb{C}}^1/\mathbb{C}^{\times}],\RAnPerf(X_{\mathrm{DR}}^{\mathrm{an}})\big) \nonumber
\\
&=& \mathsf{AnFilt}\big(\RAnPerf(X_{\mathrm{DR}}^{\mathrm{an}})\big).
\end{eqnarray}
Note we also used \cite[Lem.~5.24, Lem.~5.31]{HolsteinPorta2025}. Similarly, we obtain an equivalence 
\begin{equation}
    \label{GradAn}
\mathsf{Grad}\big(\RPerf(X_{\mathrm{DR}})\big)^{\mathrm{an}}\simeq \mathsf{AnGrad}\big(\RAnPerf(X_{\mathrm{DR}}^{\mathrm{an}})\big),
\end{equation}
of derived analytic stacks. Since the derived stacks of interest are all (locally) geometric, then by relative GAGA \cite[Prop.~5.2, Thm.~5.5]{HolsteinPorta2025}, which give equivalences \eqref{eqn: RPerfAnCommutes}, we have that
since $\mathcal{A}^{p,\mathrm{cl}}(Z,n)\simeq \Map(Z,\mathcal{A}^{2,\mathrm{cl}}(-,n)),$ we have an equivalence of complexes 
$\mathcal{A}^{2,\mathrm{cl}}(Z,n)^{\mathrm{an}}\xrightarrow{\simeq} \mathcal{A}_{\mathbf{C}}^{2,\mathrm{cl}}(Z^{\mathrm{an}},n),$
and the Lagrangian structure also preserved by analytification since
\begin{eqnarray*}
(\mathbb{T}_{\mathsf{Filt}\big(\RPerf(X_{\mathrm{DR}})\big)})^{\mathrm{an}}&\xrightarrow{\Theta_h^{\mathrm{an}}}&(\mathbb{L}_{\mathsf{gr}\times\mathsf{un}})^{\mathrm{an}}[1-2d]
\\
&\simeq& \mathbb{L}_{(\mathsf{gr}\times\mathsf{un})^{\mathrm{an}}}[1-2d]
\\
&\simeq& \mathbb{L}_{\mathsf{Filt}\big(\RPerf(X_{\mathrm{DR}})\big)^{\mathrm{an}}/(\mathsf{Grad}(\RPerf(X_{\mathrm{DR}})\times\RPerf(X_{\mathrm{DR}})^{\mathrm{an}}}[1-2d].
\end{eqnarray*}
Then, from \eqref{FiltAn} and \eqref{GradAn} by composing we obtain
$$\Theta_{h^{\mathrm{an}}}:\mathbb{T}_{\mathsf{AnFilt}\big(\RAnPerf(X_{\mathrm{DR}}^{\mathrm{an}})\big)}\xrightarrow{\sim}\mathbb{L}_{\mathsf{AnFilt}\big(\RAnPerf(X_{\mathrm{DR}}^{\mathrm{an}})\big)/\mathsf{AnGrad}(\RAnPerf(X_{\mathrm{DR}}^{\mathrm{an}}))\times \RAnPerf(X_{\mathrm{DR}}^{\mathrm{an}})}[1-2d].$$

\end{proof}
We now consider the Hodge stack of perfect complexes $\RPerf_{\Hod}(X)\to \Theta.$ By taking invariants for the natural $\mathbb{G}_m$-action we identify 
$\Map_{\Theta}(X_{\Hod},\Perf_{\mathbb{C}}\times\Theta)^{\mathbb{G}_m}$ with $\Map(X_{\Dol},\Perf_{\mathbb{C}})^{\mathbb{G}_m}.$
The Lagrangian correspondence follows from a twisted version of the following general observation.
Let $Z$ be an $n$-shifted symplectic derived Artin stack. Then $Z\times\mathbb{A}^1\to \mathbb{A}^1$ is relative $n$-shifted symplectic. Since $Z\to Z\times Z$ is shifted Lagrangian, then the morphism of $\mathbb{A}^1$-stacks $(Z\times\mathbb{A}^1)\to (Z\times\mathbb{A}^1)\times(Z\times\mathbb{A}^1)$ is relative shifted Lagrangian.
\begin{theorem}
\label{MainTheoremBodyBody}
Let $X$ be a smooth proper scheme over $\mathbb{C}$ of dimension $d.$ Then $X_{\Hod}\to \Theta,$ carries a canonical $\mathcal{O}_{\Theta}(1)$-twisted $2d$-orientation and by pullback along $\mathbb{A}^1\to \Theta,$ there is a relative shifted Lagrangian correspondence
$$\RPerf_{\Hod}(X)\times\mathbb{A}^1\dashrightarrow \big(\RPerf_{\Hod}(X)\times\mathbb{A}^1\big)\times_{\mathbb{A}^1}\big(\RPerf_{\Hod}(X)\times\mathbb{A}^1\big),$$
compatible with analytifcation.
\end{theorem}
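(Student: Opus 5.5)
The plan has three parts: construct the twisted orientation on $X_{\Hod}\to\Theta$ fibrewise, transgress it via the twisted AKSZ theorem, and then obtain the correspondence from the relative diagonal observation stated just before the theorem.

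\emph{Step 1: the twisted orientation.} I would use the description of $X_{\Hod}$ as the formal deformation to the normal bundle of $X\to X_{\DR}$, together with the cosimplicial Deligne presentation recalled above. The main computation is the relative global sections $\pi_*\mathcal{O}_{X_{\Hod}}\in\mathsf{QCoh}(\Theta)$. Pulled back along $\beta:\mathbb{A}^1\to\Theta$, this is the $\lambda$-de Rham complex $(\Omega^\bullet_X,\lambda d)$, graded by the Hodge filtration. Hence it is perfect, and $X_{\Hod}\to\Theta$ is $\mathcal{O}$-compact and universally perfect, since the maps $X_\sharp\to\mathbb{A}^1$ are categorically proper. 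The preorientation is the trace on the top term $H^d(X,\Omega^d_X)[-2d]$, which sits in filtration weight $d$. To make it $\mathbb{G}_m$-equivariant one must twist by a power of $\mathcal{O}_\Theta(1)$; after normalising, this gives an $\mathcal{O}_\Theta(1)$-twisted $2d$-preorientation $[X_{\Hod}]:\Gamma_\Theta(X_{\Hod},\mathcal{O}_\Theta(1))\to\mathcal{O}_\Theta[-2d]$.

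\emph{Step 2: non-degeneracy.} Non-degeneracy is checked fibrewise over $\Theta$ and reduces to the two fibres. Over $[1]$ it is Poincaré--Serre duality for algebraic de Rham cohomology, i.e. the orientation of $X_{\DR}$. Over $[0]$ it is Serre duality for the Dolbeault complex, as in Proposition~\ref{Prop:DolbeaultShiftedSympl}. The general $\lambda$ is handled as in Proposition~\ref{DelDeformationTheory}. The identity $x^*q_+\simeq p_{A+}(\mathrm{id}\times x)^*$ makes all of these checks compatible with base change.

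\emph{Step 3: the correspondence.} Applying the twisted AKSZ theorem \cite[Thm.~2.35]{calaque2024shifted} to $\Perf$ (which is $2$-shifted symplectic) gives a relative, suitably twisted, $2(1-d)$-shifted symplectic form on $\RPerf_{\Hod}(X)\to\Theta$. Pulling back along $\beta$ untwists it, because $\beta^*\mathcal{O}_\Theta(1)$ is trivial. This produces a relative symplectic stack $\RPerf_{\Hod}(X)\times\mathbb{A}^1\to\mathbb{A}^1$. The general observation before the theorem, that the relative diagonal of a relative symplectic stack is relative Lagrangian, then yields the stated correspondence over $\mathbb{A}^1$. Composing with the evaluations at $\lambda=0$ and $\lambda=1$ recovers $\RPerf(X_{\Dol})$ and $\RPerf(X_{\DR})$ as fibres; this matches Proposition~\ref{MainTheoremBody}.

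\emph{Step 4: analytification.} For compatibility with analytification I would follow the proof of Proposition~\ref{MainTheoremBody}. The ingredients are the comparison map \eqref{eqn: Map to AnMap} being an equivalence for $\Perf$ (by \cite{HolsteinPorta2025}, with $X_{\Hod}$ satisfying universal GAGA since its fibres do), Proposition~\ref{AnDeRhamAlgs} for the de Rham algebras, and the equivalence of analytic and algebraic cotangent complexes for the Lagrangian condition.

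The main obstacle is Step 1: pinning down the exact twist and shift so that the trace is genuinely $\mathbb{G}_m$-equivariant. The orientation class lives in Hodge weight $d$, so naively the twist is $\mathcal{O}_\Theta(d)$. My first attempt would be to compute the weight of the trace on the Rees module of the Hodge filtration explicitly, then renormalise the grading (or absorb the weight into the degree shift) to reach the $\mathcal{O}_\Theta(1)$ twist the statement asks for.
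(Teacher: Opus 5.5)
Your route coincides with the paper's. Both use a twisted orientation on $X_{\Hod}\to\Theta$, twisted AKSZ (the paper cites \cite[Prop.~5.10, Cor.~5.13]{calaque2024shifted} where you cite Thm.~2.35), pullback along $\beta:\mathbb{A}^1\to\Theta$, the relative diagonal, and \eqref{eqn: Map to AnMap} with \cite{HolsteinPorta2025} for analytification. Your Steps~1--2 supply the fibrewise computation that the paper compresses into ``by construction''.

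The step that would fail is your proposed way around the obstacle you flagged: you cannot renormalise the twist down to $\mathcal{O}_\Theta(1)$. The bundles $\mathcal{O}_\Theta(k)$ are pairwise non-isomorphic, since their restrictions to $\mathsf{B}\mathbb{G}_m$ have different weights. A cohomological shift does not change weight either. More decisively, non-degeneracy on the special fibre fixes the twist. Write $\Gamma(X_{\Dol},\mathcal{O})\simeq\bigoplus_p R\Gamma(X,\Omega^p_X)[-p]$ with $\Omega^p_X$ in weight $p$. The pairing must match $1\in H^0$, of weight $0$, with the top class of $H^d(X,\Omega^d_X)$ in degree $2d$, which has weight $d$. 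An $\mathcal{O}_\Theta(1)$-twisted preorientation only sees the summand of weight $\pm1$, which is $R\Gamma(X,\Omega^1_X)[-1]$ or zero. For $d\geq 2$ this summand has no cohomology in degree $2d$, because $H^{2d-1}(X,\Omega^1_X)=0$ when $2d-1>d$. So every $\mathcal{O}_\Theta(1)$-twisted $2d$-preorientation vanishes on the special fibre, and none is non-degenerate.

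Your ``naive'' answer is therefore the right one. The canonical orientation is $\mathcal{O}_\Theta(d)$-twisted (sign according to your weight convention), which agrees with the statement's $\mathcal{O}_\Theta(1)$ only when $d=1$. The paper's proof does not resolve this either. It asserts an ``$\mathcal{O}_\Theta$-twisted'' orientation of degree $2d$ and cites \cite[Prop.~5.1]{calaque2024shifted}, which concerns $\mathsf{B}\mathbb{G}_m\to\Theta$, not $X_{\Hod}\to\Theta$.

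The discrepancy is harmless for the correspondence itself. Keep the $\mathcal{O}_\Theta(d)$-twist in Step~3, so the transgressed $2(1-d)$-shifted form on $\RPerf_{\Hod}(X)\to\Theta$ is twisted by the corresponding power of $\mathcal{O}_\Theta(1)$. Since $\beta^*\mathcal{O}_\Theta(k)$ is trivial for every $k$, the following are unaffected:
\begin{itemize}
\item the relative symplectic structure over $\mathbb{A}^1$;
\item the diagonal Lagrangian;
\item the identification of the fibres at $\lambda=0,1$.
\end{itemize}

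One smaller point concerns Step~4. ``Universal GAGA since its fibres do'' is not an argument: fibrewise equivalences do not by themselves give an equivalence of relative mapping stacks. You need a relative GAGA statement for $X_{\Hod}\times_\Theta\mathbb{A}^1\to\mathbb{A}^1$, which the paper likewise just takes from \cite{HolsteinPorta2025}.
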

\begin{proof}
By \cite[Prop.~5.1]{calaque2024shifted}, the morphism $\mathsf{B}\mathbb{G}_m\to \Theta$ is $\mathcal{O}_{\Theta}(1)[-1]$-oriented over $\Theta$. By construction of $r:X_{\Hod}\to \Theta,$ consider 
$\Gamma_{/\Theta}(X_{\Hod},\mathcal{O})\to \mathcal{O}_{\Theta}[2d],$ which gives an $\mathcal{O}_{\Theta}$-twisted relative orientation of degree $2d$ interpolating the untwisted $2d$-orientations $[X_{\DR}]$ and $[X_{\Dol}]$ \cite{PTVV13}. Then, consider the pull-back deformation space via
\[
\begin{tikzcd}
X_{\Del}\arrow[d]\arrow[r] & X_{\Hod}\arrow[d]
    \\
    \mathbb{A}^1\arrow[r] & \Theta.
\end{tikzcd}
\]
Using \cite[Prop.~5.10, Cor.~5.13]{calaque2024shifted},
by considering $\RPerf_{\mathrm{Hod}}(X):=\Map_{/\Theta}(X_{\Hod},\Perf\times\Theta)\to\Theta,$ and the pull-back along the canonical map $\mathbb{A}^1\to \Theta,$ we get $\RPerf_{\Hod}(X)\times\mathbb{A}^1\to \mathbb{A}^1,$ and consider the diagram 
    \[
    \begin{tikzcd}
     \RPerf_{\Hod}(X)\times\mathbb{A}^1 \arrow[dr] \arrow[rr] & & \arrow[dl] (\RPerf_{\Hod}(X)\times\mathbb{A}^1)\times (\RPerf_{\Hod}(X)\times\mathbb{A}^1)
     \\ 
     &\mathbb{A}^1 &
    \end{tikzcd}
    \]
where the top map is given by $(\mathrm{ev}_0,\mathrm{ev}_1).$ Each factor of the target is $2(1-d)$-shifted symplectic relative to $\mathbb{A}^1,$ thus so is their product, for which the above map is endowed with a relative shifted Lagrangian structure. Compatibility with analytification is clear from \eqref{eqn: Map to AnMap} and the main result of \cite{HolsteinPorta2025}.
\end{proof}
While Theorem \ref{MainTheoremBody} is independent of any choice of stability condition, we mention for completeness the corresponding open substacks and their classical truncations. In the particular case of a complex curve $C$, we also describe the Jordan--H\"older maps, acting by semisimplification from the Artin stack of flat (resp. $\lambda$-connections) to its corresponding good moduli space, in the sense of \cite{alper2013good}. These Artin stacks will be the classical truncations of corresponding derived Artin stacks described we now discuss.
\subsection{Stability considerations}
\label{ssec: Stability considerations}
Let $H$ be an ample divisor and fix $P=P(m)\in\mathbb{Q}[m].$ By \cite[Def.~1.2.4, Prop.~2.3.1]{HuybrechtsLehn2010}, we may study the $H$-semistable coherent sheaves on a projective scheme, making use of openness of $H$-semistability in families.
We can define derived open substacks of the moduli presented so far, with fixed numerical behaviour, following e.g \cite[Cons.~2.5]{PortaSala2023}. There exists an open substack
$$^{\mathrm{cl}}\RCoh^P(X)\hookrightarrow \hspace{.2mm}^{\mathrm{cl}}(\RCoh(X)),$$
parameterizing flat families of coherent sheaves $F$ on $X$ with $P$ fixed. Namely, for $n\gg 0,$ 
$$\dim H^0\big(X,F\otimes\mathcal{O}_X(mH)\big)=P(m).$$
Then, as an open substack, by \cite[Prop.~2.1]{STV}, there is a canonical derived enhancement $\RCoh^P(X)$. The same holds true for $\mathbb{R}\underline{\mathrm{Vect}}^P(X).$
Moreover, for any non-zero $P(m)\in \mathbb{Q}[m]$ of a fixed degree $r$, let $\overline{P}(m):=P(m)/a_r,$ with $a_r$ the leading coeffficient in $P(m).$ Given a monic polynomial $q\in\mathbb{Q}[m],$ we have
$$\RCoh^q(X):=\coprod_{\overline{P}=q}\RCoh^P(X),$$
and similarly for $\mathbb{R}\underline{\mathrm{Vect}}^q(X).$ Assuming $\deg =\dim X,$
Gieseker $H$-semistability is an open property, thus there are open substacks
$^{\mathrm{cl}}\RCoh^{\mathrm{ss},q}(X)\hookrightarrow \hspace{.2mm} ^{\mathrm{cl}}\RCoh^{\mathrm{ss},q}(X),$
parameterizing families of $H$-semistable coherent sheaves on $X$ with fixed reducted polynomial $q$. Again by \cite[Prop~2.1]{STV}, $\RCoh^{\mathrm{ss},q}(X)$ is the canonical derived enhancement.
Similarly for $\mathbb{R}\underline{\mathrm{Vect}}^{\mathrm{ss},q}(X).$

\subsubsection{Variants of Dolbeault Complexes}

Let $X$ be a smooth projective complex scheme. For any monic numerical polynomial $q(m)\in \mathbb{Q}[m],$ consider cartesian diagrams\footnote{Since from \cite[Lem.~ 5.3.1]{PortaSala}, the canonical map $X\to X_{\mathrm{Dol}}$ is a flat effective epimorphism.} in derived stacks

\begin{equation}
    \label{qCohDol}
\begin{tikzcd}
    \RCoh_{\mathrm{Dol}}^q(X)\arrow[d]\arrow[r] & \RCoh^q(X)\arrow[d]
    \\
   \RPerf(X_{\mathrm{Dol}})\arrow[r] & \RPerf(X),
\end{tikzcd}\hspace{5mm} \begin{tikzcd}
    \mathbb{R}\underline{\mathrm{Vect}}_{\mathrm{Dol}}^q(X)\arrow[d]\arrow[r] & \mathbb{R}\underline{\mathrm{Vect}}^q(X)\arrow[d] 
    \\
     \RPerf(X_{\mathrm{Dol}})\arrow[r] & \RPerf(X).
\end{tikzcd}
\end{equation}
The homotopy fiber products \eqref{qCohDol} are both geometric derived stacks, locally of finite presentation.
The higher-dimensional analogue of semistability of Higgs bundles \cite{Simpson1994ModuliI,Simpson1994ModuliII} originally defined over a curve, 
is an instance of the
Gieseker stability condition for modules over a sheaf of rings of differential operators,
when such a sheaf is induced by $\Omega_X^1$ with zero symbol. This
semistability condition is an open property for flat families. Thus,
there exists an open substack
$^{\mathrm{cl}}\RCoh_{\mathrm{Dol}}^{\mathrm{ss},q}(X)\hookrightarrow \hspace{.2mm}^{\mathrm{cl}}\RCoh_{\mathrm{Dol}}^q(X),$
parameterizing families of semistable Higgs sheaves on $X$, with fixed reduced polynomial $\overline{P}(m)=q(m).$ We denote by
$$\RCoh_{\mathrm{Dol}}^{\mathrm{ss},q}(X),$$
its canonical derived enhancement. Similarly, we have geometric derived stack which is locally of finite presentation $\RCoh_{\mathrm{DR}}(X):=\RCoh(X_{\mathrm{DR}})$, as a full substack of $\Map(X_{\mathrm{DR}},\Perf).$ 
Since $X$ is smooth, the canonical map $q_X:X\to X_{\mathrm{DR}}$ is a flat effective epimorphism \cite[Prop.~4.1.1]{PortaSala}, hence we may identify $\RCoh_{\mathrm{DR}}(X)$ as in \eqref{qCohDol}, with the homotopy pull-back,
\begin{equation}
\label{RCohDR}
\begin{tikzcd}
\RCoh(X_{\mathrm{DR}})\arrow[d]\arrow[r] & \RCoh(X)\arrow[d]
    \\
\RPerf(X_{\mathrm{DR}})\arrow[r] & \RPerf(X).
\end{tikzcd}
\end{equation}
The following two results are then direct corollaries of Theorem \ref{MainTheoremBody}.
\begin{corollary}
\label{SSCohCor}
Let $q(m)\in\mathbb{Q}[m],$ be a monic numerical polynomial. Restriction to $\RCoh^{\mathrm{ss},q}$, induces a shifted Lagrangian correspondence $$\RCoh_{\mathrm{Dol}}^{\mathrm{ss},q}(X)\leftarrow \RCoh_{\mathrm{Hod}}^{\mathrm{ss},q}(X)\rightarrow \RCoh_{\mathrm{DR}}^{\mathrm{ss},q}(X),$$
where we consider the (relative) derived stack $\RCoh_{\mathrm{Hod}}(X):(\mathsf{dAff}_{/\mathbb{A}^1})^{\mathrm{op}}\to \mathsf{Spc}$ mapping $(S\to \mathbb{A}^1)$ to $\mathsf{Coh}_{S}\big(S\times_{\mathbb{A}^1}X_{\mathrm{Hod}}\big)^{\simeq}.$
\end{corollary}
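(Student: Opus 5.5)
The plan is to obtain Corollary \ref{SSCohCor} by restricting the Lagrangian correspondence of Theorem \ref{MainTheoremBodyBody}, whose two legs are $(\mathrm{ev}_0,\mathrm{ev}_1)$ out of $\RPerf_{\Hod}(X)\times\mathbb{A}^1$, to open substacks. The guiding principle is that the Lagrangian condition of Definition \ref{LagStructure} is local: it asks that a certain square of tangent and cotangent complexes be cartesian. Tangent complexes are unchanged under restriction along open immersions, and pullbacks of closed forms along open immersions are still closed forms. So it is enough to check that each leg of the correspondence sends the semistable locus into the semistable locus, and that this semistable locus is an open derived substack of the corresponding $\RPerf$-stack.

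\emph{Step 1 (openness).} Define $\RCoh_{\Hod}(X)$ as the homotopy pullback of $\RCoh(X)\times\mathbb{A}^1\to \RPerf(X)\times\mathbb{A}^1$ along restriction. This is the analogue of \eqref{qCohDol} and \eqref{RCohDR}, using that $X\times\mathbb{A}^1\to X_{\Hod}\times\mathbb{A}^1$ is a flat effective epimorphism. The inclusion $\RCoh(X)\subset \RPerf(X)$ is open, namely the tor-amplitude $\le 0$ locus, and base change preserves open immersions. Hence $\RCoh_{\Hod}(X)\subset \RPerf_{\Hod}(X)\times\mathbb{A}^1$ is open, and its fibers over $0$ and $1$ recover $\RCoh_{\Dol}(X)$ and $\RCoh_{\DR}(X)$.

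Next, Gieseker semistability for $\lambda$-connections, with fixed reduced Hilbert polynomial $q$, is open in flat families over $\mathbb{A}^1$. This is Simpson's result for modules over the split almost-polynomial sheaf of rings of differential operators $\Lambda^{\mathrm{Hod}}$. It gives an open classical substack, and \cite[Prop.~2.1]{STV} supplies its canonical derived enhancement $\RCoh^{\mathrm{ss},q}_{\Hod}(X)$. The fibers of this substack at $\lambda=0$ and $\lambda=1$ are exactly $\RCoh^{\mathrm{ss},q}_{\Dol}(X)$ and $\RCoh^{\mathrm{ss},q}_{\DR}(X)$. Consequently $\mathrm{ev}_0$ and $\mathrm{ev}_1$ restrict to maps between these open substacks.

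\emph{Step 2 (restriction of structures).} On each end, pull back the $2(1-d)$-shifted symplectic form along the open immersion. Non-degeneracy is preserved because $\mathbb{T}$ and $\mathbb{L}$ restrict. The isotropic homotopy $h$ from Theorem \ref{MainTheoremBodyBody} also restricts. Since $j^*$ along an open immersion $j$ is exact and commutes with the identifications $\mathbb{T}_U\simeq j^*\mathbb{T}$, the cartesian square of Definition \ref{LagStructure} pulled back along $j$ stays cartesian. Finally, analytic compatibility follows as in Theorem \ref{MainTheoremBodyBody}, because analytification preserves open immersions.

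\emph{Main obstacle.} The difficult step is Step 1's openness of semistability uniformly in $\lambda$. In particular, the semistable locus must be compatible with the generic fiber, and there one must exclude the degenerate behaviour where $\mathrm{ev}_1$ fails to land in $\RCoh^{\mathrm{ss},q}_{\DR}$. I would handle this by invoking Simpson's $\Lambda$-module framework. At $\lambda=1$ every flat coherent sheaf is locally free and automatically semistable for $q$ of top degree, which should make the inclusion on that end formal.
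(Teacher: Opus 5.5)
Your proposal is correct and follows essentially the paper's argument. The paper likewise writes $\RCoh_{\Hod}(X)$ as the homotopy pullback of $\RPerf_{\Hod}(X)$ along $\RCoh(X)\times\mathbb{A}^1\to\RPerf(X)\times\mathbb{A}^1$, identifies the fibres at $\lambda=0$ and $\lambda=1$, and transfers the cotangent complex and the symplectic/Lagrangian data because $\RCoh(X)\to\RPerf(X)$ is formally \'etale (Porta--Sala), which is the same mechanism as your restriction along open immersions. The only difference is that the paper simply restricts along the open immersion $\RCoh^{\mathrm{ss},q}(X)\hookrightarrow\RCoh^q(X)$, so your appeal to Simpson's $\Lambda$-module openness and your discussion of the $\lambda=1$ end are extra detail rather than a different idea.
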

\begin{proof}
    Since semistability is an open property, $\RCoh^{\mathrm{ss},q}(X)\hookrightarrow \RCoh^q(X)$ is a Zariski open immersion. Note that $\RCoh_{\mathrm{Hod}}(X)$ is a geometric derived stack, locally of finite presentation, since the map  $X\times\mathbb{A}^1\to X_{\mathrm{Hod}},$ induced from \eqref{eqn: Hodge Structure Map} is a flat effective $\mathbb{A}^1$-map. Then, we have the following explicit identification as a homotopy fiber product,
    \[
    \begin{tikzcd}
        \RCoh_{\mathrm{Hod}}(X)\arrow[d]\arrow[r] & \RCoh(X)\times\mathbb{A}^1\arrow[d]
        \\
        \RPerf_{\mathrm{Hod}}(X)\arrow[r] & \RPerf(X)\times\mathbb{A}^1.
    \end{tikzcd}
    \]
    Hence, $\RCoh_{\mathrm{Hod}}(X)\to \mathbb{A}^1$ when viewed as a derived stack over $\mathbb{A}^1$ (by pull-back), with fibers
    $\RCoh_{\mathrm{Hod}}(X)\times_{\mathbb{A}^1}\{0\}\simeq \RCoh(X_{\mathrm{Dol}}),$ and $\RCoh_{\mathrm{Hod}}(X)\times_{\mathbb{A}^1}\{1\}\simeq \RCoh(X_{\mathrm{DR}}).$
    To see the symplectic structures and Lagrangian morphism are preserved, we use the fact that the canonical map $\RCoh(X)\to \RPerf(X)$ is formally \'etale \cite[Cor.~2.19]{PortaSala2023}, thus if $\RPerf(X)$ has a cotangent complex, so does $\RCoh(X)$ \cite[Cor.~2.20]{PortaSala2023} and therefore inherits the shifted-symplectic structure.
\end{proof}
When $X$ is a smooth and proper complex scheme there is a geometric derived stack 
$$\mathbb{R}\mathrm{Higgs}(X):=\mathsf{T}^*[0]\RCoh(X):=\mathsf{Spec}_{\RCoh(X)}\big(\mathsf{Sym}_{\mathcal{O}_{\RCoh(X)}}^{\bullet}(\mathbb{T}_{\RCoh(X)})\big),$$
and a natural morphism 
$\RCoh_{\mathrm{Dol}}(X)\to \mathbb{R}\mathrm{Higgs}(X).$
It is an equivalence when $X$ is a smooth projective curve, however, in higher dimensions it is no longer an equivalence, since in $\dim_{\mathbb{C}}\geq 2,$ the symmetric algebra and the tensor algebra on $\mathbb{T}_{\RCoh(X)}$ differ. In the curve case, there are natural maps from truncations to underlying good moduli schemes, which we now describe.
\subsubsection{Jordan--H\"older maps}
\label{sssec: JHMaps}
When $C$ is a smooth projective curve, 
$\mathcal{M}_{\mathrm{DR}}(C)$ is the de Rham moduli stack parameterizing flat connections on $C$, with good moduli space $M_{\mathrm{DR}}(C)$ parameterizing semisimple flat connections on $C$. Similar good moduli spaces exists for the Dolbeault stacks $\mathcal{M}_{\mathrm{Dol}}(C)\to M_{\mathrm{Dol}}(C)$. The natural maps between them are given by the Jordan--H\"older maps
\begin{equation}
\label{JHmaps}
JH_{\mathrm{DR}}:\mathcal{M}_{\mathrm{DR}}(C)\to M_{\mathrm{DR}}(C),\hspace{3mm} JH_{\mathrm{Dol}}:\mathcal{M}_{\mathrm{Dol}}(C)\to M_{\mathrm{Dol}}(C).
\end{equation}
Similarly, for the Hodge moduli stack, we have
$JH_{\mathrm{Hod}}:\mathcal{M}_{\mathrm{Hod}}(C)\to M_{\mathrm{Hod}}(C).$ There are obvious $\mathbb{C}^{*}$-actions on both $\mathcal{M}_{\mathrm{DR}}(C)$ and $M_{\mathrm{Dol}}(C)$, covering the natural action on $\mathbb{A}^1.$ The map $JH_{\mathrm{Hod}}$ is $\mathbb{C}^*$-equivariant.
Note the following defining diagrams, whose squares are all cartesian,
\[
\begin{tikzcd}
    \mathcal{M}_{\mathrm{Dol}}(C)\arrow[d]\arrow[r,"JH_{\mathrm{Dol}}"] & M_{\mathrm{Dol}}(C)\arrow[d]\arrow[r] & \{0\}\arrow[d]
    \\
    \mathcal{M}_{\mathrm{Hod}}(C)\arrow[r,"JH_{\mathrm{Hod}}"] & M_{\mathrm{Hod}}(C)\arrow[r] & \mathbb{A}^1,
\end{tikzcd}\hspace{5mm}
\begin{tikzcd}
    \mathcal{M}_{\mathrm{DR}}(C)\arrow[d]\arrow[r,"JH_{\mathrm{DR}}"] & M_{\mathrm{DR}}(C)\arrow[d]\arrow[r] & \{1\}\arrow[d]
    \\
    \mathcal{M}_{\mathrm{Hod}}(C)\arrow[r,"JH_{\mathrm{Hod}}"] & M_{\mathrm{Hod}}(C)\arrow[r] & \mathbb{A}^1.
\end{tikzcd}
\]
Moreover, for every $\lambda\in\mathbb{A}^1,$ there are fiber diagrams, whose horizontal arrows are inclusion of the fiber over $\lambda$:
\[
\begin{tikzcd}
\mathcal{M}_{\lambda}^{\mathrm{Hod}}(C)\arrow[d,"JH_{\lambda,\mathrm{Hod}}"]\arrow[r] & \mathcal{M}_{\mathrm{Hod}}(C)\arrow[d,"JH_{\mathrm{Hod}}"]
    \\
    M_{\lambda}^{\mathrm{Hod}}(C)\arrow[d]\arrow[r]& M_{\mathrm{Hod}}(C)\arrow[d]
    \\
    \{\lambda\}\arrow[r] & \mathbb{A}^1.
\end{tikzcd}
\]
Taking the fiber over $\lambda\in \mathbb{A}^1,$ the Hodge moduli stack admits a good moduli space $\mathcal{M}_{\lambda}^{\mathrm{Hod}}(C)\to M_{\lambda}^{\mathrm{Hod}}(C).$

We conclude by relating these classical spaces with their derived enhancements. 

We let $\RCoh_{\mathrm{DR}}(C)\simeq \RPerf_{\mathrm{DR}}(C)\times_{\RPerf(C)}\RCoh(C)$ be the geometric derived stack, locally of finite presentation, obtained as a full substack of $\Map(C_{\mathrm{DR}},\mathbb{R}\Perf).$ Similarly, for $\mathbb{R}\mathrm{Vect}_{\mathrm{DR}}(C),$ as well as $\mathbb{R}\mathrm{Vect}_{\mathrm{Hod}}(C).$ This derived Artin stack parameterizes mixed graded complexes in degree zero with $\lambda$-connection. There is an equivalence of (higher) Artin stacks $\mathcal{M}_{\mathrm{Hod}}(C)\simeq\hspace{.2mm}^{\mathrm{cl}}\mathbb{R}\mathrm{Vect}_{\mathrm{Hod}}(C).$ Taking the fiber over $\lambda$, we obtain an equivalence of higher Artin stacks of $\lambda$-complexes,
$$\mathcal{M}_{\lambda}^{\mathrm{Hod}}(C)\simeq\hspace{1mm} ^{\mathrm{cl}}\mathbb{R}\mathrm{Vect}_{\lambda,\mathrm{Hod}}(C).$$

We now recall a well-known fact that the Hodge--Deligne moduli space carries a natural Poisson structure. We give a nonabelian description in Proposition \ref{NonAbelianCompleteIntegrable}.
\subsubsection{A complete integrable Poisson variety}
\label{sssec: CompleteIntegrable}
In the particular case when the base is a projective curve $C$, one can describe $\lambda$-connections as pairs $(E,D)$, where $E$ is a vector bundle and $
D \in H^0\big(C, \mathrm{At}(E) \otimes K_C \big)$
is a section of the Atiyah algebroid of $E$ tensored with the canonical bundle $K_C$ of $C$. The symbol map defines a morphism $D\mapsto \lambda$, given by the composite,
$$
\sigma \otimes \mathrm{Id} \colon H^0\big(\mathrm{At}(E) \otimes K_C\big) \longrightarrow H^0(T_C \otimes K_C) \simeq H^0(\mathcal{O}_C) = \mathbb{C}.$$
Variants for fixed determinant also arise. Let $M_{SL_m}$ be the smooth quasi-projective variety of stable rank
$m$ bundles with trivial determinant on $C$. Let $\pi:M^{\mathrm{st}}\times C\to M^{\mathrm{st}}$ be the projection and assume there is a universal bundle $\mathcal{E}$.
Consider the determinant of cohomology
line bundle 
$\mathcal{L}_{\mathrm{det}}:=\det(\pi_*\mathcal{E})\otimes\det \mathbb{R}^1\pi_*(\mathcal{E})^{\vee}.$
Restricting to the locus of the moduli spaces where the underlying bundle is stable, twe obtain morphisms,
$
M_{\mathrm{Dol}}^s \simeq T^* M_{\mathrm{SL}_m} \longrightarrow M_{\mathrm{SL}_m},
M_{\mathrm{Hod}} \longrightarrow M_{\mathrm{SL}_m},$
 interpreted as vector bundles over $M_{\mathrm{SL}_mr}$. Together with the symbol map sending a $\lambda$-connection to $\lambda$, one obtains a short exact sequence
\begin{equation} \label{eq:HodgeSeq}
0 \longrightarrow M_{\mathrm{Dol}}^s \simeq T^* M_{\mathrm{SL}_r} 
\longrightarrow M_{\mathrm{Hod}} \simeq \pi^*\big(\mathrm{At}_0(E) \otimes K_C \big) \longrightarrow \mathcal{O}_{M_{\mathrm{SL}_r}} \longrightarrow 0.
\end{equation}
Over the locus of stable underlying vector bundles, the Hodge--Deligne space $M_{\mathrm{Hod}}$ for fixed determinants is dual to the total space of the Atiyah algebroid $\mathrm{At}_0(\mathcal{L}_{\det})$. Moreover, 
$
M_{\mathrm{Hod}} \simeq \mathrm{At}_0(L_{\det})^* \longrightarrow M_{\mathrm{SL}_r},$ is an isomorphism preserving the natural Poisson structures on total spaces. 
In the next subsection we provide an interpretation of this (relative) Poisson structure at the (nonabelian) level of moduli spaces as the classical shadow of the shifted Poisson structure in Theorem \ref{MainTheoremBody}. 

\subsection{Semi-stable curves}
Consider the moduli stack of relative flat $\lambda$-connections on a semi-stable family of curves $\mathcal{X}\to \mathcal{B}.$ 

\begin{definition}
\label{defn: Family of ss curves}
Let $f:\mathcal{X}\to \mathcal{B}$ be a flat representable morphism of classical Artin stacks of relative dimension $1$. It is a \emph{semi-stable family of curves} if every geometric fiber of $\mathcal{X}\to \mathcal{B}$ is isomorphic to a semi-stable curve i.e. satisfies:
\begin{itemize}
    \item Every geometric fiber is a reduced, connected, projective, nodal curve;
    \item Every component $E$ of a geometric fiber which isomorphic to $\mathbb{P}^1$ must intersect the
 union of all other components at at least two smooth points.
\end{itemize}
\end{definition}
Consider the moduli of rank $n$-vector bundles $B\mathrm{GL}_n$ viewed as a sub-stack of $\Perf.$
A \emph{relative $\lambda$-connection} over a family of semi-stable curves $\mathcal{X}\to\mathcal{B}$ is a locally free $\mathcal{O}_{\mathcal{X}}$-module $\mathcal{E}$ with a relative $\lambda$-connection $\nabla_{\mathcal{X}/\mathcal{B}}^{\lambda}:\mathcal{E}\to \mathcal{E}\otimes \omega_{\mathcal{X}/\mathcal{B}}.$ 
\begin{corollary}
\label{DelCohomIsom}
If $(\mathcal{E},\nabla_{\mathcal{X}/\mathcal{B}}^{\lambda})$ is a quasi-coherent relative $\lambda$-connection on $\mathcal{X}/\mathcal{B}$, and if $E$ is the corresponding quasi-coherent complex on $\mathcal{X}_{\mathrm{Del}/\mathcal{B}},$ there is a canonical isomorphism of quasi-coherent
 complexes on $\mathcal{B}$,
 $$H(\mathcal{X}_{\mathrm{Del}/\mathcal{B}}/\mathcal{B},E)\simeq H_{\mathrm{Del}}(\mathcal{X}/\mathcal{B},(\mathcal{E},\nabla_{\mathcal{X}/\mathcal{B}}^{\lambda}),$$
 where the right-hand side is $R(\mathcal{X}\to \mathcal{B})_*\big[\mathcal{E}\xrightarrow{\nabla^{\lambda}}\mathcal{E}\otimes \omega_{\mathcal{X}/\mathcal{B}}\big],$ in degrees $[0,1].$
\end{corollary}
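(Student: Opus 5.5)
The plan is to reduce the statement to the relative Dolbeault-type computation already carried out for $\mathcal{X}_{\mathrm{Dol}/\mathcal{B}}$ in Proposition \ref{Prop:DolbeaultShiftedSympl}, now along the Deligne deformation $\mathcal{X}_{\mathrm{Del}/\mathcal{B}}$. First, I identify quasi-coherent complexes on $\mathcal{X}_{\mathrm{Del}/\mathcal{B}}$ with modules on $\mathcal{X}$ over the relative sheaf of $\lambda$-differential operators. By construction, $\mathcal{X}_{\mathrm{Del}/\mathcal{B}}$ is the geometric realization of the formal completion $(\mathcal{X}/\mathcal{B})^{\bullet}_{\mathrm{Del}}$ of the deformations to the normal cone of the relative diagonals $\mathcal{X}\to\mathcal{X}^n_{\mathcal{B}}$. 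Hence $\mathsf{QCoh}(\mathcal{X}_{\mathrm{Del}/\mathcal{B}})$ is the totalization of the corresponding cosimplicial category. Equivalently, it consists of $\mathcal{O}_{\mathcal{X}}$-modules with descent data along the formal groupoid, whose Lie algebroid is $\lambda\cdot\mathbb{T}_{\mathcal{X}/\mathcal{B}}$, that is, the relative tangent sheaf with anchor rescaled by $\lambda$. Since $f$ has relative dimension $1$, this relative tangent sheaf is the dual of $\omega_{\mathcal{X}/\mathcal{B}}$ on the smooth locus. The map $\mathcal{X}\to\mathcal{X}_{\mathrm{Del}/\mathcal{B}}$ is a flat effective epimorphism, as in \cite[Lem.~5.3.2]{PortaSala}. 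Using it, $E$ corresponds exactly to $(\mathcal{E},\nabla^{\lambda}_{\mathcal{X}/\mathcal{B}})$.

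Second, I compute $H(\mathcal{X}_{\mathrm{Del}/\mathcal{B}}/\mathcal{B},E)=p_*E$ by factoring $p$ as $\mathcal{X}_{\mathrm{Del}/\mathcal{B}}\to\mathcal{X}\to\mathcal{B}$, in the manner of \eqref{eqn: Dolbeault maps}. Pushforward to $\mathcal{X}$ is the Chevalley--Eilenberg cohomology of the Lie algebroid. This uses the same Koszul-type resolution as in the proof of Proposition \ref{Prop:DolbeaultShiftedSympl}, but with differential deformed from $\phi$ to $\nabla^{\lambda}$. Concretely, $\mathcal{O}_{\mathcal{X}_{\mathrm{Del}/\mathcal{B}}}$ is resolved by $\mathrm{Sym}(\mathbb{T}_{\mathcal{X}/\mathcal{B}}[1])$ with the $\lambda$-twisted de Rham differential. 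Dualizing gives the relative $\lambda$-de Rham complex $[\mathcal{E}\to\mathcal{E}\otimes\omega_{\mathcal{X}/\mathcal{B}}]$, since the relative forms vanish above degree $1$. Applying $R(\mathcal{X}\to\mathcal{B})_*$ then yields the right-hand side. Naturality in $E$, and compatibility with base change along $\mathsf{Spec}A\to\mathcal{B}$, follow from the fact that all constructions commute with pullback along $\mathcal{B}$.

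The main obstacle is the nodal locus. There, $\omega_{\mathcal{X}/\mathcal{B}}$ is the relative dualizing sheaf rather than $\Omega^1_{\mathcal{X}/\mathcal{B}}$, and $f$ is not smooth. As a result, the deformation-to-the-normal-cone description of $\mathcal{X}_{\mathrm{Del}/\mathcal{B}}$ no longer directly produces a Lie algebroid with cotangent $\omega_{\mathcal{X}/\mathcal{B}}$. My first approach would be to define $\mathcal{X}_{\mathrm{Del}/\mathcal{B}}$ on such families via the log-structure, so that $\omega_{\mathcal{X}/\mathcal{B}}$ becomes the sheaf of relative log differentials and the log tangent sheaf is locally free. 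I would then run the Koszul argument for the log Lie algebroid. I would verify étale locally near a node, using the model $xy=t$, that the Chevalley--Eilenberg complex is quasi-isomorphic to the two-term complex. Finally, I would glue using descent, with the smooth-locus case handled as above.
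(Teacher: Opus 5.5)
The paper gives no argument for this corollary. It is simply asserted after the Deligne-shape construction, and that construction is only set up for smooth $f$. So your proposal has to stand on its own.

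Your first step is sound on the smooth locus. Your diagnosis of the nodes is also correct: there $\mathbb{L}_{\mathcal{X}/\mathcal{B}}\simeq\Omega^1_{\mathcal{X}/\mathcal{B}}$ is not locally free, so the ordinary diagonal cannot produce an algebroid dual to the line bundle $\omega_{\mathcal{X}/\mathcal{B}}$. Some logarithmic replacement, as you propose, is needed even to make the statement meaningful at nodal fibres, and the paper does not address this.

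\textbf{The gap is in your second step.} The factorization you borrow from \eqref{eqn: Dolbeault maps} would need a map $r:\mathcal{X}_{\mathrm{Del}/\mathcal{B}}\to\mathcal{X}\times\mathbb{A}^1$ retracting the atlas $u:\mathcal{X}\times\mathbb{A}^1\to\mathcal{X}_{\mathrm{Del}/\mathcal{B}}$. No such map exists away from $\lambda=0$. At $\lambda=1$ it would retract $\mathcal{X}\to(\mathcal{X}/\mathcal{B})_{\mathrm{DR}}$, so every local function $g=u^*r^*g$ would be horizontal along the fibres, which is absurd in positive relative dimension. The maps \eqref{eqn: Dolbeault maps} exist only because $\mathcal{X}_{\mathrm{Dol}/\mathcal{B}}$ is a relative classifying stack over $\mathcal{X}$.

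The same problem is visible in the answer. For $\lambda\neq0$ the differential $\nabla^{\lambda}$ is not $\mathcal{O}_{\mathcal{X}}$-linear, so $[\mathcal{E}\to\mathcal{E}\otimes\omega_{\mathcal{X}/\mathcal{B}}]$ is a complex of $f^{-1}\mathcal{O}_{\mathcal{B}}$-modules on $\mathcal{X}$. It is not a pushforward of a quasi-coherent complex to $\mathcal{X}$. Your sentence that $\mathcal{O}_{\mathcal{X}_{\mathrm{Del}/\mathcal{B}}}$ is resolved by $\mathrm{Sym}(\mathbb{T}_{\mathcal{X}/\mathcal{B}}[1])$ transplants the Dolbeault computation to a setting where it does not apply.

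What is missing is exactly the comparison that constitutes the corollary. There are two standard ways to supply it.
\begin{enumerate}
\item Compute $p_*E$ by descent along the \v{C}ech nerve of $u$, whose terms are the $\lambda$-deformed completed diagonals. This gives $Rf_*$ of the \v{C}ech--Alexander (stratification) complex of $(\mathcal{E},\nabla^{\lambda})$. A $\lambda$-Poincar\'e lemma then identifies that complex with $[\mathcal{E}\xrightarrow{\nabla^{\lambda}}\mathcal{E}\otimes\omega_{\mathcal{X}/\mathcal{B}}]$.
\item Identify $p_*E$ with $Rf_*R\mathcal{H}om_{\mathcal{D}^{\lambda}}(\mathcal{O}_{\mathcal{X}},\mathcal{E})$, using that $\mathcal{O}_{\mathcal{X}_{\mathrm{Del}/\mathcal{B}}}$ corresponds to $(\mathcal{O}_{\mathcal{X}},\lambda d)$. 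Then resolve $\mathcal{O}_{\mathcal{X}}$ by the $\lambda$-Spencer complex $\mathcal{D}^{\lambda}\otimes\wedge^{\bullet}T_{\mathcal{X}/\mathcal{B}}$. This complex is graded and $\mathbb{C}[\lambda]$-flat, and reduces to the Koszul resolution for $\mathrm{Sym}\,T_{\mathcal{X}/\mathcal{B}}$ at $\lambda=0$, so it is exact for every $\lambda$. This is where your Koszul idea belongs.
\end{enumerate}

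The same correction applies at the nodes. For a rank-one log tangent sheaf, the two-term shape of the Chevalley--Eilenberg complex is automatic, so checking it near $xy=t$ proves nothing. What you must verify there is the log Poincar\'e lemma: the completed, exactified log diagonal should be a formal disc $\mathcal{O}_{\mathcal{X}}[[u]]$ over $\mathcal{X}$. This is where log smoothness for Kato's canonical log structures on $\mathcal{X}\to\mathcal{B}$ enters, and the same structures give $\Omega^1_{\log}\cong\omega_{\mathcal{X}/\mathcal{B}}$.
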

We immediately have that
\begin{proposition}
\label{RMDel0shifted}
    The moduli stack $\Map_{/\mathcal{B}\times\mathbb{A}^1}(\mathcal{X}_{\mathrm{Del}/\mathcal{B}},\mathrm{\mathsf{B}GL}_n(\mathcal{O}_{\mathcal{B}})\times\mathbb{A}_{\mathcal{B}}^1),$
     has a natural relative $0$-shifted symplectic structure.
\end{proposition}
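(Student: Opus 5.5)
The plan is to get Proposition \ref{RMDel0shifted} as the curve case $d=1$ of Proposition \ref{DelDeformationTheory}, made precise over a semistable (possibly nodal) family. Since $\mathsf{B}\mathrm{GL}_n$ is an open substack of $\Perf$, the mapping stack $\Map_{/\mathcal{B}\times\mathbb{A}^1}(\mathcal{X}_{\mathrm{Del}/\mathcal{B}},\mathsf{B}\mathrm{GL}_n\times\mathbb{A}^1_{\mathcal{B}})$ is open in $\Map_{/\mathcal{B}\times\mathbb{A}^1}(\mathcal{X}_{\mathrm{Del}/\mathcal{B}},\Perf\times\mathbb{A}^1_{\mathcal{B}})$. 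Its relative tangent complex is the restriction of the one for $\Perf$, so it is enough to build the $0=2-2d$-shifted form on the $\Perf$-valued stack and then restrict. I will use the normalization $2(1-d)$, matching $\RPerf_{\DR}$ and $\RPerf_{\Dol}$.

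The first step is to fix the relative $\mathcal{O}$-orientation. By Corollary \ref{DelCohomIsom}, $p_*\mathcal{O}$ along $p:\mathcal{X}_{\mathrm{Del}/\mathcal{B}}\to\mathcal{B}\times\mathbb{A}^1$ is computed by $Rf_*[\mathcal{O}_{\mathcal{X}}\xrightarrow{\lambda d}\omega_{\mathcal{X}/\mathcal{B}}]$. The map to $R^1f_*\omega_{\mathcal{X}/\mathcal{B}}[-1]\simeq\mathcal{O}_{\mathcal{B}}[-2]$ is given by the trace coming from Grothendieck duality for the flat, proper, Gorenstein (nodal) morphism $f$, and it gives a $2$-preorientation $[\mathcal{X}_{\mathrm{Del}/\mathcal{B}}]$ that varies with $\lambda$. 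It is compatible with base change because $\omega_{\mathcal{X}/\mathcal{B}}$ is the relative dualizing sheaf and its formation commutes with base change.

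Next comes transgression. I apply the relative AKSZ construction (\cite[Prop.~3.2.1]{CalaqueHaugsengScheimbauer2025}, which is also what underlies Theorem \ref{MainTheorem2Body}) to the $2$-shifted form on $\Perf$, or equivalently on $\mathsf{B}\mathrm{GL}_n$ using the trace pairing. Integrating $\mathrm{ev}^*\omega$ against $[\mathcal{X}_{\mathrm{Del}/\mathcal{B}}]$ gives a closed relative $0$-shifted $2$-form. For nondegeneracy, take an $A$-point $(\mathcal{E}_A,\nabla_A^\lambda)$. The tangent complex there is $Rf_{A*}[\mathcal{E}nd\,\mathcal{E}_A\to\mathcal{E}nd\,\mathcal{E}_A\otimes\omega][1]$, and the form is the pairing obtained from trace composed with the orientation. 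I then argue exactly as in Proposition \ref{Prop:DolbeaultShiftedSympl}: the two-term complex $[\mathcal{E}nd\to\mathcal{E}nd\otimes\omega]$ is Serre self-dual up to $[-1]$ via $\mathcal{H}om(-,\omega)$, because the dual of $\nabla^\lambda$ on $\mathcal{E}^\vee$ is $-\nabla^{\lambda\vee}$. Grothendieck duality for $f_A$ then shows that $-\cap\eta_A$ is a quasi-isomorphism.

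The hard step is exactly this duality on nodal fibres. There $\omega_{\mathcal{X}/\mathcal{B}}$ is the dualizing sheaf but not $\Omega^1$, and the $\lambda$-connection is taken with values in $\omega$, that is, logarithmic at the nodes. I must check that the de Rham-type complex is perfect and self-dual. My plan is to verify this étale-locally near a node, using the local model $xy=t$ and the residue pairing, and to show that $\mathcal{H}om(\mathcal{E}\otimes\omega,\omega)\simeq\mathcal{E}^\vee$ is compatible with the differential. Local freeness of $\mathcal{E}$ makes this reduce to the case of $\mathcal{O}$. Closedness is automatic from AKSZ. Compatibility with base change in $\mathcal{B}\times\mathbb{A}^1$ follows from flatness of $f$.
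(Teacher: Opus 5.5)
Your proposal is correct and is essentially the paper's argument. The paper also takes the trace pairing on $u_A^*\mathbb{T}_{\mathsf{B}\mathrm{GL}_n}\simeq\mathcal{E}nd(\mathcal{E}_A)[1]$, pushes it forward to the two-term complex $[\mathcal{E}nd(E_A)\xrightarrow{\mathrm{ad}_{\nabla^{\lambda}}}\mathcal{E}nd(E_A)\otimes\omega_{\mathcal{X}_A/A}]$ and concludes by duality; your AKSZ packaging, with the orientation given by the trace on $R^1f_*\omega_{\mathcal{X}/\mathcal{B}}$ in total degree $2$, additionally supplies the closedness that the paper's ``inspection'' leaves implicit, and the one step to tighten is that Grothendieck duality for $f_A$ cannot be applied verbatim to a complex whose differential is a differential operator: filter it by its stupid (Hodge) filtration, observe that the trace pairing is filtered with graded pieces the Serre duality pairings between $Rf_{A*}\mathcal{E}nd(E_A)$ and $Rf_{A*}(\mathcal{E}nd(E_A)\otimes\omega_{\mathcal{X}_A/A})[-1]$ for the flat proper Gorenstein morphism $f_A$, and conclude by the five lemma, which handles the nodal fibres directly and makes your \'etale-local residue computation unnecessary.
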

\begin{proof}
    Let $u_A:\mathcal{X}_{\mathrm{Dol}/A}\to \mathrm{\mathsf{B}GL}_n\times \mathsf{Spec}(A),$ be a point corresponding to a rank $n$-vector bundle $\mathcal{E}_A$ on $\mathcal{X}_{\mathrm{Dol},A}.$ Thus, it corresponds to a $\lambda$-connection 
    $$\big[E_A\xrightarrow{\nabla_A^{\lambda}}E_A\otimes \omega_{\mathcal{X}_A,A}\big],$$
    on $\mathcal{X}_A.$ There is a natural pairing 
$$u_A^*\mathbb{T}_{\mathrm{\mathsf{B}GL}_n}\otimes u_A^*\mathbb{T}_{\mathrm{\mathsf{B}GL}_n}\simeq \mathcal{E}nd(\mathcal{E}_A)[1]\otimes \mathcal{E}nd(\mathcal{E}_A)[1]\xrightarrow{\mathrm{Tr}}\mathcal{O}_{\mathcal{X}_{\mathrm{Dol},A}}[2],$$
    via the trace-pairing and pushforward along the map $q_A$, gives 
    $$q_{A,*}\big(\mathcal{E}nd(E_A[1])\oplus \mathcal{E}nd(E_A[1]))\to (\mathcal{O}_{\mathcal{X}_A}\oplus \omega_{\mathcal{X}_A,A}[-1])[2].$$
    Note that the complex
    $q_{A,*}\big(\mathcal{E}nd(\mathcal{E}_A[1])\big)\simeq \big[\mathcal{E}nd(E_A)\xrightarrow{\mathrm{ad}_{\nabla^{\lambda}}}\mathcal{E}nd(E_A)\otimes\omega_{\mathcal{X}_A,A}\big],$
    has differential $\mathrm{ad}_{\nabla^{\lambda}}:=[-,\nabla^{\lambda}],$ and the result follows from inspection.
\end{proof}

\begin{corollary}
\label{SScurveLAGNAH}
    Let $f:\mathcal{X}\to\mathcal{B}$ be a family of semi-stable curves of genus $g$ and let $\lambda\in \mathbb{A}^1$ be
a deformation parameter for the (relative) Deligne shape $\mathcal{X}_{\mathrm{Del}/\mathcal{B}}\to \mathcal{B}\times \mathbb{A}^1.$
Then, $R\mathcal{M}^{\mathrm{Del}}(\mathcal{X}/\mathcal{B};n)\to \mathcal{B}\times\mathbb{A}^1,$ carries a natural $0$-shifted relative symplectic structure and
\begin{enumerate}
    \item $(\lambda=0):$ the relative Dolbeault stack $R\mathcal{M}^{\mathrm{Dol}}(\mathcal{X}/\mathcal{B};n)$ is $0$-shifted symplectic,
\item $(\lambda \neq 0):$ the relative de Rham stack $R\mathcal{M}^{\mathrm{DR}}(\mathcal{X}/\mathcal{B};n)$ is $0$-shifted symplectic.
\end{enumerate}
\end{corollary}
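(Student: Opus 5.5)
The plan is to obtain the statement from Proposition \ref{RMDel0shifted} by pullback. The Dolbeault and de Rham stacks then arise as the fibers over $\lambda=0$ and over $\lambda\neq 0$ of the relative Deligne mapping stack
$$R\mathcal{M}^{\mathrm{Del}}(\mathcal{X}/\mathcal{B};n):=\Map_{/\mathcal{B}\times\mathbb{A}^1}\big(\mathcal{X}_{\mathrm{Del}/\mathcal{B}},\mathsf{B}\mathrm{GL}_n\times\mathbb{A}^1_{\mathcal{B}}\big),$$
and we will restrict the relative $0$-shifted symplectic form along each inclusion of fibers.

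First, the existence of the relative form on $R\mathcal{M}^{\mathrm{Del}}(\mathcal{X}/\mathcal{B};n)\to\mathcal{B}\times\mathbb{A}^1$. Proposition \ref{RMDel0shifted} supplies the relative $0$-shifted symplectic structure. I would check that it is an AKSZ form for $d=1$: $\mathsf{B}\mathrm{GL}_n$ is $2$-shifted symplectic via the trace pairing, and the Deligne shape of a relative curve carries a relative $\mathcal{O}$-orientation of degree $2$. The degree $2$ is read off from Corollary \ref{DelCohomIsom}: the complex $[\mathcal{E}\to\mathcal{E}\otimes\omega_{\mathcal{X}/\mathcal{B}}]$ has top cohomology $R^1f_*\omega_{\mathcal{X}/\mathcal{B}}\simeq\mathcal{O}_{\mathcal{B}}$. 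The shift is therefore $2-2=0$, which agrees with Proposition \ref{DelDeformationTheory} for $d=1$.

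Second, the fibers. Let $\iota_\lambda:\mathcal{B}\times\{\lambda\}\to\mathcal{B}\times\mathbb{A}^1$ be the inclusion. Relative mapping stacks commute with base change, so
$$\iota_0^*R\mathcal{M}^{\mathrm{Del}}\simeq \Map_{/\mathcal{B}}(\mathcal{X}_{\mathrm{Dol}/\mathcal{B}},\mathsf{B}\mathrm{GL}_n\times\mathcal{B}).$$
For $\lambda\neq0$, the $\mathbb{G}_m$-equivariance of the deformation to the normal cone identifies the fiber $\mathcal{X}_{\mathrm{Del}/\mathcal{B},\lambda}$ with $\mathcal{X}_{\mathrm{DR}/\mathcal{B}}$, by rescaling $\nabla^\lambda\mapsto\lambda^{-1}\nabla^\lambda$. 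Relative closed forms pull back along base change, and the relative cotangent complex satisfies $\iota_\lambda^*\mathbb{L}_{R\mathcal{M}^{\mathrm{Del}}/\mathcal{B}\times\mathbb{A}^1}\simeq\mathbb{L}_{R\mathcal{M}_\lambda/\mathcal{B}}$, by the algebraic analogue of Proposition \ref{AnCotangentProperties}. Hence nondegeneracy restricts fiberwise, and both (1) and (2) follow.

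The main obstacle is the orientation on nodal fibers. Serre duality requires $\omega_{\mathcal{X}/\mathcal{B}}$ to be the relative dualizing sheaf, which holds because $f$ is flat, projective and Gorenstein (nodal). Nondegeneracy then follows from the pairing $\mathcal{E}nd\otimes\mathcal{E}nd\omega\to\omega$ together with Grothendieck duality for $f$, applied to the two-term complex as in Proposition \ref{Prop:DolbeaultShiftedSympl}. I would also check that the relative de Rham and Dolbeault shapes of $\mathcal{X}/\mathcal{B}$ are defined via $\omega_{\mathcal{X}/\mathcal{B}}$ (log-type) rather than via $\Omega^1$, and that the map to $\mathcal{B}$ is categorically proper, so that pushforward preserves perfect complexes.
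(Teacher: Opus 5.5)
Your argument is sound within the paper's conventions, but it does not follow the paper's route.

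\textbf{The paper's route.} The paper deduces the corollary from Theorem~\ref{MainTheorem}. It takes the relative $0$-shifted structure from Proposition~\ref{RMDel0shifted}, with Corollary~\ref{DelCohomIsom} describing the tangent complexes. It then invokes the nonabelian-Hodge Lagrangian correspondence given by $(\mathrm{ev}_{\lambda=0},\mathrm{ev}_{\lambda=1})$, together with the principle that a Lagrangian morphism into an $n$-shifted symplectic target is ``$(n-1)$-shifted''.

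\textbf{Your route.} You never use the correspondence. You restrict the relative form along $\iota_\lambda:\mathcal{B}\times\{\lambda\}\to\mathcal{B}\times\mathbb{A}^1$. Relative mapping stacks, relative closed forms and relative cotangent complexes are all compatible with base change, so nondegeneracy passes to each fiber.

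\textbf{Comparison.} Your route is more elementary, and it is the one that actually produces the fiberwise symplectic structures. The Lagrangian principle concerns the source of a correspondence, and yields at best a shifted Poisson structure there (cf.\ Proposition~\ref{LagThicken}). The $\lambda=0$ and $\lambda\neq0$ fibers are instead the targets, and the correspondence takes their symplectic structures as input. What the paper's framing adds is only the conceptual placement of the two fibers as legs of the Lagrangian correspondence. Under the rescaling $\nabla^\lambda\mapsto\lambda^{-1}\nabla^\lambda$, the restricted form on a $\lambda\neq0$ fiber agrees with the intrinsic de Rham form only up to the unit $\lambda^{\pm1}$, which is harmless.

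\textbf{Two corrections.}

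First, the cross-check ``agrees with Proposition~\ref{DelDeformationTheory} for $d=1$'' is not literally true. That proposition, like Proposition~\ref{Prop:DolbeaultShiftedSympl}, states a $(2-d)$-shift, which is $1$ at $d=1$. The shift consistent with an orientation of degree $2d$ is $2-2d=0$. This is what your computation from Corollary~\ref{DelCohomIsom} gives, so rely on that computation instead.

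Second, and more importantly, the point you leave as something to ``check'' is essential. With the honest relative de Rham stack, a nodal fiber is not a Poincar\'e duality space: a nodal cubic is homotopy equivalent to $S^2\vee S^1$, whose cup pairing on $H^1$ vanishes. So the pairing would degenerate. The statement holds only because the shapes classify $\omega_{\mathcal{X}/\mathcal{B}}$-valued $\lambda$-connections. That convention is built into the paper's definition of relative $\lambda$-connection and into Corollary~\ref{DelCohomIsom}. State it as your standing convention rather than as a pending check.
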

\begin{proof}
  This follows directly from the main Theorem \ref{MainTheorem}. Indeed, from Corollary \ref{DelCohomIsom} which describes the tangent spaces to the relative Deligne-moduli stack, which is $0$-shifted by Proposition \ref{RMDel0shifted}, the result follows from the general fact that a Lagrangian morphsim with $n$-shifted symplectic target is $(n-1)$-shifted. 
\end{proof}

A nonabelain interpretation of Subsect.~\ref{sssec: CompleteIntegrable} is available as follows. Let
$
q_{\mathrm{Hod}} :C \times_S M_{\mathrm{Hod}} \longrightarrow M_{\mathrm{Hod}}$
be the natural projection and suppose $(\mathcal{E}, \gamma)$ is a universal $\lambda$-connection on $C \times_S M_{\mathrm{Hod}} \to M_{\mathrm{Hod}}$. Let $\EuScript{T}_{M/S}$ denote the tangent complex of the classical moduli stack $M$ over $S$, and let $T_{M/S} := H^0(\EuScript{T}_{M/S})$ be its zero-th cohomology sheaf. We then have the following lemma. 
\begin{proposition}
\label{NonAbelianCompleteIntegrable}
  Let $C$ be a smooth projective curve and 
$\mathcal{M}_{\mathrm{Hod}}(C)\to \mathbb{A}^1$ the Hodge moduli stack of vector bundles with $\lambda$-connection. 
For each $\lambda \in \mathbb{A}^1$, the fiber 
\[
\mathcal{M}_{\lambda}^{\mathrm{Hod}}(C) \simeq {}^{\mathrm{cl}}\mathbb{R}\mathrm{Vect}_{\lambda,\mathrm{Hod}}(C)
\]
admits a natural Poisson structure on its smooth locus, induced via the trace pairing
$$
    \mathrm{tr} : \mathrm{End}^0(\mathcal{E}_{\mathrm{Hod}}) \otimes \mathrm{End}^0(\mathcal{E}_{\mathrm{Hod}}) \longrightarrow \mathcal{O}_{C\times_S M_{\mathrm{Hod}}}.$$
    
\end{proposition}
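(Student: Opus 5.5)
The plan is to build the Poisson structure as the classical shadow of a relative symplectic structure on the smooth locus of $\mathcal{M}_{\mathrm{Hod}}(C)$, and then to restrict it to the fiber over $\lambda$. I will work on the open smooth (stable) locus $\mathcal{M}^{\mathrm{st}}_{\lambda}\subset\mathcal{M}_{\lambda}^{\mathrm{Hod}}(C)$. By the equivalence $\mathcal{M}_{\lambda}^{\mathrm{Hod}}(C)\simeq{}^{\mathrm{cl}}\mathbb{R}\mathrm{Vect}_{\lambda,\mathrm{Hod}}(C)$, the tangent complex there is computed by Corollary \ref{DelCohomIsom}. At a point $(E,\nabla^\lambda)$ it is the Deligne complex
\[
\EuScript{T}_{(E,\nabla^\lambda)}\simeq \mathbb{R}\Gamma\big(C,[\mathrm{End}^0(E)\xrightarrow{\mathrm{ad}_{\nabla^\lambda}}\mathrm{End}^0(E)\otimes K_C]\big)[1],
\]
which sits in degrees $[-1,1]$. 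Stability kills $H^{-1}$ and $H^{1}$, so $T_{M/S}=H^0(\EuScript{T}_{M/S})$ is a vector bundle on the smooth locus.

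The first step is to write down the pairing. The trace $\mathrm{tr}$ on $\mathrm{End}^0(\mathcal{E}_{\mathrm{Hod}})$ combines with wedge product to give a pairing of the Deligne complex with itself into $[\mathcal{O}_C\to K_C]$. Composing with $\int_C:H^1(C,K_C)\simeq\mathbb{C}$ produces a pairing of degree $-2$ on $\mathbb{R}\Gamma$, that is, a symmetric pairing of degree $0$ on $\EuScript{T}[1]$ after the shift. This is exactly the pairing of Proposition \ref{RMDel0shifted}, specialised to $\mathcal{B}=\mathrm{pt}$, $\mathcal{X}=C$, and restricted to the fiber over $\lambda$. Grothendieck--Serre duality on $C$ makes it perfect: in the duality $H^i(\mathrm{Def})\times H^{2-i}(\mathrm{Def})\to\mathbb{C}$ the differential $\mathrm{ad}_{\nabla^\lambda}$ is skew-adjoint for $\mathrm{tr}$, because $\mathrm{tr}([a,\nabla]b)=-\mathrm{tr}(a[b,\nabla])$ up to the Leibniz term, which is exact and integrates to zero. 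This gives a nondegenerate $2$-form $\omega_\lambda$ on $T_{M/S}$, and I define the bivector as $\pi_\lambda:=\omega_\lambda^{-1}$.

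The second step is closedness, which is what makes $\pi_\lambda$ Poisson. I would not prove $[\pi_\lambda,\pi_\lambda]=0$ by hand. Instead I take $\omega_\lambda$ to be the truncation to $H^0$ of the relative $0$-shifted closed form of Proposition \ref{RMDel0shifted}, equivalently Corollary \ref{SScurveLAGNAH}, on $R\mathcal{M}^{\mathrm{Del}}(C)\to\mathbb{A}^1$, pulled back along $\{\lambda\}\to\mathbb{A}^1$. On a smooth classical stack, a $0$-shifted symplectic structure is an honest closed nondegenerate $2$-form, so $\pi_\lambda$ is symplectic, hence Poisson, on the stable locus. To obtain the relative (family) Poisson structure over all of $\mathbb{A}^1$ of Subsect.~\ref{sssec: CompleteIntegrable}, I would instead invoke the Lagrangian correspondence of Theorem \ref{MainTheoremBodyBody} and Proposition \ref{LagThicken}; its $(n-1)$-shifted Poisson structure, truncated to $0$-shifted, is the family version. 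As a consistency check, under the isomorphism $M_{\mathrm{Hod}}\simeq\mathrm{At}_0(\mathcal{L}_{\det})^*$ this should recover the linear Poisson structure on the dual of the Atiyah algebroid.

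The main obstacle I expect is the identification of the trace pairing with the transgressed PTVV form. One has to check that the AKSZ form obtained from the Killing form on $\mathsf{B}\mathrm{GL}_n$ (restricted to $\mathrm{End}^0$) is, on tangent complexes, precisely $\int_C\mathrm{tr}(-\wedge-)$ on the Deligne complex. This requires tracking the orientation $[C_{\mathrm{Del}}]$ through the identification of Corollary \ref{DelCohomIsom}. I would first verify it separately at $\lambda=0$ (the Dolbeault case, Proposition \ref{Prop:DolbeaultShiftedSympl}) and at $\lambda=1$ (the Atiyah--Bott/Goldman case), and then argue by flatness in $\lambda$ that the identification holds for every $\lambda$.
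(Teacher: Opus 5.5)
Your argument is essentially sound, but it is organized differently from the paper's. The paper stays classical. It identifies $T_{M_{\mathrm{Hod}}/\mathbb{A}^1}$ with $\mathbb{R}^1q^{\mathrm{Hod}}_*$ of the traceless Atiyah ($\lambda$-de Rham) complex, also letting the curve vary over $S$ through the fiber product with $\pi_{\mathrm{Hod}}^*T_S$. It then uses relative Serre duality along the fibers of $q^{\mathrm{Hod}}$, together with $\mathrm{tr}$, to write down the map $\mathbb{L}_{M_{\mathrm{Hod}}/\mathbb{A}^1}\to T_{M_{\mathrm{Hod}}/\mathbb{A}^1}$ directly as a relative bivector. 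The Jacobi identity is never addressed there. You build the same nondegenerate Serre-duality pairing, but as a $2$-form $\omega_\lambda$. You then get integrability by recognizing $\omega_\lambda$ as the underlying form of the AKSZ $0$-shifted symplectic structure, so that $\pi_\lambda=\omega_\lambda^{-1}$ is Poisson because $\omega_\lambda$ is closed. Your route therefore actually justifies the word ``Poisson''. The price is two inputs the paper does not need: the identification of the trace pairing with the transgressed form, and the agreement of the derived and classical stacks on the locus you work on. The paper's route produces the relative bivector over $\mathbb{A}^1$, and over a moving curve, in one stroke, but only as a bivector.

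Three points need repair.

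First, your tangent complex uses $\mathrm{End}^0$, so you should transgress from $\mathsf{B}\mathrm{SL}_n$ (or fix the determinant and rigidify) rather than cite Proposition~\ref{RMDel0shifted}, which is for $\mathsf{B}\mathrm{GL}_n$. In the $\mathrm{GL}_n$ case the trace summand $[\mathcal{O}_C\xrightarrow{\lambda d}K_C]$ contributes $H^{-1}\simeq H^{1}\simeq\mathbb{C}$ at every point. So that derived stack never coincides with its classical truncation, and the restricted form is degenerate along scalar automorphisms; your sentence about ``a $0$-shifted symplectic structure on a smooth classical stack'' does not apply as stated. In the traceless setting, the vanishing of $H^{\pm1}$ is equivalent to simplicity, i.e.\ no traceless endomorphisms. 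This condition is weaker than stability and is the right description of the smooth locus.

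Second, ``they agree at $\lambda=0$ and $\lambda=1$, hence for all $\lambda$ by flatness'' is not a valid inference. It is also unnecessary. The underlying $2$-form of $\int_{[C_\lambda]}\mathrm{ev}^*\omega$ is by construction $\mathrm{tr}$ followed by the orientation of $C_\lambda$. That orientation is the composite $\mathbb{H}^2(C,[\mathcal{O}_C\xrightarrow{\lambda d}K_C])\cong H^1(C,K_C)\xrightarrow{\int_C}\mathbb{C}$, uniformly in $\lambda$. Alternatively, use $\mathbb{G}_m$-equivariance to reduce every $\lambda\neq 0$ to $\lambda=1$.

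Third, drop the aside through Theorem~\ref{MainTheoremBodyBody} and Proposition~\ref{LagThicken}. For curves the target there is relative $0$-shifted symplectic, so that route produces a $(-1)$-shifted Poisson structure, and there is no way to ``truncate'' it to a $0$-shifted one. The relative Poisson structure over $\mathbb{A}^1$ is simply the fiberwise inverse of the relative $0$-shifted form.
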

\begin{proof}
    There is a canonical isomorphism $
T_{M_{\mathrm{DR}}(C/S)/S} \simeq \mathbb{R}^1 q_* \mathrm{End}^0(\mathcal{E})_{\mathrm{DR}},$
where $\mathcal{E}$ is the universal flat bundle. Moreover,
$$
H^0(\EuScript{T}_{M_{\mathrm{DR}}(C/S)}) \simeq \pi^* T_S \times_{\mathbb{R}^1 q_* T_{C \times_S M_{\mathrm{DR}}/M_{\mathrm{DR}}}} \mathbb{R}^1 q_*(\mathrm{At}^0(\mathcal{E})_1^\bullet),$$
via the Atiyah complex, where the subscripts indicate taking the fiber $\lambda =1.$
Similarly,
$$
T_{M_{\mathrm{Hod}}(C/S)/\mathbb{A}^1} \simeq \mathbb{R}^1 q^{\mathrm{Hod}}_* \mathrm{End}^0(\mathcal{E})_{\lambda\text{-}\mathrm{DR}} \simeq \pi_{\mathrm{Hod}}^* T_S \times_{\mathbb{R}^1 q^{\mathrm{Hod}}_* T_{X \times M_{\mathrm{Hod}}/M_{\mathrm{Hod}}}} \mathbb{R}^1 q^{\mathrm{Hod}}_*(\mathrm{At}^0(\mathcal{E}_{\mathrm{Hod}})_{\lambda}^\bullet).
$$
Then, via relative Serre duality along the fibers of $q^{\mathrm{Hod}}$, there is a canonical map
$$   \mathbb{L}_{M_{\mathrm{Hod}}(C/S)/\mathbb{A}^1} \longrightarrow T_{M_{\mathrm{Hod}}(C/S)/\mathbb{A}^1},$$
    endowing $M_{\mathrm{Hod}}(C/S)$ with a relative Poisson structure over $\mathbb{A}^1$. This is verified by noting this map is equivalently
\[\begin{tikzcd} T_{M_{\mathrm{Hod}}/\mathbb{A}^1} \ar[r,"\sim"] \ar[d] & \pi_{\mathrm{Hod}}^* T_S \times_{\mathbb{R}^1 q_* T_{C\times M/M}} \mathbb{R}^1 q_* (\mathrm{At}^0(\mathcal{E}_\lambda)^\bullet) \ar[d] \\ \mathbb{L}_{M_{\mathrm{Hod}}/\mathbb{A}^1}[1] \ar[r,"\sim"] & \pi_{\mathrm{Hod}}^* T_S^\vee \times_{\mathbb{R}^1 q_* T_{C\times M/M}^\vee} \mathbb{R}^1 q_* (\mathrm{At}^0(\mathcal{E}_\lambda)^\bullet \otimes K_{C/S}). \end{tikzcd}\]
\end{proof}

\subsection{The surface case}

Definition \ref{defn: Family of ss curves} adapts to the case of a smooth family of smooth projective surfaces over $\mathbb{C}.$ Consequently, adjusting the argument of Corollary \ref{DelCohomIsom}
to the relative dimension $2$ context, we have that for each perfect complex $E$, the corresponding Deligne--Dolbeault and de Rham complexes are concentrated in degrees $[0,2],$ and in particular, we have a quasi-isomorphism
$$R(\mathcal{X}_{\mathrm{Hod}/\mathcal{B}}\to \mathcal{B})_*(E)\simeq R(\mathcal{X}\to \mathcal{B})_*\big[\mathcal{E}\xrightarrow{\nabla^{\lambda}}\mathcal{E}\otimes \Omega_{\mathcal{X}/\mathcal{B}}\to\mathcal{E}\otimes \Omega_{\mathcal{X}/\mathcal{B}}^2\big].$$
This directly implies the following analogue of Proposition \ref{RMDel0shifted}.

\begin{corollary}
\label{RelDim2NAHLag}
    Let $f:\mathcal{X}\to \mathcal{B}$ be a smooth family of projective surfaces. Then the derived moduli stack $\mathcal{M}_{\Hod}(\mathcal{X}/\mathcal{B};n)$ is naturally $(-2)$-shifted symplectic, and is naturally part of a $(-2)$-shifted relative Lagrangian correspondence.
\end{corollary}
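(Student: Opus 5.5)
The plan is to reduce Corollary \ref{RelDim2NAHLag} to the relative versions of the symplectic and Lagrangian statements already proved, with the relative dimension set to $d=2$. There are three steps: the symplectic form, the Lagrangian correspondence, and analytification.

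\emph{Step 1: the shifted symplectic form.} I will apply Proposition \ref{DelDeformationTheory} to $f:\mathcal{X}\to\mathcal{B}$. It gives a relative $(2-d)$-shifted symplectic structure on $\Map_{/\mathcal{B}\times\mathbb{A}^1}\big((\mathcal{X}/\mathcal{B})_{\mathrm{Del}},\Perf\times\mathbb{A}^1_{\mathcal{B}}\big)$. Since the correct shift is $2(1-d)$, which equals $-2$ when $d=2$, the first task is to check the shift directly on tangent complexes.

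At an $A$-point $\mathcal{E}_A$, the tangent complex is $p_{A*}\mathcal{E}nd(\mathcal{E}_A)[1]$. By the quasi-isomorphism displayed just before the corollary, this is $R f_*$ of the three-term complex
\[
\mathcal{E}nd\xrightarrow{\mathrm{ad}\nabla^{\lambda}}\mathcal{E}nd\otimes\Omega^1_{\mathcal{X}/\mathcal{B}}\to\mathcal{E}nd\otimes\Omega^2_{\mathcal{X}/\mathcal{B}},
\]
shifted by one. This complex sits in degrees $[0,2]$ before taking $R f_*$, and relative Serre duality on $\mathcal{X}/\mathcal{B}$ contributes a further $[2]$. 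The pairing therefore combines the trace pairing $\mathcal{E}nd[1]\otimes\mathcal{E}nd[1]\to\mathcal{O}[2]$, the wedge product $\Omega^i\otimes\Omega^{2-i}\to\Omega^2$, and the trace map $R f_*\Omega^2_{\mathcal{X}/\mathcal{B}}[2]\to\mathcal{O}_{\mathcal{B}}$. The total degree is $2-2\cdot 2=-2$.

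To get nondegeneracy I will repeat the argument of Proposition \ref{Prop:DolbeaultShiftedSympl} term by term. The essential point is that $\mathrm{ad}\nabla^{\lambda}$ is skew-adjoint for this pairing. Closedness comes from the AKSZ formalism, using the $\mathcal{O}_\Theta(1)$-twisted $2d$-orientation of Theorem \ref{MainTheoremBodyBody} transported relatively over $\mathcal{B}$.

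\emph{Step 2: the Lagrangian correspondence.} I will take the relative correspondence
\[
\mathcal{M}_{\Dol}(\mathcal{X}/\mathcal{B})\xleftarrow{\mathrm{ev}_0}\mathcal{M}_{\Hod}(\mathcal{X}/\mathcal{B})\times\mathbb{A}^1\xrightarrow{\mathrm{ev}_1}\mathcal{M}_{\DR}(\mathcal{X}/\mathcal{B}),
\]
built exactly as in Theorem \ref{MainTheoremBodyBody}, but now over $\mathcal{B}\times\mathbb{A}^1$. This uses the relative Deligne shape and the cospan \eqref{CoSpan} base-changed to $\mathcal{B}$.

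The Lagrangian structure should come from \cite[Prop.~3.4.2]{CalaqueHaugsengScheimbauer2025}: AKSZ sends oriented cospans to Lagrangian correspondences. The cospan in question is $\mathcal{X}_{\Dol/\mathcal{B}}\to\mathcal{X}_{\Hod/\mathcal{B}}\leftarrow\mathcal{X}_{\DR/\mathcal{B}}$, which is oriented relative to $\Theta_{\mathcal{B}}$ by the relative version of the twisted orientation. The fibers of the correspondence are then identified with the $(-2)$-shifted structures obtained in Step 1.

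\emph{Step 3: analytification.} As in Theorem \ref{MainTheoremBodyBody}, the comparison map \eqref{eqn: Map to AnMap} together with Proposition \ref{AnDeRhamAlgs} should show that the analytified correspondence is again a $(-2)$-shifted Lagrangian correspondence.

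\emph{Main obstacle.} I expect the hardest step to be making the relative $2d$-orientation of $\mathcal{X}_{\Hod/\mathcal{B}}\to\Theta_{\mathcal{B}}$ precise and compatible at both endpoints $\lambda=0,1$. This requires relative Serre duality on $\mathcal{X}/\mathcal{B}$ to commute with the Hodge degeneration, uniformly in families. My first attempt will use the filtration by $\mathrm{Sym}^i(\mathbb{T}_{\mathcal{X}/\mathcal{B}}[1])$, i.e.\ the Hodge filtration, whose associated graded is the Dolbeault case of Proposition \ref{Prop:DolbeaultShiftedSympl}. The idea is to deduce nondegeneracy at every $\lambda$ by a filtered quasi-isomorphism argument.
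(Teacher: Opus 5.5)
Your Step~1 matches the paper's argument for the first claim: an explicit Deligne-complex computation with the trace pairing and fiberwise integration, modeled on Proposition~\ref{RMDel0shifted}. You are right that the shift must be $2(1-d)=-2$; Propositions~\ref{Prop:DolbeaultShiftedSympl} and~\ref{DelDeformationTheory} as written would give $2-d=0$. One detail needs care there. For $\lambda\neq 0$ the projection $(\Omega^\bullet_{\mathcal{X}/\mathcal{B}},\lambda d)\to\Omega^2_{\mathcal{X}/\mathcal{B}}[-2]$ is not a chain map. The orientation therefore has to go through the top cohomology sheaf, identified with $R^2f_*\Omega^2_{\mathcal{X}/\mathcal{B}}\simeq\mathcal{O}_{\mathcal{B}}$ by relative Hodge--de Rham degeneration. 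Your ``main obstacle'' paragraph anticipates this.

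The genuine gap is in Step~2. \cite[Prop.~3.4.2]{CalaqueHaugsengScheimbauer2025} needs an \emph{oriented cospan}: a non-degenerate nullhomotopy exhibiting the two legs as a boundary, i.e.\ a Lefschetz-type duality. The $\mathcal{O}_\Theta(1)$-twisted $2d$-orientation of $\mathcal{X}_{\Hod/\mathcal{B}}\to\Theta_{\mathcal{B}}$ is not that. It is a fiberwise orientation of a family, which is exactly the input for the relative form in Step~1. The Dolbeault and de Rham shapes are fibers over two distinct points of $\Theta_{\mathcal{B}}$, not a boundary.

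Your cospan $\mathcal{X}_{\Dol/\mathcal{B}}\to\mathcal{X}_{\Hod/\mathcal{B}}\leftarrow\mathcal{X}_{\DR/\mathcal{B}}$ carries no such structure. For $\mathcal{X}=\mathcal{B}=\mathrm{pt}$ it is $\mathrm{pt}\xrightarrow{0}\Theta\xleftarrow{1}\mathrm{pt}$. For $E=\mathcal{O}_\Theta(k)$ with $k\neq 0$, the analogue of the cartesian square preceding \eqref{CoSpan} fails: for the sign of $k$ with $\Gamma(\Theta,E)\neq 0$, it would identify $\mathbb{C}$ with $\mathbb{C}\times_0\mathbb{C}$. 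The Kinjo--Park--Safronov square holds only because the leg over $0$ is $\mathsf{B}\mathbb{G}_m$. In your setting that would replace $\mathcal{M}_{\Dol}$ by $\mathbb{G}_m$-equivariant (graded) Higgs complexes.

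Pulling back to $\mathbb{A}^1$ does not help. The middle term then has $R\Gamma$ equal to a Rees module over $\mathbb{C}[\lambda]$, so its mapping stack is not locally of finite presentation. That stack of sections over $\mathbb{A}^1$ is also the only place where $\mathrm{ev}_0,\mathrm{ev}_1$ are defined. They do not exist on $\mathcal{M}_{\Hod}(\mathcal{X}/\mathcal{B})\times\mathbb{A}^1$, whose points over $\lambda$ are single $\lambda$-connections.

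The paper's route needs no cospan. Theorem~\ref{MainTheoremBodyBody}, to which the paper's proof defers via Corollary~\ref{SScurveLAGNAH}, uses only this: for a relatively $n$-shifted symplectic $Y\to S$, the relative diagonal $Y\to Y\times_S Y$ is relatively Lagrangian, since $\mathbb{L}_{Y/Y\times_S Y}\simeq\mathbb{L}_{Y/S}[1]$. Take $Y=\mathcal{M}_{\Hod}(\mathcal{X}/\mathcal{B};n)$ and $S=\mathcal{B}\times\mathbb{A}^1$ with your Step~1 form. This gives the second claim immediately, with the Dolbeault and de Rham stacks entering only as base changes to $\lambda=0,1$. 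That fiberwise relation is all the paper's argument provides. Replace Step~2 by this, and drop Step~3, which this algebraic statement does not need.
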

\begin{proof}
The first claim follows directly from the proof of Corollary \ref{DelCohomIsom} and of Proposition \ref{RMDel0shifted} via the fiberwise integration map
$\int_{\mathcal{X}_A/\mathcal{B}}: R(\mathcal{X}\to\mathcal{B})_*\Omega^2_{\mathcal{X}/\mathcal{B}}[-2] \to \mathcal{O}_{\mathcal{B}}[-2].$
The second claim is a consequence of the proof of Corollary \ref{SScurveLAGNAH}.
\end{proof}

\subsection{Nonabelian Hodge theory in the $\mathbb{C}$-analytic setting}
\label{ssec: Analytification}

We now consider a complex analytic analogue of the above results. In particular, we study the enhancement of the RH-isomorphism \eqref{RHIntro} in the derived analytic setting \cite{porta2017derived}. 

All of $X_{\mathrm{DR}},X_{\mathrm{Dol}}$ and $X_{\mathrm{Betti}}$ exist in the analytic setting, and if $X$ is a smooth proper connected $\mathbb{C}$-scheme, forming these shape operations commutes with analytification i.e. $(X_{\mathrm{DR}})^{\mathrm{an}}\simeq (X^{\mathrm{an}})_{\mathrm{DR}},(X_{\mathrm{Dol}})^{\mathrm{an}}\simeq (X^{\mathrm{an}})_{\mathrm{Dol}},$ \cite[Lem.~5.24, Lem.~5.31]{HolsteinPorta2025}. Similarly for $(-)_{\mathrm{Betti}}.$

Since analytification functor commutes with colimits, note the canonical $(\mathsf{B}G)^{\mathrm{an}} \simeq \mathsf{B}(G^{\mathrm{an}}),$ and that there is a canonical $t$-exact conservative functor
$$\mathsf{QCoh}(\mathsf{B}G)\to \mathsf{Mod}(\mathcal{O}_{\mathsf{B}G^{\mathrm{an}}}). $$
Moreover, both canonical $t$-exact functors
$$\mathsf{Perf}(\mathsf{B}G)\to \mathsf{Perf}(\mathsf{B}G^{\mathrm{an}}),\hspace{2mm}\mathsf{Coh}(\mathsf{B}G)\to\mathsf{Coh}(\mathsf{B}G^{\mathrm{an}}),$$
are equivalences of stable $\infty$-categories where both sides are equipped with complete $t$-structures.
\begin{remark}
We restrict to the study of 
$\RAnCoh(X^{\mathrm{an}})$ where $X$ is an algebraic variety, although this derived stack makes sense for more general derived analytic stacks $X$ which are $\mathcal{O}$-compact and oriented.
\end{remark}

If $X$ is a proper complex scheme, by \cite[Prop.~5.2, Thm.~5.5]{HolsteinPorta2025}, there is an equivalence,
\begin{equation}
    \label{eqn: RPerfAnCommutes}
\RPerf(X)^{\mathrm{an}}\simeq \RAnPerf(X^{\mathrm{an}}).
\end{equation}

It extends to $\RAnCoh(X^{\mathrm{an}})$ as follows. There is a canonical map $\RPerf(X)^{\mathrm{an}}\to \RAnPerf(X^{\mathrm{an}})$, obtained by adjunction from the canonical morphism 
$$\mu_{X,\mathrm{perf}}:\RPerf(X)\to \RAnPerf(X^{\mathrm{an}})\circ (-)^{\mathrm{an}}.$$

For $S\in \mathsf{dAff}^{\mathrm{afp}}$, this map is induced by applying the underlying $\infty$-groupoid functor $(-)^{\simeq}:\mathsf{Cat}_{\infty}\to \mathsf{Spc},$ to the analytification functor
\begin{equation}
    \label{relAnalytic}
\mathsf{Perf}(Z\times S)\to \mathsf{Perf}(Z^{\mathrm{an}}\times S^{\mathrm{an}}).
\end{equation}

If $f:B\to S$ is a morphism of derived complex stacks, lfp, with $B$ geometric and $S$ affine, then $\mathcal{F}\in \mathsf{APerf}(B)$ has tor-amplitude within $[a,b]$ relative to $S$ if and only if $\mathcal{F}^{\mathrm{an}}\in \mathsf{APerf}(B^{\mathrm{an}})$ has tor amplitude within $[a,b]$ relative to $S^{\mathrm{an}},$ thus analytification preserves subcategories of families of coherent sheaves relative to $S$ and $S^{\mathrm{an}}.$ 

Indeed, for $j_U:U\hookrightarrow S^{\mathrm{an}}$ an open derived Stein subspace, let
$A_U:=\Gamma(U;\mathcal{O}_{S^{\mathrm{an}}}^{\mathrm{alg}}|_{U}),$ where $S=\mathsf{Spec}A.$ Then, there is a canonical map $a_U:\mathsf{Spec}A_U\to S$ induced by the morphism $A\to A_U.$ We have natural pullback squares, 
\[
\begin{tikzcd}
X_U\arrow[d]\arrow[r] & X\arrow[d]
\\
\mathsf{Spec}A_U\arrow[r] & S,
\end{tikzcd}\hspace{2mm} \begin{tikzcd}
X_U\arrow[d]\arrow[r] & X\arrow[d]
\\
U\arrow[r] & S,
\end{tikzcd}
\]
and thus a relative analytification functor
$$(-)_{U}^{\mathrm{an}}:\mathsf{APerf}(X_U)\to \mathsf{APerf}(X_U^{\mathrm{an}}).$$

Consequently, there is a morphism 
$\RCoh(X)\to \RAnCoh(X^{\mathrm{an}})\circ(-)^{\mathrm{an}},$ compatible with $\mu_{X,\mathrm{perf}},$ induces
\begin{equation}
    \label{muXCoh}
\mu_X:\RCoh(X)^{\mathrm{an}}\to \RAnCoh(X^{\mathrm{an}}),
\end{equation}
by adjunction. Since $X$ is a complex projective scheme, \eqref{muXCoh} is an equivalence \cite{PortaSala2023}.

Indeed, this can be seen by reducing to the case of derived Stein spaces $U\in \mathsf{dStn}_{\mathbb{C}}.$ Namely, for every derived Stein space $U$ and every
compact derived Stein subspace $K$ of $U$, the natural morphism
$$``\underset{K\subset V\subset U}{\mathrm{hocolim}}" \mathsf{Coh}\big(\mathsf{Spec}(A_V)\times X/\mathsf{Spec}(A_V)\big)\to``\underset{K\subset V\subset U}{\mathrm{hocolim}}" \mathsf{Coh}(V\times X^{\mathrm{an}}/V),$$
is an equivalence in $\mathsf{Ind}\big(\mathsf{Cat}_{\infty}^{\mathrm{st}}\big)$. The colimit is taken over the family of open Stein neighborhoods $V$ of $K$ inside $U.$

\subsubsection{Shifted-symplectic derived Riemann--Hilbert}
In this subsection, we study the derived analog of the RH-map \eqref{RHIntro} and upgrade the derived RH-equivalence \cite[Thm.~6.11]{porta2017derived} to an equivalence between \emph{shifted-symplectic} derived analytic stacks.

As before, we keep that $X$ is a smooth proper connected complex scheme. First, there is a natural transformation
\begin{equation}
    \label{EtaRH}
\eta_{RH}:X_{\mathrm{DR}}^{\mathrm{an}}\to X_{\mathrm{Betti}}^{\mathrm{an}}.
\end{equation}
As mentioned in \cite[Rem.~3.7]{porta2017derived} this map is part of the following diagram,
\begin{equation}
    \label{RHPFDiagram}
  \begin{tikzcd}
      & \mathrm{Maps}_{\mathsf{Spc}}\big(\Pi_{\infty}\big((-)^{\mathrm{an}}\big),\Pi_{\infty}X\big)^{\mathrm{an}} & 
      \\
X_{\mathrm{DR}}^{\mathrm{an}}\arrow[dr,"q"]\arrow[ur] \arrow[rr,"\eqref{EtaRH}"] & & \arrow[ul,"\sim"] \arrow[dl,"p"] X_{\mathrm{Betti}}^{\mathrm{an}}
\\
& * &
  \end{tikzcd}  
\end{equation}
whose lower triangle naturally commutes. 
\begin{remark}
\label{AnalyticRequirementRemark}
  This diagram \eqref{RHPFDiagram} also exists for $X\in\mathsf{dAff}_{\mathbb{C}},$ but the right-most map can not be inverted, as it requires the analytic topology.
  \end{remark}
The transformation \eqref{EtaRH} induces for every $Z\in\mathsf{dAnSt},$ a morphism
\begin{equation}
\label{DerRH}
\eta_{RH}^*:\AnMap(X_{\B}^{\mathrm{an}},Z)\to \AnMap(X_{\DR}^{\mathrm{an}},Z).
\end{equation}
When $Z$ is $\AnPerf,$ so that $\RAnPerf(X_{\mathrm{DR}}^{\mathrm{an}})=\AnMap(X_{\mathrm{DR}}^{\mathrm{an}},\AnPerf),$ by \cite[Thm.~6.11]{porta2017derived} the Riemann--Hilbert morphism \eqref{DerRH} is an equivalence of derived analytic stacks.

Since $X$ is compact connected oriented manifold of dimension $d$, the Betti stack $X_{\mathrm{Betti}}$ is $\mathcal{O}$-compact. By Poincar\'e duality, it is $d'$-oriented as a topological manifold, and $\Map(X_B,\RPerf)$ is locally of finite presentation with natural $(2-d')$-shifted symplectic structure \cite{PTVV13}. Here, $d'=2\dim_{\mathbb{C}}(X),$ and thus the underlying $2$-forms and their cohomological degrees agree. By relative GAGA, via the analytic de Rham theorem the fundamental classes coincide, hence so do the PTVV-orientation data.  

In other words, applying the analytification functor to the correspondence in Theorem \ref{MainTheoremBody}, then the morphism \eqref{eqn: Map to AnMap} in this case is an equivalence \cite{HolsteinPorta2025} and we obtain the following result.
\begin{theorem}
\label{AnalyticNAH}
   Let $X$ be a smooth proper connected complex algebraic variety of dimension $d$. Consider the $2d$-oriented stacks $X_{\DR},X_{\B}$. Then the Riemann--Hilbert transformation preserves the orientation data. In other words,
   the following statements hold:
   \begin{itemize}
   \item[(i)] The derived RH morphism \eqref{DerRH} is compatible with the $(2-2d)$-shifted symplectic structures via AKSZ i.e.
$$\eta_{\mathrm{RH}}^*:\AnMap(X_{\B}^{\mathrm{an}},\AnPerf)\xrightarrow{\simeq}\AnMap(X_{\DR}^{\mathrm{an}},\AnPerf),$$
    is an equivalence of $2(1-d)$-shifted sympelctic derived analytic stacks. 
       \item[(ii)] More generally, let $Z\in\mathsf{dSt}^{\mathrm{afp}}_{\mathbb{C}}$ be a geometric derived stack locally almost of finite presentation over $\mathbb{C},$ with $n$-shifted symplectic structure.
Assume that $Z$ is Tannakian and that $\Q(Z)\simeq\mathsf{Ind}\big(\mathsf{Perf}(Z)\big).$
Assume that $X$ is a smooth $\mathbb{C}$-analytic space of dimension $d$. Then the Riemann--Hilbert transformation $\eta_{\mathrm{RH}}:X_{\DR}\to X_{\B}$ induces an equivalence
$$\eta_{\mathrm{RH}}^*:\AnMap(X_{\B},Z^{\an})\rightarrow \AnMap(X_{\DR},Z^{\an}),$$
of $(n-2d)$-shifted symplectic derived analytic stacks.
    \end{itemize}
    \end{theorem}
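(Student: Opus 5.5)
The plan is to reduce both statements to a comparison of AKSZ forms. By Porta's derived Riemann--Hilbert theorem \cite[Thm.~6.11]{porta2017derived}, and more generally \cite[Cor.~7.6]{HolsteinPorta2025} for Tannakian $Z$ with $\Q(Z)\simeq\mathsf{Ind}(\mathsf{Perf}(Z))$, the map $\eta_{\mathrm{RH}}^*$ of \eqref{DerRH} is already an equivalence of derived analytic stacks. What has to be shown is that it carries the transgressed form $\omega_{\B}:=\int_{[X_{\B}]}\mathrm{ev}_{\B}^*\omega_Z$ to $\omega_{\DR}:=\int_{[X_{\DR}]}\mathrm{ev}_{\DR}^*\omega_Z$. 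Nondegeneracy on the source is then inherited automatically, and part (i) is the special case $Z=\Perf$, $n=2$.

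\textbf{Step 1: naturality of transgression.} The evaluation maps fit into a commutative square, with $\mathrm{ev}_{\DR}\circ(\eta_{\mathrm{RH}}\times\mathrm{id})\simeq \mathrm{ev}_{\B}\circ(\mathrm{id}\times\eta_{\mathrm{RH}}^*)$ as maps $X_{\DR}^{\an}\times\AnMap(X_{\B}^{\an},Z^{\an})\to Z^{\an}$. The analytic de Rham algebras of Proposition \ref{AnDeRhamAlgs} and the Künneth morphism are functorial. Together these give
\[
(\eta_{\mathrm{RH}}^*)^*\omega_{\DR}\simeq \int_{[X_{\DR}]}(\eta_{\mathrm{RH}}\times\mathrm{id})^*\mathrm{ev}_{\B}^*\omega_Z .
\]
It therefore suffices to show that the orientations are compatible, i.e.\ that $[X_{\DR}]\circ\eta_{\mathrm{RH}}^*\simeq[X_{\B}]$ as maps $\mathbb{R}\Gamma(X_{\B}^{\an},\mathcal{O})\to\mathbb{C}[-2d]$. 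This reduction follows the pattern of the AKSZ argument in Theorem \ref{MainTheorem2Body}, where pulling back forms commutes with fibrewise integration.

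\textbf{Step 2: matching fundamental classes.} Here I would use that $\eta_{\mathrm{RH}}^*$ on structure sheaves induces the analytic de Rham comparison
\[
\mathbb{R}\Gamma(X^{\an}_{\B},\underline{\mathbb{C}})\xrightarrow{\simeq}\mathbb{R}\Gamma(X^{\an}_{\DR},\mathcal{O})\simeq H^\bullet_{\DR}(X^{\an}).
\]
By GAGA for $X_{\DR}$ \cite[Lem.~5.24]{HolsteinPorta2025}, the target is identified with algebraic de Rham cohomology. The PTVV orientation on $X_{\DR}$ is the trace $H^{2d}_{\DR}\to\mathbb{C}$; on $X_{\B}$ it is Poincaré duality for the compact oriented $2d$-manifold $X^{\an}$. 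The classical statement that integration of top forms agrees with evaluation on the fundamental class, up to the standard normalisation $(2\pi i)^{d}$, identifies the two. If necessary, I would fix the normalisation by rescaling $[X_{\B}]$. Since both orientations depend only on top-degree cohomology, this yields a homotopy of preorientations. The orientations are nondegenerate: for $X_{\B}$ by Poincaré–Verdier duality, and for $X_{\DR}$ by \cite[Cor.~2.13]{PTVV13} together with Proposition \ref{AnDeRhamAlgs}.

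\textbf{Step 3: the general case (ii), and the main obstacle.} For (ii) the same argument applies verbatim once the analytic mapping stacks are known to carry AKSZ forms. This uses Theorem \ref{MainTheorem2Body}(ii), applied with trivial base and trivial twisting, together with the cotangent computation of Proposition \ref{AnMapCotComplex}. The Tannakian hypothesis ensures the comparison map \eqref{eqn: Map to AnMap} is an equivalence, so the symplectic forms may be computed algebraically and then analytified. I expect the main obstacle to be Step 2 made homotopy-coherent. One has to check that the identification of orientations is not merely an equality in cohomology but is compatible with the graded-mixed structure used in $\int_{[X]}$. Since $X_{\B}$ is not an algebraic stack, this must be done in the analytic setting. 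I would first try to argue that the space of $2d$-preorientations is discrete on $\pi_0$ in the relevant range. The point is that maps $\mathbb{R}\Gamma(\mathcal{O})\to\mathbb{C}[-2d]$ are determined by $H^{2d}$, since the complex has cohomological amplitude $[0,2d]$, so agreement on $H^{2d}$ already gives a homotopy.
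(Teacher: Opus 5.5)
Your proposal is correct and follows essentially the paper's route: you take the derived Riemann--Hilbert equivalence from Porta and Holstein--Porta as given, use naturality of AKSZ transgression along the evaluation maps to reduce to $[X_{\DR}]\circ\eta_{\mathrm{RH}}^*\simeq[X_{\B}]$, and get that compatibility from the de Rham theorem. Where you differ, the paper additionally sketches a pointwise Tannakian check of the equivalence, while you add the $(2\pi i)^d$ normalisation and the discreteness of the space of preorientations, both of which are sound. Two small slips: the evaluation identity should read $\mathrm{ev}_{\DR}\circ(\mathrm{id}\times\eta_{\mathrm{RH}}^*)\simeq\mathrm{ev}_{\B}\circ(\eta_{\mathrm{RH}}\times\mathrm{id})$, which is what your displayed formula actually uses; and Theorem \ref{MainTheorem2Body} assumes a geometric source, which $X_{\B}$ and $X_{\DR}$ are not, so the AKSZ forms should come from PTVV's transgression for $\mathcal{O}$-compact oriented sources, analytified through the Tannakian comparison, as your fallback sentence already says.
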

\begin{proof}
We need only show orientations are preserved, which is an argument independent of the target. We thus phrase the proof for (ii). 
Consider the diagram \eqref{RHPFDiagram}. It is enough to prove that the natural orientations $p_*(\mathcal{O}_{X_{\mathrm{B}}})\to \mathbb{C}[2-d']$ and $q_*(\mathcal{O}_{X_{\mathrm{DR}}})\to \mathbb{C}[2-2d],$ are preserved by the pull-back $\eta_{\mathrm{RH}}^*,$ for $\mathcal{F}\in\mathsf{APerf}(X_{\mathrm{B}}^{\mathrm{an}}).$ In this case, compatibility of lower-$+$ pushforward, is just a rephrasing of the de Rham theorem i.e. de Rham and Betti cohomology agree. 
By Beck-Chevalley, there is a natural map 
$q_+\eta_{\mathrm{RH}}^*(\mathcal{F})\leftarrow p_+(\mathcal{F}),$
and the natural composition 
$$\mathcal{F}\otimes p^*p_*(\mathcal{F}^{\vee})\to \mathcal{F}\otimes \mathcal{F}^{\vee}\to \mathcal{O}_{X_{\mathrm{B}}^{\mathrm{an}}},$$
induces a natural morphism $\nu_{\mathcal{F}}:\mathcal{F}\to p^*p_+(\mathcal{F}),$ functorial in $\mathcal{F}\in\mathsf{APerf}.$ Similarly for $q_+$. The result follows by considering the unit map 
$$p_*\to p_*\eta_{\mathrm{RH}*}\eta_{\mathrm{RH}}^*\simeq q_+\eta_{\mathrm{RH}}^*,$$
which is an equivalence by \eqref{RHPFDiagram}.
For completeness, we also confirm the agreement of the symplectic structures via the AKSZ construction (although more or less clear).
To this end, consider the canonical diagram induced by the RH-transform \eqref{EtaRH}, and \eqref{DerRH}:
\[
\begin{tikzcd} X_{\mathrm{DR}}^{\mathrm{an}}\times \AnMap(X_{\mathrm{DR}}^{\mathrm{an}},Z) \arrow[r,"\mathrm{ev}_{\mathrm{DR}}"] \arrow[d,"\eta_{\mathrm{RH}}\times \eta_{\mathrm{RH}}^{-1}"] & Z \\ X_{\mathrm{Betti}}^{\mathrm{an}}\times \AnMap(X_{\mathrm{Betti}}^{\mathrm{an}},Z) \arrow[ur,"\mathrm{ev}_{\mathrm{B}}"'] \end{tikzcd}
\]
Consider also the diagram of projections
\[
\begin{tikzcd} X_{\mathrm{DR}}^{\mathrm{an}}\times \AnMap(X_{\mathrm{DR}}^{\mathrm{an}},Z) \arrow[r] \arrow[d,"p^{\mathrm{DR}}"] & X_{\mathrm{Betti}}^{\mathrm{an}}\times \AnMap(X_{\mathrm{Betti}}^{\mathrm{an}},Z) \arrow[d,"p^{\mathrm{B}}"] \\ \AnMap(X_{\mathrm{DR}}^{\mathrm{an}},Z) \arrow[r,"\eqref{DerRH}"] & \AnMap(X_{\mathrm{Betti}}^{\mathrm{an}},Z) \end{tikzcd}
\]
Applying $\mathrm{DR}^{\mathrm{an}},$ we first obtain the diagram

\begin{equation}
    \label{SympRHDiag1}
\begin{tikzcd}[column sep=huge] \mathrm{DR}(Z) \arrow[r,"ev_{dR}^*"] \arrow[d,equal] & \mathrm{DR}\big(X_{\mathrm{DR}}^{\mathrm{an}}\times \AnMap(X_{\mathrm{DR}}^{\mathrm{an}},Z)\big) \arrow[d,"(\eta_{RH}\times id)^*"] \\ \mathrm{DR}(Z) \arrow[r,"ev_B^*"'] & \mathrm{DR}\big(X_{\mathrm{B}}^{\mathrm{an}}\times    \AnMap(X_{\mathrm{B}}^{\mathrm{an}},Z)\big) \end{tikzcd}
\end{equation}
and secondly a diagram via the choices of orientations,
\begin{equation}
    \label{SympRHDiag2}
\begin{tikzcd}[column sep=huge] \mathrm{DR}(X_{\mathrm{DR}}^{\mathrm{an}}\times \AnMap(X_{\mathrm{DR}}^{\mathrm{an}},Z)) \arrow[r,"p^{\mathrm{DR}}_+"] \arrow[d,"(\eta_{RH}\times id)^*"] & \mathrm{DR}(\AnMap(X_{\mathrm{DR}}^{\mathrm{an}},Z))[-2d] \arrow[d,"\eta_{RH}^*"] \\ \mathrm{DR}(X_{\mathrm{B}}^{\mathrm{an}}\times \AnMap(X_{\mathrm{B}}^{\mathrm{an}},Z)) \arrow[r,"p^{\mathrm{B}}_+"] & \mathrm{DR}(\AnMap(X_{\mathrm{B}}^{\mathrm{an}},Z))[-2d]. \end{tikzcd}
\end{equation}
Pasting together diagram \eqref{SympRHDiag1} with  \eqref{SympRHDiag2} via composition (c.f. composite maps \eqref{eqn: DR-composites}), we obtain a large diagram from which the Beck--Chevalley condition gives that
$p(_{\mathrm{DR}})_+\circ (\eta_{\mathrm{RH}}\circ \mathrm{Id})^*\simeq \eta_{\mathrm{RH}}^*\circ (p_{\mathrm{B}})_+,$
which is the content of the de Rham theorem. Then, explicitly:
$$\eta_{\mathrm{RH}}^*(\omega_{\mathrm{Betti}})\simeq\eta_{\mathrm{RH}}^*\big(\int_{[X_{\mathrm{B}}]}\mathrm{ev}_{\mathrm{B}}^*\omega_Z\big)\simeq\int_{[X_{\mathrm{DR}}]}(\eta_{\mathrm{RH}}\times \mathrm{Id})^*\mathrm{ev}_{\mathrm{B}}^*\omega_Z\simeq \int_{[X_{\mathrm{DR}}]}\circ \mathrm{ev}_{\mathrm{DR}}^*\omega_Z=\omega_{\mathrm{DR}}.$$
At the level of points, the argument is carried out via a relative GAGA argument; it suffices to evaluate $\eta_{\mathrm{RH}}^*$ on $S\in \mathsf{dStein}_{\mathbb{C}}$, which gives a morphism
$$\mathrm{Maps}_{\mathsf{dAnSt}_{\mathbb{C}}}\big(S\times X_{\B},Z^{\an})\rightarrow \mathrm{Maps}_{\mathsf{dAnSt}_{\mathbb{C}}}(S\times X_{\DR},Z^{\an}).$$
One must show that the analytic mapping stacks on both sides are geometric and this morphism is an equivalence which preserves the orientations on $X_{\B},X_{\DR}.$ 
These statements are clear from \cite{HolsteinPorta2025}, but we mention details for completeness. Geometricity is immediate, while  the equivalence follows if we can show that the induced diagram,
\[
\begin{tikzcd}[column sep=6em]
\mathrm{Maps}_{\mathsf{dAnSt}_{\mathbb{C}}}\big(S\times X_{\B},Z^{\an})\arrow[r,"\eta_{\mathrm{RH}}^*(S)"] \arrow[d,"\hat{P}"] & \mathrm{Maps}_{\mathsf{dAnSt}_{\mathbb{C}}}\big(S\times X_{\DR},Z^{\an})\arrow[d,"\hat{P}"]
    \\
    \mathsf{Funct}_{L,\mathbb{C}}^{\otimes}\big(\mathsf{Perf}(Z),\mathsf{Perf}(S\times X_{\B})\big)\arrow[r,"G"]& \mathsf{Funct}_{L,\mathbb{C}}^{\otimes}\big(\mathsf{Perf}(Z),\mathsf{Perf}(S\times X_{\DR})\big)
\end{tikzcd}
\]
is commutative.
Since $Z$ is Tannakian, the vertical morphisms are fully faithful by application of \cite[Thm.~6.4]{HolsteinPorta2025}. The functor $G$ is an equivalence by \cite[Thm.~6.11]{porta2017derived}.
Commutativity follows from application of \cite[Prop.~6.12]{HolsteinPorta2025}. In order to apply that result, one proceeds as follows. Replace $X_{\B}$ by a colimit of derived Stein spaces, and apply the same argument given in \cite[Lem.~5.14]{PortaYuHigherGaga2016}, which finds a finite double hypercovering of $X$ by disjoint unions of contractible open Stein subspaces satisfying a nesting property \cite[Rmk.~5.15]{PortaYuHigherGaga2016}, for which the above results apply and which we glue to give the desired statement.
\end{proof}
\begin{remark}
In practice, if $\mathcal{F}$ is a derived local system i.e. a perfect complex of local systems on $X$, then $\eta_{\mathrm{RH}}^*(\mathcal{F})\simeq\mathcal{F}\otimes_{\underline{\mathbb{C}}_X}\mathcal{O}_X$ is the induced perfect complex with flat connection $\nabla$. Then, 
$q_*\eta_{\mathrm{RH}}^*(\mathcal{F})\simeq H_{\mathrm{DR}}^*(\mathcal{F}\otimes_{\underline{\mathbb{C}}_X}\mathcal{O}_{X^{\mathrm{an}}},\nabla\big)\simeq H^*(\mathcal{F}),$ precisely by the de Rham theorem (cf. remark \ref{UniversalDolbeault}).
\end{remark}

\section{Deligne--Hitchin--Simpson twistor stack}
\label{sec: Derived NAH and KW}
In this section,  
for simplicity, we focus on the absolute case, so that $\mathcal{B}$ is $Spec(\mathbb{C}).$ 
 \subsection{Deligne homotopy pushouts}
 Let $X$ be a smooth proper variety over $\mathbb{C}$ and $X^{\mathrm{conj}}$ the complex (Galois) conjugate variety as in Subsection.~\ref{PreTwistorSection}, defined by taking complex conjugates of the coefficients of equations defining $X.$
 This defines a real analytic homemorphism,
 $\gamma:X_{\mathrm{B}}\xrightarrow{\simeq}X^{\mathrm{conj}}_{\mathrm{B}},$
 where $X_{\mathrm{B}}$ is the topological space underlying the complex analytic manifold $X^{\mathrm{an}}.$ There is an induced equivalence,
 \begin{equation}
     \label{eqn: Gamma_B}
 \gamma_{\mathrm{B}}^*:\Map(X_{\mathrm{B}},\RPerf)\xrightarrow{\simeq}\Map(X_{\mathrm{B}}^{\mathrm{conj}},\RPerf).
 \end{equation}
 By \eqref{eqn: Map to AnMap} it is compatible with analytification. We will combine this equivalence with the derived Riemann-Hilbert equivalence \eqref{DerRH}, to compute the derived twistor space of perfect complexes on a smooth proper $\mathbb{C}$-analytic space $X$.
\begin{remark}[Notation]
The stack quotient $\EuScript{P}=[\mathbb{P}^1/\mathbb{G}_m]$ has two closed points isomorphic to $\mathsf{B}\mathbb{G}_m$ and one open point with trivial stabilizers. Indeed, the fixed points of the $\mathbb{G}_m$-action on $\mathbb{P}^1$ are $0=[1:0]$ and $\infty=[0:1].$ In other words, $\EuScript{P}$ is equivalent to the pushout $\EuScript{A}_0\sqcup_{\mathrm{Spec}(\mathbb{C})}\EuScript{A}_{\infty},$ where $\EuScript{A}_i$ are the copies at $0,\infty.$ Moreover, $\mathbb{P}^1\to \EuScript{P}$ is a $\mathbb{G}_m$-torsor over $\mathbb{P}^1\backslash \{0,\infty\}.$
\end{remark}
Recall $(\mathbb{A}^1\backslash \{0\})^{\mathrm{an}}\simeq\mathbb{C}^*\simeq \mathbf{G}_m,$ and our conventions regarding $\mathbf{P}^1$ are that it is always endowed with its $\mathrm{Gal}(\mathbb{C}/\mathbb{R})$-action (cf. Rem.~\ref{GalActionRemark}).
\begin{theorem}
\label{DHSTwistor1}
Let $X$ be a smooth $\mathbb{C}$-analytic space of dimension $d$. Then, the following hold:
\begin{itemize}
    \item[(i)] The homotopy pushout in derived analytic stacks
    \begin{equation}
        \label{DeligneHoPushout}
    \begin{tikzcd}
        \RAnPerf_{\B}(X)\times \mathbf{G}_m\arrow[d,"\EuScript{J}"] \arrow[r,"\EuScript{J}_{\mathrm{conj}}"] & \RAnPerf_{\mathrm{Hod}}(X^{\mathrm{an,conj}})\arrow[d]
\\
\RAnPerf_{\mathrm{Hod}}(X^{\mathrm{an,conj}})\arrow[r] & \RAnPerf_{\mathrm{Del}}(X),
    \end{tikzcd}
\end{equation}
    is a derived analytic stack.
    
    \item[(ii)] There is a natural $\mathbb{G}_m$-equivariant map to $\mathbf{P}^1,$ descending to the stack theoretic analytic quotient $\EuScript{P}^{\mathrm{an}}$ of $\mathbf{P}^1$ by $\mathbf{G}_m$, denoted 
$\eta_{\mathrm{Del}}:\RAnPerf_{\mathrm{Del}}(X)\to \EuScript{P}^{\mathrm{an}},$ supplying the Deligne-stack with an automorphism covering the antipode. In particular, there are natural $\mathbb{G}_m$-equivariant restriction maps $\eta_{Del}^1:\RAnPerf_{\mathrm{Hod}}(X)\to \mathbf{A}_{\mathbb{C}}^1,$ and $\eta_{Del}^2:\RAnPerf_{\mathrm{Hod}}(X^{\mathrm{an,conj}})\to \mathbf{A}_{\mathbb{C}}^1,$ recording the values of the $\lambda$-connections, which are equal under the antipodal map $\sigma_{\mathbf{P}^1}:\mathbf{P}^1\to \mathbf{P}^1$. 

\item[(iii)] Let $t_0\big(\RAnPerf_{\mathrm{Hod}}(X)\big)$ be the classical truncation and denote by
$\mathcal{M}_{\mathrm{Hod}}^{\mathrm{ss}}(X)\subset t_0\big(\RAnPerf_{\mathrm{Hod}}(X)\big)$
the open substack of semistable objects. Similarly, for $\mathcal{M}_{\mathrm{Hod}}^{\mathrm{ss}}(X^{\mathrm{conj}}).$ Then the pushout of ordinary classical analytic stacks
$\mathcal{M}_{\mathrm{Hod}}(X)^{\mathrm{ss}}\sqcup \mathcal{M}_{\mathrm{Hod}}(X^{\mathrm{conj}})^{\mathrm{ss}},$
 is a good moduli space.
\end{itemize}
\end{theorem}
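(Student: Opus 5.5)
We construct the two legs of the span \eqref{DeligneHoPushout} first and only then take the pushout; the three items follow in order. Restrict the analytic Hodge stack $\RAnPerf_{\Hod}(X)\to\mathbf{A}^1$ (Theorem \ref{MainTheoremBodyBody}, analytified via \eqref{eqn: Map to AnMap} and \cite{HolsteinPorta2025}) to the open locus $\mathbf{G}_m\subset\mathbf{A}^1$. Since $\mathbb{G}_m$ acts on $X_{\Hod}$ covering its action on $\mathbb{A}^1$, and the fibre over $1$ is $X_{\DR}$, this restriction is trivialized as $\RAnPerf(X_{\DR}^{\an})\times\mathbf{G}_m$. Composing with the inverse of the derived Riemann--Hilbert equivalence of Theorem \ref{AnalyticNAH}(i) gives
$$\EuScript{J}:\RAnPerf_{\B}(X)\times\mathbf{G}_m\to\RAnPerf_{\Hod}(X).$$
We take $\EuScript{J}$ to be an open immersion onto the $\lambda\neq0$ locus (the statement says ``closed immersion''; we would check which is intended, since the open one is what gluing needs).

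For the conjugate leg, use the homeomorphism $\gamma_{\B}^*$ of \eqref{eqn: Gamma_B}, the Riemann--Hilbert equivalence for $X^{\mathrm{conj}}$, and the inversion $\lambda\mapsto\lambda^{-1}$ on $\mathbf{G}_m$. These combine to give $\EuScript{J}_{\mathrm{conj}}$.

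For (i), note that $\mathsf{dAnSt}_{\mathbb{C}}$ is an $\infty$-topos, so the homotopy pushout exists as a sheaf. The content is that it is locally geometric. We argue by gluing along open immersions: pushouts of two locally geometric stacks along a common open substack are again locally geometric, since atlases of the two pieces together give an atlas of the union. Concretely, pull back a smooth atlas of each $\RAnPerf_{\Hod}$, which exists because the algebraic Hodge stack is locally geometric and analytification preserves this. Then check that the open immersions remain monomorphisms after pushout, using descent in the $\infty$-topos \cite[Thm.~6.1.3.9]{Lur09}. The relative analytic cotangent complex is glued from the two pieces using Proposition \ref{AnCotangentProperties}.

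For (ii), the structure maps $\eta^1,\eta^2$ to the two copies of $\mathbf{A}^1$ agree on the overlap $\mathbf{G}_m$ after $\lambda\mapsto\lambda^{-1}$. Since $\mathbf{P}^1=\mathbf{A}^1\sqcup_{\mathbf{G}_m}\mathbf{A}^1$, they glue to $\eta_{\Del}$. Both maps are $\mathbf{G}_m$-equivariant, so $\eta_{\Del}$ descends to $\EuScript{P}^{\an}$ using the pushout description of $\EuScript{P}$ in the Remark above. For the antipodal structure, define $\sigma$ on the pushout by swapping the two legs through complex conjugation $X\leftrightarrow X^{\mathrm{conj}}$, in the sense of Definition \ref{RealStructure}. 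Compatibility on the overlap reduces to $\gamma_{\B}^*$ intertwining the two Riemann--Hilbert maps together with $\lambda\mapsto-\bar\lambda^{-1}$. The involutivity homotopy $\sigma^{\mathrm{conj}}\circ\sigma\simeq\mathrm{id}$ comes from $\gamma^{\mathrm{conj}}\circ\gamma=\mathrm{id}$, and the diagram \eqref{eqn: CoveringAntipodal} is checked legwise.

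For (iii), classical truncation preserves the open-gluing pushout. The semistable loci $\mathcal{M}^{\mathrm{ss}}_{\Hod}(X)$ and $\mathcal{M}^{\mathrm{ss}}_{\Hod}(X^{\mathrm{conj}})$ admit good moduli spaces (Simpson), and these agree over $\mathbf{G}_m$ via the classical Riemann--Hilbert map. We then apply the gluing criterion \cite[Prop.~7.9]{alper2013good}, as in the proof of Theorem \ref{MainTheorem3Body}. This needs the overlap to be saturated for both good moduli maps, which holds because it is the preimage of an open subset of the base $\mathbf{A}^1$.

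The main obstacle is (i) together with the real structure. Riemann--Hilbert is only analytic, so the gluing cannot be done algebraically (Remark \ref{AnalyticRequirementRemark}), and the conjugation $\sigma$ is antiholomorphic rather than a morphism in $\mathsf{dAnSt}_{\mathbb{C}}$. We would therefore phrase $\sigma$ as an equivalence $Z\simeq Z^{\mathrm{conj}}$, via Proposition \ref{AnalyticDRConj}, and verify the higher coherences by applying Proposition \ref{RealOnQuotients}-type simplicial arguments to the span.
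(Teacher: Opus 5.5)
Your argument is correct, and for (i) it takes a genuinely different route from the paper. The paper treats $\EuScript{J}$ and $\EuScript{J}_{\mathrm{conj}}$ as closed immersions and proves (i) using the theory of pushouts along closed immersions. It forms the pushout of the underlying $\infty$-topoi (Lurie, Porta--Yu) and writes the structure sheaf as a fibre product of pushforwards. It then uses the long exact sequence of homotopy sheaves to show two things: $\pi_0$ is a square-zero extension of the classical pushout by a nilpotent coherent ideal, and every $\pi_i$ is coherent.

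You instead observe that each leg is an equivalence onto $\RAnPerf_{\Hod}(X)\times_{\mathbf{A}^1}\mathbf{G}_m$ followed by the base change of $\mathbf{G}_m\hookrightarrow\mathbf{A}^1$, hence an open immersion. You then glue Zariski-locally:
in an $\infty$-topos a pushout along monomorphisms is also a pullback, so the legs stay open immersions into the pushout;
atlases combine on a compatible exhaustion by geometric opens (e.g.\ the tor-amplitude strata used in Proposition~\ref{DHSTwistor2});
cotangent complexes glue, since they are local for open immersions.

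Your hesitation about ``closed'' should be resolved in favour of ``open''. The image of $\EuScript{J}$ is the preimage of $\{\lambda\neq 0\}$, and it is not closed: the family of $t$-connections $(E,t\nabla)$ has its $t=0$ member $(E,0)$ in the Dolbeault fibre. So the closed-immersion results the paper invokes do not apply here. Your open gluing, by contrast, is exactly what yields the chart identifications $\RAnPerf_{\Del}(X)\times(\mathbf{P}^1\setminus\infty)\simeq\RAnPerf_{\Hod}(X)$ stated after Proposition~\ref{DHSTwistor3}. Your route is shorter and matches the geometry. The paper's structure-sheaf analysis would only be needed, and would give finer information, for a genuine pinching along a closed substack.

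For (ii) and (iii) you are close to the paper, with some gains.
\begin{itemize}
\item Your inversion $\lambda\mapsto\lambda^{-1}$ in $\EuScript{J}_{\mathrm{conj}}$ is essential. As written, the paper's $\EuScript{J}_{\mathrm{conj}}$ is the identity on the $\mathbf{G}_m$ factor, and with that the two structure maps would glue to the affine line with doubled origin rather than to $\mathbf{P}^1$.
\item The paper builds $\sigma_{\Del}$ by gluing self-maps of each Hodge leg that cover involutions of $\mathbf{A}^1$. But $\lambda\mapsto-\bar\lambda^{-1}$ exchanges the charts at $0$ and $\infty$, so your leg-exchanging description via $X\leftrightarrow X^{\mathrm{conj}}$ is the one that actually covers $\sigma_{\mathbf{P}^1}$.
\item In (iii) you use Alper's gluing criterion as the paper does, but you also check the saturation of the overlap, which the paper leaves implicit.
\end{itemize}
One caution for later use, which does not affect (ii) as stated. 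An involution built from $\gamma$ alone covers the antipode and squares to the identity. However, Simpson's real structure, the one for which twistor lines of harmonic bundles are real, also composes with dualization, acting on Betti points by $\rho\mapsto(\bar\rho^{t})^{-1}$.
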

\begin{proof}
\textit{(i)}:
We begin by constructing closed-immersions of analytic stacks, along which we form a homotopy-pushout. 
To this end, combining \eqref{eqn: Gamma_B} with the derived Riemann-Hilbert map \eqref{DerRH}, we have  
\begin{equation}
    \label{GluingDeRhamConjugates}
\begin{tikzcd}
\Map(X_{\mathrm{B}},\Perf)^{\mathrm{an}}\arrow[d,"\eqref{eqn: Gamma_B}"]\arrow[r,"\eqref{eqn: Map to AnMap}"]& \AnMap(X_{\mathrm{B}}^{\mathrm{an}},\AnPerf)
\arrow[d,"\gamma_{\mathrm{B}}^*\circ (-)^{\mathrm{an}}"]\arrow[r,"\eta_{\mathrm{RH}}^*"]  & \AnMap(X_{\mathrm{DR}}^{\mathrm{an}},\AnPerf)\arrow[d,"\gamma_{\mathrm{DR}}^*"]
\\
\Map(X_{\mathrm{B}}^{\mathrm{conj}},\Perf)^{\mathrm{an}} \arrow[r,"\eqref{eqn: Map to AnMap}"] & \AnMap(X_{\mathrm{B}}^{\mathrm{an},\mathrm{conj}},\AnPerf)\arrow[r,"\overline{\eta}_{\mathrm{RH}}^*"] & \AnMap(X^{\mathrm{conj}}_{\mathrm{DR}},\AnPerf).
\end{tikzcd}
\end{equation}
This gives an equivalence of derived analytic stacks
$$\gamma_{\mathrm{DR}}:\RPerf(X_{\mathrm{DR}})^{\mathrm{an}}\simeq\RAnPerf(X_{\mathrm{DR}}^{\mathrm{an}})\xrightarrow{\simeq}\RPerf(X^{\mathrm{conj}})^{\mathbf{an}}\simeq \RAnPerf\big((X_{\mathrm{DR}}^{\mathrm{conj}})^{\mathrm{an}}\big).$$
There is a natural trivialization map away from the origin in $\mathbb{A}^1,$ denoted
$$\tau:\big(\Map_{/\Theta}\big(X_{\mathrm{Hod}},\RPerf\times\Theta)\times_{\mathbb{A}^1}\mathbf{G}_m\big)^{\mathrm{an}}\simeq \RAnPerf(X_{\mathrm{DR}}^{\mathrm{an}})\times (\mathbb{A}^1\backslash\{0\})^{\mathrm{an}},$$
which sends $(\mathcal{E}^{\bullet},\nabla_{\lambda})$ to $(\mathcal{E}^{\bullet},\lambda^{-1}\cdot \nabla^{\lambda},\lambda).$ There is an obvious natural map to $\mathbb{A}^1,$ given by $(\mathcal{E}^{\bullet},\lambda^{-1}\cdot \nabla^{\lambda},\lambda)\mapsto \lambda,$ whose inverse is given by rescaling $\lambda$ with the $\mathbb{G}_m$-action $(\nabla,\lambda)\mapsto \lambda\cdot \nabla,$ to be denoted $j:=j_{X_{\mathrm{Hod}}}.$ 
A similar morphism holds for the conjugate variety $X^{\mathrm{conj}},$
\begin{equation}
    \label{ClImmerHodge}
j_{\mathrm{conj}}:\RAnPerf(X_{\mathrm{DR}}^{\mathrm{an}})\times\mathbf{G}_m\to \RAnPerf_{\mathrm{Hod}}(X^{\mathrm{conj},\mathrm{an}}).
\end{equation}

Recall that $\RAnPerf(X_{\mathrm{Hod}}^{\mathrm{an}})\to \EuScript{A}^{\mathrm{an}}.$
The morphism $j$ and its conjugate morphism \eqref{ClImmerHodge} induce closed immersions $\EuScript{J}$ and $\EuScript{J}_{\mathrm{conj}}$ of derived analytic stacks, given by

\begin{equation}
     \label{ClImmersion1}
\begin{tikzcd}[column sep=6em]
    \RAnPerf(X_{\mathrm{B}}^{\mathrm{conj}})\times \mathbf{G}_m
\arrow[r,"j_{\mathrm{conj}}^{-1}\circ (\eta_{RH,\mathrm{conj}}^*\times \operatorname{id})"]
    \arrow[rr, bend left=13, "\EuScript{J}_{\mathrm{conj}}"]
    &
    \RAnPerf(X_{\mathrm{Hod}}^{\mathrm{conj}})\times_{\mathbb{A}^1}\mathbf{G}_m
    \arrow[r]
    &
    \RAnPerf(X_{\mathrm{Hod}}^{\mathrm{conj}})
\end{tikzcd}
\end{equation}
and by the composition
\begin{equation}
    \label{ClImmersion2}
    \begin{tikzcd}[column sep=6em]
    \RAnPerf(X_{\mathrm{B}})\times \mathbf{G}_m
    \arrow[r,"j_{X_{\mathrm{Hod}}}^{-1}\circ (\eta_{RH}^*\times \operatorname{id})"]
    \arrow[rr, bend right=10, "\EuScript{J}"]
    &
    \RAnPerf(X_{\mathrm{Hod}})\times_{\mathbb{A}^1}\mathbf{G}_m
    \arrow[r]
    &
    \RAnPerf(X_{\mathrm{Hod}}).
\end{tikzcd}
\end{equation}
Via the derived Riemann--Hilbert equivalence \eqref{DerRH}, we may use \eqref{eqn: Gamma_B} to glue the sources of the morphisms \eqref{ClImmersion1},\eqref{ClImmersion2}. Equivalently, one may use the induced conjugate de Rham diagram via \eqref{GluingDeRhamConjugates}.
We compute the homotopy-pushout in $\mathsf{dAnSt}_{\mathbb{C}/\mathbf{P}^1},$ along the closed-immersions
\begin{equation}
    \label{DelPushOutProof}
\begin{tikzcd}        \RAnPerf(X_{\mathrm{B}}^{\mathrm{conj}})\times (\mathbb{A}^1\backslash\{0\})^{\mathrm{an}}\overset{\eta_{RH}^{*,\mathrm{conj}}}{\simeq}\RAnPerf(X_{\mathrm{DR}}^{\mathrm{conj}})\times (\mathbb{A}^1\backslash\{0\})^{\mathrm{an}}\arrow[d,"\eqref{ClImmersion2}"]\arrow[r,"\eqref{ClImmersion1}"] & \RAnPerf_{\mathrm{Hod}}(\overline{X}^{\mathrm{an}})\arrow[d]
        \\
        \RAnPerf_{\mathrm{Hod}}(X)\arrow[r] & \RAnPerf_{\mathrm{Del}}(X).
    \end{tikzcd}
\end{equation}
We must prove that $\RPerf_{\mathrm{Del}}(X)$ is a derived analytic stack. This is achieved by analyzing the analytic Postnikov tower decomposition, which allows to construct the pushout of derived
analytic spaces along closed immersions. 

Since the pushout is formed in the category of derived analytic stacks, an open cover of the Deligne stack is induced by open covers of $\RPerf_{\mathrm{Hod}}(X)$ and $\RPerf_{\mathrm{Hod}}(X^{\mathrm{conj}})$ compatible over $\RAnPerf_{\mathrm{B}}(X)\times (\mathbb{A}^1\backslash\{0\})^{\mathrm{an}}.$ 
Hence we may assume each space is a derived affinoid or derived Stein space. By resentability criterion \cite[Thm. 7.1]{PortaYu2020}, the derived mapping stacks are covered by derived affinoid (or Stein) spaces, so are
locally of the form 
$\mathbb{R}\mathrm{Sp}(A)$ for a derived analytic algebra $A$. As $X$ is compact, by the proper mapping theorem for analytic stacks, the intersection is a closed derived affinoid subspace.
By local reductions, we first will prove that the pushout \eqref{DelPushOutProof} exists in the category of topologically ringed 
$\infty$-topoi \cite[Prop 6.2]{PortaYu2020}. Its underlying $\infty$-topos is the pushout of the underlying $\infty$-topoi. 
\begin{remark}[Notation]
    Given an affinoid algebra $A$, set $X=\mathsf{Sp}(A).$ Then, the \'etale topos is denoted $\EuScript{X}_{\mathsf{Sp}(A)}.$ This notation will be extended to our derived analytic stacks.
\end{remark}
Thus, we denote by $\EuScript{X}_{\mathrm{Betti}}:=\EuScript{X}_{\RAnPerf_{\B}(X)},$ the $\infty$-topose for the Betti stack of perfect complexes. Similarly, for notational simplicity, put $\EuScript{X}_{\mathrm{Hod}},\EuScript{X}_{\mathrm{Hod}^{\mathrm{conj}}}$ and $\EuScript{X}_{\mathrm{Del}},$ for the corresponding $\infty$-toposes. 
By definition of closed immersion (cf. Definition \ref{ClImmersions}), there are morphisms of topoi
$$j_{\mathrm{H},*}^{\mathrm{B}}:\EuScript{X}_{\mathrm{Betti}}\leftrightarrows \EuScript{X}_{\mathrm{Hod}^{\mathrm{conj}}}:j_{\mathrm{H},\mathrm{conj}}^{\mathrm{B},-1},\hspace{3mm}\text{ and }\hspace{1mm}
j_{H,*}^B:\EuScript{X}_{\mathrm{Betti}}\leftrightarrows \EuScript{X}_{\mathrm{Hod}}:j_{H}^{B,-1}.$$
By \cite[Lem.~6.5]{PortaYu2020}, we have that for each $n\geq 0$,
\begin{equation}
\label{CommuteTruncate}
j_{H,*}^B\circ \tau_{\leq n}^{\EuScript{X}_{\mathrm{Betti}}}\simeq \tau_{\leq n}^{\EuScript{X}_{\mathrm{Hod}^{\mathrm{conj}}}}:j_{H,*}^B,\hspace{5mm}\text{ and }j_{H,*}^B\circ \tau_{\leq n}^{\EuScript{X}_{\mathrm{Betti}}}\simeq \tau_{\leq n}^{\EuScript{X}_{\mathrm{Hod}}}:j_{H,*}^B.
\end{equation}
From \cite[Thm.~5.1]{Lurie2011}, the underlying $\infty$-topoi $\EuScript{X}_{\mathrm{Del}}$ of $\RAnPerf_{\mathrm{Del}}(X)$ is given by the pushout
\[
\begin{tikzcd}
    \EuScript{X}_{\mathrm{Betti}}\arrow[d,"j_{H,*}^B"]\arrow[r,"j_{H^{\mathrm{conj}},*}^B"]& \EuScript{X}_{\mathrm{Hod}^{\mathrm{conj}}}\arrow[d,"p_*"]
    \\
    \EuScript{X}_{\mathrm{Hod}}\arrow[r,"q_*"]& \EuScript{X}_{\mathrm{Del}}.
\end{tikzcd}
\]
By Proposition \ref{PullbackClosedImmersionLemma}, there is a corresponding pull-back 
\begin{equation}
    \label{StrSheafDiagram}
\begin{tikzcd}
\mathcal{O}_{\RAnPerf_{\Del}(X)}\arrow[d]\arrow[r] & \mathbb{R}p_*\mathcal{O}_{\mathbb{R}Hod}(X^{\mathrm{conj}})\arrow[d]
    \\
    \mathbb{R}q_*\mathcal{O}_{\RAnPerf_{\mathrm{Hod}}(X)}\arrow[r] & \mathbb{R}h_*\mathcal{O}_{\RAnPerf_{\mathrm{B}}(X)\times\mathbf{G}_m},
\end{tikzcd}
\end{equation}
where the pushout-maps $p:\RAnPerf_{\mathrm{Hod}}(X)\to \RAnPerf_{\mathrm{Del}}(X)$ and $q:\RAnPerf_{\mathrm{Hod}}(X^{\mathrm{conj}})\to \RAnPerf_{\mathrm{Del}}(X)$ satisfy 
$h=p\circ \iota=q\circ \iota^{conj}:\RAnPerf_{\mathrm{B}}(X)\times\mathbf{G}_m\to \RAnPerf_{\mathrm{Del}}(X).$
In particular, the structure sheaf $\mathcal{O}_{\RAnPerf_{\Del}(X)}$ is equivalent to 
\begin{equation}
    \label{eqn: DeligneDGSheaf}
\mathbb{R}q_*\mathcal{O}_{\RAnPerf_{\mathrm{\mathrm{Hod}}}(X^{\mathrm{an,conj}})}\otimes_{\mathbb{R}q_*\mathbb{R}j_{H^{\mathrm{conj}},*}^{\mathrm{B}}\mathcal{O}_{\RAnPerf_{\mathrm{B}}(X)\times\mathbb{G}_m}}\mathbb{R}p_*\mathcal{O}_{\RAnPerf_{\mathrm{Hod}}(X)}.\end{equation}
This homotopy pullback of sheaves induces a corresponding long exact sequence of homotopy groups by \cite[Thm.~5.1]{Lurie2011}.
Corresponding to \eqref{StrSheafDiagram},
it reads
\begin{align}
\label{LESDelign}
&\pi_1\big(\mathbb{R}p_*\mathcal{O}_{\RAnPerf_{\mathrm{Hod}}(X^{\mathrm{conj}})}^{\mathrm{alg}})\oplus \pi_1\big(\mathbb{R}q_*\mathcal{O}_{\RAnPerf_{\mathrm{Hod}}(X)}^{\mathrm{alg}}\big)\to  \pi_1\big(\mathbb{R}h_*\mathcal{O}_{\RAnPerf_{\mathrm{B}}(X)\times\mathbf{G}_m})
\nonumber
\\
&\to\pi_0\mathcal{O}_{\RAnPerf_{\mathrm{Del}}(X)}^{\mathrm{alg}}\rightarrow \to \pi_0\big(\mathbb{R}p_*\mathcal{O}_{\RAnPerf_{\mathrm{Hod}}(X^{\mathrm{conj}}))}^{\mathrm{alg}})\oplus\pi_0\big(\mathbb{R}q_*\mathcal{O}_{\RAnPerf_{\mathrm{Hod}}(X)}\big)\nonumber
\\
&\to \pi_0(\mathbb{R}h_*\mathcal{O}_{\RAnPerf_{\mathrm{B}}(X)\times\mathbf{G}_m}^{alg})\to 0.
\end{align}
Before we turn to this, note that since $t_0$ commute with colimits and preserves
closed immersions, we obtain a classical pushout square in classical analytic stacks,
\[
\begin{tikzcd}
   t_0(\mathbb{R}\AnPerf_{\B}(X))\times\mathbb{A}^1\backslash\{0\})\arrow[d]\arrow[r] & t_0(\RAnPerf_{\mathrm{Hod}}(X^{\mathrm{conj}}))\arrow[d]
   \\
   t_0(\RAnPerf_{\mathrm{Hod}}(X))\arrow[r] & t_0\big(\RAnPerf_{\Del}(X)\big))=:\underline{\mathrm{Del}}(X).
\end{tikzcd}
\]
Thus, $\underline{\mathrm{Del}}(X)$ exists as a $\mathbb{C}$-analytic space \cite[Prop. 6.2]{PortaYu2020}. Moreover by \eqref{eqn: DeligneDGSheaf}, its structure sheaf satisfies,
$$\mathcal{O}_{\underline{\mathrm{Del}}(X)}\simeq \mathbb{R}p_*\pi_0\mathcal{O}_{\RAnPerf_{\mathrm{Hod}}(X^{\mathrm{an,conj}})}\times_{\mathbb{R}h_*\pi_0\mathcal{O}_{\RAnPerf_{\mathrm{B}}(X)\times\mathbf{G}_m}}\mathbb{R}q_*\pi_0\mathcal{O}_{\RAnPerf_{\mathrm{Hod}}(X)}.$$
Since push-forwards of $\infty$-topoi along closed immersions commutes with truncations \cite[Prop.~7.3.2.5]{Lur09}, relations 
\eqref{CommuteTruncate}
directly imply that

$$\mathbb{R}p_*\pi_0\mathcal{O}_{\RAnPerf_{\mathrm{Hod}}(X^{\mathrm{an,conj}})}\simeq \pi_0\big(\mathbb{R}p_*\mathcal{O}_{\RAnPerf_{\mathrm{Hod}}(X^{\mathrm{an,conj}})}\big),$$
with similar commutation of $\pi_0$ with the other pushforwards $\mathbb{R}q_*$ and $\mathbb{R}h_*.$
Long-exact sequence \eqref{LESDelign},
gives a sequence 
$$0\to \mathcal{J}\to \pi_0\mathcal{O}_{\RAnPerf_{\mathrm{Del}}(X)}\to \mathcal{O}_{\underline{\mathrm{Del}}(X)}\to 0,$$
with cokernel
$$\mathcal{J}=\mathrm{coker}\big(\pi_1\mathbb{R}p_*\mathcal{O}_{\RAnPerf_{\mathrm{Hod}}(X^{\mathrm{conj}}))}\oplus \pi_1(\mathbb{R}q_*\mathcal{O}_{\RAnPerf_{\mathrm{Hod}}(X)}\to \pi_1\mathbb{R}h_*\mathcal{O}_{\RAnPerf_{\mathrm{B}}(X)\times\mathbf{G}_m}).$$
By similar reasoning, we have $\pi_1(\mathbb{R}p_*\mathcal{O})\simeq\mathbb{R}p_*(\pi_1\mathcal{O})$ as well as $\pi_1\mathbb{R}q_*\mathcal{O}\simeq\mathbb{R}q_*\pi_1\mathcal{O}$ and similarly for $h.$ In other words $\mathcal{O}_{\RAnPerf_{\mathrm{Hod}}(X^{\mathrm{conj}}))}$ and $\mathcal{O}_{\RAnPerf_{\mathrm{Del}}(X)}$ are structure sheaves of derived affinoid spaces. Moreover, $\pi_i$ of them are coherent modules over the classical analytic ring $\pi_0\mathcal{O}$ and since $\mathcal{J}$ is a coherent sheaf of $\mathcal{O}_{\underline{\mathrm{Del}}(X)}$-modules, $\mathcal{J}$ is nilpotent ideal since $\pi_0\mathcal{O}_{DelX'}$ and $\mathcal{O}_{\underline{\mathrm{Del}}(X)}$ have the same support. 
Therefore, $\pi_0\mathcal{O}_{\RAnPerf_{\mathrm{Del}}(X)}$ is an analytic square-zero extensio of $\mathcal{O}_{\underline{\mathrm{Del}}(X)}$ by the coherent sheaf $\mathcal{J}$. The sheaves $\pi_0\mathcal{O}_{\RAnPerf_{\mathrm{Del}}(X)}$ are structure sheaves of an analytic space (in the sense of Definition \ref{AnalyticSpace}), whose reduced space is $\underline{\mathrm{Del}}(X).$
Therefore, 
$$\big(\RAnPerf_{\mathrm{Del}}(X),\pi_0\mathcal{O}_{\RAnPerf_{\mathrm{Del}}(X)}\big),$$
is an analytic ring i.e. an element of $\EuScript{T}^{an}$-structure topos (in the sense of \ref{AnalyticTopos}).
Finally, to see $\pi_i(\mathcal{O}_{\RAnPerf_{\mathrm{Del}}(X)}$ is a coherent $\pi_0\mathcal{O}_{\RAnPerf_{\mathrm{Del}}(X)}$-module, for each $i,$ we may use the above long-exact sequence,
$$\cdots \pi_i\mathbb{R}p_*\mathcal{O}_{\RAnPerf_{\mathrm{Hod}}(X^{\mathrm{conj}}))}\oplus \pi_i\mathbb{R}q_*\mathcal{O}_{\RAnPerf_{\mathrm{Hod}}(X)}\to \pi_i\mathbb{R}h_*\mathcal{O}_{\RAnPerf_{\mathrm{B}}(X)\times\mathbf{G}_m}\to \pi_{i-1}\mathcal{O}_{\RAnPerf_{\mathrm{Del}}(X)}\to \dots.$$
Since $\pi_0\mathcal{O}_{\RAnPerf_{\mathrm{Del}}(X)}\to \pi_0\mathbb{R}p_*\mathcal{O}_{\RAnPerf_{\mathrm{Hod}}(X^{\mathrm{conj}}))}$ are epimorphisms, by surjectivity of the pushout maps, by induction on each $i$, and properness of the mapping stacks each $\pi_i\mathcal{O}_{\RAnPerf_{\mathrm{Del}}(X)}$ is a coherent $\pi_0\mathcal{O}_{\RAnPerf_{\mathrm{Del}}(X)}$-module, for ever $i$. Thus, we have prove by local affinoid reduction that $\RAnPerf_{\Del}(X)$ is derived analytic stack.
\medskip

\textit{Proof of (ii)}:
We now prove compatibility with real-structures and $\mathbb{G}_m$-actions (cf., Definition \ref{RealStructure}). To this end, restricting to the components of the pushout, we obtain maps
$$\eta_{\mathrm{Hod}}:=\eta_{\mathrm{Del}}|_{\RAnPerf_{\mathrm{Hod}}(X)}:\RAnPerf_{\mathrm{Hod}}(X)\to \mathbf{A}^1_{\mathbb{C}},$$
and similarly for $\eta_{\mathrm{Hod}}^{\mathrm{conj}}:\RAnPerf_{\mathrm{Hod}}(X^{\mathrm{conj}})\to \mathbf{A}^1_{\mathbb{C}}.$
Both structure maps are $\mathbf{G}_m$-equivariant and given $\beta\in\mathbf{G}_m,$ the conjugate element is $\beta^{\mathrm{conj}}=1/\beta.$ Such conjugate actions supply automorphisms,
$$\sigma_{\mathrm{Hod}}:\RAnPerf_{\mathrm{Hod}}(X)\to \RAnPerf_{\mathrm{Hol}}(X),\hspace{2mm}\sigma_{\mathrm{Hod}}^{\mathrm{conj}}:\RAnPerf_{\mathrm{Hod}}(X^{\mathrm{conj}})\to \RAnPerf_{\mathrm{Hod}}(X^{\mathrm{conj}}),$$
which are equivariant with respect to the natural actions on $\mathbf{A}_{\mathbf{C}}^1.$
In other words, there are commutative diagrams covering the natural involutions on $\mathbf{A}^1_{\mathbb{C}}$:
\begin{equation}
    \label{HodgeCommutingDiagrams}
\begin{tikzcd}
    \RAnPerf_{\mathrm{Hod}}(X)\arrow[d,"\eta_{\mathrm{Hod}}"] \arrow[r,"\sigma_{\mathrm{Hod}}"] & \RAnPerf_{\mathrm{Hod}}(X)\arrow[d,"\eta_{\mathrm{Hod}}"]
    \\
    \mathbf{A}_{\mathbb{C}}^1\arrow[r,"\sigma"]& \mathbf{A}_{\mathbb{C}}^1,
\end{tikzcd}\hspace{5mm}\begin{tikzcd}
    \RAnPerf_{\mathrm{Hod}}(X^{\mathrm{conj}})\arrow[d,"\eta_{\mathrm{Hod}}^{\mathrm{conj}}"] \arrow[r,"\sigma_{\mathrm{Hod}}^{\mathrm{conj}}"] & \RAnPerf_{\mathrm{Hod}}(X^{\mathrm{conj}})\arrow[d,"\eta_{\mathrm{Hod}}^{\mathrm{conj}}"]
    \\
    \mathbf{A}_{\mathbb{C}}^1\arrow[r,"\sigma"]& \mathbf{A}_{\mathbb{C}}^1.
\end{tikzcd}
\end{equation}
Consequently, by identifying $\mathbf{P}_{\mathbb{C}}^1$ with gluing two copies of $\mathbf{A}_{\mathbb{C}}^1$ with their conjugate variables, by the construction of the homotopy pushout \eqref{DeligneHoPushout},
the diagrams \eqref{HodgeCommutingDiagrams} glue to give an automorphism of $\RAnPerf_{\mathrm{Del}}(X)$, via the commutative diagram
\begin{equation}
    \label{DerDeligneCommuting}
\begin{tikzcd}
   \RAnPerf_{\mathrm{Del}}(X)\arrow[d,"\eta_{\mathrm{Del}}"] \arrow[r,"\sigma_{\mathrm{Del}}"] & \RAnPerf_{\mathrm{Del}}(X)\arrow[d,"\eta_{\mathrm{Del}}"]
    \\
    \mathbf{P}_{\mathbb{C}}^1\arrow[r,"\sigma"]& \mathbf{P}_{\mathbb{C}}^1,
    \end{tikzcd}
\end{equation}
covering the antipodal map on $\mathbf{P}^1,$ as required. Compose with the stack theoretic quotient $g:\mathbf{P}^1\to [\mathbf{P}^1/\mathbf{G}_m]$, gives the result, as the latter is precisely $\EuScript{P}^{\mathrm{an}}.$
\medskip

\textit{Proof of (iii)}:
Since truncation is compatible with taking colimits and with the derived Riemann-Hilbert correspondence, the derived pushout square \eqref{DeligneHoPushout} truncates to an ordinary pushout of schemes; there is an equivalence of classical analytic stacks,
\begin{eqnarray*}
t_0\big(\RAnPerf_{\mathrm{Del}}(X)\big)&\simeq&t_0\big(\RAnPerf_{\mathrm{Hod}}(X)\sqcup^{\mathbb{L}}_{\RAnPerf(X_{\mathrm{Betti}})}\RAnPerf_{\mathrm{Hod}}(X^{\mathrm{conj}})\big)
\\
&\simeq& t_0\big(\RAnPerf_{\mathrm{Hod}}(X)\sqcup_{t_0\RAnPerf_{\mathrm{Betti}}(X)}t_0(\RAnPerf_{\mathrm{Hod}}(X^{\mathrm{conj}}).
\end{eqnarray*}
By applying \cite[Prop~2.1]{STV}, we may again introduce the sub stack of Gieseker semistable objects of the truncations
$$\mathrm{M}_{\mathrm{Hod}}^{\mathrm{ss}}(X):=t_0\big(\RAnPerf_{\mathrm{Hod}}(X)\big)^{\mathrm{ss}}\subset t_0\big(\RAnPerf_{\mathrm{Hod}}(X)\big),$$
and similarly for 
$$\mathrm{M}_{\mathrm{Hod}}^{\mathrm{ss}}(X^{conj}):=t_0\big(\RAnPerf_{\mathrm{Hod}}(X^{\mathrm{conj}})^{ss}\subset t_0\big(\RAnPerf_{\mathrm{Hod}}(X^{\mathrm{conj}}).$$
They define good moduli spaces for the $1$-stacks $\mathcal{M}_{\Hod}(X)$ and $\mathcal{M}_{\Hod}(X^{\mathrm{conj}})$ as in 

It follows from \cite[Prop.~7.9]{alper2013good}, that we may repeat the argument in \cite[Prop.~3.3]{FrancoHanson2024} that states their pushout, which glue to give
$$t_0\big(\RAnPerf_{\Del}(X))^{\mathrm{ss}}\to \mathrm{Tw}\big(M_{\Dol}(X)\big),$$
is a good moduli space for the classical twistor space of $M_{\Dol}(X).$
\end{proof}

In Theorem \ref{DHSTwistor1}, we have established the derived Deligne-stack is a derived analytic space. To establish the existence of a relative shifted symplectic structure, we will first show it admits a (relative) cotangent complex. For this, note any locally geometric derived algebraic stack admits a global cotangent complex (e.g, \cite[Cor.~3.17]{toenvaquie2007}, whereby we may use Proposition \ref{AnDeRhamAlgs} and thus \cite[Thm.~7.1]{PortaYu2020}.
\begin{proposition}
\label{DHSTwistor2}
The homotopy pushout in derived analytic stacks $\RPerf_{\mathrm{Hod}}(X)^{\mathrm{an}}\sqcup_{\RAnPerf(X_{\mathrm{DR}}^{\mathrm{an}})}\RPerf_{\mathrm{Hod}}(X^{\mathrm{conj}})^{\mathrm{an}},$
is a locally geometric derived analytic stack over $\EuScript{P}^{\mathrm{an}}.$ 
\end{proposition}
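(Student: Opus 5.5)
We would prove Proposition \ref{DHSTwistor2} by combining the pushout description of Theorem \ref{DHSTwistor1} (i) with a gluing argument for local geometricity and cotangent complexes. The first step is to record that each of the three corners is already locally geometric with a global analytic cotangent complex. The algebraic stacks $\RPerf_{\Hod}(X)$, $\RPerf_{\Hod}(X^{\mathrm{conj}})$ and $\RPerf(X_{\DR})$ are locally geometric and locally of finite presentation, hence have perfect global cotangent complexes by \cite[Cor.~3.17]{toenvaquie2007}. Analytification preserves local geometricity. By Proposition \ref{AnDeRhamAlgs} (more precisely the equivalence $(\mathbb{L})^{\an}\simeq\mathbb{L}^{\an}$ used in its proof), the analytified stacks inherit analytic cotangent complexes. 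Using the equivalence \eqref{eqn: RPerfAnCommutes} together with the derived Riemann--Hilbert equivalence \eqref{DerRH}, we may identify the gluing locus $\RAnPerf(X_{\DR}^{\an})\times\mathbf{G}_m$ with $\RAnPerf_{\B}(X)\times\mathbf{G}_m$. This places us exactly in the setting of the pushout \eqref{DeligneHoPushout}.

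The second step is to show that the pushout is covered by geometric opens. The map to $\EuScript{P}^{\an}$ from Theorem \ref{DHSTwistor1} (ii) restricts over the two charts $\EuScript{A}_0^{\an}$ and $\EuScript{A}_\infty^{\an}$ to $\RAnPerf_{\Hod}(X)$ and $\RAnPerf_{\Hod}(X^{\mathrm{conj}})$ respectively. Over the overlap it restricts to the $\mathbf{G}_m$-locus, because the gluing maps $\EuScript{J}$ and $\EuScript{J}_{\mathrm{conj}}$ factor through the open loci $\RAnPerf_{\Hod}(-)\times_{\mathbf{A}^1}\mathbf{G}_m$ as in \eqref{ClImmersion1} and \eqref{ClImmersion2}. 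We would therefore treat the pushout as a Zariski-type gluing of two open substacks along an open, which is better behaved than a gluing along closed immersions. Next we choose exhaustions by geometric open substacks of fixed tor-amplitude, in the manner of \cite{toenvaquie2007}, on each Hod-chart. These exhaustions should be compatible on the overlap: $\gamma_{\DR}$ preserves tor-amplitude, and the rescaling $\lambda\mapsto\lambda^{-1}\nabla$ does not change the underlying complex. Each glued piece is then a pushout of geometric stacks along an open immersion, hence geometric over $\EuScript{P}^{\an}$. We would conclude local geometricity from \cite[Thm.~7.1]{PortaYu2020}.

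The third step constructs the relative cotangent complex $\mathbb{L}^{\an}_{\RAnPerf_{\Del}(X)/\EuScript{P}^{\an}}$. Since perfect complexes satisfy descent for the pushout, this is the same as giving a pair of perfect complexes on the two charts together with an identification on the overlap. On the overlap, both relative cotangent complexes restrict to $\mathbb{L}^{\an}_{\RAnPerf(X_{\DR}^{\an})}\boxtimes\mathcal{O}_{\mathbf{G}_m}$, by the base-change statement in Proposition \ref{AnCotangentProperties}. We expect the main obstacle to be showing that the two resulting identifications agree. Concretely, we must check that the composite of $\eta_{\mathrm{RH}}^*$, $\gamma_{\B}^*$ and the $\lambda$-rescaling induces the canonical map on cotangent complexes, up to the $\mathcal{O}(2)$-type twist coming from the $\mathbf{G}_m$-weight. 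Our first attempt would be to use the Tannakian fully faithfulness argument from the proof of Theorem \ref{AnalyticNAH}, so that the comparison reduces to a statement about cocontinuous functors on $\mathsf{Perf}$, where it becomes formal. Finally, we would verify the universal property of the glued complex against analytic derivations, chart by chart, using Proposition \ref{PullbackClosedImmersionLemma}.
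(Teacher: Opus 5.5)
Your argument for local geometricity is correct and has the same skeleton as the paper's proof. Both exhaust the two Hodge stacks by fixed tor-amplitude strata, check that the Riemann--Hilbert/conjugation identification respects these strata, and glue stratum by stratum. The weight falls on different steps, though.

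The paper spends its effort on analytic-versus-algebraic bookkeeping. It uses relative GAGA to identify the analytified algebraic strata with the analytic ones, and it uses $t$-exactness and conservativity of analytification to show that relative tor-amplitude is preserved and reflected. It then simply forms the pushout of strata in $\mathsf{dAnSt}_{\mathbb{C}}$; it also restricts to a semistable locus, which the statement does not need.

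You instead supply the reason the glued strata are geometric, which the paper leaves implicit. The maps $\EuScript{J}$ and $\EuScript{J}_{\mathrm{conj}}$ are an equivalence followed by the base change of $\mathbf{G}_m\hookrightarrow\mathbf{A}^1$, hence open immersions, despite the closed-immersion language of Theorem~\ref{DHSTwistor1}. So each stratum of the pushout is covered by its two charts, has representable diagonal, and is geometric outright; your final appeal to the representability theorem is then superfluous.

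Conversely, your claim that ``$\gamma_{\DR}$ preserves tor-amplitude'' relies on the paper's GAGA check. The gluing map exists only analytically, so you need the geometric strata (the analytified algebraic ones) to coincide with the analytically defined tor-amplitude strata that Riemann--Hilbert visibly preserves.

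Keep the bases straight as well. Over the charts $\EuScript{A}_0^{\an},\EuScript{A}_\infty^{\an}$ of $\EuScript{P}^{\an}$, the pieces are the $\Theta^{\an}$-stacks $\RPerf_{\Hod}(-)^{\an}$, glued along $\RAnPerf(X_{\DR}^{\an})$ as in the statement. Gluing along $\RAnPerf(X_{\DR}^{\an})\times\mathbf{G}_m$ instead produces the stack over $\mathbf{P}^1$, which you then pass to the $\mathbf{G}_m$-quotient.

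Your third step is unnecessary, and the obstacle you single out is not there. Once the pushout is locally geometric, it has an analytic cotangent complex; this is exactly how the paper intends to use the proposition. Its restriction to each open chart is that chart's cotangent complex, because open immersions are étale.

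With only two charts there is a single gluing equivalence $\phi$ and no cocycle condition. The identification of cotangent complexes on the overlap is the one induced by $\phi$ through functoriality of $\mathbb{L}^{\an}$, so there is no second identification to compare it with. No $\EuScript{O}(2)$-twist appears at the level of cotangent complexes; that twist belongs to the relative symplectic form of Proposition~\ref{DHSTwistor3}.

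Neither the Tannakian argument nor Proposition~\ref{PullbackClosedImmersionLemma} is needed. The latter concerns closed immersions and does not fit your own open-gluing picture.
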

\begin{proof}
    Let $\mathbb{R}\mathbf{M}_{\mathrm{Hod}}(X):=\big[\RAnPerf_{\mathrm{Hod}}(X^{\mathrm{an}})/\mathbb{G}_m^{\mathrm{an}}]\to \EuScript{A}^{\mathrm{an}},$ and consider 
    $j_X:\RAnPerf_{\mathrm{DR}}(X^{\mathrm{an}})\hookrightarrow \mathbb{R}\mathbf{M}_{\mathrm{Hod}}(X),$ as well as
    $$\RAnPerf_{\mathrm{DR}}(X)\xrightarrow{\gamma_{\mathrm{DR}}^*}\RAnPerf_{\mathrm{DR}}(X^{\mathrm{conj}})\xrightarrow{j_{X^{\mathrm{conj}}}}\mathbb{R}\mathbf{M}_{\mathrm{Hod}}(X^{\mathrm{conj}}).$$
Consider the stratification by tor-amplitude. We thus impost some conditions, define $\RPerf_{\mathrm{Hod}}^{\mathrm{ss}}(X)$ as the derived moduli functor whose $f_T:T\to \mathbb{A}^1,$ points are:
$$\RPerf_{\mathrm{Hod}}^{\mathrm{ss}}(X)(T)\simeq \big\{E\in \mathcal{O}_{X\times T}-\text{perfect }: \forall t\in T, \lambda\circ f_T(t)\neq0 E_t:=E|_{X\times\{t\}} ,H^i(E_t)\in\mathcal{M}_{\mathrm{Hod}}^{\mathrm{ss}}(X)\big\}.$$
This is an open condition \cite[Thm.~7.1]{Sim09}, therefore defines a locally geometric open substack,
$$\RPerf_{\mathrm{Hod}}^{\mathrm{ss}}(X)\hookrightarrow \RPerf_{\mathrm{Hod}}(X).$$
Since $\RPerf(X)\simeq \coprod_{p,q}\RPerf^{[p,q]}(X),$
one has a similar decomposition for $\RPerf_{\mathrm{Hod}}(X)$, and thus there is a covering by open substacks with fixed tor-amplitudes,
\[
\begin{tikzcd}
\RPerf_{\mathrm{Hod}}^{\mathrm{ss},[p,q]}(X)\arrow[d]\arrow[r] & \RPerf^{[p,q]}_{\mathrm{Hod}}(X)\arrow[d]
    \\
\RPerf_{\mathrm{Hod}}^{\mathrm{ss}}(X)\arrow[r] & \RPerf_{\mathrm{Hod}}(X).
\end{tikzcd}
\]
By relative GAGA, there are equivalences,
$\RPerf_{\mathrm{Hod}}^{\mathrm{ss},[p,q]}(X)^{\mathrm{an}}\simeq \RAnPerf_{\mathrm{Hod}}^{\mathrm{ss},[p,q]}(X^{\mathrm{an}}),$
and similarly the conjugate of $X.$ 
We claim the functor of analytification makes the following diagram commute
\[
\begin{tikzcd}
\RPerf_{\mathrm{Hod}}(X)\arrow[d,"(-)^{\mathrm{an}}"]\arrow[r,"\simeq"]& \prod_{p,q}\RPerf_{\mathrm{Hod}}(X)^{[p,q]}\arrow[d,"(-)^{\mathrm{an}}"]
    \\
    \RAnPerf_{\mathrm{Hod}}(X^{\mathrm{an}})\arrow[r,"\simeq"] & \prod_{p, q}\RAnPerf_{\mathrm{Hod}}(X^{\mathrm{an}})^{[p,q]}.
\end{tikzcd}
\]
Indeed, since analytification is t-exact and conservative, let $S$ be a test scheme and consider the induced functor of relative analytification \eqref{relAnalytic}, as above. Assume that $\mathcal{F}^{\mathrm{an}}$ is perfect of tor-amplitude $[p,q]$, relative to $S^{\mathrm{an}}.$ Then for any $\mathcal{G}\in \mathsf{Perf}^{\heartsuit}(S),$ we must show 
$\pi_i(\mathcal{F}\otimes f^*\mathcal{G})=0,$ for every $i\notin [p,q],$ where $f:X\to S$ is the induced map. By $t$-exactness and conservative properties of analytification, this is equivalent to checking that 
$\pi_i\big((\mathcal{F}\otimes f^*\mathcal{G})^{\mathrm{an}})\big)=0.$ However, since 
$$(\mathcal{F}\otimes f^*\mathcal{G})^{\mathrm{an}}\simeq \mathcal{F}^{\mathrm{an}}\otimes f^{\mathrm{an},*}(\mathcal{G}^{\mathrm{an}}),$$
the claim follows since $\mathcal{G}^{\mathrm{an}}\in \mathsf{Perf}^{\heartsuit}(S^{\mathrm{an}}).$ 

Since $\mathsf{dAnSt}_{\mathbb{C}}$ has all colimits, 
we may consider the homotopy-pushout diagram gluing the conjugate analytic Hodge stacks 
via the derived Riemann--Hilbert equivalence  
$\RAnPerf(X_{\mathrm{B}}^{\mathrm{an}})^{[p,q]}\simeq \RAnPerf(X_{\mathrm{DR}}^{\mathrm{an}})^{[p,q]}$, and we obtain 
$\RAnPerf_{\mathrm{DH}}^{\mathrm{ss}}(X^{\mathrm{an}})^{[p,q]}\to \mathbb{P}_{\mathbb{C}}^1.$
\end{proof}

\subsubsection{}
Let $X$ be a smooth proper scheme over $\mathbb{C}.$ A more general version of Theorem \ref{DHSTwistor1} can be obtained, which could be of interest for future applications. Namely, let $Z$ be a geometric derived stack locally finitely presented over $\mathbb{C}$, which is Tannakian such that $\Q(Z)\simeq \mathrm{Ind}(\Per(Z)).$ 
Assume the relative derived stack $\Map_{/\Theta}(X_{\Hod},Z)\to \Theta,$ is locally geometric, and call it the \emph{$Z$-valued (derived) $\lambda$-connections on $X$.} 
From \eqref{eqn: Map to AnMap}, there is a canonical map 
$$\big(\Map_{\Theta}(X_{\Hod},Z\times\Theta)\times_{\Theta}\mathbb{A}_{\mathbb{C}}^1\big)^{\an}\to \AnMap_{/\Theta^{\an}}(X_{\Hod}^{\an},Z^{\an}\times\Theta^{\an})\times_{\Theta^{\an}}\mathbf{A}^1,$$
which is an equivalence of relative derived analytic stacks by \cite{HolsteinPorta2025}.
By Theorem \ref{AnalyticNAH} (ii), we may glue along the generalized RH-correspondence to obtain:
\begin{equation}
    \label{GluingDeRhamConjugates2}
\begin{tikzcd}
\Map(X_{\mathrm{B}},Z)^{\mathrm{an}}\arrow[d,"\sim"]\arrow[r,"\eqref{eqn: Map to AnMap}"]& \AnMap(X_{\mathrm{B}}^{\mathrm{an}},Z^{\an})
\arrow[d,"\gamma_{\mathrm{B}}^*\circ (-)^{\mathrm{an}}"]\arrow[r,"\eta_{\mathrm{RH}}^*"]  & \AnMap(X_{\mathrm{DR}}^{\mathrm{an}},Z^{\an})\arrow[d,"\gamma_{\mathrm{DR}}^*"]
\\
\Map(X_{\mathrm{B}}^{\mathrm{conj}},Z)^{\mathrm{an}} \arrow[r,"\eqref{eqn: Map to AnMap}"] & \AnMap(X_{\mathrm{B}}^{\mathrm{an},\mathrm{conj}},Z^{\an})\arrow[r,"\overline{\eta}_{\mathrm{RH}}^*"] & \AnMap(X^{\mathrm{conj}}_{\mathrm{DR}},Z^{\an}).
\end{tikzcd}
\end{equation}
This gives an equivalence of derived analytic stacks
$$\Map(X_{\mathrm{DR}},Z)^{\mathrm{an}}\simeq\AnMap(X_{\mathrm{DR}}^{\mathrm{an}},Z^{\an})\simeq \AnMap\big((X_{\mathrm{DR}}^{\mathrm{conj}})^{\mathrm{an}},Z^{\an}\big).$$
The trivialization map away from the origin in $\mathbb{A}^1,$ is 
$$\tau:\big(\Map_{/\Theta}\big(X_{\mathrm{Hod}},Z\times\Theta)\times_{\mathbb{A}^1}\mathbf{G}_m\big)^{\mathrm{an}}\simeq \AnMap(X_{\mathrm{DR}}^{\mathrm{an}},Z^{\an}\times (\mathbb{A}^1\backslash\{0\})^{\mathrm{an}},$$
and there are analogs of the closed-immersions \eqref{ClImmersion1} and \eqref{ClImmersion2}. The corresponding homotopy pushout \eqref{DeligneHoPushout} gives a $\mathbf{G}_m$-equivariant map 
$\AnMap_{\Del}(X^{\an},Z^{\an})\to \bfP,$
with a relative $(n-2d)$-shifted symplectic structure.

As a final remark, we mention the possibility to further generalize the construction to obtain a family of derived analytic twistor spaces parametrized by complex structures by adapting \cite[Sect.~2]{Hu2024}.
\begin{remark}
   Replacing the fiber at $\infty$ by the Hodge stack of analytic perfect complexes with $\lambda$-connection over any $X'$ which lies in the Teichm\"uller space $\mathrm{Teich}(X)$ of complex structures on the underlying $C^{\infty}$-manifold of $X.$ Having fixed $X$, the analytic structure of such a generalized Deligne--
Hitchin twistor stack with $X'$ in $\mathrm{Teich}(X)$ which does not affect the $C^{\infty}$-structure and yields a family of relative derived analytic stacks parametrized by $\mathrm{Teich}(X).$ 
\end{remark}
\subsubsection{}

There are variants with fixed determinant, leading to rigidification of $\lambda$-perfect complexes. They should play an important role in stability questions, so we mention their construction for completeness.
\begin{remark}
Fixing $\mathcal{L}\in \mathsf{Pic},$ via the derived determinant construction of \cite{STV}, there are homotopy fiber products; note the first defines the second:
\[
\begin{tikzcd} \RPerf_{\mathrm{Hod}}^{\mathrm{si},>0}(X)_{\mathcal{L}} \ar[d] \ar[r] & \RPerf^{\mathrm{si},>0}(X)_{\mathcal{L}}\times\mathbb{A}^1\ar[d] \\ \RPerf_{\mathrm{Hod}}(X) \ar[r] & \RPerf(X)\times\mathbb{A}^1, \end{tikzcd}\hspace{5mm}
\begin{tikzcd} \RCoh_{\mathrm{Hod}}^{\mathrm{si},>0}(X)_{\mathcal{L}} \ar[d] \ar[r] & \RCoh_{\mathrm{Hod}}(X)\ar[d] \\ \RPerf_{\mathrm{Hod}}^{\mathrm{si},>0}(X)_{\mathcal{L}} \ar[r] & \RPerf_{\mathrm{Hod}}(X). \end{tikzcd}\]
\end{remark}

\begin{proposition}
\label{DHSTwistor3}
Consider Theorem \ref{DHSTwistor1}. Pullback along  $g:\mathbf{P}^1\to\EuScript{P}^{\mathrm{an}}$ gives a relative derived analytic stack to be  denoted by
\begin{equation}
    \label{TwistorPB}
\begin{tikzcd}
    \mathrm{Tw}^{\mathbb{R}}(\Perf)\arrow[d,"p"]\arrow[r,"q"] & \RAnPerf_{\mathrm{Del}}(X)\arrow[d,"\eta_{\mathrm{Del}}"]
    \\
\mathbf{P}^{1}\arrow[r,"g"] & \EuScript{P}^{\mathrm{an}},
\end{tikzcd}
\end{equation}
with shifted symplectic fibers compatible with the real structures given by diagram \eqref{DerDeligneCommuting}.

In other words, the derived relative analytic Deligne stack $\RAnPerf_{\Del}(X)\to \mathbf{P}^1$ has the structure of a $2(1-d)$-shifted analytic pretwistor structure of hyperkahler type. 
\end{proposition}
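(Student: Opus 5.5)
The plan is to produce the relative $\EuScript{O}(2)$-twisted $2(1-d)$-shifted symplectic form on $\mathrm{Tw}^{\mathbb{R}}(\Perf)\to\mathbf{P}^1$ by gluing the two Hodge charts. The gluing will run along the pushout \eqref{DeligneHoPushout}. After that I will check compatibility with the involution $\sigma_{\mathrm{Del}}$ of \eqref{DerDeligneCommuting}.

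The first step is to pull back along $g$. Since $g:\mathbf{P}^1\to\EuScript{P}^{\mathrm{an}}$ is a $\mathbf{G}_m$-torsor and pullback in $\mathsf{dAnSt}$ commutes with colimits, $\mathrm{Tw}^{\mathbb{R}}(\Perf)$ is the pushout of the pullbacks of the three corners. Concretely, it is glued from $\RAnPerf_{\Hod}(X)\times_{\Theta}\mathbf{A}^1$ over the chart $\mathbf{A}^1_0$ and from $\RAnPerf_{\Hod}(X^{\mathrm{conj}})\times_{\Theta}\mathbf{A}^1$ over $\mathbf{A}^1_\infty$. The gluing takes place along $\RAnPerf(X_{\B}^{\an})\times\mathbf{G}_m$. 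By Proposition \ref{DHSTwistor2} the result is locally geometric, and it has a relative analytic cotangent complex. This cotangent complex is computed chartwise by Proposition \ref{AnCotangentProperties}, because $\{\mathbf{A}^1_0,\mathbf{A}^1_\infty\}$ is an open cover of $\mathbf{P}^1$.

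The second step constructs the form on each chart. On $\mathbf{A}^1_0$ I will take the form from Theorem \ref{MainTheoremBodyBody}: the $\mathcal{O}_\Theta(1)$-twisted $2d$-orientation on $X_{\Hod}$ gives, by AKSZ and Theorem \ref{MainTheorem2Body}(1), a relative $2(1-d)$-shifted form on $\RPerf_{\Hod}(X)\times\mathbf{A}^1$. This form is analytified via Proposition \ref{AnDeRhamAlgs}. Its non-degeneracy is Proposition \ref{DelDeformationTheory}. By Proposition \ref{WeightVersion}, the form has $\mathbf{G}_m$-weight $2$ when read on the $\mathbf{G}_m$-stack $[\RAnPerf_{\Hod}/\mathbf{G}_m]$ over $\EuScript{P}^{\an}$. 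After twisting by $\mathcal{O}(1)$ it therefore becomes an $\EuScript{O}_{\mathbf{P}^1}(2)$-twisted relative form. The same construction applied to $X^{\mathrm{conj}}$ gives the form on $\mathbf{A}^1_\infty$.

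The third step, which I expect to be the main obstacle, is the gluing. On the overlap, each chart form restricts through the trivialization $\tau$, which rescales $\nabla^\lambda\mapsto\lambda^{-1}\nabla^\lambda$. On the first chart this yields $\lambda^{2}$ times the de Rham form $\omega_{\DR}$ on $\RAnPerf(X_{\DR}^{\an})$; on the other chart it yields $\lambda^{-2}$ times the conjugate form. The transition function of $\mathcal{O}(2)$ is exactly $\lambda^{2}$, so the two twisted forms match provided that two identifications hold:
\[
\eta_{\mathrm{RH}}^*\omega_{\DR}\simeq\omega_{\B}
\qquad\text{and}\qquad
\gamma_{\B}^*\omega_{\B}(X^{\mathrm{conj}})\simeq\omega_{\B}(X).
\]
The first is Theorem \ref{AnalyticNAH}(i). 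For the second, $\gamma$ is an orientation-preserving homeomorphism, since complex conjugation on a complex manifold of dimension $d$ changes orientation by $(-1)^d$. I would have to verify that the PTVV fundamental classes agree, possibly up to this sign, and absorb any sign into the choice of $[X^{\mathrm{conj}}]$.

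Once the forms match on the overlap, closed forms glue by descent. The space $\mathcal{A}^{2,\mathrm{cl}}$ is a limit over the pushout, so a form on the pushout is exactly a compatible triple of forms on the two charts and the overlap. Non-degeneracy is local on $\mathbf{P}^1$, so it follows from the charts.

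The final step treats reality. The involution $\sigma_{\mathrm{Del}}$ swaps the two charts, covering $\lambda\mapsto-1/\bar\lambda$, and it is built from the conjugation identification $\RAnPerf_{\Hod}(X)^{\mathrm{conj}}\simeq\RAnPerf_{\Hod}(X^{\mathrm{conj}})$. Proposition \ref{AnalyticDRConj} gives $\sigma_{\mathrm{Del}}^*\overline{\omega}\simeq\omega$ on each chart. Proposition \ref{HomotopyFixedPoints1} then upgrades this to a point of $\mathsf{Symp}^{\EuScript{O}(2)}(\mathrm{Tw}^{\mathbb{R}}/\mathbf{P}^1,2(1-d))^{hC_2}$, which is the pretwistor structure of Definition \ref{DerivedTwistor}.

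For the hyperkähler-type clause I will simply cite Theorem \ref{DHSTwistor1}(iii): the semistable truncation has a good moduli space to $\mathrm{Tw}(M_{\Dol}(X))$, as Definition \ref{DerivedTwistor2} requires.
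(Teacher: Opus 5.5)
\textbf{The gap is in your third step.} This is exactly where the twist is decided, and the bookkeeping there does not close. Your $X$-chart form restricts to $\lambda^{2}\tau^*\omega_{\DR}$ and your $X^{\mathrm{conj}}$-chart form to $\lambda^{-2}=\mu^{2}$ times the conjugate form, with $\mu=\lambda^{-1}$. The identification through $\eta_{\mathrm{RH}}$ and $\gamma$ does not involve $\lambda$. So on the overlap the two chart forms differ by $\lambda^{4}$, which is the cocycle of $\EuScript{O}(4)$, not $\EuScript{O}(2)$.

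In general, weight $w$ on each chart yields an $\EuScript{O}(2w)$-twist. All the $\lambda$-dependence sits in the chart forms, and the inversion $\mu=\lambda^{-1}$ doubles it.

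Proposition~\ref{WeightVersion} cannot be used to fix $w$. It characterizes families of the form $Z'\times_{\mathsf{B}\mathbb{G}_m}\mathbb{P}^1$, where the chart forms are constant in $\lambda$ and the fibres are glued by the $\mathbb{G}_m$-action through $\EuScript{O}(1)$. The Deligne stack is instead glued by the $\lambda$-independent RH/conjugation identification. Invoking the proposition also presupposes the structure you are trying to build. Finally, weight $2$ contradicts the $\mathcal{O}_\Theta(1)$-twisted orientation you take from Theorem~\ref{MainTheoremBodyBody}, which would mean $w=1$.

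\textbf{The correct weight is $w=d$.} It has to be computed from the orientation:
\begin{itemize}
\item The relative trace on $X_{\Hod}$ factors through $H^d(X,\Omega^d_X)$ for every $\lambda$.
\item The differential $d\tau$ rescales the $\mathrm{End}(E)\otimes\Omega^k_X$ part of the tangent complex by $\lambda^{-k}$.
\item The trace pairing matches $k$ with $d-k$.
\end{itemize}
Hence $\omega_\lambda=\lambda^{d}\,\tau^*\omega_{\DR}$. For curves this is the weight-one form $dp\wedge dq$ on $\mathsf{T}^*\mathrm{Bun}$, which is where Hitchin's $\EuScript{O}(2)$ comes from. Your exponent $2$ is right precisely when $d=2$.

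Nondegeneracy at $\lambda=0$ and $\lambda=\infty$ only allows rescaling the chart forms by nowhere-vanishing functions. Such rescalings do not change the winding number $2d$ of the gluing cocycle. So your gluing produces an $\EuScript{O}(2d)$-twisted $2(1-d)$-shifted relative form. This gives the stated $\EuScript{O}(2)$-twisted pretwistor structure only for $d=1$; for surfaces it is $\EuScript{O}(4)$-twisted.

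The paper's proof does not settle this point either: it applies Theorem~\ref{MainTheoremBodyBody} chart by chart and never examines the overlap.

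\textbf{What is otherwise fine.} Your route is the paper's, and it is more careful in two places. You use Theorem~\ref{AnalyticNAH}(i) and $\gamma_{\B}$ on the overlap, which is the right idea. You also treat $\{\lambda\neq 0\}$ as an open locus glued by descent; the paper calls the inclusions $\EuScript{J},\EuScript{J}_{\mathrm{conj}}$ of that open locus closed immersions.

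The sign coming from $\gamma$ is harmless, since a constant factor in the cocycle does not change the line bundle. Your sentence about it contradicts itself, though: conjugation preserves the complex orientation only for even $d$.
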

\begin{proof}
   Recall $\EuScript{P}$ is equivalent to the pushout $\EuScript{A}_0\sqcup_{\mathrm{Spec}(\mathbb{C})}\EuScript{A}_{\infty},$ where $\EuScript{A}_i$ are the copies at $0,\infty.$ Moreover, $\mathbb{P}^1\to \EuScript{P}$ is a $\mathbb{G}_m$-torsor over $\mathbb{P}^1\backslash \{0,\infty\},$ whose analytification is $g:\bfP\to \EuScript{P}^{\an}.$ In Theorem \ref{DHSTwistor1} (ii), we have verified the compatibility of real involutive structures via the commutative diagram \eqref{DerDeligneCommuting}. It remains to prove there exists a relative shifted-symplectic form, twisted by $\EuScript{O}_{\mathbf{P}^1}(2).$ 
To this end, since \eqref{TwistorPB} is a cartesian diagram in $\mathsf{dAnSt}_{/\EuScript{P}^{\mathrm{an}}},$ there is an induced pushout square in $\mathcal{O}_{\mathrm{Tw}^{\mathbb{R}}(\Perf)}-\mathsf{Mod}$ for analytic relative cotangent complexes, given by Proposition \ref{AnCotangentProperties}.
The computation is clear by relating $\mathbb{L}_{\RAnPerf_{\mathrm{Del}}(X)}^{\mathrm{an}}$ to the analytic cotangent complexes of the derived Hodge stack and of the derived Betti stack using \eqref{DeligneHoPushout}. 
Indeed, since the morphisms \eqref{ClImmersion1} and \eqref{ClImmersion2} are closed-immersions, by \cite[Cor.~5.33, Thm.~5.29]{PortaYu2020}, we have equivalences
$$\mathbb{L}_{\RAnPerf_{\mathrm{B}}(X)^{\mathrm{alg}}/\RAnPerf_{\mathrm{Hod}}(X^{\mathrm{conj}})^{\mathrm{alg}}}\xrightarrow{\simeq} \mathbb{L}_{\RAnPerf_{\mathrm{B}}(X)/\RAnPerf_{\mathrm{Hod}}(X^{\mathrm{conj}})}^{\mathrm{an}},$$
where $\RAnPerf_{\mathrm{B}}(X)^{\mathrm{alg}}$ are the $\EuScript{T}_{et}(\mathbb{C})$-structure topoi $(\EuScript{X}_{\mathrm{Betti}},\mathcal{O}^{\mathrm{alg}}).$

Moreover, since 
$\mathbb{L}_{\RAnPerf_{\mathrm{B}}(X)/\RAnPerf_{\mathrm{Hod}}(X^{\mathrm{conj}})}^{\mathrm{an}}$ is the analytic cotangent complex of the morphism 
$$\EuScript{J}_{\mathrm{conj}}^{-1}\mathcal{O}_{\RAnPerf_{\mathrm{Hod}}(X^{\mathrm{conj}})}\to \mathcal{O}_{\RAnPerf_{\mathrm{B}}(X)},$$
in $\mathsf{AnRing}_{\mathbb{C}}(\mathsf{Shv}(X_{\mathrm{top}}),$ since $\EuScript{J}_{\mathrm{conj}}$ is a closed-immersion, it is an effective epimorphism. Similarly for $\EuScript{J},$ thus as a homotopy push-out \eqref{DelPushOutProof} along closed immersions, we apply Theorem \ref{MainTheoremBodyBody} along each copy $\EuScript{A}_i$ of $\mathbf{A}^1,i=0,\infty.$

\end{proof}

Note that by Theorem \ref{DHSTwistor1}, together with Lemmas \ref{DHSTwistor2} and \ref{DHSTwistor3}, we may compute the fibers of the homotopy--pushout over $\EuScript{P}^{\mathrm{an}},$ as:
$$\begin{cases}
\AnPerf_{\mathrm{Dol}}(X),\hspace{7.5mm}\text{ fiber at } 0;
\\
\AnPerf_{\mathrm{DR}}(X),\hspace{7.5mm}\text{ fiber at } 1;
\\
\AnPerf_{\mathrm{Dol}}(X^{\mathrm{conj}}),\hspace{2mm}\text{ fiber at } \infty.
\end{cases}
$$
Moreover, there are equivalences
$$\RAnPerf_{\mathrm{Del}}(X^{\mathrm{an}})\times (\mathbf{P}^1\backslash\infty)\simeq \AnPerf_{\mathrm{Hod}}(X^{\mathrm{an}}),\hspace{3mm}\RAnPerf_{\mathrm{Del}}(X^{\mathrm{an}})\times (\mathbf{P}^1\backslash 0)\simeq \AnPerf_{\mathrm{Hod}}(X^{\mathrm{an,conj}}).$$
\begin{remark}
    The computation of the fibers, and the general theory of twistor spaces associated to a hyperk\"ahler manifold \cite{HKLR87}, indicate that the pull-back given by Proposition \ref{DHSTwistor3}, is really the derived twistor space of the derived analytic Dolbeault stack; we could write 
$\mathbf{Tw}^{\mathbb{R}}\big(\RAnPerf_{\mathrm{Dol}}(X^{\mathrm{an}})\big)\simeq \RAnPerf_{\mathrm{Del}}(X^{\mathrm{an}}),$ but refrain from using this notation as it could suggest a derived NAH correspondence at the global level has been established.
\end{remark}
\subsection{$\lambda\leftrightarrow t$ and Kapustin--Witten equations}
We conclude by discussing the derived analytic Deligne-stack of perfect complexes and $G$-bundles over a surface $S$, to a derived enhancement of the Kapustin--Witten moduli space as studied in \cite{LRT2022}.

Note that the constructions in the previous subsections hold for smooth projective manifolds of any dimension. For gauge-theoretic applications we consider the setting of Subsect.\ref{ssec: KW} and by specifying to the
surface case, we obtain a $(-2)$-shifted derived pre-twistor family of hyperk\"ahler type, whose fibers are described as follows.

\begin{proposition}
Consider $\mathsf{B}G$ for $G$ a connected complex reductive group or the analytification of a reductive algebraic group. There is a good moduli space for the classical twistor construction with $(-2)$-shifted symplectic fibers over $\lambda=0,1$. 
\end{proposition}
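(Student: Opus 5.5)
The plan is to rerun the construction of Theorem \ref{DHSTwistor1} and Proposition \ref{DHSTwistor3} with target $\mathsf{B}G$ in place of $\Perf$, specialised to a smooth projective surface $S$, so that $d=2$. First I will check that $\mathsf{B}G$ meets the hypotheses of the generalized gluing described just after Proposition \ref{DHSTwistor2}. It is geometric and locally of finite presentation. It is Tannakian. It satisfies $\Q(\mathsf{B}G)\simeq\mathsf{Ind}(\Per(\mathsf{B}G))$, since $G$ is reductive in characteristic zero. By \cite[Prop.~5.33]{HolsteinPorta2025} it also has the GAGA property. Its canonical $2$-shifted symplectic form, coming from a nondegenerate invariant pairing on $\mathfrak{g}$, transgresses by Theorem \ref{MainTheorem2Body} and Theorem \ref{MainTheoremBodyBody} to relative forms of degree $2-2d=-2$ on $\Map_{/\Theta}(S_{\Hod},\mathsf{B}G\times\Theta)\times_\Theta\mathbb{A}^1$. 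On the fibres this gives $\Map(S_{\Dol},\mathsf{B}G)$ at $\lambda=0$ and $\Map(S_{\DR},\mathsf{B}G)$ at $\lambda=1$; the latter is identified with the Betti side by Theorem \ref{AnalyticNAH} (ii) and carries the $(-2)$-shifted structure. For the non-algebraic case of a connected complex reductive group, I will replace $\mathsf{B}G$ by $\mathsf{B}G^{\an}\simeq(\mathsf{B}G^{\mathrm{alg}})^{\an}$, which reduces it to the algebraic case.

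Second, I will form the homotopy pushout \eqref{DeligneHoPushout} of $\AnMap_{\Hod}(S,G)$ and $\AnMap_{\Hod}(S^{\mathrm{conj}},G)$ along $\AnMap(S_{\B},\mathsf{B}G^{\an})\times\mathbf{G}_m$. The closed immersions come from the analogues of \eqref{ClImmersion1} and \eqref{ClImmersion2}. Part (i) of the proof of Theorem \ref{DHSTwistor1} is a local Postnikov and square-zero argument that uses nothing specific about $\Perf$ beyond local geometricity, so it carries over verbatim. Proposition \ref{DHSTwistor3} then gives the $\EuScript{O}(2)$-twisted $(-2)$-shifted pretwistor structure over $\bfP$, covering the antipode.

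Third, and this is the substantive step, I will produce the good moduli space. I will restrict the classical truncations of the two Hodge pieces to the semistable loci $\mathcal{M}^{\mathrm{ss}}_{\Hod}(S,G)$ and $\mathcal{M}^{\mathrm{ss}}_{\Hod}(S^{\mathrm{conj}},G)$, which are open by Simpson \cite{Sim09}. These are quotient stacks whose good moduli spaces are Simpson's $M_{\Hod}$ \cite{SimpsonHodgeFiltrationNonabelian,alper2013good}. Over $\mathbf{G}_m$ their restrictions are identified, via the truncated Riemann--Hilbert equivalence, with the Betti moduli times $\mathbf{G}_m$. I will then use the gluing criterion \cite[Prop.~7.9]{alper2013good} together with the argument of \cite[Prop.~3.3]{FrancoHanson2024}. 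That argument is carried out there for $G$-bundles, and it shows that the pushout of the semistable loci admits a good moduli space to $M_{\Del}(S,G)=\mathrm{Tw}(M_{\Dol}(S,G))$. Base change of good moduli spaces along $\{0\},\{1\}\hookrightarrow\bfP$ then identifies the fibres with $M_{\Dol}$ and $M_{\DR}$. These sit under the $(-2)$-shifted derived fibres of the second step.

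I expect the main obstacle to be the gluing step. Semistability on the two charts must match over $\mathbf{G}_m$ under Riemann--Hilbert, meaning that semisimple flat bundles correspond to semisimple local systems. I also have to check that the good moduli maps are compatible on the overlap, so that Alper's criterion, which requires the overlap to be saturated, actually applies. I will verify saturation by noting that, for $\lambda\neq0$, every $\lambda$-connection is semistable, so the preimage of the overlap is the entire $\mathbf{G}_m$-part.
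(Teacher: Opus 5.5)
Your proposal follows essentially the same route as the paper: the $(-2)$-shifted fibres over $\lambda=0,1$ come from the $d=2$ transgression of the $2$-shifted form on $\mathsf{B}G$. The good moduli space is obtained by taking the GIT/Simpson good moduli spaces of the semistable loci of the two conjugate Hodge stacks and gluing them over the Betti$\times\mathbf{G}_m$ overlap via Alper's gluing criterion and the Franco--Hanson argument, exactly as in Theorem~\ref{DHSTwistor1}(iii). The paper's proof is terser and writes this out only for $GL_n$, whereas you treat general reductive $G$ and spell out the overlap check; note that saturation is in fact automatic, since the overlap is the preimage of $\mathbf{G}_m\subset\mathbf{A}^1$, and the semistability of all $\lambda$-connections with $\lambda\neq 0$ is what makes the two semistable charts agree there.
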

\begin{proof}
Considering $\EuScript{M}_{\mathrm{Hod}}(S,n):=\EuScript{M}_{\mathrm{Hod}}(S,\mathsf{B}GL_n),$ as before, by geometric invariant theory, these are good categorical quotients and hence 
the maps
$\mathrm{M}_{\mathrm{Hod}}^{ss}(S)\to \EuScript{M}_{\mathrm{Hod}}(S,n),$ and $\mathrm{M}_{\mathrm{Hod}}^{ss}(S^{\mathrm{conj}})\to \EuScript{M}_{\mathrm{Hod}}(S^{\mathrm{conj}},n)$ are good moduli spaces. Consequently, their colimit
$$t_0\RAnPerf_{\mathrm{Del}}(S)^{\mathrm{ss}}=:\mathrm{M}_{\mathrm{Del}}^{\mathrm{ss}}(S)\simeq \mathrm{M}_{\mathrm{Hod}}^{\mathrm{ss}}(S)\sqcup_{\RAnPerf_{\mathrm{B}}(S)\times\mathbf{G}_m} \mathrm{M}_{\mathrm{Hod}}^{\mathrm{ss}}(S^{\mathrm{conj}}),$$ is a good moduli space for $\mathrm{Tw}(\EuScript{M}_{\mathrm{Dol}}(S)).$
\end{proof}

\bibliographystyle{amsalpha}
\bibliography{Bibliography}
\end{document}